\renewcommand\mathfrak[1]{\mbox{\usefont{U}{euf}{m}{n}#1}}
\definecolor{winered}{rgb}{0.5,0,0}
\newtheoremstyle{theoremdd}
{\topsep}{\topsep}{\upshape}{0pt}{\bfseries}{.}{ }{\thmname{#1}\thmnumber{ #2}\thmnote{ (#3)}}
\theoremstyle{definition}
\newtheorem{Th}{Theorem}[section]
\newtheorem{Lemma}[Th]{Lemma}
\newtheorem{Cor}[Th]{Corollary}
\newtheorem{Prop}[Th]{Proposition}
\newtheorem{Def}[Th]{Definition}
\newtheorem{Rem}[Th]{Remark}
\newtheorem{Ex}[Th]{Example}
\newcommand{\Hom}{\text{Hom}}
\newcommand{\R}{\mathbb{R}}
\newcommand{\Z}{\mathbb{Z}}
\newcommand{\ncat}{\mathbf} % - named categories
\newcommand{\cat}{\mathcal} % - unnamed categories
\renewcommand{\u}{\underline}
\newcommand{\colim}{\text{colim}}
\newcommand{\ncolim}[1]{\underset{#1}{\colim} \,}
\newcommand{\op}{\text{op}}
\newcommand{\Gr}{\ncat{Gr}}
\newcommand{\Sing}{\text{Sing}}
\renewcommand{\Re}{\text{Re}}
\newcommand{\oSing}{\text{oSing}}
\newcommand{\tSing}{\widetilde{\text{S}}\text{ing}}
\newcommand{\tRe}{\widetilde{\text{R}}\text{e}}
\newcommand{\Sd}{\u{\text{Sd}}}
\newcommand{\sd}{\text{Sd}}
\newcommand{\Cl}{\text{C}\ell}
\newcommand{\st}{\text{st}}
\newcommand{\Face}{\text{Face}}
\newcommand{\Ord}{\text{Ord}}
\renewcommand{\H}{\mathbb{H}}
\newcommand{\bDelta}{\u{\Delta}}
\newtheorem*{rep@theorem}{\rep@title}
\newcommand{\newreptheorem}[2]{%
\newenvironment{rep#1}[1]{%
 \def\rep@title{#2 \ref{##1}}%
 \begin{rep@theorem}}%
 {\end{rep@theorem}}}
\tikzset{curve/.style={settings={#1},to path={(\tikztostart)
    .. controls ($(\tikztostart)!\pv{pos}!(\tikztotarget)!\pv{height}!270:(\tikztotarget)$)
    and ($(\tikztostart)!1-\pv{pos}!(\tikztotarget)!\pv{height}!270:(\tikztotarget)$)
    .. (\tikztotarget)\tikztonodes}},
    settings/.code={\tikzset{quiver/.cd,#1}
        \def\pv##1{\pgfkeysvalueof{/tikz/quiver/##1}}},
    quiver/.cd,pos/.initial=0.35,height/.initial=0}
\tikzset{tail reversed/.code={\pgfsetarrowsstart{tikzcd to}}}
\tikzset{2tail/.code={\pgfsetarrowsstart{Implies[reversed]}}}
\tikzset{2tail reversed/.code={\pgfsetarrowsstart{Implies}}}
\definecolor{emilioeditcolor}{rgb}{0.94, 0.97, 1.0}
\title{Thomason-Type Model Structures on Simplicial Complexes and Graphs}
\author{Emilio Minichiello}
\address{CUNY CityTech}
\email{emilio.minichielloepstein04@citytech.cuny.edu, \href{www.emiliominichiello.com}{emiliominichiello.com}.}
\subjclass[2010]{55U10, 05E45}
\keywords{simplicial complex, graph, Hom complex, model category.}
\begin{document}

\begin{abstract}
In this paper we show that the Matsushita model structure on loop graphs, which is right-transferred from the Kan-Quillen model structure on simplicial sets, factors through two other right-transferred model structures on simplicial complexes and reflexive graphs. We show that each Quillen adjunction between these right-transferred model categories is a Quillen equivalence. These model structures are analogous to the Thomason model structure on small categories, and we prove that they are all cofibrantly generated and proper. Furthermore we show that all cofibrant simplicial complexes are flag complexes, and all forests are cofibrant.
\end{abstract}

\maketitle

\setcounter{tocdepth}{1}
\tableofcontents

\section{Introduction}

In recent years, the notion of graph homotopy has gained popularity. In fact, two distinct graph homotopy theories have emerged: the $A$-homotopy theory and the $\times$-homotopy theory. These homotopy theories are most conveniently understood on the category $\ncat{Gr}$ of reflexive, undirected, simple graphs (Definition \ref{def reflexive graph}). Kapulkin and Kershaw have recently shown \cite{kapulkin2024closed} that on this category, there exist precisely two symmetric monoidal closed structures. They are given by the monoidal product $\square$, called the \textbf{box product} or \textbf{cartesian product}, and $\times$, called the \textbf{categorical product}. To each of these symmetric monoidal closed products $\odot \in \{\square, \times \}$ we can associate a notion of homotopy by studying maps $I_n \to \Hom^\odot(G,H)$, where $I_n$ denotes a path of length $n$ and $\Hom^\odot(G,-)$ is the right adjoint to the functor $G \odot -$. The box product and categorical product correspond to the $A$-theory and the $\times$-theory respectively. Rather than study a graph $G$ as a topological space, in which case its homotopy type is rather trivial, these notions of homotopy capture deeper combinatorial information about graphs, see \cite{babson2006homotopy, barcelo2005perspectives, dochtermann2009hom, dochtermann2009homotopygroup}.

The $A$-homotopy theory of reflexive graphs was recently shown to be inherited from a cubical nerve functor \cite{kapulkin2022cubical}. However, it is currently unknown if the $A$-homotopy theory of reflexive graphs is equivalent to the homotopy theory of spaces, though there has been some recent progress \cite{kapulkin2024homotopyntypescubicalsets}. 

Meanwhile, the $\times$-homotopy theory of loop graphs (Definition \ref{def loop graph}) is inherited from the homotopy theory of clique complexes. The origins of this theory lie in the Hom-complex construction of Lov\'{a}sz, which he used to prove the Kneser conjecture in \cite{lovasz1978kneser}. Given two graphs $G$ and $H$, this is a space\footnote{There are actually many different models for the homotopy type of $\Hom(G,H)$. Lov\'{a}sz used the neighborhood complex, see \cite[Section 17]{kozlov2008combinatorial}.} $\Hom(G,H)$ whose topological invariants are tightly related to various combinatorial invariants of $G$ and $H$. In particular, in the case $G = K^2$, topological invariants of the $\Z_2$-equivariant homotopy type of $\Hom(G,H)$ give lower bounds on the chromatic number of $H$, see \cite{daneshpajouh2025box} for a nice overview of this connection. Dochtermann showed in \cite[Remark 3.6]{dochtermann2009hom} that there is a homotopy equivalence
\begin{equation*}
    \Hom(G,H) \simeq \Cl((H^G)^\circ)
\end{equation*}
where the right-hand side is the clique complex of the maximal reflexive subgraph of the exponential graph $H^G$. 

Motivated by the desire to better understand these Hom-complexes of graphs, the author searched the literature for abstract homotopical structures relevant to $\times$-homotopy theory. Two such structures are given by Matsushita in \cite{matsushita2017box}. In more detail, Matsushita constructs two model structures on the category $\ncat{Gr}_\ell$ of loop graphs, one of which is Quillen equivalent to the Kan-Quillen model structure on $\ncat{sSet}$, and one of which is Quillen equivalent to a model structure on $\Z_2\ncat{sSet}$, simplicial sets with a $\Z_2$-action. For this paper, we will focus on the former, which we will call the Matsushita model structure on $\ncat{Gr}_\ell$. Matsushita constructs this model structure by right-transferring the Kan-Quillen model structure to $\ncat{Gr}_\ell$ along an adjunction. This results in a model structure whose weak equivalences (fibrations) are precisely those graph homomorphisms $ f: G \to H$ which induce a weak equivalence (fibration) on the clique complexes of their maximal reflexive subgraphs.

There is something curious about this model structure. There exists an adjunction 
\begin{equation*}
    \begin{tikzcd}
	{\ncat{Gr}_\ell} && {\ncat{sSet}}
	\arrow[""{name=0, anchor=center, inner sep=0}, "{\Sing_{\Gr_\ell}}"', shift right=3, from=1-1, to=1-3]
	\arrow[""{name=1, anchor=center, inner sep=0}, "{\Re_{\Gr_{\ell}}}"', shift right=3, from=1-3, to=1-1]
	\arrow["\dashv"{anchor=center, rotate=-90}, draw=none, from=1, to=0]
\end{tikzcd}
\end{equation*}
between graphs and simplicial sets, where $\Sing_{\ncat{Gr}_\ell}$ is a simplicial set version of the clique complex of the maximal reflexive subgraph. However, this is not the adjunction that Matsushita transfers the model structure along. Rather, he uses the adjunction 
\begin{equation*}
\begin{tikzcd}
	{\ncat{Gr}_\ell} && {\ncat{sSet}}
	\arrow[""{name=0, anchor=center, inner sep=0}, "{\text{Ex}^2 \, \Sing_{\Gr_\ell}}"', shift right=3, from=1-1, to=1-3]
	\arrow[""{name=1, anchor=center, inner sep=0}, "{\Re_{\Gr_{\ell}} \, \sd^2}"', shift right=3, from=1-3, to=1-1]
	\arrow["\dashv"{anchor=center, rotate=-90}, draw=none, from=1, to=0]
\end{tikzcd}
\end{equation*}
where $\sd$ is the barycentric subdivision functor on simplicial sets and $\text{Ex}$ is its right adjoint.

Now this somewhat mysterious second subdivision functor is well known to homotopy theorists. Indeed, in 1980, Thomason published the paper ``Cat as a closed model category'' \cite{thomason1980cat} in which he proved that the right-transferred model structure of the Kan-Quillen model structure along the adjunction
\begin{equation*}
\begin{tikzcd}
	{\ncat{Cat}} && {\ncat{sSet}}
	\arrow[""{name=0, anchor=center, inner sep=0}, "{{\text{Ex}^2 \, N}}"', shift right=3, from=1-1, to=1-3]
	\arrow[""{name=1, anchor=center, inner sep=0}, "{{h \, \sd^2}}"', shift right=3, from=1-3, to=1-1]
	\arrow["\dashv"{anchor=center, rotate=-90}, draw=none, from=1, to=0]
\end{tikzcd}  
\end{equation*}
exists, where $N$ denotes the nerve functor and $h$ its left adjoint. This model structure on the category $\ncat{Cat}$ of small categories is called the \textbf{Thomason model structure}. The main geometric idea behind this construction is the fact that for any simplicial set $X$, the counit $N h X \to X$ is not necessarily a weak equivalence, but for any simplicial set $X$, the canonical map $N h \Sd^2 X \to X$ is in fact a weak equivalence of simplicial sets. This amazing fact was proven independently by Thomason and Fritsch-Latch in \cite{fritsch1981homotopy}. Using this, it can be shown that the adjunction above is a Quillen equivalence.

A weak equivalence in the Thomason model structure is a functor $F : \cat{C} \to \cat{D}$ such that $NF : N \cat{C} \to N \cat{D}$ is a weak equivalence of simplicial sets. There are some interesting trade-offs regarding the Thomason model structure. Its weak equivalences are simple to understand, it is cofibrantly generated and proper. Furthermore, we have a very good understanding of the homotopy colimits in the Thomason model structure via the Grothendieck construction \cite{thomason1979homotopy}, and the construction of homotopy fibers via Quillen's Theorems A and B \cite{quillen1972higher}. However, a downside of this model structure is that we do not have a complete understanding of its fibrant and cofibrant objects. We know that all cofibrant categories are posets by \cite[Proposition 5.7]{thomason1980cat}. Many partial results have been proved for cofibrant objects in \cite{bruckner2016cofibrant} and for fibrant objects in \cite{meier2015fibrancy}.

The author was surprised to learn of the variety of Thomason-type model structures in the literature that have cropped up in recent years: on posets \cite{raptis2010homotopy}, on acyclic categories \cite{bruckner2015acyclic}, on relative categories \cite{barwick2012relative}, on nonsingular simplicial sets \cite{fjellbo2020nonsingular}, on $G$-categories \cite{Bohmann_2015} where $G$ is a finite group, on $n$-fold categories \cite{fiore2010thomason}, on $2$-categories modelling the homotopy theory of spaces \cite{ara2015structures}, on $2$-categories \cite{pavlov2024enriched} modelling the homotopy theory of $\infty$-categories, and on $\infty$-categories \cite{mazel2019grothendieck} modelling the homotopy theory of spaces.

Shortly after the preprint version of this paper was posted to the arXiv, the preprint \cite{wei2025ordered} was posted, which produces a Thomason-type model structure on the category of ordered simplicial complexes.

We could not find analogous results for the category of simplicial complexes $\ncat{Cpx}$ or the category of reflexive graphs $\ncat{Gr}$ in the literature. In this paper we obtain such results, showing that Matsushita's construction actually factors through two intermediate model categories

\begin{equation} \label{eq total composite adjunction}
\begin{tikzcd}
	{\ncat{Gr}_\ell} && {\ncat{Gr}} && {\ncat{Cpx}} && {\ncat{sSet}}
	\arrow[""{name=0, anchor=center, inner sep=0}, "{(-)^\circ}"', shift right=3, from=1-1, to=1-3]
	\arrow[""{name=1, anchor=center, inner sep=0}, "{i_\ell}"', shift right=3, from=1-3, to=1-1]
	\arrow[""{name=2, anchor=center, inner sep=0}, "\Cl"', shift right=3, from=1-3, to=1-5]
	\arrow[""{name=3, anchor=center, inner sep=0}, "{(-)_{\leq 1}}"', shift right=3, from=1-5, to=1-3]
	\arrow[""{name=4, anchor=center, inner sep=0}, "{{{\text{Ex}^2 \, \text{Sing}}}}"', shift right=3, from=1-5, to=1-7]
	\arrow[""{name=5, anchor=center, inner sep=0}, "{{{\text{Re} \, \sd^2}}}"', shift right=3, from=1-7, to=1-5]
	\arrow["\dashv"{anchor=center, rotate=-90}, draw=none, from=1, to=0]
	\arrow["\dashv"{anchor=center, rotate=-90}, draw=none, from=3, to=2]
	\arrow["\dashv"{anchor=center, rotate=-90}, draw=none, from=5, to=4]
\end{tikzcd}    
\end{equation}
each of which are right-transferred along the intermediate adjunctions. We prove a wealth of properties about these model structures:
\begin{itemize}
    \item the right-transferred model structure on $\ncat{Cpx}$, which we call the \textbf{Thomason model structure} on $\ncat{Cpx}$ exists, is cofibrantly generated (Theorem \ref{th transferred model structure from sset}) and proper (Proposition \ref{prop thomason model structure is proper}). Furthermore, all cofibrant simplicial complexes are flag complexes (Proposition \ref{prop cofibrant complexes are flag}), the rightmost adjunction above is a Quillen equivalence and the functor $\text{Ex}^2 \, \Sing$ is also left Quillen (Lemma \ref{lem tsing is left quillen}).
    \item the right-transferred model structure on $\ncat{Gr}$, which we call the \textbf{Matsushita model structure} on $\ncat{Gr}$ exists (Theorem \ref{th transfer from cpx to gr}), is cofibrantly generated and proper (Proposition \ref{prop model structure on gr is cofib gen and proper}). Furthermore, the middle adjunction above is a Quillen equivalence (Theorem \ref{th quillen equiv between cpx and gr}).
    \item the right-transferred model structure on $\ncat{Gr}_\ell$, which we call the \textbf{Matsushita model structure} on $\ncat{Gr}_\ell$ exists (Theorem \ref{th transfer from gr to loop gr}), is cofibrantly generated and proper (Proposition \ref{prop model structure loop graphs is proper}). Furthermore, the leftmost adjunction above is a Quillen equivalence (Theorem \ref{th quillen equiv between gr and loop gr}) and the functor $i_\ell$ is also right Quillen (Lemma \ref{lem adjunction between loop graphs is biQuillen}).
    \item we are able to characterize several classes of cofibrant and fibrant objects in these model structures. All double subdivided simplicial complexes are Thomason cofibrant (Corollary \ref{cor cofibrant complexes}), all $n$-simplices are both Thomason fibrant and cofibrant (Lemma \ref{lem simplices are cofibrant} and \ref{lem simplices are thomason fibrant}), all reflexive complete graphs are Matsushita fibrant (Lemma \ref{lem complete graphs are fibrant}), all reflexive $n$-path graphs are Matsushita cofibrant and the inclusion of an endpoint is a cofibration (Lemma \ref{lem paths are cofibrant}), all reflexive graphs which are forests are Matsushita cofibrant (Corollary \ref{cor trees are cofibrant}) and all $n$-cycles $C_{n}$ for $n \geq 3$ are Matsushita cofibrant (Lemma \ref{lem 4n-cycles are cofibrant}).
\end{itemize}
This provides a useful framework for analyzing the $\times$-homotopy theory of graphs, and extends it to simplicial complexes.

In Section \ref{section simplicial complexes} we review the theory of simplicial complexes and its relation with ordered simplicial complexes and simplicial sets. We extend the notions of $\times$-homotopy theory of graphs from \cite{dochtermann2009hom} to simplicial complexes and prove various results about $\times$-homotopy of simplicial complexes that may be of independent interest. In Section \ref{section transfer simplicial complexes} we prove that the Thomason model structure on $\ncat{Cpx}$ exists, and in Section \ref{section properties of thomason} we explore some of its properties, eventually proving it is proper. In Section \ref{section graphs} we prove that the Matsushita model structures on $\ncat{Gr}$ and $\ncat{Gr}_\ell$ exist and investigate their properties. In Section \ref{section derived hom} we study the derived Hom in $\ncat{Gr}_\ell$ and determine that it does not respect the underlying cartesian-closed structure. We provide Appendix \ref{section categories} for some background on the categories of simplicial complexes and ordered simplicial complexes.

\subsection*{Acknowledgement}
The author was supported by PSC-CUNY Grant TRADB 56-13. He would also like to thank the participants at the NYC category theory seminar for their comments on a talk given about this work.

\subsection{Notation and Assumptions}

\begin{itemize}
    \item We use write unnamed categories as $\cat{C}, \cat{D}, \dots$ and named categories as $\ncat{Set}, \ncat{Cat}, \dots$
    \item We will assume the existence of a Grothendieck universe. Sets belonging to this universe are called small, subsets of the universe are called large. We will assume all of our categories are locally small, i.e. they have small hom-sets (See \cite[Appendix A]{minichiello2025coverages} for an introduction to set-theoretic assumptions useful in category theory.).
    \item We will mainly be concerned with the following categories for this paper: 
    \begin{itemize}
        \item $\ncat{Gr}$, the category of reflexive graphs,
        \item $\ncat{Gr}_\ell$, the category of loop graphs,
        \item $\ncat{Cpx}$, the category of simplicial complexes,
        \item $\ncat{oCpx}$, the category of ordered simplicial complexes, and
        \item $\ncat{sSet}$, the category of simplicial sets.
    \end{itemize}
    \item In order to keep this paper to a reasonable length, we must assume the reader is comfortable with simplicial homotopy theory and model categories. For introductory references on the subject we recommend \cite{riehl2014categorical, riehl2020homotopical, hirschhorn2003model, hovey2007model, balchin2021handbook}.
    \item We will often cite modern, readable references for definitions and folklore results rather than attempt to find the oldest such source.
    \item Since we will deal with both simplicial sets and simplicial complexes in this paper, when we are denoting an object that could be either a simplicial set or simplicial complex, like $\Delta^k$, we will use an underline to denote the simplicial complex version. So the $k$-simplex as a simplicial complex will be denoted $\u{\Delta}^k$.
\end{itemize}

\section{Simplicial Complexes} \label{section simplicial complexes}
In this section we review the information about simplicial complexes we will need for the construction of the Thomason model structure, and extend the basic notions of $\times$-homotopy theory from graphs to simplicial complexes. In Appendix \ref{section categories}, we look at the category of (ordered) simplicial complexes in more detail.

\subsection{Geometry of Simplicial Complexes}

\begin{Def} \label{def cpx}
A simplicial complex $K$ consists of a set $V(K)$ and a collection of finite subsets (which we also denote by $K \subseteq 2^{V(K)}$), which is closed under inclusion. In other words, if $\sigma \in K$ and $\tau \subseteq \sigma$, then $\tau \in K$. We call $\sigma \in K$ a \textbf{simplex} or \textbf{face} of $K$. The \textbf{dimension} of a face $\sigma$ is $|\sigma| - 1$. The dimension of a simplicial complex $K$ is the highest dimension of its faces. A morphism $f : K \to L$ of simplicial complexes consists of a function $V(f): V(K) \to V(L)$ such that if $\sigma \in K$, then $V(f)(\sigma) \in V(L)$. We let $\ncat{Cpx}$ denote the category of simplicial complexes.
\end{Def}

We study the category $\ncat{Cpx}$ more deeply in Appendix \ref{section categories}.

\begin{Def}
Given a simplicial complex $K$, we say that a face $\tau \in K$ is a \textbf{facet} if there exists no $\sigma \in K$ with $\tau \subset \sigma$. We say that a simplicial complex $K$ is a \textbf{subcomplex} of a simplicial complex $L$ on the same vertex set if $K \subseteq L$. A \textbf{pair of simplicial complexes} $(L, K)$ consists of a subcomplex $K \subseteq L$.
\end{Def}

Let us now connect this abstract notion with topology.

\begin{Def}[{\cite[Section 1.1]{ruschoff2017lecture}}] \label{def realization of simplicial complexes}
For a simplicial complex $K$, let $\oplus_{x \in V(K)} \R \cong \R^{V(K)}$ denote the real vector space with basis $ \{e_x \}_{x \in V(K)}$. If $a \in \R^{V(K)}$, let $a_x$ denote the $e_x$ component of $a$, and
\begin{equation} \label{eq realization of complexes}
    |K|_{\ncat{Cpx}} = \left \{ a \in \R^{V(K)} \; | \; a_x \geq 0, \, \sum_{x \in V(K)} a_x = 1, \, \{x \in V(K) \, | a_x \neq 0 \} \in K \right \},
\end{equation}
where we equip this subset of $\R^{V(K)}$ with the final topology. See \cite[Definition 1.6]{ruschoff2017lecture} for more details. This construction defines a functor $|-|_{\ncat{Cpx}} : \ncat{Cpx} \to \ncat{Top}$, which we call the \textbf{geometric realization} functor on simplicial complexes.
\end{Def}

Famously, the functor $|-|_{\ncat{Cpx}}$ does not preserve finite products (See \cite[Section 3]{bergner2022simplicial} for more details). One way to fix this problem is to consider ordered simplicial complexes.

\begin{Def} \label{def ordered complexes}
An \textbf{ordered simplicial complex} $K_{\leq}$ consists of a simplicial complex $K$ along with a partial order $\leq$ on $V(K)$ such that for each simplex $\sigma \in K$, the restricted partial order $\leq|_{\sigma}$ is a total order. A map $f : K_{\leq} \to L_{\leq}$ of ordered simplicial complexes is a map of simplicial complexes that also preserves the partial order on vertices. Let $\ncat{oCpx}$ denote the category of ordered simplicial complexes, and let $U : \ncat{oCpx} \to \ncat{Cpx}$ denote the forgetful functor.
\end{Def}

\begin{Rem}
While this paper is mainly focused on unordered simplicial complexes, there a few key results on ordered simplicial complexes we will need. In particular, we need Corollary \ref{cor realization of subdivision of horns, simplices, boundaries} for Theorem \ref{th transferred model structure from sset} and Proposition \ref{prop hoequiv of realizations} for Proposition \ref{prop Sing preserves x-homotopy equivalence}, which itself is used throughout much of the results on simplicial complexes.
\end{Rem}

Note that products in $\ncat{oCpx}$ are different than those in $\ncat{Cpx}$, see Example \ref{ex ordered simplicial complex products}. If we define $|-|_{\ncat{oCpx}} : \ncat{oCpx} \to \ncat{Top}$ using the same formula (\ref{eq realization of complexes}), then we have the following result.

\begin{Lemma}[{\cite[Proposition 1.25]{ruschoff2017lecture}}] \label{lem geo real of ord complexes preserves finite products}
The functor $|-|_{\ncat{oCpx}}$ preserves finite products between finite ordered simplicial complexes.
\end{Lemma}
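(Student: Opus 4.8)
The plan is to show that a canonical comparison map is a homeomorphism. Writing $\pi_K \colon K_\leq \times L_\leq \to K_\leq$ and $\pi_L \colon K_\leq \times L_\leq \to L_\leq$ for the two projections, functoriality of $|-|_{\ncat{oCpx}}$ together with the universal property of products in $\ncat{Top}$ produces a continuous map $\phi_{K,L} \colon |K_\leq \times L_\leq|_{\ncat{oCpx}} \to |K_\leq|_{\ncat{oCpx}} \times |L_\leq|_{\ncat{oCpx}}$. For \emph{finite} ordered simplicial complexes the final topology on each realization coincides with the subspace topology from the finite-dimensional space $\R^{V(-)}$, so both source and target are compact Hausdorff, and it suffices to prove that $\phi_{K,L}$ is a bijection; this is the only place finiteness enters.

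I would then reduce to products of simplices combinatorially. Recall from the description of products in $\ncat{oCpx}$ that a face of $K_\leq \times L_\leq$ is a finite subset $S \subseteq V(K) \times V(L)$ which is a chain in the product order and whose projections lie in $K$ and in $L$. Fix $(a,b)$ in the target; $a$ is supported on a face $\sigma \in K$ and $b$ on a face $\tau \in L$, and with their induced total orders these are ordered simplices $\Delta^p_\leq$ and $\Delta^q_\leq$. Since the coordinates of $a,b$ outside $\sigma,\tau$ vanish, any $c \in \phi_{K,L}^{-1}(a,b)$ is supported on a chain inside $\sigma\times\tau$, i.e.\ on a face of the order complex of the poset $\sigma\times\tau \cong [p]\times[q]$; conversely every such $c$ maps into the target. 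Restricting to supports thus identifies $\phi_{K,L}^{-1}(a,b)$ with the fibre of the comparison map $\phi_{p,q}$ for $\Delta^p_\leq$ and $\Delta^q_\leq$ over the corresponding point of $|\Delta^p_\leq|_{\ncat{oCpx}} \times |\Delta^q_\leq|_{\ncat{oCpx}}$. So the claim reduces to the bijectivity of $\phi_{p,q}$ for all $p,q \geq 0$.

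For this base case I would invoke, or reprove, the classical staircase triangulation of a product of simplices. The ordered simplicial complex $\Delta^p_\leq \times \Delta^q_\leq$ is the order complex of the grid poset $[p]\times[q]$, and a point $(a,b) \in |\Delta^p_\leq|_{\ncat{oCpx}} \times |\Delta^q_\leq|_{\ncat{oCpx}}$ determines two partitions $0 = A_0 \le \dots \le A_{p+1} = 1$ and $0 = B_0 \le \dots \le B_{q+1} = 1$ of the unit interval by partial sums of barycentric coordinates; passing to their common refinement and recording, interval by interval, which pair of subintervals $(A_i,A_{i+1})$ and $(B_j,B_{j+1})$ it falls inside, assigns positive weights to a unique monotone staircase from $(0,0)$ to $(p,q)$ in $[p]\times[q]$, and this is a two-sided inverse to $\phi_{p,q}$. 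Checking that this assignment is well defined, is actually inverse to $\phi_{p,q}$, and is continuous across the walls where $\{A_i\}$ and $\{B_j\}$ meet — equivalently, that the staircase simplices triangulate the prism — is the technical core, and is the computation behind the analogous statement for geometric realization of simplicial sets; it is recorded in \cite[Proposition 1.25]{ruschoff2017lecture}.

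The step I expect to be the main obstacle is exactly this base case: the uniqueness half of the bijection (a point of the prism lies on a well-defined smallest staircase face) and the continuity of the inverse across the triangulation's walls both need care, as does the finite-versus-final-topology bookkeeping ensuring that the continuous bijection $\phi_{K,L}$ upgrades to a homeomorphism. Granting the base case, the reduction in the second paragraph is purely formal.
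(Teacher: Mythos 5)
The paper does not actually prove this lemma: it is imported verbatim from R\"uschoff's notes, cited as \cite[Proposition 1.25]{ruschoff2017lecture}, so there is no internal proof to compare against. Your outline is the standard argument behind that result: build the comparison map $\phi_{K,L}$ from the two projections, observe that for finite complexes both realizations are compact Hausdorff so a continuous bijection suffices, reduce via supports to the case of two ordered simplices, and finish with the staircase (prism) triangulation of $\Delta^p_\leq \times \Delta^q_\leq$. Your combinatorial description of the product in $\ncat{oCpx}$ --- chains in the product order on $V(K)\times V(L)$ whose projections are faces of $K$ and of $L$ --- is correct, since limits in $\ncat{oCpx}$ are created from $\ncat{sSet}$ through $\oSing$, and the support argument identifying fibres of $\phi_{K,L}$ with fibres of $\phi_{p,q}$ is fine (note that the projection of the support of a preimage is forced to equal the support of $a$, resp.\ $b$, exactly, which is what makes the restriction legitimate).

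The caveat is about what your write-up actually establishes. The whole mathematical content of the lemma sits in the base case: existence and uniqueness of the staircase face of $[p]\times[q]$ carrying a given point of the prism, i.e.\ bijectivity of $\phi_{p,q}$, together with the well-definedness of the partial-sum/common-refinement construction when some of the $A_i$ and $B_j$ coincide. You defer exactly that step to the same citation the paper uses, so as a blind proof this is an accurate and correctly structured roadmap rather than a complete argument. If the intent is a self-contained proof, the missing piece is precisely the computation you sketch in your third paragraph (that the staircase weights are well defined on walls, and that the two assignments are mutually inverse); the continuity of the inverse you can indeed sidestep by the compact Hausdorff argument, which is the one genuine simplification finiteness buys you.
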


Historically, Lemma \ref{lem geo real of ord complexes preserves finite products} still did not make ordered simplicial complexes convenient enough to serve as the basis for combinatorial models of homotopy types. Simplicial sets became the preferred such notion. The reference \cite{bergner2022simplicial} provides good context for this idea.

Let $\ncat{sSet} = \ncat{Set}^{\mathsf{\Delta}^\op}$ denote the category of simplicial sets, and let $|-| : \ncat{sSet} \to \ncat{Top}$ denote the corresponding geometric realization functor, which preserves finite products between finite simplicial sets \cite[Proposition 2.60]{ruschoff2017lecture} \footnote{In fact, by restricting the codomain to compactly generated Hausdorff spaces, \cite[Proposition 2.4]{goerss2009simplicial} shows that $|-|$ preserves all finite limits}. We equip $\ncat{sSet}$ with the \textbf{Kan-Quillen model structure} \cite{quillen2006homotopical} where a map $f : X \to Y$ of simplicial sets is a
\begin{itemize}
    \item weak equivalence if $|f| : |X| \to |Y|$ induces an isomorphism on all homotopy groups,
    \item fibration if $f$ is a Kan fibration, i.e. right lifts against all maps $\Lambda^n_k \to \Delta^n$ for $0 \leq k \leq n$ and $n \geq 1$,
    \item cofibration if it is a monomorphism.
\end{itemize}

We let $\sd : \ncat{sSet} \to \ncat{sSet}$ denote the barycentric subdivision functor for simplicial sets \cite[Definition 3.50]{ruschoff2017lecture} and $\text{Ex} : \ncat{sSet} \to \ncat{sSet}$ its right adjoint.

\begin{Lemma}
The following adjunction
\begin{equation*}
\begin{tikzcd}
	{\ncat{sSet}} && {\ncat{sSet}}
	\arrow[""{name=0, anchor=center, inner sep=0}, "{\text{Ex}}"', shift right=3, from=1-1, to=1-3]
	\arrow[""{name=1, anchor=center, inner sep=0}, "{\text{Sd}}"', shift right=3, from=1-3, to=1-1]
	\arrow["\dashv"{anchor=center, rotate=-90}, draw=none, from=1, to=0]
\end{tikzcd}
\end{equation*}
is a Quillen adjunction.
\end{Lemma}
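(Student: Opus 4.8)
The plan is to show that $\sd$ preserves cofibrations and trivial cofibrations, which by the standard Quillen adjunction criterion (it suffices to check the generating (trivial) cofibrations, or simply to check the two conditions directly since cofibrations here are all monomorphisms) gives that $(\sd, \text{Ex})$ is a Quillen adjunction. There are two things to verify.

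\textbf{Step 1: $\sd$ preserves cofibrations.} Since cofibrations in the Kan-Quillen model structure are exactly the monomorphisms, I need only check that $\sd$ sends monomorphisms to monomorphisms. The cleanest route is to recall that $\sd$ is defined as the left Kan extension along the Yoneda embedding of the functor $[n] \mapsto \sd \Delta^n$, where $\sd \Delta^n$ is the nerve of the poset of nondegenerate simplices of $\Delta^n$ (equivalently, of faces of the standard $n$-simplex ordered by inclusion). Because $\sd$ is a left adjoint it preserves colimits, and since every monomorphism of simplicial sets is a transfinite composition of pushouts of the generating cofibrations $\partial \Delta^n \hookrightarrow \Delta^n$, it is enough to know that $\sd(\partial \Delta^n) \to \sd(\Delta^n)$ is a monomorphism — which is immediate from the explicit description, since $\sd(\partial \Delta^n)$ is the subcomplex of $\sd(\Delta^n)$ spanned by the proper faces. (Alternatively, one can cite that $\sd$ restricts to an endofunctor on nonsingular simplicial sets and preserves monomorphisms there, or simply cite \cite{fritsch1981homotopy} / \cite[Ch.\ III]{fritsch1990cellular}; I would cite \cite[Section 3]{ruschoff2017lecture} for the relevant facts.)

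\textbf{Step 2: $\sd$ preserves weak equivalences (hence trivial cofibrations).} This is the well-known fact that there is a natural last-vertex map $\sd X \to X$ which is a weak equivalence for every simplicial set $X$; see \cite[Section 3]{ruschoff2017lecture} or the references therein. Given this, if $f : X \to Y$ is a weak equivalence, then in the commuting square with vertical maps $\sd X \to X$ and $\sd Y \to Y$ and horizontal maps $\sd f$ and $f$, three of the four maps are weak equivalences, so by two-out-of-three $\sd f$ is a weak equivalence. Combined with Step 1, $\sd$ preserves trivial cofibrations. Therefore $\sd$ is left Quillen and $(\sd, \text{Ex})$ is a Quillen adjunction.

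\textbf{The main obstacle} is not really a difficulty so much as a decision about how much of the classical theory of barycentric subdivision to take as given: the two facts invoked — that $\sd$ preserves monomorphisms and that the last-vertex map $\sd X \to X$ is a natural weak equivalence — are standard but nontrivial, and I would simply cite \cite{ruschoff2017lecture} (which the paper has already adopted as its reference for $\sd$ and $\text{Ex}$) rather than reprove them. If one wanted a fully self-contained argument, the genuinely substantive input is the weak equivalence $\sd X \to X$, whose proof goes through the fact that $|\sd \Delta^n| \to |\Delta^n|$ is a homeomorphism realizing the classical barycentric subdivision of a simplex.
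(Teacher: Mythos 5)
Your proof is correct, but it runs on the opposite side of the adjunction from the paper's. You verify that the left adjoint $\sd$ is left Quillen: it preserves cofibrations (monomorphisms), by reducing via colimit-preservation to the generating cofibrations and observing $\sd(\partial\Delta^n)\hookrightarrow\sd(\Delta^n)$ is a subcomplex inclusion, and it preserves weak equivalences via the natural last-vertex weak equivalence $\sd X \to X$ together with $2$-of-$3$. The paper instead verifies that the right adjoint $\text{Ex}$ is right Quillen: it cites Kan's result that $\text{Ex}$ preserves Kan fibrations, and deduces that $\text{Ex}$ preserves weak equivalences from the natural map $j : 1_{\ncat{sSet}} \to \text{Ex}$ being a componentwise trivial cofibration, again by $2$-of-$3$; hence $\text{Ex}$ preserves trivial fibrations. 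The two arguments lean on essentially adjoint classical inputs (the last-vertex map $\sd X \to X$ versus its adjunct $X \to \text{Ex}\,X$ being a natural weak equivalence), so neither is more elementary in substance; your route has the advantage of needing only that $\sd$ preserves monomorphisms (easy and checkable on generators), while the paper's route imports the harder fact that $\text{Ex}$ preserves fibrations but gets, as a by-product it uses later, that $\text{Ex}$ preserves weak equivalences --- a fact the paper reuses (e.g.\ via the maps $\rho_X : X \to \text{Ex}\,X$ in Lemma \ref{lem push out of weak equiv units is weak equiv unit}). One small caution in your Step 1: strictly, monomorphisms of simplicial sets are transfinite composites of pushouts of \emph{coproducts} of the boundary inclusions, but this changes nothing since $\sd$ preserves coproducts and monomorphisms in $\ncat{sSet}$ are closed under coproducts, pushouts, and transfinite composition.
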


\begin{proof}
The functor $\text{Ex}$ preserves fibrations by \cite[Page 6, (3)]{guillou2006KanEx}. There is a natural transformation $j : 1_{\ncat{sSet}} \to \text{Ex}$ that is componentwise a trivial cofibration by \cite[Theorem 3.3]{moss2020another}. Hence by $2$-of-$3$, the $\text{Ex}$ functor preserves weak equivalences. Thus $\text{Ex}$ preserves trivial fibrations.
\end{proof}

\begin{Def} \label{def osing}
Let $\oSing : \ncat{oCpx} \to \ncat{sSet}$ denote the functor that assigns to an ordered simplicial complex $K_{\leq}$ the simplicial set $\oSing(K_{\leq})$ defined degreewise by
\begin{equation*}
    \oSing(K_{\leq})_n = \ncat{oCpx}(\Delta^n, K_{\leq})
\end{equation*}
where here $\Delta^n$ is the ordered simplicial complex with totally ordered vertex set $\{0 \leq 1 \leq \dots \leq n \}$ and where every subset is a simplex.
\end{Def}

We discuss $\oSing$ in more detail in Appendix \ref{section categories}. This functor respects geometric realization in the following sense.

\begin{Lemma}[{\cite[Proposition 2.59]{ruschoff2017lecture}}] 
Given an ordered simplicial complex $K_{\leq}$ there is a natural homeomorphism
\begin{equation*}
    |K_{\leq}|_{\ncat{oCpx}} \xrightarrow{\cong} |\oSing(K_{\leq})|.
\end{equation*}
\end{Lemma}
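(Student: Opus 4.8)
The plan is to make $\oSing(K_\leq)$ completely explicit and then compare the two realizations cell by cell. Since every subset of $\{0\le\cdots\le n\}$ is a face of the ordered simplicial complex $\Delta^n$, an $n$-simplex of $\oSing(K_\leq)$, i.e.\ a morphism $\varphi\colon\Delta^n\to K_\leq$ in $\ncat{oCpx}$, is the same datum as a weakly monotone function $\varphi\colon\{0<\cdots<n\}\to(V(K),\le)$ whose image $\varphi(\{0,\dots,n\})$ is a face of $K$; the simplicial operators act by precomposition with the monotone maps $[m]\to[n]$. A nondegenerate $n$-simplex is an injective such $\varphi$, and I would note that these correspond bijectively to the $n$-dimensional faces $\sigma$ of $K$, via $\varphi\mapsto\varphi(\{0,\dots,n\})$ with inverse sending $\sigma$ to the unique monotone bijection $\{0<\cdots<n\}\to(\sigma,\le|_{\sigma})$. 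Thus $|\oSing(K_\leq)|$ is a CW complex with exactly one $n$-cell for each $n$-face of $K$, the same indexing set as the cells of $|K_\leq|_{\ncat{oCpx}}$.

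Next I would write down the comparison map. Using the standard colimit presentation $|\oSing(K_\leq)|=\colim_{(\varphi\colon\Delta^n\to K_\leq)}|\Delta^n|$ and writing $t=(t_0,\dots,t_n)$ for barycentric coordinates on the topological simplex $|\Delta^n|$, define $\Phi$ on the component indexed by $\varphi$ by
\[
\Phi_\varphi\colon|\Delta^n|\longrightarrow|K_\leq|_{\ncat{oCpx}},\qquad \Phi_\varphi(t)_x=\sum_{i\,:\,\varphi(i)=x}t_i.
\]
I would then check the routine points: (i) $\Phi_\varphi(t)$ has nonnegative coordinates summing to $1$ and support contained in the face $\varphi(\{0,\dots,n\})$, hence itself a face, so $\Phi_\varphi(t)$ lies in the set $(\ref{eq realization of complexes})$ of Definition \ref{def realization of simplicial complexes}; (ii) $\Phi_\varphi$ is continuous, since for $\varphi$ the inclusion of a face it is exactly the affine characteristic map of that face, and in general it factors as such a characteristic map precomposed with an affine surjection of simplices; (iii) for any monotone $\theta\colon[m]\to[n]$ one has $\Phi_{\varphi\theta}(t)=\Phi_\varphi(\theta_*t)$, because summing the coordinates of $\theta_*t$ over a fibre of $\varphi$ equals summing the $t_j$ over the corresponding fibre of $\varphi\theta$, so the $\Phi_\varphi$ glue to a continuous map $\Phi\colon|\oSing(K_\leq)|\to|K_\leq|_{\ncat{oCpx}}$; and (iv) $\Phi$ is natural in $K_\leq$, since $\oSing(f)$ acts by postcomposition with $V(f)$ and $|f|_{\ncat{oCpx}}$ is the corresponding ``sum over the fibres of $V(f)$'' map.

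Then I would prove $\Phi$ is a homeomorphism. For bijectivity: given $a\in|K_\leq|_{\ncat{oCpx}}$, its support $\sigma_a=\{x:a_x\neq 0\}$ is a face; listing $\sigma_a$ in increasing order as $x_0<\cdots<x_k$ gives a nondegenerate $k$-simplex $\iota_a\colon\Delta^k\to K_\leq$ and an interior point $u(a)=(a_{x_0},\dots,a_{x_k})$ of $|\Delta^k|$ with $\Phi_{\iota_a}(u(a))=a$; conversely, if $\varphi$ is nondegenerate and $t$ is interior then $\Phi_\varphi(t)$ has support $\varphi(\{0,\dots,n\})$, from which both $\varphi$ and $t$ are recovered. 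So $\Phi$ identifies the pairs (nondegenerate simplex, interior point) with the points of $|K_\leq|_{\ncat{oCpx}}$, which is exactly the set-level description of $|\oSing(K_\leq)|$; hence $\Phi$ is a bijection. For continuity of $\Phi^{-1}$: $|\oSing(K_\leq)|$ carries the final topology with respect to the maps $|\Delta^n|\to|\oSing(K_\leq)|$ of its cells, and for the cell attached to a face $\sigma$ the composite with $\Phi$ is precisely the characteristic map $|\Delta^{\dim\sigma}|\to|K_\leq|_{\ncat{oCpx}}$; since by Definition \ref{def realization of simplicial complexes} the target carries the final topology with respect to exactly these characteristic maps, any subset of $|K_\leq|_{\ncat{oCpx}}$ whose $\Phi$-preimage is open is open, so $\Phi^{-1}$ is continuous.

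As for the main obstacle: there is essentially no deep point, only bookkeeping, and the one thing that really needs care is that the two spaces a priori carry different topologies --- the CW/colimit topology on $|\oSing(K_\leq)|$ versus the final topology of Definition \ref{def realization of simplicial complexes} on $|K_\leq|_{\ncat{oCpx}}$ --- so the crux is to observe that both are colimits over the same poset of faces glued along the same faces, which is what forces $\Phi^{-1}$ to be continuous. It is also worth flagging explicitly that the partial order on $V(K)$ enters only through the monotone enumeration of the vertices of each face, both in the definition of $\oSing$ and in the construction of the inverse, which is precisely why an order is the right amount of extra data to make this comparison canonical and natural.
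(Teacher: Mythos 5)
Your argument is correct. The paper does not prove this statement at all --- it is imported by citation to R\"{u}schoff's notes --- so there is nothing internal to compare against; your write-up is essentially the standard proof one would find there: identify the nondegenerate $n$-simplices of $\oSing(K_{\leq})$ with the $n$-faces of $K$ via the monotone enumeration of vertices, define the affine comparison map on barycentric coordinates, use the unique ``nondegenerate simplex plus interior point'' representation of points of a realization for bijectivity, and play the two final (weak) topologies off against each other, noting that the cell maps of $|\oSing(K_{\leq})|$ compose with $\Phi$ to give exactly the characteristic maps generating the topology of $|K_{\leq}|_{\ncat{oCpx}}$. The only cosmetic discrepancy is direction: the lemma asserts a homeomorphism $|K_{\leq}|_{\ncat{oCpx}} \to |\oSing(K_{\leq})|$ while you build $\Phi$ the other way, but since you show $\Phi$ is a homeomorphism (and your inverse construction via supports is visibly natural as well), this is immaterial.
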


If $K$ is a simplicial complex and $\leq$ is some partial ordering on $V(K)$ making $K_{\leq}$ into an ordered simplicial complex, then by definition we have
\begin{equation*}
    |K|_{\ncat{Cpx}} = |K_{\leq}|_{\ncat{oCpx}}.
\end{equation*}
Hence by assigning an ordering to a simplicial complex we obtain a homeomorphism between its geometric realization as a simplicial complex and as a simplicial set. Let us now turn to subdivision.

\begin{Def} \label{def subdivision}
Given a simplicial complex $K$, let $\Face(K)$ denote the partially ordered set whose underlying set is $K$, the simplices of $K$, and where $ \sigma \leq \tau$ if and only if $\sigma \subseteq \tau$. We call this the \textbf{face poset} of $K$. This defines a functor $\Face : \ncat{Pos} \to \ncat{Cpx}$ from the category of posets to simplicial complexes. 

Conversely, given a poset $P$, let $\Ord(P)$ denote the simplicial complex with $V(\Ord(P)) = P$, and $\{x_0, \dots, x_n \} \in \Ord(P)$ if and only if $x_0 \leq x_1 \leq \dots \leq x_n$. We call this the \textbf{order complex} of $P$. This defines a functor $\Ord : \ncat{Pos} \to \ncat{Cpx}$.

We let $\Sd : \ncat{Cpx} \to \ncat{Cpx}$ denote the composite functor $\Sd = \Ord \circ \Face$. We call this the \textbf{(barycentric) subdivision functor}. Note that for any simplicial complex $K$, $\Sd \, K$ is the simplicial complex with $V(\Sd \, K) = K$ and $\{\sigma_0, \dots, \sigma_n \} \in \Sd \, K$ if and only if $\sigma_0 \subset \sigma_1 \subset \dots \subset \sigma_n$. There is a canonical partial ordering on the vertices $\Sd \, K$ induced by inclusion of faces making $\Sd \, K$ into an ordered simplicial complex. Hence subdivision extends canonically to a functor $\Sd : \ncat{Cpx} \to \ncat{oCpx}$.
\end{Def}

\begin{Rem}
One might be tempted to think that $\Ord$ and $\Face$ are adjoint, but they are not. In fact, $\Face$ does not have either a left or right adjoint, because it does not preserve finite limits or colimits.
\end{Rem}

If $K_\leq$ is an ordered simplicial complex, then there is a map
\begin{equation} \label{eq last vertex map}
    \lambda : \Sd K_{\leq} \to K_{\leq}
\end{equation}
called the \textbf{last vertex map}. A vertex in $\Sd K_\leq$ is a simplex in $K_\leq$. Since $K_\leq$ is ordered, the set of vertices of each simplex is totally ordered. The map $\lambda$ is defined on vertices as follows, if $i_0 < i_1 < \dots < i_n$ are the vertices of a simplex $\sigma \in K_\leq$ then $\lambda(\sigma) =i_n$. Now given a $k$-simplex $\sigma_0 \subset \sigma_1 \subset \dots \subset \sigma_k$ in $\Sd K_\leq$, its image under $\lambda$ lands in $\sigma_k$. Hence $\lambda$ is a map of ordered simplicial complexes.

There is an adjunction
\begin{equation} \label{eq re sing adjunction}
\begin{tikzcd}
	{\ncat{Cpx}} && {\ncat{sSet}}
	\arrow[""{name=0, anchor=center, inner sep=0}, "{{\text{Sing}}}"', shift right=3, from=1-1, to=1-3]
	\arrow[""{name=1, anchor=center, inner sep=0}, "{{\text{Re}}}"', shift right=3, from=1-3, to=1-1]
	\arrow["\dashv"{anchor=center, rotate=-90}, draw=none, from=1, to=0]
\end{tikzcd}
\end{equation}
where if $K$ is a simplicial complex, $\Sing(K)$ is the simplicial set defined degreewise by
\begin{equation} \label{eq sing def}
    \Sing(K)_n = \ncat{Cpx}(\bDelta^n, K)
\end{equation}
where $\bDelta^n$ is the simplicial complex with $V(\bDelta^n) = \{0, 1, \dots, n \}$ and $S(\bDelta^n) = 2^{V(\bDelta^n)}$. We call $\Sing(K)$ the \textbf{singular simplicial set} associated to a simplicial complex $K$. Let $E[n]$ denote the indiscrete category with $n+1$ objects. In other words, this is the category with precisely one morphism between every pair of objects. It is not hard to see that $\Sing(\bDelta^n) = N(E[n])$, where $N$ is the nerve functor $N : \ncat{Cat} \to \ncat{sSet}$.

\begin{Lemma} \label{lem sing is fully faithful}
The functor $\Sing : \ncat{Cpx} \to \ncat{sSet}$ is fully faithful.
\end{Lemma}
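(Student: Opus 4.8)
The plan is to bypass the adjunction and verify full faithfulness directly, by showing that for all simplicial complexes $K$ and $L$ the assignment $g \mapsto \Sing g$ defines a bijection from $\ncat{Cpx}(K,L)$ to $\ncat{sSet}(\Sing K, \Sing L)$. The first step is a concrete description of $\Sing K$: since every subset of $V(\bDelta^n)$ is a face of $\bDelta^n$, a morphism $\bDelta^n \to K$ is the same data as a function $f \colon \{0, 1, \dots, n\} \to V(K)$ whose image is a face of $K$ (closure under inclusion then takes care of the images of the remaining faces of $\bDelta^n$). Hence $\Sing(K)_n$ is the set of all such functions, with a simplicial operator acting by precomposition. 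In particular $\Sing(K)_0 = V(K)$, and the $n+1$ vertex maps $\Delta^0 \to \Delta^n$ induce a map $\Sing(K)_n \to V(K)^{n+1}$, $f \mapsto (f(0), \dots, f(n))$, which is injective: an $n$-simplex of $\Sing K$ is determined by its tuple of vertices.

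The second step treats an arbitrary simplicial set map $\phi \colon \Sing K \to \Sing L$. Naturality of $\phi$ with respect to the vertex maps yields, for each $n$, each $f \in \Sing(K)_n$, and each $i \in \{0, \dots, n\}$, the identity $\phi_n(f)(i) = \phi_0(f(i))$; in other words $\phi_n(f) = \phi_0 \circ f$. Thus $\phi$ is completely determined by the vertex function $\phi_0 \colon V(K) \to V(L)$. The third step checks that $\phi_0$ underlies a morphism of simplicial complexes: given a nonempty face $\sigma \in K$, pick a bijection $f \colon \{0, \dots, n\} \to \sigma$ with $n = |\sigma| - 1$, so $f \in \Sing(K)_n$; then $\phi_0 \circ f = \phi_n(f)$ lies in $\Sing(L)_n$, which by the description above says precisely that the image of $\phi_0 \circ f$, namely $\phi_0(\sigma)$, is a face of $L$. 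So $\phi_0$ sends faces to faces and defines some $g \in \ncat{Cpx}(K, L)$ with $V(g) = \phi_0$; and since $\Sing(g)_n(f) = g \circ f = \phi_0 \circ f = \phi_n(f)$ for every $f$, we get $\Sing g = \phi$. This establishes surjectivity. Injectivity is immediate, since a morphism of simplicial complexes is determined by its vertex function and $V(g) = \Sing(g)_0$. Hence $\Sing$ is fully faithful.

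The only step demanding any care is the first: identifying $\ncat{Cpx}(\bDelta^n, K)$ with all vertex functions whose image is a single face of $K$ — including the non-injective ones — and noticing the resulting injection $\Sing(K)_n \hookrightarrow V(K)^{n+1}$. Everything afterward is a formal consequence of naturality, so I do not expect a genuine obstacle. Alternatively one could argue that the counit $\text{Re}\,\Sing K \to K$ of the adjunction (\ref{eq re sing adjunction}) is an isomorphism, but the direct argument above is shorter and more transparent.
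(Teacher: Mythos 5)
Your proof is correct, but it takes a genuinely different route from the paper. The paper's argument is formal: it writes $\Sing(K)$ as a colimit of representables, pulls the hom out of the colimit, and then invokes the density (CoYoneda) Lemma \ref{lem coyoneda for simplicial complexes} for $\ncat{Cpx}$, i.e.\ $K \cong \ncolim{\bDelta^n \to K} \bDelta^n$, which the appendix establishes via the reflective embedding of $\ncat{Cpx}$ into symmetric sets. You instead verify full faithfulness by hand: an $n$-simplex of $\Sing(K)$ is a vertex function $\{0,\dots,n\}\to V(K)$ whose image is a face, hence is determined by its vertices; naturality against the vertex maps $\Delta^0 \to \Delta^n$ forces any simplicial map $\phi$ to satisfy $\phi_n(f)=\phi_0\circ f$; and applying $\phi_n$ to a bijective parametrization of a face shows $\phi_0$ is a map of complexes realizing $\phi$. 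Your surjectivity and injectivity steps are sound, and the argument is self-contained and more elementary, avoiding the density lemma and the symmetric-set machinery entirely; what the paper's approach buys is brevity once Lemma \ref{lem coyoneda for simplicial complexes} is in place, and a template that applies verbatim to any functor of this ``nerve'' shape induced by a dense family of objects. One small point worth flagging (it affects the paper's proof equally): your identification $\Sing(K)_0 = V(K)$, and indeed the truth of the lemma itself, uses the convention that every vertex spans a face, since a vertex contained in no face would be invisible to $\Sing$; this is implicit in the paper's identification of $\ncat{Cpx}$ with concrete presheaves on $\ncat{FinSet}_\times$, where all constant plots are plots, so your reading is consistent with the paper's conventions.
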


\begin{proof}
Given simplicial complexes $K$ and $L$, we have
\begin{equation*}
    \begin{aligned}
        \ncat{sSet}(\Sing(K), \Sing(L)) & \cong \ncat{sSet}\left(\ncolim{\Delta^n \to \Sing(K)} \, \Delta^n, \Sing(L) \right) \\
        & \cong \lim_{\Delta^n \to \Sing(K)} \ncat{sSet}(\Delta^n, \Sing(L)) \\
        & \cong \lim_{\bDelta^n \to K} \Sing(L)_n \\
        & \cong \lim_{\bDelta^n \to K} \ncat{Cpx}(\bDelta^n, L) \\
        & \cong \ncat{Cpx}\left(\ncolim{\bDelta^n \to K} \, \bDelta^n, L\right) \\
        & \cong \ncat{Cpx}(K, L),
    \end{aligned}
\end{equation*}
where the first isomorphism is the CoYoneda lemma and the final isomorphism follows from Lemma \ref{lem coyoneda for simplicial complexes}.
\end{proof}

The functor $\Sing$ has a left adjoint $\Re$ given by
\begin{equation*}
    \Re(X) = \int^n X_n \times \bDelta^n,
\end{equation*}
where the colimit is taken in $\ncat{Cpx}$. We call this the \textbf{realization functor}. We can equivalently describe this functor as the left Kan extension
\begin{equation*}
    \begin{tikzcd}
	{\mathsf{\Delta}} & {\ncat{Cpx}} \\
	{\ncat{sSet}}
	\arrow["{\bDelta^\bullet}", from=1-1, to=1-2]
	\arrow["y"', hook', from=1-1, to=2-1]
	\arrow["{\Re}"', from=2-1, to=1-2]
\end{tikzcd}
\end{equation*}
where $y : \mathsf{\Delta} \to \ncat{sSet}$ is the Yoneda embedding and $\bDelta^\bullet : \mathsf{\Delta} \to \ncat{Cpx}$ is the cosimplicial simplicial complex $[n] \mapsto \bDelta^n$. Since $y$ is fully faithful, this implies that $\Re(\Delta^n) \cong \bDelta^n$.  

\begin{Prop} \label{prop explicit description of Re}
Given a simplicial set $X$, $\Re(X)$ is the simplicial complex with $V(\Re(X)) = X_0$ and $\{x_0, \dots, x_n \} \in \Re(X)$ if and only if there exists an $n$-simplex $\sigma \in X_n$ with $\sigma(i) = x_i$ for all $0 \leq i \leq n$, where $\sigma(i)$ is the $i$th vertex of $\sigma$.
\end{Prop}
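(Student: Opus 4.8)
The plan is to exploit that $\Re$, being a left adjoint (to $\Sing$), preserves all colimits. Since every simplicial set is the colimit of its simplices, $X \cong \ncolim{\Delta^n \to X} \Delta^n$, and since $\Re(\Delta^n) \cong \bDelta^n$, this gives $\Re(X) \cong \ncolim{\Delta^n \to X} \bDelta^n$, a colimit formed in $\ncat{Cpx}$. Rather than unwind this colimit, I would write down the candidate complex $L$ of the statement and check that it satisfies the universal property characterizing $\Re(X)$; the conclusion then follows from the Yoneda lemma. First one must see that $L$ is a legitimate simplicial complex: set $V(L) = X_0$ and declare a finite subset $T \subseteq X_0$ to be a face of $L$ precisely when $T$ is the vertex set of some simplex of $X$ (equivalently, for a set $T$ of size $n+1$, when $T$ admits a listing $x_0, \dots, x_n$ with $\sigma(i) = x_i$ for some $\sigma \in X_n$). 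Closure under subsets holds since, given $\sigma \in X_n$ with vertex set $T$ and $T' \subseteq T$ of size $k+1$, picking for each element of $T'$ a position at which it occurs in $\sigma$ and ordering these positions as $j_0 < \cdots < j_k$ produces an injective monotone $[k] \hookrightarrow [n]$ along which the restriction of $\sigma$ is a $k$-simplex with vertex set $T'$. The same ``first occurrence'' device shows that \emph{every} simplex of $X$, degenerate or not, has its vertex set among the faces of $L$, so the appearance of a specific dimension in the statement causes no ambiguity.

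Next I would produce a bijection $\ncat{Cpx}(L, K) \cong \ncat{sSet}(X, \Sing(K))$ natural in $K \in \ncat{Cpx}$. Given $g \colon L \to K$, send it to $\widehat{g} \colon X \to \Sing(K)$ defined on $\sigma \in X_n$ by the morphism $\bDelta^n \to K$, $i \mapsto V(g)(\sigma(i))$: this is a morphism of simplicial complexes because for every $S \subseteq [n]$ the set $\{\sigma(i) : i \in S\}$ is the vertex set of a simplex of $X$ (restrict $\sigma$ to $S$), hence a face of $L$, hence maps under $g$ to a face of $K$; and it is compatible with the simplicial operators acting on $\sigma$, so $\widehat{g}$ is a map of simplicial sets. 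Conversely, given $f \colon X \to \Sing(K)$, send it to its vertex component $\widetilde{f} := f_0 \colon X_0 = V(L) \to V(K)$; compatibility of $f$ with the vertex maps $[0] \to [n]$ forces $f_n(\sigma) \colon \bDelta^n \to K$ to send $i \mapsto f_0(\sigma(i))$, so (as $[n]$ itself is a face of $\bDelta^n$) the set $\{f_0(\sigma(0)), \dots, f_0(\sigma(n))\}$ is a face of $K$, which shows $\widetilde{f}$ carries faces of $L$ to faces of $K$ and hence is a morphism $L \to K$. One checks routinely that $g \mapsto \widehat{g}$ and $f \mapsto \widetilde{f}$ are mutually inverse and natural in $K$. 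Composing with the adjunction bijection $\ncat{Cpx}(\Re(X), K) \cong \ncat{sSet}(X, \Sing(K))$ and invoking Yoneda yields an isomorphism $\Re(X) \cong L$ that is the identity on vertices, identifying $\Re(X)$ with the complex described.

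The argument is essentially bookkeeping, and the single point that deserves care is the well-definedness and naturality of $\widehat{g}$: one must check that $i \mapsto V(g)(\sigma(i))$ respects \emph{all} faces of $\bDelta^n$ — not just the top face $[n]$ — and commutes with pulling $\sigma$ back along arbitrary simplicial operators, and both reductions are exactly the closure of $L$ under subsets proved in the first step. An alternative, essentially equivalent route would be to compute the colimit $\ncolim{\Delta^n \to X} \bDelta^n$ in $\ncat{Cpx}$ directly — $\ncat{Cpx}$ is a reflective subcategory of $\ncat{sSet}$ along the fully faithful $\Sing$ of Lemma \ref{lem sing is fully faithful}, so colimits in $\ncat{Cpx}$ are reflections of colimits in $\ncat{sSet}$ — but I expect the universal-property argument above to be cleaner.
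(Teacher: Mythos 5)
Your proof is correct, but it takes a genuinely different route from the paper. The paper argues constructively: it writes $X$ as the union of its skeleta, uses the standard pushout squares attaching nondegenerate simplices along their boundaries, applies $\Re$ (which preserves these colimits), and then analyzes which attachments actually contribute — observing that nondegenerate simplices with repeated vertices add nothing, so only nonsingular nondegenerate simplices matter — before concluding by induction over the skeletal filtration. You instead write down the candidate complex $L$ and verify directly that it represents $K \mapsto \ncat{sSet}(X, \Sing(K))$, so that $\Re(X) \cong L$ by uniqueness of left adjoints; the only real work is the closure of $L$ under subsets via the ``first occurrence'' restriction, which is exactly what makes $\widehat{g}$ well defined and natural, and which also resolves the mild ambiguity in the statement when the listed vertices repeat. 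Your representability argument is cleaner and avoids the paper's somewhat delicate bookkeeping about singular versus nonsingular nondegenerate simplices in the pushout analysis (the step the paper dispatches with ``a simple induction argument''); what the paper's approach buys in exchange is an explicit skeleton-by-skeleton cell-attachment description of $\Re(X)$, which is closer in spirit to the cell-complex arguments used later for cofibrations. Your closing aside is also consistent with the paper: since $\Sing$ is fully faithful with left adjoint $\Re$, computing $\ncolim{\Delta^n \to X}\, \bDelta^n$ in $\ncat{Cpx}$ as the reflection of the corresponding colimit in $\ncat{sSet}$ is legitimate, though, as you say, the universal-property argument is tidier.
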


\begin{proof}
First note that every simplicial set $X$ can be written as the union of its skeletons $X = \cup_{n \geq 0} \text{sk}_n(X)$. Each skeleton is given by a pushout along the previous skeleton
\begin{equation*}
    \begin{tikzcd}
	{\sum_{\sigma \in X^\text{nd}_n} \partial \Delta^n} & {\text{sk}_{n-1}(X)} \\
	{\sum_{\sigma \in X^{\text{nd}}_n} \Delta^n} & {\text{sk}_n(X)}
	\arrow[from=1-1, to=1-2]
	\arrow[hook, from=1-1, to=2-1]
	\arrow[hook, from=1-2, to=2-2]
	\arrow[from=2-1, to=2-2]
	\arrow["\lrcorner"{anchor=center, pos=0.125, rotate=180}, draw=none, from=2-2, to=1-1]
\end{tikzcd}
\end{equation*}
see \cite[Section 20]{rezk2022introduction} for more details.

We know that $\Re$ is a left adjoint, and so we obtain the pushout
\begin{equation*}
    \begin{tikzcd}
	{\sum_{\sigma \in X^\text{nd}_n} \partial \u{\Delta^n}} & {\Re(\text{sk}_{n-1}(X))} \\
	{\sum_{\sigma \in X^{\text{nd}}_n} \bDelta^n} & {\Re(\text{sk}_n(X))}
	\arrow[from=1-1, to=1-2]
	\arrow[hook, from=1-1, to=2-1]
	\arrow[hook, from=1-2, to=2-2]
	\arrow[from=2-1, to=2-2]
	\arrow["\lrcorner"{anchor=center, pos=0.125, rotate=180}, draw=none, from=2-2, to=1-1]
\end{tikzcd}
\end{equation*}
in $\ncat{Cpx}$, where $\partial\u{\Delta}^n$ is the simplicial complex obtained\footnote{We can see that $\Re(\partial \Delta^n) \cong \partial \u{\Delta}^n$ by using the characterization of $\partial \Delta^n$ as a coequalizer of simplices, see \cite[Section I.3]{goerss2009simplicial}.} by deleting the unique facet in $\bDelta^n$ and $X^{\text{nd}}_n$ is the set of nondegenerate $n$-simplices of $X$. Note that if $\sigma \in X^{\text{nd}}_n$ has any repeated vertices, then the map $\partial \u{\Delta}^n \to \Re(\text{sk}_{n-1}(X))$ corresponding to $\sigma$ will collapse down to a map $\tau : \bDelta^m \to \Re(\text{sk}_{n-1}(X))$ for $m < n$. But this is only possible if there is already a simplex in $\Re(\text{sk}_{n-1}(X))$ with the same vertices as the image of $\tau$. In other words, if $\sigma \in X^{\text{nd}}_n$ has any repeated vertices, then there must exist some other $\sigma' \in \text{sk}_{n-1}(X)^\text{nd}$ such that $\Re(\sigma) = \Re(\sigma')$. Hence for simplices with repeated vertices, the pushout adds nothing to the $n$-skeleton. Thus we can replace all of the above pushouts with
\begin{equation*}
    \begin{tikzcd}
	{\sum_{\sigma \in NSX^\text{nd}_n} \partial \u{\Delta}^n} & {\Re(\text{sk}_{n-1}(X))} \\
	{\sum_{\sigma \in NSX^{\text{nd}}_n} \bDelta^n} & {\Re(\text{sk}_n(X))}
	\arrow[from=1-1, to=1-2]
	\arrow[hook, from=1-1, to=2-1]
	\arrow[hook, from=1-2, to=2-2]
	\arrow[from=2-1, to=2-2]
	\arrow["\lrcorner"{anchor=center, pos=0.125, rotate=180}, draw=none, from=2-2, to=1-1]
\end{tikzcd}
\end{equation*}
where $NSX \subseteq X$ is the sub-simplicial set consisting of nonsingular simplices, i.e. those simplices with distinct vertices.

Now if $\{x_0, \dots, x_n \}$ is a collection of vertices, then the only way that it can be filled in with a simplex is if there is a nondegenerate, nonsingular simplex $\sigma \in NSX^{\text{nd}}_n$ whose boundary is already filled in. Using a simple induction argument, the result follows.
\end{proof}

\begin{Rem}
Note that $\Re$ only ``sees'' the nondegenerate simplices of $X$, as degenerate simplices always have repeated vertices. Furthermore, given a map $f : X \to Y$ of simplicial sets, $\Re(f) : \Re(X) \to \Re(Y)$ is well-defined, because if $x \in X_n$ is nonsingular and nondegenerate, but $f(x)$ is degenerate, then there exists a unique (possibly singular) non-degenerate simplex $y \in Y$ such that $f(x) = \sigma(y)$, by the Eilenberg-Zilber Lemma \cite[Proposition 19.14]{rezk2022introduction} where $\sigma$ is a composition of degeneracy operators. Furthermore if $y$ is singular, then $\Re(y)$ is identified with the image $\Re(z)$ of some lower-dimensional nonsingular, nondegenerate simplex $z$ and $\Re(f)$ maps $x \in \Re(X)$ to $z$.
\end{Rem}

\begin{Rem}
Note that $\Re$ can drastically change the homotopy type of a simplicial set. For instance, consider the simplicial set $X = \Delta^1/\partial \Delta^1$, which has one vertex and one nondegenerate 1-simplex. Then $|X| \simeq S^1$, but $\Re(X) \cong \bDelta^0$, and hence $|\Re(X)| \cong *$ where $*$ denotes a point.
\end{Rem}

\begin{Rem}
Sadly, $\Re$ does not preserve finite products. Indeed $\Re(\Delta^1 \times \Delta^1)$ is a simplicially subdivided square, and $\Re(\Delta^1) \times \Re(\Delta^1) \cong \bDelta^1 \times \bDelta^1 \cong \bDelta^3$.
\end{Rem}

\begin{Lemma}
Given an ordered simplicial complex $K_{\leq}$, with underlying simplicial complex $K = U(K_\leq)$, there is an isomorphism
\begin{equation} \label{eq realization on simp complexes}
\Re(\oSing(K_{\leq})) \xrightarrow{\cong} K.
\end{equation}
Furthermore for any simplicial complex $K$, the counit
\begin{equation*}
\varepsilon_K : \Re(\Sing(K)) \xrightarrow{\cong} K
\end{equation*}
is an isomorphism.
\end{Lemma}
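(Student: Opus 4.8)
The plan is to compute both realizations directly from the explicit description of $\Re$ in Proposition \ref{prop explicit description of Re}. That proposition already records the crucial fact that $\Re(X)$ only ``sees'' simplices with distinct vertices: its vertex set is $X_0$, and a finite set of vertices is a face of $\Re(X)$ exactly when it is the vertex set $\{\sigma(0), \dots, \sigma(n)\}$ of some $\sigma \in X_n$. Since $\Sing$ and $\oSing$ are both defined by mapping the standard (ordered) simplices in, the comparison maps will be the identity on vertices and a bijection on faces, hence isomorphisms.

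For the counit, recall $\Sing(K)_0 = \ncat{Cpx}(\bDelta^0, K) = V(K)$, and unwinding the definition of the counit shows that $\varepsilon_K$ is this identification on vertices. A simplicial map $\sigma : \bDelta^n \to K$ has vertex set $\{\sigma(0), \dots, \sigma(n)\}$, which is a face of $K$; and conversely, because a simplicial complex is closed under subsets, any enumeration $i \mapsto x_i$ of a face $\{x_0, \dots, x_n\} \in K$ defines such a $\sigma$ (every subface of $\bDelta^n$ is then sent into $K$). So by Proposition \ref{prop explicit description of Re} the faces of $\Re(\Sing K)$ are exactly the faces of $K$, and $\varepsilon_K$ is an isomorphism.

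For the first statement I first produce the comparison map. Forgetting orders, an order-preserving simplicial map $\Delta^n \to K_\leq$ is in particular a simplicial map $\bDelta^n \to U(K_\leq)$, giving a monomorphism $\oSing(K_\leq) \hookrightarrow \Sing(U(K_\leq))$ of simplicial sets; applying $\Re$ and composing with $\varepsilon_{U(K_\leq)}$ gives a map $\Re(\oSing(K_\leq)) \to K$, natural in $K_\leq$. Again $\oSing(K_\leq)_0 = \ncat{oCpx}(\Delta^0, K_\leq) = V(K)$ and the map is the identity there. If $\{x_0, \dots, x_n\}$ is a face of $K$, then by the definition of an ordered simplicial complex $\leq$ restricts to a total order on it, and enumerating the $x_i$ compatibly produces an order-preserving simplicial map $\Delta^n \to K_\leq$ with that vertex set; conversely the vertex set of any such map is a face of $K$. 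By Proposition \ref{prop explicit description of Re} once more, the faces of $\Re(\oSing(K_\leq))$ coincide with those of $K$, so the comparison map is an isomorphism.

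The argument is essentially bookkeeping, and the one thing to keep in mind is that we really do need Proposition \ref{prop explicit description of Re}: a priori $\Sing(K)$ and $\oSing(K_\leq)$ have far more simplices than $K$ has faces (all the degenerate and singular ones), and it is only after passing through $\Re$ -- which discards exactly those -- that the two sides become isomorphic. The remaining inputs are the defining properties of (ordered) simplicial complexes: closure under subsets, and totality of the order on each face.
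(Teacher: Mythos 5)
Your proof is correct. For the first isomorphism you do exactly what the paper does: read off the vertices and faces of $\Re(\oSing(K_\leq))$ from Proposition \ref{prop explicit description of Re} and match them with those of $K$, using that $\leq$ is total on each face to produce the order-preserving enumerations (the paper simply declares this "immediate" from that proposition). For the counit, however, the paper takes a different, more formal route: it cites Lemma \ref{lem sing is fully faithful} ($\Sing$ is fully faithful, proved by a coYoneda computation), and the counit of an adjunction with fully faithful right adjoint is automatically an isomorphism. You instead verify the counit directly, again via Proposition \ref{prop explicit description of Re}, checking it is the identity on vertices and a bijection on faces (closure of $K$ under subsets giving the converse direction). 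Both work: the paper's argument is a one-liner given the lemma already in hand, while yours is self-contained, makes the combinatorics explicit, and in effect re-derives the counit-isomorphism consequence of full faithfulness without invoking it. Your closing remark that the content lies in $\Re$ discarding degenerate and singular simplices is exactly the right point to emphasize.
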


\begin{proof}
The first claim is immediate from Proposition \ref{prop explicit description of Re} and the second is by Lemma \ref{lem sing is fully faithful}.
\end{proof}

\begin{Lemma}[{\cite[Proposition 3.52]{ruschoff2017lecture}}] \label{lem realization preserves subdivision}
Given an ordered simplicial complex $K_{\leq}$, with underlying simplicial complex $K = U(K_\leq)$, there is a natural isomorphism
\begin{equation*}
\sd \, \oSing(K_{\leq}) \cong \oSing( \Sd \, K),
\end{equation*}
where $\Sd \, K$ is considered as an ordered simplicial complex as in Definition \ref{def subdivision}.
\end{Lemma}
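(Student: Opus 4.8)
The plan is to exhibit both sides as colimits, over the category of simplices of $\oSing(K_\leq)$, of the subdivided simplices $\sd\Delta^m$, and then to check that these colimits agree. The key starting observation is that on representables $\sd$ already agrees with $\oSing\circ\Sd$: for each $[m]$ the simplicial set $\sd\Delta^m$ is the nerve $N(P_m)$ of the poset $P_m$ of nonempty subsets of $\{0,\dots,m\}$ ordered by inclusion, while $\Sd\bDelta^m = \Ord(\Face(\bDelta^m)) = \Ord(P_m)$; and for any poset $P$ one has $\oSing(\Ord(P)) \cong N(P)$, since an $n$-simplex on either side is precisely a chain $x_0 \leq \cdots \leq x_n$ in $P$. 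Hence $\sd\Delta^m \cong \oSing(\Sd\bDelta^m)$, naturally in $[m] \in \mathsf{\Delta}$, with $\Sd\bDelta^m$ carrying its canonical ordering.

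Since $\sd$ is a left adjoint it preserves all colimits, and every simplicial set is the canonical colimit of its simplices. Applying this to $\oSing(K_\leq)$, and using the bijection $\ncat{sSet}(\Delta^m, \oSing(K_\leq)) \cong \ncat{oCpx}(\Delta^m, K_\leq)$ coming from the Yoneda lemma and Definition \ref{def osing}, we obtain
\begin{equation*}
    \sd\,\oSing(K_\leq) \;\cong\; \ncolim{\Delta^m \to \oSing(K_\leq)} \sd\Delta^m \;\cong\; \ncolim{\Delta^m \to K_\leq} \oSing(\Sd\bDelta^m),
\end{equation*}
where the last colimit is indexed by the comma category $\mathsf{\Delta} \downarrow K_\leq$ along the functor $[m] \mapsto \Delta^m$ into $\ncat{oCpx}$. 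On the other hand, arguing as in Lemma \ref{lem coyoneda for simplicial complexes} (in its ordered form) one sees that $K_\leq$ is the colimit of its ordered simplices, and that $\Sd$ carries this presentation to a presentation $\Sd K \cong \ncolim{\Delta^m \to K_\leq} \Sd\bDelta^m$ in $\ncat{oCpx}$; indeed on either side a vertex is a face of $K$ and a simplex is a chain of faces of $K$. Thus the lemma is reduced to the statement that $\oSing$ sends this last colimit to a colimit, that is, that the canonical comparison map $\ncolim{\Delta^m \to K_\leq} \oSing(\Sd\bDelta^m) \to \oSing(\Sd K)$ is an isomorphism.

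This reduction step is where the work lies, and it is the main obstacle. Since colimits of simplicial sets are formed degreewise, it is enough to show that every $n$-simplex of $\oSing(\Sd K)$, that is, every chain $\sigma_0 \subseteq \cdots \subseteq \sigma_n$ of faces of $K$, factors through $\oSing(\Sd\bDelta^m)$ for some $\Delta^m \to K_\leq$, and that any two such factorizations become identified in the colimit. For the first point, take $m = \dim\sigma_n$ and let $\Delta^m \to K_\leq$ classify $\sigma_n$ with its induced order: the whole chain lies in the subdivision of the closure of $\sigma_n$, which is exactly $\Sd\bDelta^m$. For the second point one reduces an arbitrary factorization to the one supported on the closure of $\sigma_n$, using that $\oSing(K_\leq)$ is nonsingular --- its nondegenerate simplices are the simplices of $K$ equipped with their induced orders, and each inclusion between them is realized by a unique face map --- as in \cite[Proposition 3.52]{ruschoff2017lecture}. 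Naturality in $K_\leq$ is automatic, since every identification above is induced by the functoriality of $\sd$, $\oSing$, and $\Sd$.
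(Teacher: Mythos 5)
The paper does not prove this lemma at all: it is imported verbatim from R\"uschoff (the bracketed citation \cite[Proposition 3.52]{ruschoff2017lecture} is the ``proof''), so there is no internal argument to compare against, and what you have written is a self-contained derivation of the cited fact. Your argument is essentially correct. The identification $\sd\Delta^m \cong N(P_m) \cong \oSing(\Sd\,\bDelta^m)$, naturally in $[m]$, is right (and note it already shows $\oSing(\Sd K)\cong N(\Face(K))$ for any $K$, which is the heart of the matter); writing both sides as colimits over the category of elements of $\oSing(K_\leq)$ and reducing to the degreewise comparison map is the standard way to globalize it, and your surjectivity argument (factor any chain $\sigma_0\subseteq\cdots\subseteq\sigma_n$ through the classifying map of $\sigma_n$) is fine. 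The only place where your write-up is thinner than it should be is the injectivity step. As stated, you appeal to nondegenerate simplices and face maps, but an arbitrary element of the coproduct lives over an arbitrary map $f:\Delta^m\to K_\leq$, which may fail to be injective on vertices; to identify it with the canonical representative you need two moves in the index category, not one: first restrict along the face map picking out the top set $\tau_n\subseteq[m]$ of the chain, and then push forward along the (Eilenberg--Zilber type) order-preserving surjection $[k]\to[d]$ through which $f$ factors via the nondegenerate simplex classifying the face $f(\tau_n)=\sigma_n$. Since $g:\Delta^d\to K_\leq$ is injective on vertices, the resulting chain in $\Sd\,\bDelta^d$ is forced to be $(g^{-1}(\sigma_0)\subseteq\cdots\subseteq g^{-1}(\sigma_n))$, so every preimage of a given chain is identified with the same canonical element and injectivity follows. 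This is a routine completion rather than a genuine gap, and naturality in $K_\leq$ is indeed automatic as you say.
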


\begin{Cor} \label{cor realization of subdivision of horns, simplices, boundaries}
Given any ordered simplicial complex $K_{\leq}$, there is an isomorphism
\begin{equation*}
    \Re(\sd \,  \oSing(K_\leq)) \xrightarrow{\cong} \Sd K.
\end{equation*}
In particular, if we let $\u{\Lambda}^n_k = \Re( \Lambda^n_k)$, then we have
\begin{equation*}
    \Re( \sd \, \Lambda^n_k) \cong \Sd \, \u{\Lambda}^n_k, \qquad \Re(\sd \, \Delta^n) \cong \Sd \, \bDelta^n, \qquad \Re(\sd \, \partial \Delta^n) \cong \Sd \, \partial \bDelta^n,
\end{equation*}
for $n \geq 0$ and $1 \leq k \leq n$.
\end{Cor}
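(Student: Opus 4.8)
The plan is to read off the general isomorphism directly from Lemma~\ref{lem realization preserves subdivision} together with the counit isomorphism~(\ref{eq realization on simp complexes}), and then to obtain the three displayed special cases by exhibiting the standard horn, simplex, and boundary as $\oSing$ of suitable ordered simplicial complexes.

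First, for an arbitrary ordered simplicial complex $K_{\leq}$ with underlying complex $K = U(K_{\leq})$, Lemma~\ref{lem realization preserves subdivision} supplies a natural isomorphism $\sd\,\oSing(K_{\leq}) \cong \oSing(\Sd\,K)$ of simplicial sets, where $\Sd\,K$ carries the inclusion ordering from Definition~\ref{def subdivision}. Applying $\Re$ and invoking~(\ref{eq realization on simp complexes}), which says $\Re(\oSing(L_{\leq})) \cong U(L_{\leq})$ for every ordered simplicial complex $L_{\leq}$, with $L_{\leq} = \Sd\,K$ gives
\[
\Re(\sd\,\oSing(K_{\leq})) \;\cong\; \Re(\oSing(\Sd\,K)) \;\cong\; \Sd\,K ,
\]
and this composite is natural in $K_{\leq}$ since both cited isomorphisms are.

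For the three special cases, let $\Delta^n_{\leq}$ denote the ordered simplicial complex on the totally ordered vertex set $\{0 < \cdots < n\}$ in which every subset is a simplex, i.e.\ the object $\Delta^n$ of Definition~\ref{def osing}. Unwinding definitions, $\oSing(\Delta^n_{\leq})_m = \ncat{oCpx}(\Delta^m_{\leq}, \Delta^n_{\leq})$ is the set of order-preserving maps $[m] \to [n]$, so $\oSing(\Delta^n_{\leq}) \cong \Delta^n$ as simplicial sets. Taking $\Lambda^n_{k,\leq}$ (resp.\ $\partial\Delta^n_{\leq}$) to be the subcomplex of $\Delta^n_{\leq}$ whose simplices are the subsets of $\{0,\dots,n\}$ other than $\{0,\dots,n\}$ and $\{0,\dots,n\}\setminus\{k\}$ (resp.\ other than $\{0,\dots,n\}$ itself), the same unwinding gives $\oSing(\Lambda^n_{k,\leq}) \cong \Lambda^n_k$ and $\oSing(\partial\Delta^n_{\leq}) \cong \partial\Delta^n$. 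Then~(\ref{eq realization on simp complexes}) identifies $\bDelta^n = \Re(\Delta^n) \cong U(\Delta^n_{\leq})$, $\mathbf{\Lambda}^n_k = \Re(\Lambda^n_k) \cong U(\Lambda^n_{k,\leq})$, and $\partial\bDelta^n = \Re(\partial\Delta^n) \cong U(\partial\Delta^n_{\leq})$, so substituting $K_{\leq} = \Delta^n_{\leq}$, $\Lambda^n_{k,\leq}$, $\partial\Delta^n_{\leq}$ into the general isomorphism produces exactly $\Re(\sd\,\Delta^n) \cong \Sd\,\bDelta^n$, $\Re(\sd\,\Lambda^n_k) \cong \Sd\,\mathbf{\Lambda}^n_k$, and $\Re(\sd\,\partial\Delta^n) \cong \Sd\,\partial\bDelta^n$.

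There is no genuine obstacle here: the content is entirely formal once Lemma~\ref{lem realization preserves subdivision} is available. The only points needing care are the elementary combinatorial checks that $\oSing(\Lambda^n_{k,\leq}) \cong \Lambda^n_k$ and $\oSing(\partial\Delta^n_{\leq}) \cong \partial\Delta^n$ — matching an order-preserving map $[m] \to [n]$ whose image omits some vertex $\neq k$ (resp.\ some vertex) with an $m$-simplex of the horn (resp.\ boundary), and checking degeneracies agree — together with keeping track of whether $\Sd$ is being read in $\ncat{Cpx}$ or $\ncat{oCpx}$, which is harmless since $\Re$ lands in $\ncat{Cpx}$ and the two readings agree after applying $U$.
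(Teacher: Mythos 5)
Your argument is correct and is essentially the proof the paper intends: the corollary is stated without proof precisely because it follows by applying $\Re$ to the isomorphism of Lemma \ref{lem realization preserves subdivision} and invoking $\Re\,\oSing(L_\leq)\cong U(L_\leq)$ from (\ref{eq realization on simp complexes}), then specializing to the ordered simplex, horn, and boundary complexes. Your extra care in checking $\oSing$ of these ordered complexes recovers $\Delta^n$, $\Lambda^n_k$, $\partial\Delta^n$ is exactly the routine verification the paper leaves implicit.
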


The following result shows that $\Sing$ behaves correctly with respect to geometric realization.

\begin{Prop}[{\cite{camarena2019turning}}] \label{prop hoequiv of realizations}
Given a simplicial complex $K$, there is a natural homotopy equivalence
\begin{equation*}
    |K|_{\ncat{Cpx}} \xrightarrow{\simeq} |\Sing(K)|.
\end{equation*}
\end{Prop}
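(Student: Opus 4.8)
The plan is to produce a natural \emph{weak} homotopy equivalence and then invoke Whitehead's theorem, since $|K|_{\ncat{Cpx}}$ and $|\Sing(K)|$ are both CW complexes — the first with one cell per face of $K$, the second with one cell per nondegenerate simplex of $\Sing(K)$. The natural map I would use goes $p_K\colon |\Sing(K)| \to |K|_{\ncat{Cpx}}$: send the class of a pair $\big((x_0,\dots,x_n),(t_0,\dots,t_n)\big)$, with $(x_0,\dots,x_n)\in\Sing(K)_n = \ncat{Cpx}(\bDelta^n,K)$ and $(t_0,\dots,t_n)\in\Delta^n$, to $\sum_{i=0}^n t_i e_{x_i}$; this lies in $|K|_{\ncat{Cpx}}$ because $\{x_0,\dots,x_n\}$ is a face of $K$, it respects the face and degeneracy identifications, and it is visibly natural in $K$. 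Once $p_K$ is a weak equivalence it is a homotopy equivalence by Whitehead, and a homotopy inverse — natural after fixing a well-order on each vertex set, or simply replaced by recording $p_K$ itself — gives the map in the statement. Moreover we may assume $K$ is finite: $\Sing$ preserves filtered colimits since $\bDelta^n$ is finite, $|-|$ preserves all colimits, $|-|_{\ncat{Cpx}}$ sends the filtered union $K = \bigcup_\alpha K_\alpha$ over finite subcomplexes to the colimit of the $|K_\alpha|_{\ncat{Cpx}}$ along closed inclusions, and homotopy groups commute with such colimits, with $p_K = \colim_\alpha p_{K_\alpha}$.

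For the core I would compare $p_K$ against the good open cover of $|K|_{\ncat{Cpx}}$ by the open stars $U_x = \{\,a : a_x > 0\,\}$ of the vertices $x \in V(K)$: each $U_x$ is contractible, a finite intersection $\bigcap_{x\in S}U_x$ is empty unless $S$ is a face of $K$ and contractible when it is, and the nerve of $\{U_x\}_{x\in V(K)}$ is $K$ itself. Pulling the cover back along $p_K$, set $V_x := p_K^{-1}(U_x)$. The crucial claim is that each $V_x$ deformation retracts onto the vertex $x$ — by linearly driving the barycentric coordinates of a representative onto the set of indices $i$ with $x_i = x$, after which the class reduces to the degenerate simplex on $x$ — and, more generally, that $\bigcap_{x\in S}V_x = p_K^{-1}\!\big(\bigcap_{x\in S}U_x\big)$ is contractible whenever $S$ is a face of $K$ (and empty otherwise), via the analogous retraction onto the indices whose vertices lie in $S$. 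Granting this, $p_K$ restricts to a weak equivalence over every member of the cover and over all of their finite intersections, so by the gluing lemma for open covers — equivalently, the identification of $|\Sing(K)|$ and $|K|_{\ncat{Cpx}}$ as the homotopy colimit of these pieces over the common nerve, compatibly under $p_K$ — the map $p_K$ is a weak homotopy equivalence. A variant that also produces a natural weak equivalence: fix a linear order $\leq$ on $V(K)$, use the homeomorphism $|\oSing(K_\leq)| \cong |K|_{\ncat{Cpx}}$ of the excerpt, and prove the inclusion of simplicial sets $\oSing(K_\leq) \hookrightarrow \Sing(K)$ is a weak equivalence; this reduces to essentially the same computation.

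The main obstacle is the contractibility of the preimages $\bigcap_{x\in S}V_x$: one must verify that the proposed coordinate-collapsing deformation retractions are well defined modulo the simplicial identifications gluing $|\Sing(K)|$ together, are jointly continuous, and are mutually compatible enough to feed into the gluing lemma — i.e. that $p_K$ is genuinely a map between homotopy colimits over one and the same diagram. The construction of $p_K$, the reduction to finite $K$, and the standard properties of the open-star cover are all routine by comparison.
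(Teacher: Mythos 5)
Your overall strategy (construct the natural counit-style map $p_K\colon |\Sing(K)|\to |K|_{\ncat{Cpx}}$, reduce to finite $K$, compare against the open-star cover and glue) is a legitimate route, and it is genuinely different from what the paper does: the paper does not prove this proposition from scratch but quotes Camarena's stronger simplicial-level result (cf.\ Lemma \ref{lem forgetting ordering on sing is weak equiv}), namely that $\oSing(K_\leq)\to\Sing(K)$ is a weak equivalence, and then realizes. However, as written your argument has a genuine gap, and you have named it yourself: the contractibility of the preimages $\bigcap_{x\in S}V_x=p_K^{-1}\bigl(\bigcap_{x\in S}U_x\bigr)$ is asserted, not proved. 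The coordinate-collapsing retraction you describe does work for a single vertex (its endpoint is the degenerate simplex on $x$, hence the point $x$), but for $|S|\geq 2$ its endpoint is \emph{not} a point: it lands in the subspace of $|\Sing(\bDelta^S)|\cong |N(E[S])|$ consisting of classes whose total barycentric weight at every vertex of $S$ is positive. So the retraction only reduces the claim to the contractibility of that locus inside the infinite-dimensional complex $|N(E[S])|$, which is itself nontrivial: already for an edge $S=\{x,y\}$ one has $|N(E[1])|\cong S^\infty$ and the locus in question is $S^\infty$ minus the two vertices, whose contractibility requires its own (colimit/cellular) argument and is nowhere supplied. In addition, the well-definedness of the retraction modulo the face and degeneracy identifications and its continuity, which you flag, do go through but are unverified, and the gluing step needs a citation of a theorem detecting weak equivalences over arbitrary open covers (Dugger--Isaksen or May), not just the bare ``gluing lemma.''

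Two smaller points. First, the direction and naturality: your natural map is $p_K\colon |\Sing(K)|\to|K|_{\ncat{Cpx}}$, whereas the statement asserts a natural equivalence $|K|_{\ncat{Cpx}}\to|\Sing(K)|$; a homotopy inverse produced by Whitehead is not natural, and fixing orderings only gives a map natural with respect to order-preserving morphisms, so this needs to be addressed rather than waved at. Second, your proposed ``variant'' --- proving that $\oSing(K_\leq)\hookrightarrow\Sing(K)$ is a weak equivalence --- is not ``essentially the same computation''; it is exactly the nontrivial content of Camarena's theorem that the paper is citing, so invoking it without proof would make the argument circular relative to what the proposition is asking you to establish.
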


\begin{Rem}
See \cite{ramras2022simplicial} for an alternate proof of the above result.
\end{Rem}

In fact, Antol\'{i}n-Camarena in \cite{camarena2019turning} proves a stronger statement than the above Proposition, given by the following result.

\begin{Lemma} \label{lem forgetting ordering on sing is weak equiv}
Given an ordered simplicial complex $K_\leq$ with underlying simplicial complex $K = U(K_{\leq})$, the map
\begin{equation*}
   \varphi_{K_\leq}:  \oSing(K_\leq) \to \Sing(K)
\end{equation*}
natural in $K_\leq$, defined on an oriented simplex $\bDelta^n \to K_\leq$ by just forgetting the orientation, is a weak equivalence of simplicial sets.
\end{Lemma}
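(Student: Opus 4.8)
The plan is to induct on the dimension of $K$, building it up along its skeletal filtration and comparing $\oSing$ and $\Sing$ on each attaching step. Write $K^{(d)}$ for the $d$-skeleton of $K$ (all faces of dimension $\le d$), equipped with the order restricted from $K_\le$, and note that $\varphi$ is simply induced by the forgetful functor $U : \ncat{oCpx} \to \ncat{Cpx}$ together with $U\Delta^n = \bDelta^n$, so it is natural with respect to the skeletal inclusions. For the base case $d = 0$, both $\oSing(K^{(0)}_\le)$ and $\Sing(K^{(0)})$ are the constant simplicial set on $V(K)$ and $\varphi$ is the identity. For $d \ge 1$, the complex $K^{(d)}$ is the pushout in $\ncat{Cpx}$
\begin{equation*}
    K^{(d)} = K^{(d-1)} \;\cup_{\,\coprod_{\dim \sigma = d} \partial\bDelta^{d}}\; \coprod_{\dim \sigma = d} \bDelta^{d},
\end{equation*}
the coproducts ranging over the $d$-dimensional faces $\sigma$, with attaching maps the inclusions of their boundaries; likewise $K^{(d)}_\le$ is the analogous pushout in $\ncat{oCpx}$ with ordered simplices $\partial\Delta^d \hookrightarrow \Delta^d$.

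The crux is that, although $\oSing$ and $\Sing$ are right adjoints (and so do not preserve colimits in general), they do carry these particular pushout squares to pushout squares of simplicial sets. Both preserve monomorphisms (being right adjoints) and send a coproduct to a coproduct on $n$-simplices (any map out of the connected complex $\bDelta^n$ lands in a single summand), so it suffices to check on $n$-simplices: a map $\bDelta^n \to K^{(d)}$ has image a single face of $K^{(d)}$, which is either a face of $K^{(d-1)}$ — in which case it factors uniquely through $K^{(d-1)}$ — or a $d$-face $\sigma$, in which case it factors uniquely through the summand $\bDelta^d$ indexed by $\sigma$ and, being surjective onto the vertices of $\sigma$, does not factor through $\partial\bDelta^d$. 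This matches exactly the description of the $n$-simplices of the pushout, and the identical bookkeeping with ``order-preserving'' inserted throughout handles $\oSing$ (here one uses $\oSing(\Delta^m) = \Delta^m$ and $\oSing(\partial\Delta^m) = \partial\Delta^m$, immediate from the definition).

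Since every object of $\ncat{sSet}$ is cofibrant and the right-hand leg of each square is a monomorphism, both squares are homotopy pushout squares, and $\varphi$ provides a map between them. By the inductive hypothesis $\varphi$ is a weak equivalence on $K^{(d-1)}$ (dimension $\le d-1$) and on $\coprod \partial\bDelta^{d}$ (dimension $d-1$); on $\coprod \bDelta^{d}$ it is a weak equivalence because $\oSing(\Delta^d) = \Delta^d$ and $\Sing(\bDelta^d) = N(E[d])$ are both weakly contractible (the indiscrete category $E[d]$ is equivalent to the terminal category), so any map between them is a weak equivalence. The gluing lemma for homotopy pushouts then shows $\varphi_{K^{(d)}_\le}$ is a weak equivalence, completing the induction for finite-dimensional complexes. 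For an arbitrary simplicial complex one has $K = \colim_d K^{(d)}$ along monomorphisms; since every face of $K$ is finite, $\Sing$ and $\oSing$ commute with this filtered colimit, and filtered colimits of weak equivalences along monomorphisms of simplicial sets are weak equivalences, so $\varphi_{K_\le}$ is a weak equivalence in general.

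The step I expect to be the main obstacle is the verification that $\oSing$ and $\Sing$ send the skeletal pushout squares to pushout squares of simplicial sets: this is not formal, since neither functor is a left adjoint, and it rests on the concrete fact that a simplicial-complex map out of $\bDelta^n$ (respectively an ordered map out of $\Delta^n$) is completely pinned down by its image face. The remaining ingredients — weak contractibility of $N(E[d])$, the homotopy-pushout and gluing machinery in $\ncat{sSet}$, and the filtered-colimit step — are routine. (Alternatively, one may simply cite \cite{camarena2019turning}.)
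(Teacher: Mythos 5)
Your proof is correct, but it takes a genuinely different route from the paper: there the entire proof consists of identifying $\varphi_{K_{\leq}}$ with the canonical map $\Delta\otimes_{\bDelta}\oSing(K_{\leq})\to E\otimes_{\bDelta}\oSing(K_{\leq})$ and citing \cite{camarena2019turning}, where that map is shown to be a weak equivalence, whereas you give a self-contained skeletal induction. Your identification of the crux is accurate: the non-formal point is that $\Sing$ and $\oSing$, although right adjoints, send the skeleton-attaching pushouts to pushouts of simplicial sets, and your verification (a map out of $\bDelta^n$, resp.\ an order-preserving map out of $\Delta^n$, is determined by its image face; a simplex whose image is a $d$-face factors uniquely through the corresponding summand and not through its boundary; all other simplices come from the $(d-1)$-skeleton) is the right one. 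One precision worth making: the leg that is a monomorphism after applying $\Sing$ or $\oSing$ --- and hence the one exhibiting the square as a homotopy pushout --- is $\coprod\oSing(\partial\Delta^d)\hookrightarrow\coprod\oSing(\Delta^d)$ (and its $\Sing$ analogue), not the attaching leg into the $(d-1)$-skeleton, which generally fails to be a monomorphism because distinct $d$-faces share boundary faces. The remaining ingredients (weak contractibility of $\Delta^d$ and of $\Sing(\bDelta^d)\cong N(E[d])$, the gluing lemma for homotopy pushouts, and commutation of $\Sing$ and $\oSing$ with the sequential colimit of skeleta because $\bDelta^n$ is a finite object) are standard and of the same kind the paper uses elsewhere. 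The citation buys brevity and offloads the geometry to Camarena's result; your argument buys self-containedness and makes visible that the only genuine homotopical input is the contractibility of $\Sing$ of a simplex, at the cost of a page of bookkeeping.
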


\begin{proof}
The map $\varphi_{K_{\leq}}$ above is precisely the map \begin{equation*}
    \oSing(K_\leq ) \cong \Delta \otimes_{\bDelta} \oSing(K_\leq ) \to E \otimes_{\bDelta} \oSing(K_\leq ) \cong \Sing(K)
\end{equation*}
in \cite{camarena2019turning}, which Antol\'{i}n-Camarena proves is a weak equivalence of simplicial sets.
\end{proof}

\begin{Lemma} \label{lem osing of last vertex map is weak equiv}
Given an ordered simplicial complex $K_\leq$, the map
\begin{equation*}
    \sd \, \oSing (K_\leq) \cong \oSing( \Sd K_\leq) \xrightarrow{\oSing(\lambda)} \oSing (K_\leq),
\end{equation*}
where $\lambda : \Sd K_\leq \to K_\leq$ is the last vertex map is a weak equivalence of simplicial sets.
\end{Lemma}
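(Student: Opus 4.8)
The plan is to reduce, via the known compatibilities between $\oSing$ and geometric realization, to the classical fact that the realization of the last vertex map of simplicial complexes is a homotopy equivalence. Because the first arrow in the composite is the isomorphism $\sd\,\oSing(K_\leq)\cong\oSing(\Sd K_\leq)$ of Lemma \ref{lem realization preserves subdivision}, it suffices to prove that $\oSing(\lambda)\colon\oSing(\Sd K_\leq)\to\oSing(K_\leq)$ is a weak equivalence, and by the definition of the Kan--Quillen weak equivalences this amounts to showing $|\oSing(\lambda)|$ is a weak homotopy equivalence of spaces. I would then apply the naturality of the homeomorphism $|{-}|_{\ncat{oCpx}}\xrightarrow{\cong}|\oSing({-})|$ of \cite[Proposition 2.59]{ruschoff2017lecture} to the morphism $\lambda\colon\Sd K_\leq\to K_\leq$ of $\ncat{oCpx}$: this produces a commuting square whose horizontal arrows are those homeomorphisms (for $\Sd K_\leq$ and for $K_\leq$) and whose vertical arrows are $|\lambda|_{\ncat{oCpx}}$ and $|\oSing(\lambda)|$. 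Since $|{-}|_{\ncat{oCpx}}$ is computed by the same formula (\ref{eq realization of complexes}) as $|{-}|_{\ncat{Cpx}}$ and $\lambda$ is in particular a map of the underlying simplicial complexes, the left vertical map is just $|\lambda|_{\ncat{Cpx}}\colon|\Sd K|_{\ncat{Cpx}}\to|K|_{\ncat{Cpx}}$ with $K=U(K_\leq)$. So it remains to show this last map is a homotopy equivalence.

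For that I would write down the canonical homeomorphism $h\colon|\Sd K|_{\ncat{Cpx}}\xrightarrow{\cong}|K|_{\ncat{Cpx}}$ sending each vertex $\sigma\in V(\Sd K)$ (a face of $K$) to the barycenter $\widehat{\sigma}\in|K|_{\ncat{Cpx}}$, and show that $|\lambda|_{\ncat{Cpx}}$ is homotopic to $h$. A point $x\in|\Sd K|_{\ncat{Cpx}}$ is supported on a chain $\sigma_0\subsetneq\dots\subsetneq\sigma_k$ of faces of $K$; then $h(x)$ is a convex combination of $\widehat{\sigma_0},\dots,\widehat{\sigma_k}$ while $|\lambda|_{\ncat{Cpx}}(x)$ is a convex combination of the top vertices $\max\sigma_0,\dots,\max\sigma_k$, and all of these points lie in the (convex) closed simplex $|\sigma_k|_{\ncat{Cpx}}$. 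Hence the straight-line homotopy $H(x,t)=(1-t)\,h(x)+t\,|\lambda|_{\ncat{Cpx}}(x)$ takes values in $|K|_{\ncat{Cpx}}$, and it is continuous because it is affine in $x$ on each closed simplex of $|\Sd K|_{\ncat{Cpx}}$, these pieces agreeing on overlaps, so continuity follows from the final topology (using local compactness of $[0,1]$). Since $H(-,0)=h$ is a homeomorphism, $|\lambda|_{\ncat{Cpx}}$ is a homotopy equivalence; tracing back through the reductions, $\oSing(\lambda)$, and hence the composite in the statement, is a weak equivalence of simplicial sets.

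I do not expect a serious obstacle here: the only point requiring care is the bookkeeping that the various identifications — of $|\oSing(\lambda)|$ with $|\lambda|_{\ncat{Cpx}}$, and of the images of faces under $h$ and under $|\lambda|_{\ncat{Cpx}}$ — are mutually compatible, which is exactly what keeps the straight-line homotopy inside the carriers. If one prefers not to build the homotopy by hand, one can instead cite that the last vertex map of simplicial complexes is a classical homotopy equivalence, or run the analogous naturality square through Lemma \ref{lem forgetting ordering on sing is weak equiv} and Proposition \ref{prop hoequiv of realizations} to reduce $\oSing(\lambda)$ to $\Sing$ of the underlying last vertex map and then realize.
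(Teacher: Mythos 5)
Your argument is correct, but it takes a genuinely different route from the paper. The paper's proof is a one-liner at the simplicial-set level: it observes that $\oSing(\lambda)$ is \emph{precisely} the last vertex map $\sd\,\oSing(K_\leq)\to\oSing(K_\leq)$ of simplicial sets, and then cites the standard result (Kerodon, Proposition 3.3.4.8) that the last vertex map is a weak equivalence for every simplicial set. You instead descend to topology: using the naturality of the homeomorphism $|{-}|_{\ncat{oCpx}}\cong|\oSing({-})|$ you reduce to showing $|\lambda|_{\ncat{Cpx}}\colon|\Sd K|_{\ncat{Cpx}}\to|K|_{\ncat{Cpx}}$ is a homotopy equivalence, which you get by the classical simplicial-approximation argument that $|\lambda|$ is straight-line homotopic to the barycentric-subdivision homeomorphism $h$, the key point being that both $h(x)$ and $|\lambda|(x)$ lie in the convex carrier $|\sigma_k|$ of the chain supporting $x$; your continuity bookkeeping (affine on closed simplices, final topology, local compactness of $[0,1]$) is the right way to make this rigorous. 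What each approach buys: the paper's route is shorter, stays entirely inside $\ncat{sSet}$, and outsources the hard content to a citable black box; yours is more self-contained and elementary, at the cost of invoking two classical geometric facts not stated in the paper (naturality of the comparison homeomorphism, which the paper's citation does assert, and the homeomorphism $|\Sd K|\cong|K|$), and of only establishing the statement after realization --- which is all that is needed here, since Kan--Quillen weak equivalences are defined via $|{-}|$.
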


\begin{proof}
It is easy to see that $\oSing(\lambda)$ is precisely the last vertex map of simplicial sets. Hence by Proposition 3.3.4.8 of \cite[\href{https://kerodon.net/tag/00YR}{Tag 00YR}]{kerodon} $\oSing(\lambda)$ is a weak equivalence of simplicial sets.
\end{proof}

\begin{Cor}
Given an ordered simplicial complex $K_\leq$ with underlying simplicial complex $K = U(K_\leq)$, the map
\begin{equation}
    \Sing(\Sd K) \xrightarrow{\Sing(\lambda)} \Sing(K)
\end{equation}
where $\lambda$ is the last vertex map (\ref{eq last vertex map}), is a weak equivalence of simplicial sets.
\end{Cor}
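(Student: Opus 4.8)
The plan is to deduce this from Lemmas \ref{lem forgetting ordering on sing is weak equiv} and \ref{lem osing of last vertex map is weak equiv} by a two-out-of-three argument applied to a naturality square. The starting point is that the last vertex map $\lambda : \Sd K_\leq \to K_\leq$ of (\ref{eq last vertex map}) is a morphism of \emph{ordered} simplicial complexes, where $\Sd K_\leq$ carries the canonical ordering of its vertices by inclusion of faces (Definition \ref{def subdivision}) and has underlying simplicial complex $U(\Sd K_\leq) = \Sd K$. Applying the naturality of the comparison map $\varphi$ from Lemma \ref{lem forgetting ordering on sing is weak equiv} to the morphism $\lambda$ therefore produces a commutative square with top edge $\oSing(\lambda) : \oSing(\Sd K_\leq) \to \oSing(K_\leq)$, bottom edge $\Sing(\lambda) : \Sing(\Sd K) \to \Sing(K)$, and vertical edges $\varphi_{\Sd K_\leq}$ and $\varphi_{K_\leq}$.

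First I would observe that both vertical maps $\varphi_{\Sd K_\leq}$ and $\varphi_{K_\leq}$ are weak equivalences, directly by Lemma \ref{lem forgetting ordering on sing is weak equiv} applied to $\Sd K_\leq$ and to $K_\leq$. Next I would observe that the top edge $\oSing(\lambda)$ is a weak equivalence: under the identification $\oSing(\Sd K_\leq) \cong \sd \, \oSing(K_\leq)$ of Lemma \ref{lem realization preserves subdivision} it is precisely the map shown to be a weak equivalence in Lemma \ref{lem osing of last vertex map is weak equiv}. Since the square commutes and three of its four edges are weak equivalences, the fourth edge $\Sing(\lambda)$ is a weak equivalence by the two-out-of-three property; concretely, from $\Sing(\lambda) \circ \varphi_{\Sd K_\leq} = \varphi_{K_\leq} \circ \oSing(\lambda)$ one sees that $\Sing(\lambda)$ precomposed with the weak equivalence $\varphi_{\Sd K_\leq}$ is a weak equivalence, hence $\Sing(\lambda)$ is one.

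There is no real obstacle here; the only points needing care are already in place. One must check that $\varphi$ is natural along the morphism $\lambda$ (it is natural in the ordered simplicial complex by the construction in Lemma \ref{lem forgetting ordering on sing is weak equiv}, and $\lambda$ is such a morphism) and that the underlying complex of $\Sd K_\leq$ is $\Sd K$ (immediate from Definition \ref{def subdivision}), so that the map $\Sing(\lambda)$ appearing in the statement is exactly the bottom edge of the square. With these noted, the argument reduces to the diagram chase above.
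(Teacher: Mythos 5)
Your proposal is correct and follows essentially the same route as the paper: the naturality square of $\varphi$ along $\lambda$, with the vertical maps weak equivalences by Lemma \ref{lem forgetting ordering on sing is weak equiv}, the top map a weak equivalence by Lemma \ref{lem osing of last vertex map is weak equiv}, and then two-out-of-three. The extra checks you flag (naturality of $\varphi$ along $\lambda$ and $U(\Sd K_\leq) = \Sd K$) are exactly the implicit points in the paper's argument.
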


\begin{proof}
Since the map $\varphi$ in Lemma \ref{lem forgetting ordering on sing is weak equiv} is natural in $K_{\leq}$ we get a commutative diagram
\begin{equation*}
\begin{tikzcd}
	{\oSing(\Sd K_\leq)} & {\oSing(K_\leq)} \\
	{\Sing(\Sd K)} & {\Sing(K)}
	\arrow["{\oSing(\lambda)}", from=1-1, to=1-2]
	\arrow["{\varphi_{\Sd K_\leq}}"', from=1-1, to=2-1]
	\arrow["{\varphi_{K_\leq}}", from=1-2, to=2-2]
	\arrow["{\Sing(\lambda)}"', from=2-1, to=2-2]
\end{tikzcd}
\end{equation*}
in which the two vertical maps are weak equivalences by Lemma \ref{lem forgetting ordering on sing is weak equiv}. The top horizontal map is a weak equivalence by Lemma \ref{lem osing of last vertex map is weak equiv}. Hence by $2$-of-$3$, the bottom horizontal map is a weak equivalence.
\end{proof}

\subsection{Notions of Homotopy for Simplicial Complexes}
Let us now introduce some notions of homotopy for simplicial complexes.

\begin{Def} \label{def homotopy of simplicial complexes}
We say that two maps $f, g: K \to L$ of simplicial complexes are 
\begin{enumerate}
    \item \textbf{contiguous} if for all $\sigma \in K$, $f(\sigma) \cup g(\sigma) \in L$. We write $f \sim_c g$ to mean that $f$ and $g$ are contiguous,
    \item \textbf{$n$-homotopic} for $n \geq 1$, if there is a map
\begin{equation*}
    H : K \times I_n \to L
\end{equation*}
such that $H(-,0) = f$ and $H(-,n) = g$, which we call an $n$-homotopy, where $I_n$ is the \textbf{$n$-path}, i.e. the simplicial complex with $n+1$ vertices and $V(I_n) = \{0, 1, \dots, n \}$ and edges between $i$ and $i+1$ for $0 \leq i \leq n - 1$. We denote the $n$-homotopic relation by $f \simeq_n g$. 
\item \textbf{$\times$-homotopic} if $f$ and $g$ are $n$-homotopic for some $n \geq 1$. We denote the $\times$-homotopic relation by $f \simeq_{\times} g$.
\end{enumerate}
We say that a map $f : K \to L$ is an $\times$-homotopy equivalence if there exists a map $g : L \to K$ and $\times$-homotopies $gf \simeq_\alpha 1_K$ and $fg \simeq_\alpha 1_L$.
\end{Def}

\begin{Rem}
Note that the contiguity relation $\simeq_c$ is symmetric and reflexive but not transitive. 
\end{Rem}

\begin{Lemma}
The $\times$-homotopy relation is an equivalence relation.
\end{Lemma}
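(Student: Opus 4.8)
The plan is to show reflexivity, symmetry, and transitivity directly, with the only real content being transitivity (and, lurking inside it, the need to produce a homotopy on a \emph{product} from two homotopies that share a boundary value). For reflexivity, given $f : K \to L$, take $n = 1$ and define $H : K \times I_1 \to L$ by $H(x,0) = H(x,1) = f(x)$; one checks this is a morphism of simplicial complexes since for $\sigma \in K$ and any simplex $\rho$ of $K \times I_1$ over $\sigma$, the image $f(\sigma)$ is a face of $L$, so $f \simeq_1 f$ and hence $f \simeq_\times f$. For symmetry, if $H : K \times I_n \to L$ witnesses $f \simeq_n g$, precompose with the automorphism of $I_n$ reversing the path, $i \mapsto n - i$; this is an isomorphism of simplicial complexes, so $H' = H \circ (1_K \times \mathrm{rev})$ is a morphism and witnesses $g \simeq_n f$, giving $g \simeq_\times f$.

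The substantive step is transitivity. Suppose $H : K \times I_n \to L$ witnesses $f \simeq_n g$ and $H' : K \times I_m \to L$ witnesses $g \simeq_m h$. The natural idea is to concatenate: form $I_{n+m}$ and define $G : K \times I_{n+m} \to L$ by $G(x,i) = H(x,i)$ for $0 \le i \le n$ and $G(x,i) = H'(x, i-n)$ for $n \le i \le n+m$. These agree at $i = n$ because both equal $g(x)$, so $G$ is well defined on vertices; then $G(-,0) = f$ and $G(-, n+m) = h$, so $f \simeq_{n+m} h$ and thus $f \simeq_\times h$. The thing that actually needs checking — and which I expect to be the main obstacle, or at least the only place where care is required — is that this $G$ really is a morphism of simplicial complexes, i.e. that it sends simplices of $K \times I_{n+m}$ to simplices of $L$. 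A simplex of $K \times I_{n+m}$ projects to a simplex $\sigma \in K$ and to a simplex of $I_{n+m}$, which is either a vertex $\{i\}$ or an edge $\{i, i+1\}$. If the edge lies entirely in $\{0,\dots,n\}$ then the simplex is (an image of) a simplex of $K \times I_n$ and $G$ agrees with $H$ there; similarly if it lies in $\{n,\dots,n+m\}$ we use $H'$. The only edge not covered by either subpath would be one straddling the cut, but the only edges of $I_{n+m}$ are $\{i,i+1\}$, and every such edge lies in one of the two halves (the edge $\{n-1,n\}$ is in the first half, $\{n,n+1\}$ in the second). Hence every simplex of $K \times I_{n+m}$ maps into $L$ under $G$, so $G$ is a genuine morphism.

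I would write this up by noting that a simplex of $K \times I_{n+m}$ has as its $I_{n+m}$-component a vertex or an edge, invoking the above case split, and concluding. One subtlety worth a sentence: one should make sure the product $K \times I_{n+m}$ here is the categorical product in $\ncat{Cpx}$ used in Definition~\ref{def homotopy of simplicial complexes}, so that ``simplex of $K \times I_{n+m}$'' means a pair $(\sigma, \rho)$ with $\sigma \in K$, $\rho \in I_{n+m}$ — once that is fixed, the argument above is exactly the verification that the case split on $\rho$ covers all cases. This completes reflexivity, symmetry, and transitivity, so $\simeq_\times$ is an equivalence relation.
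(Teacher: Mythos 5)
Your proof is correct and follows essentially the same route as the paper's: reflexivity via a constant homotopy, symmetry by precomposing with the reversal $i \mapsto n-i$ of $I_n$, and transitivity by concatenating the two homotopies along $I_{n+m}$. The only difference is that you spell out the verification that the concatenation is a morphism of simplicial complexes (via the case split on the $I_{n+m}$-component of a product simplex), which the paper leaves implicit; this check is sound.
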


\begin{proof}
Clearly $\simeq_\times$ is reflexive. Now suppose that $f \simeq_\times g$ is witnessed by the homotopy $H : K \times I_n \to L$. Then the composite map
\begin{equation*}
    K \times I_n \xrightarrow{1_K \times \tau} K \times I_n \xrightarrow{H} L,
\end{equation*}
where $\tau : I_n \to I_n$ is defined by $\tau(k) = n-k$ for $0 \leq k \leq n$, witnesses the homotopy $g \simeq_\times f$. Thus $\simeq_\times$ is symmetric.

Now suppose that $f \simeq_\times g$ and $g \simeq_\times h$ witnessed by the homotopies $H : K \times I_n \to L$ and $H' : K \times I_m \to L$ respectively. Define the map $H'' : K \times I_{n + m} \to L$ by 
\begin{equation*}
    H''(k, i) = \begin{cases}
        H(k, i), & \text{if } 0 \leq i \leq n \\
        H'(k, i-n), & \text{if } n \leq i \leq m.
    \end{cases}
\end{equation*}
This gives an $\times$-homotopy $f \simeq_\times h$.
\end{proof}

\begin{Lemma}
If $f, f': K \to L$ and $g, g' : L \to M$ are maps of simplicial complexes, $f \simeq_\times f'$ and $g \simeq_\times g'$, then $gf \simeq_\times g'f'$.
\end{Lemma}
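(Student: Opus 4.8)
The plan is to reduce the two-variable statement to two one-variable statements and then invoke transitivity of $\simeq_\times$, which was just established. First I would check that post-composition preserves $\times$-homotopy: if $H : K \times I_n \to L$ witnesses $f \simeq_n f'$, then $g \circ H : K \times I_n \to M$ is again a morphism of simplicial complexes, with $(g \circ H)(-,0) = g f$ and $(g \circ H)(-,n) = g f'$, so $g f \simeq_n g f'$ and in particular $g f \simeq_\times g f'$. Applying this observation with $g'$ in place of $g$ turns the hypothesis $f \simeq_\times f'$ into $g' f \simeq_\times g' f'$.

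Next I would check that pre-composition preserves $\times$-homotopy: if $H' : L \times I_m \to M$ witnesses $g \simeq_m g'$, then the composite $K \times I_m \xrightarrow{f \times 1_{I_m}} L \times I_m \xrightarrow{H'} M$ is a morphism of simplicial complexes, since $\times$ is functorial on $\ncat{Cpx}$ and $f \times 1_{I_m}$ is a morphism. It restricts to $g f$ at level $0$ and to $g' f$ at level $m$, so $g f \simeq_m g' f$, hence $g f \simeq_\times g' f$.

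Finally, combining the two steps gives $g f \simeq_\times g' f$ and $g' f \simeq_\times g' f'$, and transitivity of $\simeq_\times$ yields $g f \simeq_\times g' f'$. There is no genuine obstacle here; the only point that warrants a moment of care is that the two given homotopies may have different lengths $n$ and $m$, but because we chain them through the intermediate map $g' f$ and use that $\simeq_\times$ (rather than $\simeq_n$ for a fixed $n$) is an equivalence relation, this causes no difficulty.
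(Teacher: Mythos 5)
Your proof is correct. You verify both whiskering steps soundly: post-composing a homotopy $H : K \times I_n \to L$ with $g'$ gives $g'f \simeq_n g'f'$, and pre-composing $H' : L \times I_m \to M$ with $f \times 1_{I_m}$ (a morphism, since products are functorial in the cartesian closed category $\ncat{Cpx}$) gives $gf \simeq_m g'f$; transitivity of $\simeq_\times$, established in the preceding lemma, then closes the argument. This is a genuinely different decomposition from the paper's: rather than factoring through the intermediate map $g'f$ and concatenating, the paper writes down a single ``simultaneous'' homotopy $H'' : K \times I_N \to M$ with $N = \max\{n,m\}$, defined by $H''(k,i) = H'\bigl(H(k,\min(i,n)), \min(i,m)\bigr)$, which runs both given homotopies at once and lands directly on $gf \simeq_N g'f'$ without invoking transitivity. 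Your route is more modular and each step is transparently a map of simplicial complexes, at the cost of an appeal to the equivalence-relation lemma and an effectively longer chain; the paper's route is more compact and produces a shorter homotopy, at the cost of having to check (which it leaves to the reader) that the $\min$-formula really defines a simplicial map. Both are perfectly valid.
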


\begin{proof}
Suppose we have homotopies $H : K \times I_n \to L$ and $H' : L \times I_{n'} \to M$ witnessing $f \simeq_\times f'$ and $g \simeq_\times g'$ respectively. Let $N = \max \{n, n' \}$, then define a homotopy $H'' : K \times I_N \to M$ by
\begin{equation} \label{eq composite homotopy formula}
    H''(k, i) = H'(H(k, \min(i, n)), \min(i, n')).
\end{equation}
It is easy to see this defines a map of simplicial complexes, and $H''(k, 0) = H'(H(k,0),0) = gf(k)$ and $H''(k, N) = H'(H(k,n), n') = H'(g(k), n') = g'f'(k)$. Thus $H''$ defines a $\times$-homotopy $H'' : gf \simeq_\times g'f'$.
\end{proof}

\begin{Lemma}[{\cite[Corollary A.1.3]{barmak2011algebraic}}] \label{lem realization of contiguous maps are homotopic}
If $f, g: K \to L$ are maps of simplicial complexes such that $f \simeq_c g$, then $|f| \simeq |g|$ as maps of topological spaces.
\end{Lemma}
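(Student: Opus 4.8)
The plan is to build an explicit straight-line homotopy on geometric realizations. Recall from Definition \ref{def realization of simplicial complexes} that every point $a \in |K|_{\ncat{Cpx}}$ has a well-defined \emph{support} $\text{supp}(a) = \{x \in V(K) : a_x \neq 0\}$, which is by definition a simplex of $K$, and that $|f|_{\ncat{Cpx}}$ sends $a$ to the point whose $y$-coordinate is $\sum_{f(x) = y} a_x$. Since the $a_x$ are non-negative there is no cancellation, so $\text{supp}(|f|_{\ncat{Cpx}}(a)) = f(\text{supp}(a))$, and likewise $\text{supp}(|g|_{\ncat{Cpx}}(a)) = g(\text{supp}(a))$.

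First I would define $H : |K|_{\ncat{Cpx}} \times [0,1] \to \R^{V(L)}$ by $H(a,t) = (1-t)\,|f|_{\ncat{Cpx}}(a) + t\,|g|_{\ncat{Cpx}}(a)$ and check that it actually lands in $|L|_{\ncat{Cpx}}$. The coordinates of $H(a,t)$ are non-negative and sum to $1$ because $H(a,t)$ is a convex combination of two points each satisfying these conditions. For the support condition, write $\sigma = \text{supp}(a) \in K$; then $\text{supp}(H(a,t)) \subseteq f(\sigma) \cup g(\sigma)$ for every $t \in [0,1]$ (with equality when $t \in (0,1)$), and $f(\sigma) \cup g(\sigma) \in L$ precisely because $f \simeq_c g$. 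Hence $H(a,t) \in |L|_{\ncat{Cpx}}$, and moreover $H(-,0) = |f|_{\ncat{Cpx}}$ and $H(-,1) = |g|_{\ncat{Cpx}}$.

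Next I would verify continuity, which is the one place the final topology on the realization intervenes. Since $[0,1]$ is locally compact Hausdorff, the product $|K|_{\ncat{Cpx}} \times [0,1]$ is the colimit (in $\ncat{Top}$) over the closed cells $|\sigma|_{\ncat{Cpx}} \times [0,1]$ for $\sigma \in K$. On each such piece, $H$ restricts to a map that factors through the finite subcomplex $|f(\sigma) \cup g(\sigma)|_{\ncat{Cpx}}$, on which the final topology agrees with the Euclidean subspace topology; there $H$ is manifestly continuous, being an affine interpolation between two continuous affine maps of simplices. By the universal property of the colimit, $H$ is continuous as a map into $|L|_{\ncat{Cpx}}$, exhibiting $|f|_{\ncat{Cpx}} \simeq |g|_{\ncat{Cpx}}$. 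Alternatively, one may simply invoke \cite[Corollary A.1.3]{barmak2011algebraic}, where exactly this linear homotopy is treated.

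The main obstacle is the bookkeeping around the final topology: one must confirm that $H$ is globally continuous and not merely continuous on each simplex, which is why the reduction to finite subcomplexes (using local compactness of $[0,1]$ to commute the product past the colimit) is the essential step, whereas the convexity argument showing $H$ is well-defined is entirely routine.
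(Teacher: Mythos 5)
Your proof is correct and is exactly the argument behind the citation: the paper itself offers no proof of Lemma \ref{lem realization of contiguous maps are homotopic}, deferring to \cite[Corollary A.1.3]{barmak2011algebraic}, which establishes the result by the same linear homotopy $H(a,t) = (1-t)\,|f|(a) + t\,|g|(a)$, well-defined because $\operatorname{supp} H(a,t) \subseteq f(\sigma)\cup g(\sigma) \in L$ by contiguity. Your additional care with the weak topology (reducing continuity to the cells $|\sigma|\times[0,1]$ via local compactness of $[0,1]$) is the right way to make that step rigorous.
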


\begin{Lemma} \label{lem 1-homotopies are contiguities}
If $f, g: K \to L$ are maps of simplicial complexes such that $f \simeq_1 g$, then $f \simeq_c g$.
\end{Lemma}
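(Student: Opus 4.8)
The plan is to unwind the definition of a $1$-homotopy and combine it with the explicit description of the categorical product in $\ncat{Cpx}$. By definition, $f \simeq_1 g$ means there is a map of simplicial complexes $H : K \times I_1 \to L$ with $H(-,0) = f$ and $H(-,1) = g$, where $I_1 = \bDelta^1$ has vertex set $\{0,1\}$ and every subset of $\{0,1\}$ is a face. Recall (see Appendix \ref{section categories}) that the product $K \times I_1$ in $\ncat{Cpx}$ has vertex set $V(K) \times \{0,1\}$, and that a finite subset $\sigma' \subseteq V(K) \times \{0,1\}$ is a face precisely when its projection to $V(K)$ lies in $K$ and its projection to $\{0,1\}$ lies in $I_1$. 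Since $I_1$ contains every subset of its vertex set, the second condition is automatic, so $\sigma' \in K \times I_1$ if and only if $\pi_{V(K)}(\sigma') \in K$.

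First I would fix an arbitrary face $\sigma \in K$ and form the prism $P_\sigma := \{(v,\varepsilon) : v \in \sigma,\ \varepsilon \in \{0,1\}\} \subseteq V(K \times I_1)$ over $\sigma$. Its projection to $V(K)$ is $\sigma \in K$, so by the criterion just recalled, $P_\sigma$ is a face of $K \times I_1$. Then I would push it forward along $H$: since $H$ is a morphism of simplicial complexes, $H(P_\sigma)$ is a face of $L$. On vertices $H(v,0) = f(v)$ and $H(v,1) = g(v)$, which is exactly the content of $H(-,0) = f$ and $H(-,1) = g$, so $H(P_\sigma) = f(\sigma) \cup g(\sigma)$. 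Hence $f(\sigma) \cup g(\sigma) \in L$, and since $\sigma \in K$ was arbitrary, this is precisely the statement that $f \simeq_c g$.

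I do not anticipate any real obstacle: the argument is essentially a one-line observation once the structure of $K \times I_1$ is understood. The only inputs needed are the correct description of that product (supplied in Appendix \ref{section categories}) together with the remark that, because $I_1$ already contains its top face $\{0,1\}$, the prism over any face of $K$ is again a face of $K \times I_1$ — which is the combinatorial reason a $1$-homotopy is automatically a contiguity.
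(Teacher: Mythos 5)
Your proof is correct and follows essentially the same argument as the paper: take the prism $\{(x,\varepsilon) : x \in \sigma,\ \varepsilon \in \{0,1\}\}$ over an arbitrary face $\sigma \in K$, observe via the description of products in $\ncat{Cpx}$ that it is a face of $K \times I_1$, and apply $H$ to get $f(\sigma) \cup g(\sigma) \in L$. Your explicit justification that the prism is a face (projection criterion plus $\{0,1\} \in I_1$) just makes fully explicit a step the paper leaves implicit.
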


\begin{proof}
Suppose that there is a map $H : K \times \Delta^1 \to L$ of simplicial complexes such that $H(-,0) = f$ and $H(-,1) = g$. If $\sigma = \{x_0, \dots, x_n \} \in K$, then $\sigma' = \{(x_0, 0), \dots, (x_n, 0), (x_0, 1), \dots, (x_n, 1) \} \in K \times \Delta^1$. Since $H$ is a map of simplicial complexes, $H(\sigma') \in L$, but $H(\sigma') = f(\sigma) \cup g(\sigma)$. Hence $f \simeq_c g$.
\end{proof}

Now suppose that $n \geq 1$ and $f, g: K \to L$ are maps of simplicial complexes such that $f \simeq_n g$. Then we have an $n$-homotopy $H : K \times I_n \to L$. For each $0 \leq m \leq n$, we obtain a map $f_m : K \to L$ defined as the composite map
\begin{equation*}
   K \xrightarrow{i_m} K \times I_n \xrightarrow{H} L 
\end{equation*}
where $i_m(x) = (x, m)$. We therefore obtain a sequence of $1$-homotopies
\begin{equation} \label{eq sequence of 1-homotopies}
   f = f_0 \simeq_1 f_1 \simeq_1 f_2 \simeq_1 \dots \simeq_1 f_n = g. 
\end{equation}

\begin{Cor} \label{cor realization of x-homotopic is homotopic}
If $f, g: K \to L$ are maps of simplicial complexes such that $f \simeq_\times g$, then $|f|_{\ncat{Cpx}} \simeq |g|_{\ncat{Cpx}}$ as maps of topological spaces.
\end{Cor}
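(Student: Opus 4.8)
The plan is to reduce everything to the case of contiguous maps, for which the statement is already Lemma \ref{lem realization of contiguous maps are homotopic}, and then chain finitely many homotopies together using transitivity of topological homotopy. So the first step is to unwind the definition of $\simeq_\times$: by Definition \ref{def homotopy of simplicial complexes}, $f \simeq_\times g$ means $f \simeq_n g$ for some $n \geq 1$, so we are handed an $n$-homotopy $H : K \times I_n \to L$ with $H(-,0) = f$ and $H(-,n) = g$.

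Next, I would use the intermediate maps $f_m := H(i_m(-)) : K \to L$ for $0 \leq m \leq n$, where $i_m(x) = (x,m)$, exactly as in the discussion preceding the statement, to produce the chain of $1$-homotopies $f = f_0 \simeq_1 f_1 \simeq_1 \cdots \simeq_1 f_n = g$ of \eqref{eq sequence of 1-homotopies}. Each restriction $H|_{K \times \{m, m+1\}}$ is a $1$-homotopy $f_m \simeq_1 f_{m+1}$ because $\{m, m+1\}$ is an edge of $I_n$, hence $K \times \{m,m+1\} \cong K \times \Delta^1$ as a subcomplex of $K \times I_n$.

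Then, for each $0 \leq m \leq n-1$, Lemma \ref{lem 1-homotopies are contiguities} upgrades $f_m \simeq_1 f_{m+1}$ to $f_m \simeq_c f_{m+1}$, and Lemma \ref{lem realization of contiguous maps are homotopic} then gives a homotopy $|f_m|_{\ncat{Cpx}} \simeq |f_{m+1}|_{\ncat{Cpx}}$ of continuous maps $|K|_{\ncat{Cpx}} \to |L|_{\ncat{Cpx}}$. Finally, since homotopy of continuous maps between fixed spaces is an equivalence relation, in particular transitive, concatenating these $n$ homotopies yields $|f|_{\ncat{Cpx}} = |f_0|_{\ncat{Cpx}} \simeq |f_n|_{\ncat{Cpx}} = |g|_{\ncat{Cpx}}$, as desired.

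I do not anticipate any real obstacle: all the geometric input has already been isolated in the preceding lemmas, and the only point requiring a moment's care is that a $\times$-homotopy decomposes into a \emph{finite} string of $1$-homotopies, so that transitivity of topological homotopy need only be applied finitely many times.
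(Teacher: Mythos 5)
Your proposal is correct and follows essentially the same route as the paper: decompose the $n$-homotopy into the chain of $1$-homotopies of \eqref{eq sequence of 1-homotopies}, upgrade each to a contiguity via Lemma \ref{lem 1-homotopies are contiguities}, realize each via Lemma \ref{lem realization of contiguous maps are homotopic}, and conclude by transitivity of topological homotopy. The only difference is that you spell out the restriction $H|_{K \times \{m,m+1\}} \cong K \times \bDelta^1$ explicitly, which the paper leaves implicit in the discussion preceding the statement.
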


\begin{proof}
Since $f \simeq_\times g$, we obtain a sequence of $1$-homotopies as in (\ref{eq sequence of 1-homotopies}), but by Lemma \ref{lem 1-homotopies are contiguities}, this gives a sequence $f_0 \simeq_c f_1 \simeq_c \dots \simeq_c f_n$, which by Lemma \ref{lem realization of contiguous maps are homotopic} provides a sequence $|f_0|_{\ncat{Cpx}} \simeq |f_1|_{\ncat{Cpx}} \simeq \dots \simeq |f_n|_{\ncat{Cpx}}$ of homotopies of maps of topological spaces. But the homotopy relation on spaces is transitive, hence $|f|_{\ncat{Cpx}} = |f_0|_{\ncat{Cpx}} \simeq |f_n|_{\ncat{Cpx}} = |g|_{\ncat{Cpx}}$.
\end{proof}

\begin{Cor} \label{cor realization of x-hoequiv is hoequiv}
If $f : K \to L$ is an $\times$-homotopy equivalence of simplicial complexes, then $|f| : |K|_\ncat{Cpx} \to |L|_\ncat{Cpx}$ is a homotopy equivalence of topological spaces.
\end{Cor}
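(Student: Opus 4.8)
The plan is to deduce this immediately from Corollary \ref{cor realization of x-homotopic is homotopic} together with the functoriality of $|-|_{\ncat{Cpx}}$. Since $f : K \to L$ is an $\times$-homotopy equivalence, by Definition \ref{def homotopy of simplicial complexes} there is a map $g : L \to K$ together with $\times$-homotopies $gf \simeq_\times 1_K$ and $fg \simeq_\times 1_L$. First I would apply the functor $|-|_{\ncat{Cpx}}$, which gives continuous maps $|f| : |K|_{\ncat{Cpx}} \to |L|_{\ncat{Cpx}}$ and $|g| : |L|_{\ncat{Cpx}} \to |K|_{\ncat{Cpx}}$ with $|g| \circ |f| = |gf|_{\ncat{Cpx}}$ and $|f| \circ |g| = |fg|_{\ncat{Cpx}}$, and with $|1_K|_{\ncat{Cpx}} = 1_{|K|_{\ncat{Cpx}}}$, $|1_L|_{\ncat{Cpx}} = 1_{|L|_{\ncat{Cpx}}}$.

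Next, I would invoke Corollary \ref{cor realization of x-homotopic is homotopic} twice: applied to the pair $gf \simeq_\times 1_K$ it yields $|gf|_{\ncat{Cpx}} \simeq |1_K|_{\ncat{Cpx}}$, i.e. $|g| \circ |f| \simeq 1_{|K|_{\ncat{Cpx}}}$; applied to $fg \simeq_\times 1_L$ it yields $|f| \circ |g| \simeq 1_{|L|_{\ncat{Cpx}}}$. These two homotopies exhibit $|g|$ as a homotopy inverse of $|f|$, so $|f|$ is a homotopy equivalence of topological spaces, which is exactly the claim.

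There is essentially no obstacle here: the content is entirely carried by Corollary \ref{cor realization of x-homotopic is homotopic} (and behind it, Lemmas \ref{lem realization of contiguous maps are homotopic} and \ref{lem 1-homotopies are contiguities}). The only point worth stating explicitly is that $|-|_{\ncat{Cpx}}$ is a genuine functor $\ncat{Cpx} \to \ncat{Top}$ (Definition \ref{def realization of simplicial complexes}), so it turns the algebraic identities $gf \simeq_\times 1_K$, $fg \simeq_\times 1_L$ into statements about composites of the realized maps before Corollary \ref{cor realization of x-homotopic is homotopic} is applied.
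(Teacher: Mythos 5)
Your argument is correct and is exactly the intended one: the paper states this corollary without proof precisely because it follows immediately from Corollary \ref{cor realization of x-homotopic is homotopic} applied to the two ×-homotopies $gf \simeq_\times 1_K$ and $fg \simeq_\times 1_L$, together with functoriality of $|-|_{\ncat{Cpx}}$. Nothing to add.
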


\begin{Lemma}
If $f, g: K \to L$ are maps of simplicial complexes and $f \simeq_1 g$, then $\Sing(f) \simeq \Sing(g)$ as maps of simplicial sets.
\end{Lemma}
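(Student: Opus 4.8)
The plan is to turn the given $1$-homotopy into an honest simplicial homotopy $\Sing(K) \times \Delta^1 \to \Sing(L)$ by applying $\Sing$ and then restricting along a suitable edge $\Delta^1 \to \Sing(I_1)$.

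First I would observe that the $1$-path $I_1$ is exactly the simplicial complex $\bDelta^1$, so a $1$-homotopy $f \simeq_1 g$ is precisely a map $H : K \times \bDelta^1 \to L$ in $\ncat{Cpx}$ with $H \circ (1_K \times v_0) = f$ and $H \circ (1_K \times v_1) = g$, where $v_\epsilon : \bDelta^0 \to \bDelta^1$ is the inclusion of the vertex $\epsilon \in \{0,1\}$ and we identify $K \times \bDelta^0 \cong K$. Since $\Sing$ is a right adjoint (see the adjunction (\ref{eq re sing adjunction})), it preserves finite products, so the canonical comparison map $\Sing(K \times \bDelta^1) \to \Sing(K) \times \Sing(\bDelta^1)$ is an isomorphism.

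Next, I recall from the discussion preceding Lemma \ref{lem sing is fully faithful} that $\Sing(\bDelta^1) = N(E[1])$, the nerve of the indiscrete category on two objects. There is a canonical monomorphism $\iota : \Delta^1 \hookrightarrow N(E[1])$ carrying the nondegenerate $1$-simplex to the unique arrow $0 \to 1$ of $E[1]$; in terms of (\ref{eq sing def}), $\iota$ sends the vertex $\epsilon$ of $\Delta^1$ to $v_\epsilon \in \Sing(\bDelta^1)_0$. Using this, I define
\begin{equation*}
    \widetilde{H} : \Sing(K) \times \Delta^1 \xrightarrow{\ 1 \times \iota\ } \Sing(K) \times \Sing(\bDelta^1) \xrightarrow{\ \cong\ } \Sing(K \times \bDelta^1) \xrightarrow{\ \Sing(H)\ } \Sing(L).
\end{equation*}

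Finally, I would restrict $\widetilde{H}$ along the two vertices of $\Delta^1$. Using naturality of the product comparison isomorphism together with the identifications $\iota(\epsilon) = v_\epsilon$, one obtains $\widetilde{H} \circ (1_{\Sing(K)} \times \{0\}) = \Sing(H \circ (1_K \times v_0)) = \Sing(f)$, and likewise $\widetilde{H} \circ (1_{\Sing(K)} \times \{1\}) = \Sing(g)$. Thus $\widetilde{H}$ is a simplicial homotopy from $\Sing(f)$ to $\Sing(g)$, and since $\Sing(K) \times \Delta^1$ is a cylinder object for $\Sing(K)$ this witnesses $\Sing(f) \simeq \Sing(g)$. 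The only slightly delicate point is this last bookkeeping step — chasing the vertices $0$ and $1$ of $\Delta^1$ through the product isomorphism to confirm that they select $f$ and $g$ respectively; everything else is formal once one knows that $\Sing$ preserves products.
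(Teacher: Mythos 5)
Your proof is correct and follows essentially the same route as the paper: apply $\Sing$ to the $1$-homotopy, use that $\Sing$ preserves products to identify $\Sing(K \times \bDelta^1) \cong \Sing(K) \times N(E[1])$, and restrict along the inclusion $\Delta^1 \hookrightarrow N(E[1])$ to obtain the simplicial homotopy. The paper's proof is the same argument, just stated more briefly.
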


\begin{proof}
Given a $1$-homotopy $H : K \times \Delta^1 \to L$, we obtain a map
\begin{equation} \label{eq sing of homotopy gives joyal homotopy}
    \Sing(H) : \Sing(K \times \Delta^1) \cong \Sing(K) \times \mathbb{J} \to \Sing(L)
\end{equation}
such that $\Sing(H)(-,0) = \Sing(f)$ and $\Sing(H)(-,1) = \Sing(g)$, where $\mathbb{J} = N(E[1])$ is the nerve of the category with two distinct objects and exactly one morphism between every pair of objects. There is an inclusion $\Delta^1 \hookrightarrow \mathbb{J}$ of simplicial sets, from which we obtain a simplicial homotopy $H' : \Sing(K) \times \Delta^1 \to \Sing(L)$.
\end{proof}

\begin{Rem}
We note that $\mathbb{J}$ is an interval object for the Joyal model structure on $\ncat{sSet}$, whose fibrant objects are quasicategories. Hence the map (\ref{eq sing of homotopy gives joyal homotopy}) is an example of a homotopy in the Joyal model structure. See \cite[Section 15.3]{riehl2014categorical} for a nice discussion about this notion of homotopy.
\end{Rem}

\begin{Prop} \label{prop Sing preserves x-homotopy equivalence}
Suppose that $f : K \to L$ is a $\times$-homotopy equivalence of simplicial complexes. Then $\Sing(f) : \Sing(K) \to \Sing(L)$ is a weak equivalence of simplicial sets.
\end{Prop}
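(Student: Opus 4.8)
The plan is to deduce this from the Lemma immediately preceding the statement, which says that $1$-homotopic maps of simplicial complexes are sent by $\Sing$ to simplicially homotopic maps of simplicial sets. First I would unpack the hypothesis: a $\times$-homotopy equivalence $f$ comes equipped with a map $g : L \to K$ together with $\times$-homotopies $gf \simeq_\times 1_K$ and $fg \simeq_\times 1_L$. By the decomposition in (\ref{eq sequence of 1-homotopies}), each of these $\times$-homotopies breaks up into a finite chain of $1$-homotopies
\[
gf = h_0 \simeq_1 h_1 \simeq_1 \cdots \simeq_1 h_p = 1_K, \qquad fg = h'_0 \simeq_1 h'_1 \simeq_1 \cdots \simeq_1 h'_q = 1_L.
\]
Applying $\Sing$ and the preceding Lemma link by link shows that $\Sing(g)\,\Sing(f) = \Sing(gf)$ is connected to $1_{\Sing(K)}$ by a chain of simplicial homotopies, and likewise $\Sing(f)\,\Sing(g)$ to $1_{\Sing(L)}$.

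Next I would pass to topological spaces to turn these chains into an honest two-sided homotopy inverse. Realizing a simplicial homotopy $X \times \Delta^1 \to Y$ and using the homeomorphism $|X \times \Delta^1| \cong |X| \times [0,1]$ produces a homotopy of continuous maps; since the homotopy relation on maps of spaces is transitive and symmetric, the chains above yield $|\Sing(g)| \circ |\Sing(f)| \simeq \mathrm{id}_{|\Sing(K)|}$ and $|\Sing(f)| \circ |\Sing(g)| \simeq \mathrm{id}_{|\Sing(L)|}$. Hence $|\Sing(f)|$ is a homotopy equivalence of spaces, in particular it induces isomorphisms on all homotopy groups, so $\Sing(f)$ is a weak equivalence of simplicial sets by the definition of the Kan–Quillen weak equivalences.

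Alternatively, and more directly, one can invoke Proposition \ref{prop hoequiv of realizations}: its natural homotopy equivalence $|K|_{\ncat{Cpx}} \xrightarrow{\simeq} |\Sing(K)|$ fits, by naturality, into a commutative square with $|f|_{\ncat{Cpx}}$ on one side and $|\Sing(f)|$ on the other. Since $|f|_{\ncat{Cpx}}$ is a homotopy equivalence by Corollary \ref{cor realization of x-hoequiv is hoequiv} and the two horizontal maps are homotopy equivalences, the remaining map $|\Sing(f)|$ is a homotopy equivalence, whence $\Sing(f)$ is a weak equivalence. The only mild subtlety in either route is that $\Sing(L)$ need not be a Kan complex, so one cannot argue purely combinatorially with homotopy classes of maps between simplicial sets; routing through geometric realization, where the homotopy relation is an equivalence relation, is what makes the bookkeeping of the chains of $1$-homotopies go through cleanly. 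I do not expect any serious obstacle beyond this point.
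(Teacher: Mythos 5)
Your proposal is correct, and in fact your ``alternative, more direct'' second argument is precisely the proof given in the paper: one realizes the naturality square coming from Proposition \ref{prop hoequiv of realizations}, notes that $|f|_{\ncat{Cpx}}$ is a homotopy equivalence by Corollary \ref{cor realization of x-hoequiv is hoequiv}, and concludes by $2$-of-$3$ that $|\Sing(f)|$ is a homotopy equivalence, hence $\Sing(f)$ is a weak equivalence. Your first argument is a genuinely different route that the paper does not take: it never invokes the Camarena comparison $|K|_{\ncat{Cpx}} \simeq |\Sing(K)|$, but instead decomposes the two $\times$-homotopies into chains of $1$-homotopies as in (\ref{eq sequence of 1-homotopies}), feeds each link through the Lemma that $1$-homotopic maps have simplicially homotopic images under $\Sing$, and then passes to geometric realization (where homotopy of maps is an equivalence relation, sidestepping the fact that $\Sing(L)$ need not be Kan) to see that $|\Sing(f)|$ admits a two-sided homotopy inverse. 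Both arguments are sound; the paper's route is shorter because it outsources the comparison of $|K|_{\ncat{Cpx}}$ with $|\Sing(K)|$ to Camarena's result, while your first route is more self-contained on the $\Sing$ side, using only the elementary Lemma on $1$-homotopies together with $|X \times \Delta^1| \cong |X| \times [0,1]$ (valid since $\Delta^1$ is finite), and it interestingly puts to use a Lemma that the paper proves but never actually applies in its own proof of this Proposition.
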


\begin{proof}
By Propostion \ref{prop hoequiv of realizations} we obtain a commutative diagram
\begin{equation*}
    \begin{tikzcd}
	{|K|_{\ncat{Cpx}}} & {|L|_{\ncat{Cpx}}} \\
	{|\Sing(K)|} & {|\Sing(L)|}
	\arrow["{|f|_{\ncat{Cpx}}}", from=1-1, to=1-2]
	\arrow["\simeq"', from=1-1, to=2-1]
	\arrow["\simeq", from=1-2, to=2-2]
	\arrow["{|\Sing(f)|}"', from=2-1, to=2-2]
\end{tikzcd}
\end{equation*}
where the vertical maps are homotopy equivalences. By Corollary \ref{cor realization of x-hoequiv is hoequiv}, the top horizontal map is a homotopy equivalence. Hence by $2$-of-$3$, $|\Sing(f)|$ is a homotopy equivalence. Hence $\Sing(f)$ is a weak equivalence of simplicial sets.
\end{proof}

\begin{Def}
Given a pair $(L, K)$ of simplicial complexes with inclusion map $i : K \hookrightarrow L$, we say that $K$ is an \textbf{$\times$-deformation retract} of $L$ if there exists a map $r : L \to K$ such that $ri = 1_K$ and an $\times$-homotopy $H: L \times I_n \to L$ such that $H(x,m) = x$ for all $x \in V(K)$ and all $0 \leq m \leq n$\footnote{We should really call this a $\times$-deformation retract rel $K$ since we require the $\times$-homotopy is constant on $K$, but we believe that this abuse of terminology will not cause confusion.}. We say that $i : K \hookrightarrow L$ is the inclusion of a $\times$-deformation retract.
\end{Def}

\begin{Lemma} \label{lem composition of deformation retracts}
Suppose that $i : K \hookrightarrow L$ and $j : L \hookrightarrow M$ are inclusions of $\times$-deformation retracts. Then the pair $ji : K \hookrightarrow M$ is the inclusion of an $\times$-deformation retract.
\end{Lemma}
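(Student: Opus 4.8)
The plan is to construct the retraction and the deforming homotopy explicitly from the given data. Write $r\colon L\to K$ and $H\colon L\times I_n\to L$ for the data exhibiting $i$ as the inclusion of an $\times$-deformation retract, so that $ri=1_K$, $H$ is an $\times$-homotopy from $1_L$ to $ir$, and $H(x,m)=x$ for all $x\in V(K)$ and $0\le m\le n$; similarly write $s\colon M\to L$ and $G\colon M\times I_{n'}\to M$ for the data for $j$, so that $sj=1_L$, $G$ is an $\times$-homotopy from $1_M$ to $js$, and $G(y,m)=y$ for all $y\in V(L)$ and $0\le m\le n'$. First I would set $t:=rs\colon M\to K$; then $t\circ(ji)=r(sj)i=ri=1_K$, so $t$ retracts the inclusion $ji$.

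For the homotopy, I would observe that $j\circ H\circ(s\times 1_{I_n})\colon M\times I_n\to M$ is an $\times$-homotopy from $js$ to $jirs=(ji)t$, and then concatenate it after $G$ exactly as in the transitivity argument for $\simeq_\times$: define $F\colon M\times I_{n'+n}\to M$ by
\begin{equation*}
F(y,m)=\begin{cases} G(y,m), & 0\le m\le n',\\ j\bigl(H(s(y),\,m-n')\bigr), & n'\le m\le n'+n,\end{cases}
\end{equation*}
the two clauses agreeing at $m=n'$ since both equal $js(y)$. This $F$ is a bona fide map of simplicial complexes because every simplex of $I_{n'+n}$ is a vertex or an edge between consecutive integers, so every simplex of $M\times I_{n'+n}$ is handled by a single one of the two clauses; and $F(-,0)=1_M$, $F(-,n'+n)=(ji)t$ by construction.

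Finally I would verify that $F$ is constant on $V(K)$. For $x\in V(K)\subseteq V(L)\subseteq V(M)$ and $0\le m\le n'$ we get $F(x,m)=G(x,m)=x$ since $G$ is constant on $V(L)$; and for $n'\le m\le n'+n$, since $sj=1_L$ together with $j$ being an inclusion forces $s$ to restrict to the identity on $V(L)$, we get $F(x,m)=j\bigl(H(x,m-n')\bigr)=j(x)=x$ because $H$ is constant on $V(K)$. Hence $ji\colon K\hookrightarrow M$ is the inclusion of an $\times$-deformation retract.

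There is no serious obstacle here; the only things to watch are the bookkeeping identifications ($s$ and $r$ restricting to identities on the relevant vertex sets, which uses that $i$ and $j$ are genuine inclusions) and the check that the piecewise-defined $F$ really is a map of simplicial complexes — which goes through precisely because $I_{n'+n}$ has no simplex spanning more than two consecutive vertices, so no simplex of $M\times I_{n'+n}$ straddles the gluing level $m=n'$.
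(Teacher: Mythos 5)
Your proof is correct and follows essentially the same route as the paper: take $rs$ as the retraction and concatenate the homotopy deforming $M$ onto $L$ with the homotopy deforming $L$ onto $K$ precomposed with $s$, then check constancy on $V(K)$ using that $s$ restricts to the identity on $L$. Your extra care about the seam at $m=n'$ and the piecewise definition being simplicial is a fine (and slightly more explicit) version of what the paper dismisses as clear.
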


\begin{proof}
Let $r : L \to K$, $s : M \to L$ denote the corresponding retraction maps. There are homotopies $H : L \times I_n \to L$ and $H' : M \times I_{n'} \to M$ such that $H(-,0) = 1_L$, $H(-,n) = ir$, $H'(-,0) = 1_M$, $H'(-,n') = js$ and if $k \in V(K)$ then $H(k,i) = k$ and if $\ell \in V(L)$, then $H'(\ell, i) = \ell$ for all $i$. Let $N = n + n'$, and define a homotopy $H'' : M \times I_n \to M$ by
\begin{equation*}
    H''(m, i) = \begin{cases}
        H'(m, i), & \text{if } 0 \leq i \leq n', \\
        H(s(m), i - n'), & \text{if } n' \leq i \leq N.
    \end{cases}
\end{equation*}
This is clearly a map of simplicial complexes. We have $H''(m, 0) = H'(m,0) = m$ and $H''(m, N) = H(s(m), n) = r(s(m))$. If $k \in V(K)$, then $H''(k, i)$ is either $H'(k, i) = k$ since $k \in V(L)$, or $H(s(k), i - n') = H(k, i - n') = k$ since $s$ is the identity on $L$. Thus $H''$ defines a homotopy $1_M \simeq_\times ji rs$, and is hence a $\times$-deformation retract of $M$ onto $K$.
\end{proof}

The following notions will be useful for constructing $\times$-deformation retracts by hand.

\begin{Def}
Given a simplicial complex $K$, we say that a vertex $v \in V(K)$ is \textbf{dominated} by another vertex $v'$ if and only if all of the facets of $K$ that contain $v$ also contain $v'$.

If $\sigma \in K$, then we let $K \setminus \sigma$ denote the simplicial complex defined by
\begin{equation*}
    K \setminus \sigma = \{ \tau \in K \, | \, \tau \cap \sigma = \varnothing \}.
\end{equation*}
We call $K \setminus \sigma$ the \textbf{deletion} of $\sigma$ from $K$.
An \textbf{elementary strong collapse} of a simplicial complex $K$ is the deletion $K \setminus v$ of a dominated vertex $v$ from $K$. We say that there is a \textbf{strong collapse} of a simplicial complex $K$ to a subcomplex $L$ if there is a finite sequence of elementary strong collapses from $K$ that result in $L$.
\end{Def}

\begin{Ex} \label{ex strong collapse}
In a simplex $\bDelta^n$, every vertex is dominated by every other vertex. Hence $n$-simplices strongly collapse to a point. Similarly, every vertex of $\u{\Lambda}^n_k$ is dominated by the vertex $k \in V(\u{\Lambda}^n_k)$, hence $\u{\Lambda}^n_k$ strongly collapses to a point. The boundary $\partial \u{\Delta}^n$ does not strongly collapse onto any of its subcomplexes.
\end{Ex}

Given a simplicial complex $K$, if $v \in V(K)$ is a vertex dominated by another vertex $v'$, then the function $V(r) : V(K) \to V(K) \setminus v$ defined by
\begin{equation} \label{eq strong collapse retraction map}
    V(r)(w) = \begin{cases}
        w & \text{if } w \neq v, \\
        v' & \text{if } w = v,
    \end{cases}
\end{equation}
defines a retraction map of simplicial complexes $r : K \to K \setminus v$.

\begin{Lemma} \label{lem strong collapse is x-def retract}
If $(L, K)$ is a pair of simplicial complexes such that $L$ strongly collapses to $K$, then $K$ is a $\times$-deformation retract of $L$.
\end{Lemma}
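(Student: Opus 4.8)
The plan is to reduce to a single elementary strong collapse and then write down an explicit $1$-homotopy by hand.

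By definition a strong collapse is a finite chain of elementary strong collapses $L = L_0 \supseteq L_1 \supseteq \dots \supseteq L_m = K$, where each $L_{j+1} = L_j \setminus v_j$ for some vertex $v_j$ dominated in $L_j$. Lemma \ref{lem composition of deformation retracts} says that a composite of inclusions of $\times$-deformation retracts is again the inclusion of a $\times$-deformation retract. So the first step is to reduce to the following claim: if $v \in V(L)$ is dominated by $v' \in V(L)$ and $K = L \setminus v$, then the inclusion $i : K \hookrightarrow L$ is the inclusion of a $\times$-deformation retract. Applying this to each $L_{j+1} \hookrightarrow L_j$ and composing via Lemma \ref{lem composition of deformation retracts} then settles the general case.

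For the elementary case, I would take $r : L \to K$ to be the retraction (\ref{eq strong collapse retraction map}), i.e.\ the vertex map fixing every $w \neq v$ and sending $v \mapsto v'$ (well-defined as a map of simplicial complexes precisely because $v$ is dominated by $v'$, as observed just before the statement), and define $H : L \times I_1 \to L$ on vertices by $H(w,0) = w$ and $H(w,1) = V(r)(w)$. Then $H(-,0) = 1_L$, $H(-,1) = i \circ r$, and $H(x,m) = x$ for every $x \in V(K)$ and $m \in \{0,1\}$, so the only thing to check is that $H$ is actually a morphism of simplicial complexes. Recall that a simplex of $L \times I_1$ is a finite set $S \subseteq V(L) \times \{0,1\}$ whose projection $\tau := \{w : (w,\epsilon) \in S \text{ for some } \epsilon\}$ is a simplex of $L$ (the product in $\ncat{Cpx}$ is detected on projections, exactly as used in the proof of Lemma \ref{lem 1-homotopies are contiguities}). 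Its image $H(S)$ is then a subset of $\tau$ if $v \notin \tau$, and a subset of $\tau \cup \{v'\}$ if $v \in \tau$.

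The crux — and the one place the domination hypothesis genuinely bites — is to show that $\tau \cup \{v'\} \in L$ whenever $v \in \tau \in L$: choose a facet $F$ of $L$ containing $\tau$; since $v \in \tau \subseteq F$, domination of $v$ by $v'$ forces $v' \in F$, hence $\tau \cup \{v'\} \subseteq F$ and so $\tau \cup \{v'\} \in L$. Together with heredity of $L$, this gives $H(S) \in L$ in both cases, so $H$ is a morphism and the elementary case is complete. Everything else is routine: unwinding the definitions of the categorical product on $\ncat{Cpx}$ and of $\times$-deformation retract, and the bookkeeping already packaged in Lemma \ref{lem composition of deformation retracts}.
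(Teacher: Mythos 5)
Your proposal is correct and follows essentially the same route as the paper: reduce to a single elementary strong collapse, use the retraction (\ref{eq strong collapse retraction map}) together with the explicit $1$-homotopy $H(w,0)=w$, $H(w,1)=V(r)(w)$, verify it is a simplicial map via the facet/domination argument, and then compose the resulting $\times$-deformation retracts using Lemma \ref{lem composition of deformation retracts}. Your check that the image of a product simplex is contained in $\tau\cup\{v'\}$ is, if anything, stated a bit more carefully than in the paper, but the idea is identical.
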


\begin{proof}
First let us show that if $K'$ is obtained from $K$ by an elementary strong collapse, then $K'$ and $K$ are $\times$-homotopy equivalent. So suppose that $v \in V(K)$ is dominated by a vertex $v' \in V(K')$ such that $K' = K \setminus v$. Let $i : K' \to K$ be the inclusion map, and let $r : K \to K'$ be the retraction map (\ref{eq strong collapse retraction map}). Let us define a $1$-homotopy $H : K \times I_1 \to K$ by setting
\begin{equation*}
    H(x,-) = x \text{ for } x \in V(K'), \qquad H(v,i) = \begin{cases}
        v, & \text{if } i = 0, \\
        v', & \text{if } i = 1.
    \end{cases}
\end{equation*}
To show that this a map of simplicial complexes, we need to show that every simplex of the form $A = \{(x_0, 0), \dots, (x_m, 0), (y_0, 1), \dots, (y_\ell, 1) \}$ where $B = \{x_0, \dots, x_m, y_0, \dots, y_\ell \} \in K$ is sent to a simplex in $K'$. But this is clear because if $v \notin B$, then $H(A) = B$, and if $v \in B$, then $(B \setminus v) \cup v'$ is a simplex of $K'$ since all the facets of $K$ that contain $v$ also contain $v'$.
Clearly $H(-,0) = 1_K$, $H(-,1) = i r$ and $H(x,i) = x$ for all $x \in K'$, $i = 0, 1$. Thus $H$ is a $\times$-deformation retract. Composites of $\times$-deformation retracts are $\times$-deformation retracts by Lemma \ref{lem composition of deformation retracts}, hence the result follows.
\end{proof}

\begin{Lemma}[{\cite[Corollary 4.11]{matsushita2017box}}] \label{lem sd preserves strong collapse}
Suppose that $(L, K)$ is a pair of simplicial complexes and $L$ strongly collapses to $K$. Then $\Sd \, L$ strongly collapses to $\Sd \, K$.
\end{Lemma}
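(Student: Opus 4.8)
The plan is to reduce to the case of a single elementary strong collapse and then write down an explicit strong collapse of $\Sd L$ by hand. Since a strong collapse is by definition a finite composite of elementary ones and $\Sd$ is a functor, it suffices to treat one elementary strong collapse; so I would assume $K = L \setminus v$ where $v \in V(L)$ is dominated by a vertex $v' \neq v$.

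Next I would record two formal facts. For any vertex $u$ of a complex $M$, the deletion $M \setminus u$ is exactly the full subcomplex of $M$ on $V(M) \setminus \{u\}$; hence carrying out a strong collapse just amounts to choosing an order in which to delete vertices and checking domination at each stage, and the end result is always a full subcomplex. Also, $V(\Sd L)$ is the set of simplices of $L$, and a chain $\sigma_0 \subsetneq \dots \subsetneq \sigma_n$ of simplices of $L$ lies in $\Sd(L\setminus v)$ exactly when no $\sigma_i$ contains $v$; so $\Sd K = \Sd(L\setminus v)$ is precisely the full subcomplex of $\Sd L$ spanned by the simplices of $L$ avoiding $v$. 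Thus the goal becomes: delete, one at a time as dominated vertices of $\Sd L$, all ``bad'' vertices $\{\sigma \in L : v \in \sigma\}$.

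For the ordering I would split the bad vertices into $G_1 = \{\sigma : v \in \sigma,\ v' \notin \sigma\}$ and $G_2 = \{\sigma : v \in \sigma,\ v' \in \sigma\}$, delete all of $G_1$ first in order of \emph{decreasing} cardinality, then delete all of $G_2$ in order of \emph{increasing} cardinality, ties broken arbitrarily. The dominating vertex I expect to use is $\sigma \cup \{v'\}$ for $\sigma \in G_1$ and $\sigma \setminus \{v\}$ for $\sigma \in G_2$. First one checks $\sigma \cup \{v'\} \in L$ always: $\sigma$ lies in some facet $F$ of $L$ containing $v$, so $v' \in F$ by domination, hence $\sigma \cup \{v'\} \subseteq F \in L$; moreover $\sigma \setminus \{v\}$ never contains $v$ and $\sigma \cup \{v'\} \in G_2$, so in both cases the proposed dominating vertex survives at the moment $\sigma$ is deleted. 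For $\sigma \in G_1$: when $\sigma$ is deleted every element of $G_1$ of cardinality $>|\sigma|$ is already gone, so any surviving $\rho \supsetneq \sigma$ contains $v$ and, not being deleted, must contain $v'$, i.e.\ $\rho \supseteq \sigma \cup \{v'\}$; hence $\tau := \sigma\cup\{v'\}$ is the unique cover of $\sigma$ among the surviving simplices, so every maximal chain — in particular every facet — of the current complex through $\sigma$ contains $\tau$, and $\sigma$ is dominated by $\tau$. Dually, for $\sigma \in G_2$: when $\sigma$ is deleted all of $G_1$ and all smaller‑cardinality elements of $G_2$ are gone, so any surviving $\rho \subsetneq \sigma$ cannot contain $v$, i.e.\ $\rho \subseteq \sigma\setminus\{v\}$; since $\tau := \sigma\setminus\{v\}$ has codimension $1$ in $\sigma$ it is the unique cocover of $\sigma$ among survivors, so again every facet through $\sigma$ contains $\tau$, and $\sigma$ is dominated by $\tau$. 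Running through the whole list removes exactly the bad vertices and leaves the full subcomplex $\Sd(L\setminus v) = \Sd K$.

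I expect the only real difficulty is calibrating these two sorting orders so that, at the moment $\sigma$ is deleted, the candidate dominating vertex is the \emph{unique} surviving cover (resp.\ cocover) of $\sigma$ in the face poset of $L$; with a clumsier order a bad vertex can become maximal among the survivors, or acquire several covers, and then fail to be dominated (a point one notices quickly on examples such as two triangles glued along an edge). Once the order above is fixed, the remaining verification is a finite, purely order‑theoretic bookkeeping, and the reduction to the elementary case together with the identification of $\Sd K$ as a full subcomplex are routine.
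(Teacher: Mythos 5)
Your proof is correct, and it cannot be compared line by line with anything in the paper: the paper does not prove this lemma at all, it simply cites Matsushita's Corollary 4.11. What you have written is essentially the standard direct argument behind that citation (in the style of Barmak--Minian's strong homotopy theory): reduce to a single elementary collapse $K = L \setminus v$ with $v$ dominated by $v'$, observe that $\Sd K$ is the full subcomplex of $\Sd L$ on the simplices avoiding $v$, and delete the remaining vertices of $\Sd L$ in an order that keeps each one dominated at the moment of deletion; your split into $G_1$ (deleted in decreasing cardinality, each $\sigma$ dominated by $\sigma \cup \{v'\}$, which lies in $L$ by the domination hypothesis applied to a facet containing $\sigma$) and $G_2$ (deleted in increasing cardinality, each $\sigma$ dominated by $\sigma \setminus \{v\}$) does the job, and the checks go through. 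Two small points of calibration. First, ``unique cover (cocover) among survivors'' is not by itself the reason domination holds (a maximal chain through $\sigma$ need not pass through a cover of $\sigma$); what actually carries the argument are the stronger containment facts you proved --- when $\sigma \in G_1$ is deleted every surviving $\rho \supsetneq \sigma$ contains $\sigma \cup \{v'\}$, and when $\sigma \in G_2$ is deleted every surviving $\rho \subsetneq \sigma$ is contained in $\sigma \setminus \{v\}$ --- from which any maximal chain of survivors through $\sigma$ that misses $\tau$ could be enlarged by inserting $\tau$, contradicting maximality; it is cleaner to state the conclusion that way. Second, your procedure is a \emph{finite} sequence of elementary collapses (and the decreasing-cardinality phase has a place to start) only when the set of simplices of $L$ containing $v$ is finite; since the paper's definition of strong collapse demands a finite sequence, the lemma should be read in the finite setting, which is both Matsushita's setting and the only one the paper uses (the pairs $\Sd^2\,\mathbf{\Lambda}^n_k \subseteq \Sd^2\,\bDelta^n$). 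With those caveats recorded, your explicit-order construction is a valid, self-contained replacement for the citation.
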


\begin{Def}
Given a simplicial complex $K$ and a simplex $\sigma \in K$, the \textbf{closed star} of $\sigma$ is the subcomplex $\st_K(\sigma) \subseteq K$ defined by
\begin{equation*}
    \st_K(\sigma) = \{ \tau \in K \, | \, \sigma \cup \tau \in K \}.
\end{equation*}
Given a simplicial complex $L$ and a subcomplex $K \subseteq L$, the \textbf{neighborhood} of $K$ in $L$, written $n_L(K)$ is the subcomplex
\begin{equation*}
    n_L(K) = \bigcup_{v \in V(K)} \st_L(v).
\end{equation*}
We say that the pair $(L, K)$ is a \textbf{strong NDR pair} if there exists a subcomplex $L' \subseteq L$ such that $n_L(K) \subseteq L'$, and such that $L'$ strong collapses to $K$.

We say that $(M, K)$ is an \textbf{$\times$-NDR pair} if there exists a subcomplex $L \subseteq M$ such that $n_M(K) \subseteq L$ and such that $K$ is an $\times$-deformation retract of $L$. We say that the inclusion map $i : K \hookrightarrow M$ is the \textbf{inclusion of an $\times$-NDR pair}, or we may just call it a $\times$-deformation retract.
\end{Def}

\begin{Rem}
Note that if $K \subseteq L$ is a subcomplex and $\sigma \in n_L(K)$, then there must exist a $v \in K$ such that $\sigma \in \st_L(v)$. Also note that if $(L, K)$ is a strong NDR pair, then it is also an $\times$-NDR pair by Lemma \ref{lem strong collapse is x-def retract}.
\end{Rem}

\begin{Lemma} \label{lem pushout of deformation retract}
Suppose that
\begin{equation*}
    \begin{tikzcd}
	K & X \\
	L & Y
	\arrow["u", from=1-1, to=1-2]
	\arrow["i"', hook, from=1-1, to=2-1]
	\arrow["j", from=1-2, to=2-2]
	\arrow["v"', from=2-1, to=2-2]
	\arrow["\lrcorner"{anchor=center, pos=0.125, rotate=180}, draw=none, from=2-2, to=1-1]
\end{tikzcd}
\end{equation*}
is a pushout square in $\ncat{Cpx}$ where $i$ is the inclusion of an $\times$-deformation retract. Then $j$ is the inclusion a $\times$-deformation retract.
\end{Lemma}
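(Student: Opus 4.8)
\emph{Proof idea.} The plan is to construct the retraction and the (rel-$X$) deformation directly from the pushout, using two structural facts about $\ncat{Cpx}$ developed in Appendix \ref{section categories}: the pushout of a subcomplex inclusion along any map is again a subcomplex inclusion, and such a pushout is an ``honest union''. Concretely, one identifies $Y$ with the simplicial complex $X\cup L$ on the amalgamated vertex set $V(X)\cup_{V(K)}V(L)$ whose faces are exactly the faces of $X$ together with the faces of $L$, glued along $K$. In particular $j:X\to Y$ is a subcomplex inclusion — so asking it to be the inclusion of an $\times$-deformation retract is meaningful — and, crucially, every face of $Y$ lies entirely in $V(X)$ or entirely in $V(L)$. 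The same description survives taking the categorical product with $I_n$: a face of $Y\times I_n$ projects to a face of $Y$, hence lies in $V(X)$ or in $V(L)$, and therefore lies in $X\times I_n$ or in $L\times I_n$; so
\[
Y\times I_n\;\cong\;(X\times I_n)\cup_{K\times I_n}(L\times I_n)
\]
is again a pushout square in $\ncat{Cpx}$.

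Now let $r:L\to K$ be the retraction with $ri=1_K$, and $H:L\times I_n\to L$ the $\times$-homotopy witnessing the deformation retraction, so $H(-,0)=1_L$, $H(-,n)=ir$, and $H(x,m)=x$ for all $x\in V(K)$ and all $m$. Applying the universal property of the original pushout to $1_X:X\to X$ and $ur:L\to X$ — which agree on $K$ since $uri=u$ — gives $\rho:Y\to X$ with $\rho j=1_X$ and $\rho v=ur$. Applying the universal property of the pushout $Y\times I_n\cong(X\times I_n)\cup_{K\times I_n}(L\times I_n)$ to $j\circ\pi_X:X\times I_n\to Y$ (with $\pi_X$ the projection) and $v\circ H:L\times I_n\to Y$ — which agree on $K\times I_n$ since $H$ is constant on $K$, so that $v(H(k,m))=v(i(k))=j(u(k))=j(\pi_X(u(k),m))$ — gives a map $\bar H:Y\times I_n\to Y$ with $\bar H\circ(j\times 1)=j\circ\pi_X$ and $\bar H\circ(v\times 1)=v\circ H$.

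It remains to verify the defining conditions. Since $\pi_X(-,0)=1_X$ and $H(-,0)=1_L$, the map $\bar H(-,0)$ agrees with $j$ on $X$ and with $v$ on $L$, so $\bar H(-,0)=1_Y$. Since $H(-,n)=ir$, for $\ell\in V(L)$ we get $\bar H(\ell,n)=v(i(r(\ell)))=j(u(r(\ell)))=j(\rho(v(\ell)))$, and for $x\in V(X)$ we get $\bar H(x,n)=j(x)=j(\rho(x))$; hence $\bar H(-,n)=j\rho$. Finally $\bar H(x,m)=j(\pi_X(x,m))=x$ for all $x\in V(X)$ and all $m$. Thus $\rho$ and $\bar H$ exhibit $X$ as an $\times$-deformation retract of $Y$, i.e. $j$ is the inclusion of an $\times$-deformation retract.

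The one non-formal point is the first paragraph: I must know how pushouts along subcomplex inclusions look in $\ncat{Cpx}$ — in particular that they create no face straddling $V(X)\setminus V(K)$ and $V(L)\setminus V(K)$ — and that this description is compatible with the (somewhat delicate) categorical product, so that $-\times I_n$ carries the pushout square to a pushout square. This is exactly where the hypothesis that $i$ is a monomorphism is used; everything after it is a diagram chase with the two universal properties. I would either cite the relevant lemmas from Appendix \ref{section categories} or include a short self-contained verification along the lines sketched above.
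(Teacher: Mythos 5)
Your construction is correct and is essentially the argument the paper gives: show $j$ is a subcomplex inclusion, obtain the retraction $\rho$ from the universal property of the original pushout applied to $1_X$ and $ur$, observe that $-\times I_n$ carries the square to a pushout, and then glue $j\circ\pi_X$ with $v\circ H$ (using that $H$ is constant on $K$) to get the homotopy, finishing with the same pointwise verification. The one place you diverge is the "non-formal point" you flag at the end, and there the paper's route is both shorter and safer: it deduces that $j$ is a monomorphism from the fact that $V:\ncat{Cpx}\to\ncat{Set}$ preserves colimits and pushouts of monomorphisms in $\ncat{Set}$ are monomorphisms, and it deduces that $-\times I_n$ preserves the pushout purely from cartesian closedness of $\ncat{Cpx}$ (so $-\times I_n$ is a left adjoint), with no combinatorial analysis of $Y$ at all. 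Your hands-on justification can be made to work via Example \ref{ex pushouts along mono of simplicial complexes}, but as sketched it has a subtle imprecision: from "a face of $Y\times I_n$ projects to a face of $Y$ whose vertices lie in $V(X)$ or in $V(L)$" you cannot conclude directly that it lies in $X\times I_n$ or $L\times I_n$, because $j(X)$ (and likewise $i(K)$ in $L$) need not be an \emph{induced} subcomplex, so a face of $Y$ whose vertex set happens to sit inside $V(X)$ need not be a face of $X$. The correct hands-on statement is that every face of $Y$ is the image of a face of $L$ or of a face of $X$, and such a face of $Y\times I_n$ can then be lifted to a face of $L\times I_n$ or $X\times I_n$; alternatively, just invoke cartesian closedness as the paper does and delete the combinatorics.
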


\begin{proof}
By Corollary \ref{cor monos and epis of simplicial complexes}, a map $f : K \to L$ is a monomorphism if and only if $V(f) : V(K) \to V(L)$ is a monomorphism of sets. Now the forgetful functor $V : \ncat{Cpx} \to \ncat{Set}$ preserves colimits because it is a left adjoint. Pushouts of monomorphisms in $\ncat{Set}$ are monomorphisms, hence pushouts of monomorphisms in $\ncat{Cpx}$ are monomorphisms. Thus $j$ is a monomorphism of simplicial complexes. We now want to show that $j$ is an $\times$-deformation retract. We have the following solid commutative diagram
\begin{equation*}
    \begin{tikzcd}
	K & X \\
	L & Y \\
	K & X
	\arrow["u", from=1-1, to=1-2]
	\arrow["i", hook, from=1-1, to=2-1]
	\arrow["{1_K}"', curve={height=18pt}, from=1-1, to=3-1]
	\arrow["j"', hook, from=1-2, to=2-2]
	\arrow["{1_X}", curve={height=-18pt}, from=1-2, to=3-2]
	\arrow["v", from=2-1, to=2-2]
	\arrow["r", from=2-1, to=3-1]
	\arrow["\lrcorner"{anchor=center, pos=0.125, rotate=180}, draw=none, from=2-2, to=1-1]
	\arrow["s"', dashed, from=2-2, to=3-2]
	\arrow["u"', from=3-1, to=3-2]
\end{tikzcd}
\end{equation*}
and by the universal property of the pushout, we obtain the unique dotted map $s$ making the diagram commute. By the pushout pasting Lemma \ref{lem pushout pasting lemma}, $s$ is also a pushout, since the top square is a pushout and the outer rectangle is a pushout.

Now let $H : L \times I_n \to L$ denote an $\times$-homotopy witnessing $K$ as an $\times$-deformation retract of $L$, i.e. $H : ir \simeq_n 1_L$. Since $\ncat{Cpx}$ is Cartesian closed, $(-) \times I_n$ preserves colimits. Hence
\begin{equation*}
    \begin{tikzcd}
	{K \times I_n} & {X \times I_n} \\
	{L \times I_n} & {Y \times I_n}
	\arrow["{u \times 1_{I_n}}", from=1-1, to=1-2]
	\arrow["{i \times 1_{I_n}}"', hook, from=1-1, to=2-1]
	\arrow["{j \times 1_{I_n}}", hook, from=1-2, to=2-2]
	\arrow["{v \times 1_{I_n}}"', from=2-1, to=2-2]
	\arrow["\lrcorner"{anchor=center, pos=0.125, rotate=180}, draw=none, from=2-2, to=1-1]
\end{tikzcd}
\end{equation*}
is a pushout square. We have maps $v H : L \times I_n \to Y$ and $j \, \text{proj}_X : X \times I_n \to Y$ such that for $k \in V(K)$ and $m \in V(I_n)$ we have
\begin{equation*}
vH (i \times 1_{I_n})(k,m) = vH(k,m) = vi(k) = ju(k) = j \text{proj}_X (u \times 1_{I_n})(k,m).
\end{equation*}
hence by the universal property of the pushout we obtain a unique map $H' : Y \times I_n \to Y$ making the following diagram commute
\begin{equation*}
 \begin{tikzcd}
	{K \times I_n} & {X \times I_n} \\
	{L \times I_n} & {Y \times I_n} \\
	&& Y
	\arrow["{{u \times 1_{I_n}}}", from=1-1, to=1-2]
	\arrow["{{i \times 1_{I_n}}}"', hook, from=1-1, to=2-1]
	\arrow["{{j \times 1_{I_n}}}", hook, from=1-2, to=2-2]
	\arrow["{j \text{proj}_X}", curve={height=-12pt}, from=1-2, to=3-3]
	\arrow["{{v \times 1_{I_n}}}"', from=2-1, to=2-2]
	\arrow["vH"', curve={height=12pt}, from=2-1, to=3-3]
	\arrow["\lrcorner"{anchor=center, pos=0.125, rotate=180}, draw=none, from=2-2, to=1-1]
	\arrow["{H'}"', from=2-2, to=3-3]
\end{tikzcd}   
\end{equation*}
Now since $Y$ is a pushout of $L$ and $X$ along $K$, every vertex of $Y$ either comes from $L$ or $X$ or both. If $x \in V(Y)$ comes from $L$, i.e. $x = v(x')$ for $x' \in V(L)$, then \begin{equation*}
    H'(x,0) = H'(v(x'),0) = vH(x',0)= vir(x') = jur(x') = jsv(x') = js(x),
\end{equation*} and $H'(x,1) = vH(x', 1) = v(x') = x$, and if $x \in V(Y)$ comes from $X$, then $H'(x,0) = x$ and $H'(x,1) = 1$. In the case where $x$ is in both $L$ and $X$, then $x$ belongs to $K$, and clearly $H'$ is well defined in that case. Hence $H'$ defines an $\times$-deformation retract of $Y$ to $X$.
\end{proof}

The following result is vital to proving Theorem \ref{th transferred model structure from sset}, it is the key geometric result we need to understand what kind of pushouts $\Sing$ sends to homotopy pushouts, see Proposition \ref{prop sing sends NDR pairs to homotopy pushouts}.

\begin{Prop}[{\cite[Theorem 4.17]{matsushita2017box}}] \label{prop double subdivision is NDR pair}
If $K \subseteq L$ is an inclusion of a subcomplex, then $(\Sd^2(L), \Sd^2(K))$ is a strong NDR pair. 
\end{Prop}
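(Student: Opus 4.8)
The plan is to exhibit $(\Sd^2 L, \Sd^2 K)$ as a strong NDR pair by taking $L' := n_{\Sd^2 L}(\Sd^2 K)$ itself and proving that it strongly collapses to $\Sd^2 K$. The first ingredient is an explicit description of simplicial neighborhoods after a subdivision. Let $A \subseteq B$ be any inclusion of subcomplexes. For a simplex $\sigma$ of $A$, viewed as a vertex of $\Sd A \subseteq \Sd B$, the closed star $\st_{\Sd B}(\sigma)$ equals the full subcomplex of $\Sd B$ spanned by $\{\tau \in B : \tau \subseteq \sigma \text{ or } \sigma \subseteq \tau\}$: any chain of simplices of $B$ all of whose members are comparable to $\sigma$ can be interleaved with $\sigma$ to produce a chain, and conversely every member of a chain containing $\sigma$ is comparable to $\sigma$. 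Taking the union over all simplices $\sigma$ of $A$ and using that $A$ is downward closed, one obtains that $n_{\Sd B}(\Sd A)$ is the full subcomplex of $\Sd B$ spanned by the up-closed set $\{\tau \in B : \tau \text{ contains a nonempty simplex of } A\}$.

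Apply this with $(B,A) = (\Sd L, \Sd K)$. A simplex of $\Sd L$ is a nonempty chain $C = (\sigma_0 \subsetneq \cdots \subsetneq \sigma_p)$ of nonempty simplices of $L$, and the nonempty simplices of $\Sd K$ are precisely the chains whose members all lie in $K$; hence $C$ contains a nonempty simplex of $\Sd K$ if and only if some $\sigma_i \in K$, equivalently $\sigma_0 = \min C \in K$ since $K$ is downward closed. Therefore $L' = n_{\Sd^2 L}(\Sd^2 K)$ is the full subcomplex of $\Sd^2 L$ spanned by $W := \{C \in \Face(\Sd L) : \min C \in K\}$. On the other hand, identifying $\Sd^2 L$ with $\Ord(\Face(\Sd L))$, the subcomplex $\Sd^2 K$ is the full subcomplex spanned by $V := \{C \in \Face(\Sd L) : \text{every member of } C \text{ lies in } K\}$, which also equals $\{C : \max C \in K\}$, and clearly $V \subseteq W$. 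Since a full subcomplex of an order complex is the order complex of the corresponding sub-poset, it suffices to prove that $\Ord(W)$ strongly collapses onto $\Ord(V)$.

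I would do this by repeated elementary strong collapses, using the following observation: in $\Ord(Q)$, a vertex $x$ is dominated by a vertex $y$ whenever $y < x$ in $Q$ and every $z \in Q$ with $z < x$ also satisfies $z \leq y$ (indeed, any maximal chain through $x$ contains the predecessor $w$ of $x$ in that chain, and $w \leq y < x$ then forces $y$ onto the chain by maximality). The vertices of $\Ord(W)$ not in $\Ord(V)$ are exactly the chains $C = (\sigma_0 \subsetneq \cdots \subsetneq \sigma_p) \in W$ with $\sigma_p \notin K$; enumerate them in order of decreasing $\dim \sigma_p$, and within a fixed value of $\dim \sigma_p$ in order of increasing length $|C|$ (remaining ties broken arbitrarily), and delete them one at a time in this order. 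When $C$ is deleted, set $C^\flat := C \setminus \{\sigma_p\}$. Then $C^\flat$ is still present: it lies in $W$, and if it is one of the vertices being deleted then its top simplex has strictly smaller dimension, so it is scheduled strictly later. Moreover, every chain $z$ still present with $z \subsetneq C$ avoids $\sigma_p$ — otherwise $z$ lies in $W$, has top simplex $\sigma_p \notin K$ and length $|z| < |C|$, hence is one of the vertices being deleted and has already been removed — so $z \subseteq C^\flat$. By the observation, $C^\flat$ dominates $C$ at this stage, so deleting $C$ is an elementary strong collapse. After exhausting all such $C$ we are left with $\Ord(V) = \Sd^2 K$, as required.

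The crucial point is the choice of deletion order in the last step: simply ``removing the top of each chain'' does not work, because a short chain can run directly from a small face of $K$ up into $\sigma_p$, bypassing $C^\flat$, and it is precisely ordering the excess vertices by top dimension and then by length that forces such obstructions to have been removed already. The neighborhood computation and the order-complex identifications in the first two steps are then routine bookkeeping. (If $L$ is infinite this deletion list is infinite, so one should restrict attention to finite $L$, which is all that is needed in the sequel, or reinterpret ``strong collapse'' suitably.)
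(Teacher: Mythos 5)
First, note that the paper does not actually prove this proposition: it is imported verbatim from Matsushita, so your proposal is an independent argument rather than a variant of an in-paper proof. For finite complexes your argument checks out: the identification of $n_{\Sd^2 L}(\Sd^2 K)$ as the full subcomplex $\Ord(W)$ of $\Ord(\Face(\Sd L))$ with $W=\{C : \min C \in K\}$ is correct (up-closedness of the spanning set is what makes the ``full subcomplex'' description legitimate), the identification $\Sd^2 K = \Ord(V)$ with $V=\{C : \max C \in K\}$ is correct, the domination criterion in an order complex is sound (you should also cover the case where $x$ is the minimum of the maximal chain, where there is no predecessor, but that case is immediate), and the deletion order by decreasing top dimension and then increasing chain length does exactly what you claim: at the moment $C$ is deleted, every remaining proper subchain of $C$ avoids $\max C$, while $C^\flat=C\setminus\{\max C\}$ survives because its top simplex has strictly smaller dimension. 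So the finite case is a complete and correct proof.

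The genuine gap is the infinite case, which you dismiss in the final parenthesis; contrary to your remark, it is needed in the sequel: Lemma \ref{lem push out of weak equiv units is weak equiv unit} (feeding Proposition \ref{prop unit is weak equiv for all simplicial sets}) applies this proposition to $\Sd^2\,\Re X \hookrightarrow \Sd^2\,\Re Y$ for an arbitrary monomorphism of simplicial sets, where the complexes are typically infinite. Moreover the issue is not a routine reindexing: with the paper's definition, a strong collapse is a \emph{finite} sequence of single-vertex deletions, and $n_{\Sd^2 L}(\Sd^2 K)\setminus \Sd^2 K$ generally has infinitely many vertices, so no sequence of elementary collapses of the kind you use can terminate; and even if one allows a transfinite sequence, an infinite concatenation of one-step homotopies does not assemble into the finite-length $\times$-homotopy ($H : L'\times I_n \to L'$ with $n<\infty$) that the downstream notion of $\times$-NDR pair requires. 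So to replace the citation to Matsushita one must either restrict the statement (and rearrange its later uses) or give a genuinely different construction in the infinite case, e.g.\ exhibit the retraction $C\mapsto C\cap K$ together with a homotopy of uniformly bounded length, which is not supplied by your collapsing scheme.
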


\section{Transferred Model Structure on Simplicial Complexes} \label{section transfer simplicial complexes}

Let us review cofibrantly generated model categories. 

\begin{Def} \label{def cofibrantly generated model category}
Given a category $\cat{C}$ with colimits and a set $I$ of morphisms of $\cat{C}$, we let $\text{cell}(I)$ denote the class of morphisms obtained by taking transfinite composites of pushouts of maps in $I$. We call a morphism in $\text{cell}(I)$ a \textbf{relative $I$-cell complex}. We say an object $X \in \cat{C}$ is an \textbf{$I$-cell complex} if the unique map $\varnothing \to X$ from the initial object is a relative $I$-cell complex.

Let $\text{cof}(I)$ denote the class of retracts of morphisms in $\text{cell}(I)$. We say that a model category $\cat{C}$ is \textbf{cofibrantly generated} if there exist sets of morphisms $I$ and $J$, called the sets of \textbf{generating cofibrations} and \textbf{generating trivial cofibrations} respectively, such that the class of cofibrations in $\cat{C}$ is precisely $\text{cof}(I)$, the class of trivial cofibrations is precisely $\text{cof}(J)$, and both $I$ and $J$ satisfy the small object argument. 

We say that a model category $\cat{C}$ is \textbf{left proper} if weak equivalences are preserved by pushout along cofibrations, \textbf{right proper} if weak equivalences are preserves by pullback along fibrations, and \textbf{proper} if it is both left and right proper. For details on these definitions, see \cite[Section 3.1]{balchin2021handbook} and \cite[Part 3]{rezk2022introduction}.
\end{Def}

Now let us consider right-transfer of model categories. Suppose that $\cat{C}$ is a model category and we have an adjunction
\begin{equation*}
\begin{tikzcd}
	{\cat{D}} && {\cat{C}}
	\arrow[""{name=0, anchor=center, inner sep=0}, "R"', shift right=3, from=1-1, to=1-3]
	\arrow[""{name=1, anchor=center, inner sep=0}, "L"', shift right=3, from=1-3, to=1-1]
	\arrow["\dashv"{anchor=center, rotate=-90}, draw=none, from=1, to=0]
\end{tikzcd}
\end{equation*}

\begin{Def} \label{def transferred weak equivs and fibrations}
We say that a map $f : X \to Y$ in $\cat{D}$ is a 
\begin{itemize}
    \item weak equivalence if $R(f)$ is a weak equivalence in $\cat{C}$,
    \item (trivial) fibration if $R(f)$ is a (trivial) fibration in $\cat{C}$, 
    \item cofibration if $f$ has the left lifting property with respect to all trivial fibrations, and anodyne if $f$ has the left lifting property with respect to all fibrations.
\end{itemize}
\end{Def}

When certain conditions hold, the above define a model structure on $\cat{D}$, called the \textbf{right-transferred model structure} on $\cat{D}$. The following result will be the main tool we will use to transfer model structures. For details and terminology see \cite[Section 7.3]{fiore2008model}.

\begin{Prop}[{\cite[Corollary 6.1]{fiore2010thomason}}] \label{prop right-transfer}
Suppose that $L : \cat{C} \rightleftarrows \cat{D} : R$ is an adjunction where $\cat{C}$ is a cofibrantly generated model category with generating cofibrations $I$ and generating trivial cofibrations $J$, and $\cat{D}$ has all small limits and colimits. Suppose further that
\begin{enumerate}
    \item for every morphism $i \in I$ and $j \in J$, the domains of $L(i)$ and $L(j)$ are small with respect to $\cat{D}$,
    \item for any ordinal $\lambda$ and any colimit preserving functor $X : \lambda \to \cat{C}$ (which we call a $\lambda$-sequence) such that $X_{\alpha} \to X_{\alpha + 1}$ is a weak equivalence for all $\alpha + 1 < \lambda$, the transfinite composite 
    $$X_0 \to \ncolim{\beta < \lambda} X_\beta$$ is a weak equivalence,
    \item For any ordinal $\lambda$ and any colimit preserving functor $Y : \lambda \to \cat{D}$, the functor $R$ preserves the colimit of $Y$, and
    \item if $j'$ is a pushout of $L(j)$ in $\cat{D}$ for $j \in J$, then $R(j')$ is a weak equivalence in $\cat{C}$.
\end{enumerate}
Then $\cat{D}$ has a cofibrantly generated model structure with generating cofibrations $L(I)$ and generating trivial cofibrations $L(J)$. A map $f$ in $\cat{D}$ is a weak equivalence (fibration) if and only if $R(f)$ is a weak equivalence (fibration) in $\cat{C}$.
\end{Prop}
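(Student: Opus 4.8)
The plan is to verify directly that the classes of maps defined in Definition \ref{def transferred weak equivs and fibrations} satisfy the axioms of a cofibrantly generated model category on $\cat{D}$, using only formal properties of the adjunction $L \dashv R$ together with hypotheses (1)--(4). Note $\cat{D}$ is complete and cocomplete by assumption. The first thing I would record is the adjoint characterization of the two right classes: for a map $p$ in $\cat{D}$ the adjunction gives a bijection between lifting problems of $L(i)$ against $p$ and of $i$ against $R(p)$, so $p$ has the right lifting property against $L(I)$ (resp. $L(J)$) if and only if $R(p)$ has the right lifting property against $I$ (resp. $J$) in $\cat{C}$, i.e. if and only if $R(p)$ is a trivial fibration (resp. fibration). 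By Definition \ref{def transferred weak equivs and fibrations} this says precisely that the trivial fibrations of $\cat{D}$ are exactly $\mathrm{RLP}(L(I))$ and the fibrations are exactly $\mathrm{RLP}(L(J))$.

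Next I would set up the factorizations. Hypothesis (1) makes the domains of $L(I)$ and $L(J)$ small, so Quillen's small object argument applies to both and yields functorial factorizations of every map in $\cat{D}$ as a relative $L(I)$-cell complex followed by a map in $\mathrm{RLP}(L(I))$, and as a relative $L(J)$-cell complex followed by a map in $\mathrm{RLP}(L(J))$; by the previous paragraph the second factors are a trivial fibration and a fibration. The standard consequence of the small object argument identifies $\mathrm{cof}(L(I)) = \mathrm{LLP}(\mathrm{RLP}(L(I)))$ and $\mathrm{cof}(L(J)) = \mathrm{LLP}(\mathrm{RLP}(L(J)))$, so every relative $L(I)$-cell complex is a cofibration in the sense of Definition \ref{def transferred weak equivs and fibrations} and every relative $L(J)$-cell complex lies in $\mathrm{LLP}(\text{fibrations})$. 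Since $L$ is a left adjoint it preserves the coproducts, pushouts and transfinite composites used to build cells, and trivial cofibrations of $\cat{C}$ are cofibrations, so $L(J) \subseteq \mathrm{cof}(L(I))$ and hence $\mathrm{cof}(L(J)) \subseteq \mathrm{cof}(L(I))$.

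The one nonformal step is the acyclicity claim: every relative $L(J)$-cell complex is a weak equivalence. Such a map is a transfinite composite of pushouts of maps in $L(J)$ (a pushout along a coproduct of $L(J)$-maps is itself a transfinite composite of pushouts of single $L(J)$-maps, so one may reduce to that case by passing to partial colimits). Hypothesis (4) sends each individual pushout of an $L(j)$ to a weak equivalence in $\cat{C}$; hypothesis (3) says $R$ preserves the colimit of the resulting $\lambda$-sequence of partial composites, so $R \circ Y$ is again a colimit-preserving $\lambda$-sequence all of whose successor maps are weak equivalences; hence by hypothesis (2) the transfinite composite $R(Y_0) \to \mathrm{colim}\, R(Y_\beta) = R(\mathrm{colim}\, Y_\beta)$ is a weak equivalence. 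Thus $R$ of any relative $L(J)$-cell complex is a weak equivalence, so relative $L(J)$-cell complexes are exactly trivial cofibrations in the candidate structure.

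Finally I would assemble the axioms. Closure of weak equivalences and fibrations under retracts and $2$-of-$3$ follows from the corresponding facts in $\cat{C}$ since $R$ preserves composites and retracts; cofibrations are closed under retracts because they are defined by a lifting property. The factorization axioms are the two small-object-argument factorizations above, the first factor of the $L(J)$-factorization being a trivial cofibration by the acyclicity step. One lifting axiom (cofibrations against trivial fibrations) is the definition. For the other, given a trivial cofibration $i$, factor it as $i = p \circ j$ with $j$ a relative $L(J)$-cell complex and $p$ a fibration; $j$ is a weak equivalence, so $p$ is a weak equivalence by $2$-of-$3$, hence a trivial fibration, so $i$ lifts against $p$, and the retract argument exhibits $i$ as a retract of $j$; since $j \in \mathrm{LLP}(\text{fibrations})$, so is $i$. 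This establishes the model structure with generating cofibrations $L(I)$ and generating trivial cofibrations $L(J)$, and the stated description of weak equivalences and fibrations is Definition \ref{def transferred weak equivs and fibrations}. The main obstacle is bookkeeping rather than ideas: one must keep the two small-object-argument factorizations and the retract argument carefully aligned so that both lifting properties fall out; the genuinely homotopical content is entirely concentrated in the acyclicity step, which is exactly what hypotheses (2)--(4) are designed to supply.
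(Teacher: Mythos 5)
Your argument is correct and is essentially the standard transfer-lemma proof (the adjoint characterization of fibrations and trivial fibrations, two small object arguments using hypothesis (1), acyclicity of relative $L(J)$-cell complexes via hypotheses (2)--(4), and the retract argument for the remaining lifting axiom), which is precisely the content of the result the paper cites from \cite{fiore2010thomason} rather than proving itself. One harmless slip of wording: relative $L(J)$-cell complexes are trivial cofibrations, but not ``exactly'' the trivial cofibrations --- those are their retracts, which your subsequent retract argument in fact handles correctly.
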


Condition (2) of Proposition \ref{prop right-transfer} is made easier to check by the following result.

\begin{Lemma}[{\cite[Proposition 7.9]{fiore2008model}, \cite[Corollary 7.4.2]{hovey2007model}}] \label{lem transfinite composite of weak equivs}
Suppose that $\cat{C}$ is a cofibrantly generated model category such that the domains and codomains of the generating cofibrations and generating trivial cofibrations are finite. Then every transfinite composite of weak equivalences is a weak equivalence.
\end{Lemma}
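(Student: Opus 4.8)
The plan is to isolate the single place where the finiteness hypothesis does its work. Finiteness of the domains and codomains of $I$ and $J$ forces the small object arguments for both sets to close off after $\omega$ steps, producing \emph{functorial} factorizations, a functorial cofibrant replacement $q_Z\colon QZ\xrightarrow{\sim}Z$ with $q_Z$ a trivial fibration, and a functorial fibrant replacement $r_Z\colon Z\xrightarrow{\sim}RZ$ with $r_Z$ a trivial cofibration. Since $Q$ and $R$ are assembled solely from coproducts, pushouts, and $\omega$-indexed sequential colimits, and colimits commute with colimits, both $Q$ and $R$ preserve every filtered colimit, in particular every transfinite composite. First I would reduce to the case that $\lambda$ is a limit ordinal (a successor $\lambda=\mu+1$ is immediate from $2$-of-$3$ and the transfinite-induction hypothesis that $X_0\to X_\mu$ is a weak equivalence). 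Then, applying $RQ$ levelwise yields a $\lambda$-sequence $Y:=RQX$ of fibrant-cofibrant objects whose transition maps are still weak equivalences and whose colimit is $RQ\!\left(\ncolim{\beta<\lambda}X_\beta\right)$, again fibrant-cofibrant; the natural zig-zag $X\xleftarrow{\sim}QX\xrightarrow{\sim}Y$ of levelwise weak equivalences, together with $2$-of-$3$ applied to the evident commuting squares, reduces the statement to the case in which every $X_\beta$ and $\ncolim{\beta<\lambda}X_\beta$ is fibrant-cofibrant.

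In that reduced situation the colimit is fibrant for a soft reason: a map $\Lambda^n_k\to\ncolim{\beta}X_\beta$ out of a finite object factors through some $X_\beta$, where it extends over $\Delta^n$ by fibrancy. For the main point, since the domains and codomains of $I$ and $J$ are finite, hence compact, for such an object $A$ both $\cat{C}(A,-)$ and the formation of homotopies between maps out of $A$ commute with the filtered colimit $\ncolim{\beta}X_\beta$; consequently the homotopy function complex $\mathrm{map}_{\cat{C}}(A,\ncolim{\beta}X_\beta)$ is the filtered colimit $\ncolim{\beta}\,\mathrm{map}_{\cat{C}}(A,X_\beta)$, and likewise on the homotopy groups of these Kan complexes. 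Each transition $X_\alpha\to X_{\alpha+1}$ induces isomorphisms on those invariants (being a weak equivalence of fibrant objects), so the colimit over $\beta$ is already attained at $\beta=0$; feeding this through the standard fact that a map of fibrant objects inducing weak equivalences on homotopy function complexes out of the generating cells is a weak equivalence (a tower argument over the cell decomposition of an arbitrary cofibrant object), we conclude that $X_0\to\ncolim{\beta}X_\beta$ is a weak equivalence.

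The step I expect to be the real obstacle is precisely this detection argument, because of a latent circularity: the natural instinct is to replace the sequence by one of trivial cofibrations — along which a transfinite composite is automatically a trivial cofibration, hence a weak equivalence — but the comparison map at the colimit is then once more a filtered colimit of weak equivalences. The finiteness hypothesis is exactly what breaks the circle: the relevant test objects are genuinely compact, so their hom-functors and the associated homotopy invariants see \emph{every} filtered colimit, not merely colimits formed along cofibrations, and $2$-of-$3$ does the rest. In concrete cases such as $\sSet$ with the Kan--Quillen structure one may instead argue directly, using that geometric realization (a left adjoint) commutes with the colimit and that homotopy groups of CW complexes commute with filtered colimits. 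The complete argument in the generality stated is carried out in \cite{hovey2007model, fiore2008model}.
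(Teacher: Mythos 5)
The paper does not actually prove this lemma: it is imported verbatim, with citations to \cite[Corollary 7.4.2]{hovey2007model} and \cite[Proposition 7.9]{fiore2008model}, so your proposal has to be measured against those proofs. Much of your outline is sound: the reduction to a limit ordinal, the observation that finiteness makes the small object argument stop after $\omega$ steps and yields functorial replacements $Q$ and $R$ that commute with the relevant colimits, and the resulting reduction to a continuous $\lambda$-sequence of fibrant-cofibrant objects all work, provided ``finite'' is read as $\omega$-small relative to arbitrary $\lambda$-sequences (not merely sequences of cofibrations), since the transition maps of $X$ are arbitrary weak equivalences; this stronger reading is needed by any proof and does hold for the finite simplicial sets to which the paper applies the lemma.

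The genuine gap is in your last step. From finiteness of $A$ you infer that ``the formation of homotopies between maps out of $A$'' and the homotopy function complexes $\mathrm{map}(A,-)$ commute with the filtered colimit. But homotopies and function complexes are computed from a cylinder object, respectively a cosimplicial resolution, of $A$, and in a general model category the factorizations produce cylinders and resolutions on a finite object that need not be finite; compactness of $A$ alone does not make these invariants commute with the colimit. The closing detection principle (a map of fibrant objects inducing equivalences on $\mathrm{map}(A,-)$ for the generating cells is a weak equivalence) is likewise much heavier than a remark: it needs the framing machinery and a cellular induction, with extra care when the cells are not cofibrant. As written, your argument is complete only in cases like $\ncat{sSet}$, where cylinders on finite objects are finite and where your own parenthetical direct argument (realization and homotopy groups commute with filtered colimits) already covers everything the paper needs. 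The standard proof --- essentially what the cited sources do --- avoids mapping spaces altogether: build by transfinite recursion a $\lambda$-sequence $Y$ with $Y_0 = X_0$, trivial cofibrations as transition maps, and a natural transformation $p \colon Y \to X$ that is levelwise a trivial fibration, factoring $Y_\beta \to X_\beta \to X_{\beta+1}$ at successors; at a limit stage $\gamma$ the map $p_\gamma$ is still a trivial fibration because a lifting problem from $A \to B$ in $I$ factors through some stage of the two sequences (after passing far enough along that the square commutes there) --- this is the only place finiteness of the domains and codomains is used. Then $X_0 \to \colim Y$ is a transfinite composite of trivial cofibrations, $\colim Y \to \colim X$ is a trivial fibration, and $2$-of-$3$ finishes; note this route uses only lifting properties, needs no fibrant-cofibrant reduction, and in fact only requires finiteness of the cells in $I$.
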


We recall the Thomason model structure on $\ncat{Cat}$, the (large) category of small categories. Let $N : \ncat{Cat} \to \ncat{sSet}$ denote the nerve functor and $h : \ncat{sSet} \to \ncat{Cat}$ its left adjoint. 

\begin{Th}[{\cite[Theorem 4.9]{thomason1980cat}}]
The right-transferred model structure on $\ncat{Cat}$ transferred along the adjunction
\begin{equation}
\begin{tikzcd}
	{\ncat{Cat}} && {\ncat{sSet}}
	\arrow[""{name=0, anchor=center, inner sep=0}, "{{\text{Ex}^2 \, N}}"', shift right=3, from=1-1, to=1-3]
	\arrow[""{name=1, anchor=center, inner sep=0}, "{{h \, \sd^2}}"', shift right=3, from=1-3, to=1-1]
	\arrow["\dashv"{anchor=center, rotate=-90}, draw=none, from=1, to=0]
\end{tikzcd}
\end{equation}
from the Kan-Quillen model structure on $\ncat{sSet}$ exists. Furthermore, it is proper and cofibrantly generated.
\end{Th}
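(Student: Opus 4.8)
The plan is to apply Proposition~\ref{prop right-transfer} to the adjunction $h\,\sd^2 \dashv \text{Ex}^2\,N$, taking $\cat{C} = \ncat{sSet}$ with the Kan--Quillen model structure, $\cat{D} = \ncat{Cat}$, candidate generating cofibrations $I = \{\, h\,\sd^2\partial\Delta^n \hookrightarrow h\,\sd^2\Delta^n \,\}_{n \geq 0}$, and candidate generating trivial cofibrations $J = \{\, h\,\sd^2\Lambda^n_k \hookrightarrow h\,\sd^2\Delta^n \,\}_{1 \leq k \leq n}$. The category $\ncat{Cat}$ is complete and cocomplete, and both $\ncat{sSet}$ and $\ncat{Cat}$ are locally finitely presentable. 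The functor $N$ is finitary, since $N(\cat{A})_n = \ncat{Cat}([n], \cat{A})$ and the category $[n] = (0 \to 1 \to \cdots \to n)$ is finitely presentable; likewise $\text{Ex}$ is finitary, since $\text{Ex}(X)_n = \ncat{sSet}(\sd\Delta^n, X)$ with $\sd\Delta^n$ a finite simplicial set. Hence $\text{Ex}^2\,N$ preserves filtered colimits, in particular transfinite composites, which is condition (3). Since $N$ is finitary, its left adjoint $h$ preserves finite presentability, and $\sd$ preserves finiteness of simplicial sets; so the domains of $L(i)$ and $L(j)$ are finitely presentable, hence small, in $\ncat{Cat}$, which is condition (1). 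Condition (2) is Lemma~\ref{lem transfinite composite of weak equivs} applied to the Kan--Quillen model structure, whose generators $\partial\Delta^n \hookrightarrow \Delta^n$ and $\Lambda^n_k \hookrightarrow \Delta^n$ have finite domains and codomains.

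The substantive condition is (4): given the pushout $j' : A \to B$ in $\ncat{Cat}$ of some $h\,\sd^2\Lambda^n_k \hookrightarrow h\,\sd^2\Delta^n$ along an arbitrary map $h\,\sd^2\Lambda^n_k \to A$, one must show that $\text{Ex}^2\,N(j')$ is a weak equivalence. Because the natural map $X \to \text{Ex}\,X$ is a trivial cofibration, applying $2$-of-$3$ to the naturality square of $1_{\ncat{sSet}} \to \text{Ex}^2$ shows that $\text{Ex}^2$ both preserves and reflects weak equivalences; so it suffices to prove that $N(j') : NA \to NB$ is a weak equivalence. Here I would invoke the theory of \emph{Dwyer maps}. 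The inclusion $h\,\sd^2\Lambda^n_k \hookrightarrow h\,\sd^2\Delta^n$ is a Dwyer map of small categories --- this is the key combinatorial input of \cite{thomason1980cat}, the counterpart for $\ncat{Cat}$ of the strong NDR structure produced for simplicial complexes by Proposition~\ref{prop double subdivision is NDR pair} --- and the nerve functor sends any Dwyer map to a monomorphism of simplicial sets and sends any pushout along a Dwyer map to a pushout of simplicial sets. Moreover $N(h\,\sd^2\Lambda^n_k) \to N(h\,\sd^2\Delta^n)$ is a weak equivalence: by the Thomason--Fritsch--Latch theorem \cite{thomason1980cat, fritsch1981homotopy}, the canonical map $N h\,\sd^2 Y \to Y$ is a natural weak equivalence, and $\Lambda^n_k \hookrightarrow \Delta^n$ is anodyne, so $2$-of-$3$ gives the claim. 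It follows that $N(h\,\sd^2\Lambda^n_k) \to N(h\,\sd^2\Delta^n)$ is a trivial cofibration of simplicial sets, and $N(j') : NA \to NB$, being a cobase change of it, is again a trivial cofibration, hence in particular a weak equivalence. This verifies (4), and Proposition~\ref{prop right-transfer} then produces the cofibrantly generated model structure with generating cofibrations $I$ and generating trivial cofibrations $J$.

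It remains to establish properness. Right properness is formal: $\text{Ex}^2\,N$ is a composite of right adjoints, hence preserves pullbacks, and by construction a map in $\ncat{Cat}$ is a fibration (resp.\ weak equivalence) exactly when its image under $\text{Ex}^2\,N$ is; since the Kan--Quillen model structure is right proper, so is the transferred one. Left properness again rests on the Dwyer-map input: each generating cofibration $h\,\sd^2\partial\Delta^n \hookrightarrow h\,\sd^2\Delta^n$ is a Dwyer map, Dwyer maps are stable under pushout and transfinite composition, and $N$ preserves pushouts along Dwyer maps and sends them to monomorphisms, so a cell-complex-plus-retract induction reduces left properness of the transferred structure to left properness of $\ncat{sSet}$ (via the fact that a pushout of a weak equivalence along a cofibration in $\ncat{sSet}$ is a weak equivalence). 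I expect the whole difficulty to sit in condition (4): the purely combinatorial fact that double barycentric subdivision converts anodyne inclusions into Dwyer maps that behave well under the pushouts occurring in relative cell complexes is the delicate heart of Thomason's argument in \cite{thomason1980cat} (and is historically the point that required a later correction, concerning the stability of Dwyer maps under pushout).
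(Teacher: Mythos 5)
First, note that the paper does not prove this statement at all: it is recalled verbatim from Thomason and cited to \cite[Theorem 4.9]{thomason1980cat}, so there is no in-paper argument to compare against. Your proposal is an outline of Thomason's original proof routed through Proposition \ref{prop right-transfer}, and its overall architecture (smallness and finitarity of $\text{Ex}^2\,N$ for conditions (1)--(3), Dwyer maps for condition (4) and for properness, the Thomason--Fritsch--Latch equivalence $Nh\,\sd^2 Y \to Y$) is the right one.

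There is, however, one genuine error in the way you discharge condition (4): you assert that the nerve ``sends any pushout along a Dwyer map to a pushout of simplicial sets,'' and then conclude that $N(j')$ is a cobase change of the trivial cofibration $N(h\,\sd^2\Lambda^n_k) \to N(h\,\sd^2\Delta^n)$. That preservation statement is false: even for a sieve inclusion $A \hookrightarrow B$, an $n$-chain in the pushout $B \cup_A C$ can mix morphisms of $C$ with morphisms of $B$ through formal composites and need not lie in $NB \cup_{NA} NC$, so $N$ does not preserve these pushouts strictly. What is true --- and this is precisely the delicate heart of Thomason's proof, his Proposition 4.3, the $\ncat{Cat}$-analogue of Proposition \ref{prop sing sends NDR pairs to homotopy pushouts} here --- is that $N$ sends a pushout along a Dwyer map to a \emph{homotopy} cocartesian square. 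Your conclusion survives, but via the gluing argument (a homotopy pushout of a weak equivalence is a weak equivalence), not via cobase change of a trivial cofibration; and the same repair is needed where you invoke strict preservation of pushouts in the left-properness argument. A minor historical point: the later correction (Cisinski) concerns the failure of Dwyer maps to be closed under \emph{retracts} (whence pseudo-Dwyer maps), not under pushouts; pushout- and transfinite-composition-stability are fine, but your ``cell-complex-plus-retract induction'' is exactly the step where this matters and should be handled as in Proposition \ref{prop sing sends pushouts of cofibrations to homotopy pushouts}, by a retract argument at the level of the comparison maps rather than by claiming the retract is again a Dwyer map.
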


Similarly, we obtain an adjunction
\begin{equation} \label{eq thomason adjunction cpx and sset}
\begin{tikzcd}
	{\ncat{Cpx}} && {\ncat{sSet}}
	\arrow[""{name=0, anchor=center, inner sep=0}, "{{\tSing}}"', shift right=3, from=1-1, to=1-3]
	\arrow[""{name=1, anchor=center, inner sep=0}, "{\tRe}"', shift right=3, from=1-3, to=1-1]
	\arrow["\dashv"{anchor=center, rotate=-90}, draw=none, from=1, to=0]
\end{tikzcd}
\end{equation}
where we let
\begin{equation}
    \tRe = \Re \, \sd^2, \qquad \qquad \tSing = \text{Ex}^2 \Sing.
\end{equation}

Now let us transfer the Kan-Quillen model structure on $\ncat{sSet}$ to obtain an analogue of the Thomason model structure on $\ncat{Cpx}$.

Recall that the Kan-Quillen model structure on $\ncat{sSet}$ is cofibrantly generated with generating (trivial) cofibrations 
\begin{equation} \label{eq generating cofibs for sset}
    \begin{aligned}
        I & = \{\partial \Delta^n \hookrightarrow \Delta^n \, | \, n \geq 0 \} \\
        J & = \{\Lambda^n_k \hookrightarrow \Delta^n \, | \, n \geq 1, \, 0 \leq k \leq n \}
    \end{aligned}
\end{equation}

Hence we want to show that $\ncat{Cpx}$ obtains a right-transferred cofibrantly generated model structure with generating (trivial) cofibrations
\begin{equation} \label{eq gen cofibs for cpx}
\begin{aligned}
   \tRe(I) & = \{ \tRe \partial \Delta^n \to \tRe \Delta^n \, | \, n \geq 0 \} \cong \{ \Sd^2 \partial \u{\Delta}^n \hookrightarrow \Sd^2 \bDelta^n \, | \, n \geq 0 \}, \\
   \tRe(J) & = \{ \tRe \Lambda^n_k \to \tRe \Delta^n \, | \, n \geq 1, \, 0 \leq k \leq n \} \cong \{ \Sd^2 \u{\Lambda}^n_k \hookrightarrow \Sd^2 \bDelta^n \, | \, n \geq 1, \, 0 \leq k \leq n \}
\end{aligned}
\end{equation}
where the right-hand isomorphisms follow from Lemma \ref{lem realization preserves subdivision}.
    
Note that $\u{\Lambda}^n_k \cong \u{\Lambda}^n_0$ for every $0 \leq k \leq n$, since they are unordered simplicial complexes. Hence we can equivalently replace the above with
\begin{equation*}
    \tRe(J) = \{ \Sd^2  \u{\Lambda}^n_0 \hookrightarrow \Sd^2 \bDelta^n \, | \, n \geq 1 \}.
\end{equation*}

The first three conditions of Proposition \ref{prop right-transfer} are relatively easy to check. It is condition (4) that requires a good deal of work. The next two results are needed to establish (4).

\begin{Rem}
The statements and proofs of Proposition \ref{prop sing sends NDR pairs to homotopy pushouts} and Lemma \ref{lem unit maps are weak equivs} resemble but are not identical to \cite[Theorem 4.23]{matsushita2017box} and \cite[Lemma 3.10]{matsushita2017box} respectively.
\end{Rem}

\begin{Prop} \label{prop sing sends NDR pairs to homotopy pushouts}
Suppose that
\begin{equation} \label{eq original pushout square Sing proof}
    \begin{tikzcd}
	K & X \\
	L & Y
	\arrow["u", from=1-1, to=1-2]
	\arrow["i"', hook, from=1-1, to=2-1]
	\arrow["f", hook, from=1-2, to=2-2]
	\arrow["v"', from=2-1, to=2-2]
	\arrow["\lrcorner"{anchor=center, pos=0.125, rotate=180}, draw=none, from=2-2, to=1-1]
\end{tikzcd}
\end{equation}
is a pushout diagram in $\ncat{Cpx}$ where $i : K \to L$ is the inclusion of an $\times$-NDR pair of simplicial complexes. Then the commutative diagram
\begin{equation} \label{eq sing of original pushout square Sing proof}
    \begin{tikzcd}
	{\Sing(K)} & {\Sing(X)} \\
	{\Sing(L)} & {\Sing(Y)}
	\arrow["{\Sing(u)}", from=1-1, to=1-2]
	\arrow["{\Sing(i)}"', hook, from=1-1, to=2-1]
	\arrow["{\Sing(f)}", hook, from=1-2, to=2-2]
	\arrow["{\Sing(v)}"', from=2-1, to=2-2]
\end{tikzcd}
\end{equation}
is a homotopy pushout in the Kan-Quillen model structure on $\ncat{sSet}$, i.e. the canonical map
\begin{equation*}
    \Sing(X) +_{\Sing(K)} \Sing(L) \to \Sing(Y)
\end{equation*}
is a weak equivalence.
\end{Prop}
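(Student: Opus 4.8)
The plan is to reduce the statement to checking that $\Sing$ takes the inclusion $i : K \hookrightarrow L$ of an $\times$-NDR pair to a cofibration of simplicial sets that is moreover "good enough" for the pushout to be a homotopy pushout. Since $\Sing$ is fully faithful and $i$ is a monomorphism of simplicial complexes, $\Sing(i)$ is a monomorphism of simplicial sets, hence a cofibration in the Kan–Quillen model structure; likewise $\Sing(f)$ is a cofibration. So the right-hand square \eqref{eq sing of original pushout square Sing proof} is a pushout square of simplicial sets along a cofibration. By the standard gluing lemma (left properness of $\ncat{sSet}$, or the cube lemma), it suffices to show that the square is a homotopy pushout, and for a pushout along a cofibration this is automatic \emph{provided} we know the square is actually a pushout in $\ncat{sSet}$. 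This is the first real point: $\Sing$ is a right adjoint, so it need not preserve the pushout \eqref{eq original pushout square Sing proof}. Thus the canonical comparison map $\Sing(X) +_{\Sing(K)} \Sing(L) \to \Sing(Y)$ is exactly the map whose weak-equivalence status we must establish, and it is generally not an isomorphism.

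First I would unwind the $\times$-NDR hypothesis: there is a subcomplex $L' \subseteq L$ with $n_L(K) \subseteq L'$ and with $K$ an $\times$-deformation retract of $L'$. The key geometric input is that the portion of $L$ outside the neighborhood $n_L(K)$ is glued to $K$ only along $n_L(K) \setminus$ (something), so that the pushout $Y$ decomposes as a union of $X$ (the image of $\Sing$-controlled part) and a copy of $L \setminus K$ attached along the link. Concretely I would write $Y$ as an iterated pushout: first glue $X$ to $L'$ along $K$ (this is a deformation-retract-type gluing, handled by Lemma \ref{lem pushout of deformation retract}, which tells us $\Sing(X) \hookrightarrow \Sing(X +_K L')$ is again an $\times$-deformation retract up to $\Sing$, hence a weak equivalence), and then glue on the rest of $L$ along $L'$. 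The point of requiring $n_L(K) \subseteq L'$ is that gluing on $L \setminus L'$ introduces no new simplices touching $K$, so on the level of $\Sing$ the second gluing is a pushout that $\Sing$ \emph{does} preserve — because every simplex $\bDelta^n \to Y$ factoring through neither $X$ nor $L'$ must land entirely in the "new" part, by the neighborhood condition. This is the crux and is where Matsushita's argument in \cite[Theorem 4.23]{matsushita2017box} does the work.

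So the key steps, in order, are: (1) observe $\Sing(i)$, $\Sing(f)$ are cofibrations, so by left properness it suffices to prove \eqref{eq sing of original pushout square Sing proof} is a homotopy pushout, equivalently that the comparison map to $\Sing(Y)$ is a weak equivalence; (2) use the $\times$-NDR structure to factor the pushout $K \hookrightarrow L$, $K \to X$ through $L'$, and show $\Sing(X) \to \Sing(X +_K L')$ is a weak equivalence using that $K \hookrightarrow L'$ is an $\times$-deformation retract together with Lemma \ref{lem pushout of deformation retract} and Proposition \ref{prop Sing preserves x-homotopy equivalence}; (3) show that $\Sing$ preserves the remaining pushout $L' \hookrightarrow L$, $L' \to (X +_K L')$ on the nose, using the neighborhood condition $n_L(K) \subseteq L'$ to argue that every simplex of the pushed-out complex $Y$ lifts to either $X +_K L'$ or $L$ compatibly; (4) conclude via $2$-of-$3$ and the gluing lemma. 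The main obstacle is step (3): carefully justifying that $\Sing$ commutes with that particular pushout, which amounts to a combinatorial check that a simplex $\bDelta^n \to Y$ whose vertex set meets both the $X$-part and the $(L\setminus L')$-part is impossible because any such simplex would have to lie in $n_L(K) \subseteq L'$. Everything else is formal model-category bookkeeping.
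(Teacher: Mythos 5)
Your proposal follows essentially the same route as the paper's proof: factor the pushout through the subcomplex $L'$ provided by the $\times$-NDR structure, use Lemma \ref{lem pushout of deformation retract} and Proposition \ref{prop Sing preserves x-homotopy equivalence} to see that $\Sing(X)\to\Sing(X+_K L')$ is a weak equivalence, show via the neighborhood condition $n_L(K)\subseteq L'$ that $\Sing$ preserves the remaining pushout along $L'\hookrightarrow L$ strictly (the paper does this by proving $n_Y(X)\subseteq X'$ and checking the degreewise bijection, which is exactly your "crux" step (3)), and conclude by the gluing lemma and $2$-of-$3$. The plan is correct and matches the paper's argument.
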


\begin{proof}
Since $(L, K)$ is an $\times$-NDR pair of simplicial complexes, there exists a subcomplex $L'$ of $L$ containing $n_L(K)$ such that $K$ is an $\times$-deformation retract of $L'$.

Consider the pushout
\begin{equation*}
\begin{tikzcd}
	K & X \\
	{L'} & {X'}
	\arrow["u", from=1-1, to=1-2]
	\arrow["{i'}"', hook, from=1-1, to=2-1]
	\arrow["f'", hook, from=1-2, to=2-2]
	\arrow["\ell"', from=2-1, to=2-2]
	\arrow["\lrcorner"{anchor=center, pos=0.125, rotate=180}, draw=none, from=2-2, to=1-1]
\end{tikzcd}    
\end{equation*}
By Lemma \ref{lem pushout of deformation retract}, $f' : X \to X'$ is the inclusion of an $\times$-deformation retract. Let us show that $n_Y(X) \subseteq X'$. First, let us identify $X$ and $X'$ with their images $f(X) \subseteq f'(X') \subseteq Y$. Suppose that $x \in V(X)$, then $\st_Y(x) = \{\sigma \in Y \, | \, \sigma \cup x \in Y \}$. Now suppose that $\sigma \in \st_Y(x)$. If $\sigma \cup x \in X$, then $\sigma \in X$, so $\sigma \in X'$. If $\sigma \cup x \notin X$, then since $\sigma \cup x \in Y$, and $Y \cong L +_K X$, then there exists a $\tau \in L$ such that $v(\tau) = \sigma \cup x$. But then there exists a $z \in \tau$ such that $v(z) = x$. By the construction of the pushout, Example \ref{ex pushouts along mono of simplicial complexes}, this implies there exists a $k \in V(K)$ such that $z = i(k)$ and $u(k) = x$. But then this implies that $\tau \cap K \neq \varnothing$, and $\tau$ is a simplex, so $\tau \in n_L(K)$ and hence $\tau \in L'$. Thus $\sigma \cup x \in X'$ and hence $\sigma \in X'$.

This gives us a commutative diagram consisting of the solid arrows in the following diagram
\begin{equation} \label{eq pasting pushouts Sing proof}
\begin{tikzcd}
	K & X \\
	{L'} & {X'} \\
	L & Y
	\arrow["u", from=1-1, to=1-2]
	\arrow["{i'}", hook, from=1-1, to=2-1]
	\arrow["i"', curve={height=18pt}, from=1-1, to=3-1]
	\arrow["f'"', from=1-2, to=2-2]
	\arrow["f", curve={height=-18pt}, from=1-2, to=3-2]
	\arrow["\ell", from=2-1, to=2-2]
	\arrow["{i''}", hook, from=2-1, to=3-1]
	\arrow["\lrcorner"{anchor=center, pos=0.125, rotate=180}, draw=none, from=2-2, to=1-1]
	\arrow["{f''}"', dashed, from=2-2, to=3-2]
	\arrow["v"', from=3-1, to=3-2]
\end{tikzcd}
\end{equation}
The universal property of the pushout provides us with a unique dotted map $f'' : X' \to Y$ such that the above diagram commutes. Since the top square is a pushout and the outer rectangle is a pushout, by the pushout pasting Lemma \ref{lem pushout pasting lemma}, the lower square is a pushout.

We claim that $\Sing$ sends the bottom pushout square to a pushout in $\ncat{sSet}$. Let us analyze the commutative square
\begin{equation} \label{eq strict sing pushout}
\begin{tikzcd}
	{\Sing(L')} & {\Sing(X')} \\
	{\Sing(L)} & {\Sing(Y)}
	\arrow["{\ell_*}", from=1-1, to=1-2]
	\arrow["{i''_*}"', hook, from=1-1, to=2-1]
	\arrow["{f''_*}", hook, from=1-2, to=2-2]
	\arrow["{v_*}"', from=2-1, to=2-2]
\end{tikzcd}
\end{equation}
where we've abbreviated $\Sing(g) = g_*$ for any morphism $g$. Note that $i''_*$ and $f''_*$ are monomorphisms as $\Sing$ is a right adjoint and hence preserves monomorphisms. We wish to show that this is a pushout square. Now pushouts are computed degreewise in $\ncat{sSet}$, so we need only prove that $\Sing(Y)_n \cong \Sing(L)_n +_{\Sing(L')_n} \Sing(X')_n$ for all $n \geq 0$. Now suppose that $\sigma : \bDelta^n \to Y$ is an element of $\Sing(Y)_n$. 

If the image of $\sigma$ does not intersect $X \subseteq Y$, then $\sigma$ must factor entirely through $L$. So suppose that $\sigma$ does intersect $X$. Then $\sigma \subseteq n_Y(X)$, and as shown above $n_Y(X) \subseteq X'$, so $\sigma \in X'$. Therefore the map $v_* + f''_* : \Sing(L)_n + \Sing(X')_n \to \Sing(Y)_n$ is surjective.

Now suppose that $\sigma: \bDelta^n \to L$ and $\tau : \bDelta^n \to X'$ are simplices such that $v\sigma = v_*(\sigma) = f''_*(\tau) = f'' \tau$. 

Since $v \sigma = f'' \tau$, this implies that the image of $ v\sigma$ is contained in $X'$ and $L$. Since $Y = L +_{L'} X'$, this implies that there exists a $\theta : \bDelta^n \to L'$ such that $i'' \theta = v \sigma$. But then
\begin{equation*}
    f'' \ell \theta = v i'' \theta = v \sigma = f'' \tau.
\end{equation*}
Since $f'' : X' \to Y$ is a monomorphism, this implies that $\ell \theta = \tau$. Hence $v_* + f''_*$ restricts to a bijection
\begin{equation*}
    \Sing(L)_n +_{\Sing(L')_n} \Sing(X')_n \xrightarrow{v_* + f''_*} \Sing(Y)_n.
\end{equation*}
Therefore (\ref{eq strict sing pushout}) is a pushout in $\ncat{sSet}$.

We obtain a commutative diagram
\begin{equation} \label{eq sing hoequiv of cospans}
    \begin{tikzcd}
	{\Sing(L)} & {\Sing(K)} & {\Sing(X)} \\
	{\Sing(L)} & {\Sing(L')} & {\Sing(X')}
	\arrow[equals, from=1-1, to=2-1]
	\arrow["{i_*}"', from=1-2, to=1-1]
	\arrow["{u_*}", from=1-2, to=1-3]
	\arrow["{i'_*}", from=1-2, to=2-2]
	\arrow["{f'_*}", from=1-3, to=2-3]
	\arrow["{i_*''}", from=2-2, to=2-1]
	\arrow["{\ell_*}"', from=2-2, to=2-3]
\end{tikzcd}
\end{equation}
Note that $i'$ and $m$ are $\times$-homotopy equivalences since they are inclusions of $\times$-deformation retracts, and therefore $i'_*$ and $f'_*$ are weak equivalences of simplicial sets by Proposition \ref{prop Sing preserves x-homotopy equivalence}.

Therefore (\ref{eq sing hoequiv of cospans}) is a weak equivalence of cospans, so taking homotopy pushouts gives a weak equivalence $e$ in the following commutative diagram
\begin{equation*}
    \begin{tikzcd}
	{\Sing(L) +^\text{ho}_{\Sing(K)} \Sing(X)} & {\Sing(L) +_{\Sing(K)} \Sing(X)} \\
	{\Sing(L) +^\text{ho}_{\Sing(L')} \Sing(X')} & {\Sing(Y)}
	\arrow["\simeq", from=1-1, to=1-2]
	\arrow["e"', from=1-1, to=2-1]
	\arrow[from=1-2, to=2-2]
	\arrow["\simeq"', from=2-1, to=2-2]
\end{tikzcd}
\end{equation*}
The bottom horizontal arrow is a weak equivalence since $\Sing$ preserved the bottom pushout square in (\ref{eq pasting pushouts Sing proof}), which is a pushout along a monomorphism $i''_*$, hence $\Sing(Y)$ is also a homotopy pushout. Similarly the top horizontal map above is a weak equivalence because $i_*$ is a monomorphism of simplicial sets, and hence is a homotopy pushout. Hence by $2$-of-$3$, the right hand vertical map above is a weak equivalence. Hence the commutative square (\ref{eq sing of original pushout square Sing proof}) is a homotopy pushout of simplicial sets.
\end{proof}

\begin{Rem}
Note that Proposition \ref{prop sing sends NDR pairs to homotopy pushouts} is extremely similar to \cite[Proposition 4.3]{thomason1980cat}. In that context, Thomason defines a notion of Dwyer map $f : \cat{C} \to \cat{D}$ between categories which essentially says that $f$ factors as a deformation retract along with some other compatibility conditions. What Proposition \ref{prop sing sends NDR pairs to homotopy pushouts} tells us is that in the context of simplicial complexes, the correct analog of Dwyer maps are inclusions of $\times$-NDR pairs. Proving that $N : \ncat{Cat} \to \ncat{sSet}$ sends pushouts along Dwyer maps to homotopy pushouts is the key argument to Thomason's proof. In this context, with simplicial complexes instead of categories, the proof is simpler.
\end{Rem}

The above proof along with Lemma \ref{lem pushout of deformation retract} also proves the following result.

\begin{Lemma} \label{lem pushout of ndr pair}
Suppose that
\begin{equation}
    \begin{tikzcd}
	K & X \\
	L & Y
	\arrow["u", from=1-1, to=1-2]
	\arrow["i"', hook, from=1-1, to=2-1]
	\arrow["f", hook, from=1-2, to=2-2]
	\arrow["v"', from=2-1, to=2-2]
	\arrow["\lrcorner"{anchor=center, pos=0.125, rotate=180}, draw=none, from=2-2, to=1-1]
\end{tikzcd}
\end{equation}
is a pushout diagram in $\ncat{Cpx}$ where $i : K \to L$ is the inclusion of an $\times$-NDR pair of simplicial complexes. Then $f : X \hookrightarrow Y$ is the inclusion of an $\times$-NDR pair.
\end{Lemma}

\begin{Lemma} \label{lem unit maps are weak equivs}
The unit maps
\begin{equation} \label{eq unit maps}
\begin{aligned}
& \Delta^n \xrightarrow{\eta_{\Delta^n}} \tSing \, \tRe(\Delta^n) \\
& \Lambda^n_k \xrightarrow{\eta_{\Lambda^n_k}} \tSing \, \tRe(\Lambda^n_k)
\end{aligned}
\end{equation}
are weak equivalences of simplicial sets.
\end{Lemma}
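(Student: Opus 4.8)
\emph{Proof sketch.} The plan is to avoid tracing the unit maps through the three adjunctions altogether, and instead to show that in each case the \emph{source} and the \emph{target} are weakly contractible simplicial sets. Any map between two weakly contractible objects is a weak equivalence: both map to $\Delta^0$ by weak equivalences, so by the two-out-of-three property the map between them is a weak equivalence. So it suffices to prove that $\Delta^n$, $\Lambda^n_k$, $\tSing\,\tRe(\Delta^n)$ and $\tSing\,\tRe(\Lambda^n_k)$ are all weakly contractible.

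First I would identify the targets. One checks degreewise that $\Delta^n \cong \oSing(\bDelta^n)$ and $\Lambda^n_k \cong \oSing(\mathbf{\Lambda}^n_k)$ as simplicial sets, where $\bDelta^n$ and $\mathbf{\Lambda}^n_k$ carry their standard vertex orders; the horn case uses that a weakly order-preserving $[m] \to [n]$ has image a face of $\mathbf{\Lambda}^n_k$ precisely when it factors through some $d_i$ with $i \neq k$. Iterating Lemma \ref{lem realization preserves subdivision} then gives $\sd^2 \Delta^n \cong \oSing(\Sd^2 \bDelta^n)$ and $\sd^2 \Lambda^n_k \cong \oSing(\Sd^2 \mathbf{\Lambda}^n_k)$, and applying (\ref{eq realization on simp complexes}) (equivalently, Corollary \ref{cor realization of subdivision of horns, simplices, boundaries} applied twice) yields
\begin{equation*}
    \tRe(\Delta^n) = \Re\,\sd^2\Delta^n \cong \Sd^2 \bDelta^n, \qquad \tRe(\Lambda^n_k) = \Re\,\sd^2\Lambda^n_k \cong \Sd^2 \mathbf{\Lambda}^n_k,
\end{equation*}
so that $\tSing\,\tRe(\Delta^n) \cong \text{Ex}^2\,\Sing(\Sd^2 \bDelta^n)$ and $\tSing\,\tRe(\Lambda^n_k) \cong \text{Ex}^2\,\Sing(\Sd^2 \mathbf{\Lambda}^n_k)$.

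Finally I would verify weak contractibility. By Example \ref{ex strong collapse}, both $\bDelta^n$ and $\mathbf{\Lambda}^n_k$ strongly collapse to a point; by Lemma \ref{lem sd preserves strong collapse} applied twice, so do $\Sd^2 \bDelta^n$ and $\Sd^2 \mathbf{\Lambda}^n_k$; hence by Lemma \ref{lem strong collapse is x-def retract} the inclusion of a point into each is an $\times$-homotopy equivalence, so $\Sing(\Sd^2 \bDelta^n)$ and $\Sing(\Sd^2 \mathbf{\Lambda}^n_k)$ are weakly contractible by Proposition \ref{prop Sing preserves x-homotopy equivalence}. Since $\text{Ex}$ preserves weak equivalences (as in the proof that $\sd \dashv \text{Ex}$ is Quillen) and, being a right adjoint, preserves the terminal object, applying $\text{Ex}^2$ keeps them weakly contractible; this handles both targets. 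On the source side, $\Delta^n$ is weakly contractible since $|\Delta^n|$ is convex, and $\Lambda^n_k$ is weakly contractible since it deformation retracts onto the vertex $k$ (or: run the same strong-collapse argument on $\mathbf{\Lambda}^n_k$, using $\Re(\Lambda^n_k) \cong \mathbf{\Lambda}^n_k$ and Proposition \ref{prop hoequiv of realizations}). The lemma follows by two-out-of-three. The only mildly delicate point, and the one I would spell out carefully, is the identification $\Lambda^n_k \cong \oSing(\mathbf{\Lambda}^n_k)$ of simplicial sets needed in the second step; everything else is either bookkeeping with Section \ref{section simplicial complexes} or already in hand.
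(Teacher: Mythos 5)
Your proposal is correct and follows essentially the same route as the paper: the paper's proof also reduces to weak contractibility of source and target, showing $\tSing\,\tRe(\Lambda^n_k) \cong \text{Ex}^2\,\Sing\,\Sd^2\,\mathbf{\Lambda}^n_k$ via Corollary \ref{cor realization of subdivision of horns, simplices, boundaries}, using the strong collapse of $\Sd^2\,\mathbf{\Lambda}^n_k$ to a point (Lemmas \ref{lem sd preserves strong collapse} and \ref{lem strong collapse is x-def retract}, Proposition \ref{prop Sing preserves x-homotopy equivalence}), the fact that $\text{Ex}$ preserves weak equivalences, and then concluding by $2$-of-$3$ in a square through $\Delta^0$. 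The only differences are cosmetic (you run $2$-of-$3$ over the maps to the terminal object rather than from $\Delta^0$, and you make the identification $\Lambda^n_k \cong \oSing(\mathbf{\Lambda}^n_k)$ explicit).
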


\begin{proof}
We first note that $\u{\Lambda}^n_k$ strongly collapses to the vertex $k \in V(\u{\Lambda}^n_k)$. Hence by Lemma \ref{lem sd preserves strong collapse}, $\Sd^2 \, \u{\Lambda}^n_k$ strongly collapses to $\Sd^2 \bDelta^0 \cong \bDelta^0$. Therefore the inclusion map $\bDelta^0 \xrightarrow{k} \Sd^2 \, \u{\Lambda}^n_k$ is an $\times$-homotopy equivalence by Lemma \ref{lem strong collapse is x-def retract}, and so by Proposition \ref{prop Sing preserves x-homotopy equivalence}, $k : \Delta^0 \cong \Sing \, \bDelta^0 \to \Sing \, \Sd^2 \, \u{\Lambda}^n_k$ is a weak equivalence of simplicial sets.
We have the following commutative diagram
\begin{equation*}
\begin{tikzcd}
	{\Delta^0} && {\Lambda^n_k} \\
	{\text{Sing} \, \Sd^2 \, \u{\Lambda}^n_k} & {\text{Ex}^2 \, \text{Sing} \, \Sd^2 \, \u{\Lambda}^n_k} & {\text{Ex}^2 \, \text{Sing} \, \text{Re}(\sd^2 \Lambda^n_k)}
	\arrow["\simeq", from=1-1, to=1-3]
	\arrow["\simeq"', from=1-1, to=2-1]
	\arrow["{\eta_{\Lambda^n_k}}", from=1-3, to=2-3]
	\arrow["\simeq", from=2-1, to=2-2]
	\arrow["\cong", from=2-2, to=2-3]
\end{tikzcd}
\end{equation*}
where the top horizontal map, the left vertical map and bottom left horizontal map are all weak equivalences of simplicial sets. The bottom right horizontal map is an isomorphism by Corollary \ref{cor realization of subdivision of horns, simplices, boundaries}. Hence by $2$-of-$3$, the unit map is a weak equivalence.

The proof of $\eta_{\Delta^n}$ being a weak equivalence follows similarly.
\end{proof}

Now we are ready to transfer the model structure.

\begin{Th} \label{th transferred model structure from sset}
The right-transferred model structure on $\ncat{Cpx}$ transferred along the adjunction (\ref{eq thomason adjunction cpx and sset}) from the Kan-Quillen model structure on $\ncat{sSet}$ exists.
\end{Th}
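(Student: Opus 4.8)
The plan is to verify the four hypotheses of Proposition \ref{prop right-transfer} for the adjunction $\Re\,\sd^2 \dashv \text{Ex}^2\,\Sing$, with $\cat{C} = \ncat{sSet}$ equipped with the Kan--Quillen model structure (cofibrantly generated by the sets $I,J$ of (\ref{eq generating cofibs for sset})) and $\cat{D} = \ncat{Cpx}$, which is complete and cocomplete by the results of Appendix \ref{section categories}. The conclusion then produces the model structure on $\ncat{Cpx}$ whose generating cofibrations and generating trivial cofibrations are the sets $\tRe(I)$, $\tRe(J)$ of (\ref{eq gen cofibs for cpx}).

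Conditions (1) and (3) of Proposition \ref{prop right-transfer} are routine. The domains of the maps in $\tRe(I)$ and $\tRe(J)$ are the finite simplicial complexes $\Sd^2\mathbf{\partial \Delta}^n$ and $\Sd^2\mathbf{\Lambda}^n_k$; these are finitely presentable, hence small with respect to all of $\ncat{Cpx}$, which gives (1). For (3), note that $\Sing$ and $\text{Ex}$ are each computed degreewise by homming out of a finite object ($\bDelta^n$ and $\sd\Delta^n$ respectively), and finite objects are compact in $\ncat{Cpx}$ and $\ncat{sSet}$, so $\Sing$ and $\text{Ex}$ preserve filtered colimits. A $\lambda$-sequence either has successor length, in which case its colimit is the terminal object of the sequence and is trivially preserved, or has limit length, in which case $\lambda$ is a directed (hence filtered) poset and the colimit is filtered; either way $R = \text{Ex}^2\,\Sing$ preserves it. Condition (2) is immediate from Lemma \ref{lem transfinite composite of weak equivs}, since the Kan--Quillen model structure is cofibrantly generated with finite generating data.

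The substance of the argument is condition (4). Let $j \in J$ be an inclusion $\Lambda^n_k \hookrightarrow \Delta^n$, so that $L(j) = \Re\,\sd^2(j)$ is, up to the isomorphisms of (\ref{eq gen cofibs for cpx}), the inclusion $i : \Sd^2\mathbf{\Lambda}^n_k \hookrightarrow \Sd^2\bDelta^n$, and let $f : X \hookrightarrow Y$ be a pushout of $i$ along some map $u : \Sd^2\mathbf{\Lambda}^n_k \to X$. Since $\text{Ex}^2$ preserves weak equivalences, it suffices to show $\Sing(f)$ is a weak equivalence of simplicial sets. By Proposition \ref{prop double subdivision is NDR pair}, $(\Sd^2\bDelta^n, \Sd^2\mathbf{\Lambda}^n_k)$ is a strong NDR pair, hence an $\times$-NDR pair, so Proposition \ref{prop sing sends NDR pairs to homotopy pushouts} applies: the canonical map $\Sing(X) +_{\Sing(\Sd^2\mathbf{\Lambda}^n_k)} \Sing(\Sd^2\bDelta^n) \to \Sing(Y)$ is a weak equivalence, and since $\Sing(i)$ is a monomorphism and $\ncat{sSet}$ is left proper, this strict pushout also computes the homotopy pushout. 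Next I claim $\Sing(i)$ is a weak equivalence: by Example \ref{ex strong collapse} both $\mathbf{\Lambda}^n_k$ and $\bDelta^n$ strongly collapse to a vertex, so by Lemma \ref{lem sd preserves strong collapse} so do $\Sd^2\mathbf{\Lambda}^n_k$ and $\Sd^2\bDelta^n$; by Lemma \ref{lem strong collapse is x-def retract} and Proposition \ref{prop Sing preserves x-homotopy equivalence} both $\Sing(\Sd^2\mathbf{\Lambda}^n_k)$ and $\Sing(\Sd^2\bDelta^n)$ are weakly contractible, and the vertex inclusion factors compatibly through $i$, so $\Sing(i)$ is a weak equivalence by $2$-of-$3$. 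Feeding this into the gluing lemma for homotopy pushouts, comparing the span $\Sing(\Sd^2\bDelta^n) \xleftarrow{\Sing(i)} \Sing(\Sd^2\mathbf{\Lambda}^n_k) \xrightarrow{\Sing(u)} \Sing(X)$ with $\Sing(\Sd^2\mathbf{\Lambda}^n_k) = \Sing(\Sd^2\mathbf{\Lambda}^n_k) \xrightarrow{\Sing(u)} \Sing(X)$ (a levelwise weak equivalence of spans), the coprojection $\Sing(X) \to \Sing(X) +_{\Sing(\Sd^2\mathbf{\Lambda}^n_k)} \Sing(\Sd^2\bDelta^n)$ is a weak equivalence; composing with the weak equivalence above to $\Sing(Y)$, and recognizing the composite as $\Sing(f)$, we conclude that $\Sing(f)$ is a weak equivalence. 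Hence $R(f) = \text{Ex}^2\,\Sing(f)$ is a weak equivalence, verifying (4), and Proposition \ref{prop right-transfer} yields the model structure.

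I expect the main obstacle to be concentrated entirely in condition (4), but essentially all the geometric content there has already been isolated in Propositions \ref{prop double subdivision is NDR pair} and \ref{prop sing sends NDR pairs to homotopy pushouts} (ultimately Matsushita's results); what remains is the bookkeeping around contractibility of the doubly subdivided horns and the gluing argument. The only mild care needed in (1)--(3) is the observation that $\Sing$ and $\text{Ex}$ commute with filtered colimits, where finiteness of $\bDelta^n$ and $\sd\Delta^n$ is used.
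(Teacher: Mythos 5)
Your proposal is correct, and for the bulk of the argument it coincides with the paper's proof: conditions (1)--(3) are handled by the same finiteness/smallness observations, and condition (4) rests, exactly as in the paper, on Proposition \ref{prop double subdivision is NDR pair} (the doubly subdivided horn inclusion is a strong, hence $\times$-, NDR pair) together with Proposition \ref{prop sing sends NDR pairs to homotopy pushouts} ($\Sing$ sends such pushouts to homotopy pushouts). The only divergence is in how the weak equivalence is finally extracted. The paper pastes the $\Sing$-image of the pushout square against the unit square, invokes Lemma \ref{lem unit maps are weak equivs} (the units $\eta_{\Lambda^n_k}$, $\eta_{\Delta^n}$ are weak equivalences), concludes by $2$-of-$3$ that $\tSing\tRe(\Lambda^n_k) \to \tSing\tRe(\Delta^n)$ is a weak equivalence, and then applies the Kerodon statement that a homotopy pushout of a weak equivalence is a weak equivalence. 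You instead prove directly that $\Sing(\Sd^2\mathbf{\Lambda}^n_k) \hookrightarrow \Sing(\Sd^2\bDelta^n)$ is a trivial cofibration, by the contractibility of both sides via strong collapse (Example \ref{ex strong collapse}, Lemmas \ref{lem sd preserves strong collapse} and \ref{lem strong collapse is x-def retract}, Proposition \ref{prop Sing preserves x-homotopy equivalence}), and then conclude by a gluing/cobase-change argument. The two routes use the same underlying geometric input (the strong collapses), since the paper's Lemma \ref{lem unit maps are weak equivs} is itself proved from them; your version is marginally more self-contained for this theorem, while the paper's unit-map formulation is reused later for the Quillen equivalence (Proposition \ref{prop unit is weak equiv for all simplicial sets} and Theorem \ref{th cpx quillen equiv to sset}), which is why the paper isolates it as a lemma.
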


\begin{proof}
Let us begin verifying the conditions of Proposition \ref{prop right-transfer}.

(1) The category $\ncat{Cpx}$ is a quasitopos and hence locally presentable. Therefore by \cite[Remark 1.30, 2.15]{rosicky1994locally}, every object is small with respect to the whole category. In fact the domains of $\tRe(I)$ and $\tRe(J)$ are finite with respect to the whole category.

(2) The domains and codomains of the simplicial sets in $I$ and $J$ are finite simplicial sets, hence by Lemma \ref{lem transfinite composite of weak equivs}, transfinite composites of weak equivalences are weak equivalences in $\ncat{sSet}$.

(3) We note that $\text{Ex}$ preserves $\lambda$-sequences by \cite[Proof of Theorem 6.3 part (iii)]{fiore2010thomason}. The functor $\Sing$ preserves $\lambda$-sequences for the same reason
\begin{equation*}
\begin{aligned}
\Sing(\colim_{\beta \leq \lambda} K_\beta)_n & = \ncat{Cpx}(\bDelta^n, \colim_{\beta \leq \lambda} K_\beta) \\
& \cong \colim_{\beta \leq \lambda} \ncat{Cpx}(\bDelta^n, K_\beta) \\
& = \colim_{\beta \leq \lambda} \Sing(K)_n
\end{aligned}   
\end{equation*}
namely the $n$-simplex $\bDelta^n$ is a finite object in $\ncat{Cpx}$, and hence degreewise $\Sing$ preserves $\lambda$-sequences, but colimits are computed degreewise in $\ncat{sSet}$, hence the result follows.

Now let us prove condition (4). Let us write $\tRe = \Re \, \sd^2$ and $\tSing = \text{Ex}^2 \, \Sing$. Suppose we have a pushout diagram
\begin{equation} \label{eq pushout of realizations}
\begin{tikzcd}
	{\tRe(\Lambda^n_k)} & A \\
	{\tRe(\Delta^n)} & B
	\arrow[from=1-1, to=1-2]
	\arrow["{{\tRe(j)}}"', from=1-1, to=2-1]
	\arrow["{{j'}}", from=1-2, to=2-2]
	\arrow[from=2-1, to=2-2]
	\arrow["\lrcorner"{anchor=center, pos=0.125, rotate=180}, draw=none, from=2-2, to=1-1]
\end{tikzcd} 
\end{equation}
we want to show that $\tSing(j') : \tSing(A) \to \tSing(B)$ is a weak equivalence in $\ncat{sSet}$ (equivalently $\Sing(j')$ is a weak equivalence). 

By Lemma \ref{lem realization preserves subdivision}, $\tRe(j)$ is isomorphic to the inclusion map
\begin{equation*}
\Sd^2 \, \u{\Lambda}^n_k \hookrightarrow \Sd^2 \, \bDelta^n.
\end{equation*}
Hence by Proposition \ref{prop double subdivision is NDR pair}, $\tRe(j)$ is isomorphic to the inclusion of an NDR pair. Hence by Proposition \ref{prop sing sends NDR pairs to homotopy pushouts}, applying $\tSing$ to (\ref{eq pushout of realizations}) we obtain a homotopy pushout diagram in $\ncat{sSet}$
\begin{equation} \label{eq sing pushout square}
\begin{tikzcd}
	{\tSing \, \tRe(\Lambda^n_k)} & {\tSing(A)} \\
	{\tSing \, \tRe(\Delta^n)} & {\tSing(B)}
	\arrow[from=1-1, to=1-2]
	\arrow[hook, from=1-1, to=2-1]
	\arrow[hook, from=1-2, to=2-2]
	\arrow[from=2-1, to=2-2]
\end{tikzcd}
\end{equation}
Pasting this commutative square with the unit we obtain a commutative diagram
\begin{equation} \label{eq full pasting square}
\begin{tikzcd}
	{\Lambda^n_k} & {\tSing \, \tRe(\Lambda^n_k)} & {\tSing(A)} \\
	{\Delta^n} & {\tSing \, \tRe(\Delta^n)} & {\tSing(B)}
	\arrow["{{\eta_{\Lambda^n_k}}}", from=1-1, to=1-2]
	\arrow["\simeq"', hook, from=1-1, to=2-1]
	\arrow[from=1-2, to=1-3]
	\arrow[hook, from=1-2, to=2-2]
	\arrow["{\tSing(j')}", hook, from=1-3, to=2-3]
	\arrow["{{\eta_{\Delta^n}}}"', from=2-1, to=2-2]
	\arrow[from=2-2, to=2-3]
\end{tikzcd}
\end{equation}
By Lemma \ref{lem unit maps are weak equivs}, the unit maps $\eta_{\Lambda^n_k}$ and $\eta_{\Delta^n}$ are weak equivalences. Hence by $2$-of-$3$, each map in the left hand square of (\ref{eq full pasting square}) is a weak equivalence. But since the right hand square is a homotopy pushout, by Proposition 3.4.2.10 of \cite[\href{https://kerodon.net/tag/0112}{Tag 0112}]{kerodon}, this implies that $\tSing(j')$ is a weak equivalence.
\end{proof}

\section{Properties of the Thomason Model Structure} \label{section properties of thomason}

In the previous section, we proved that the right-transfer of the Kan-Quillen model structure to $\ncat{Cpx}$ exists. In this section, we look at this model structure in more detail. We call the model structure on $\ncat{Cpx}$ that exists by Theorem \ref{th transferred model structure from sset} the \textbf{Thomason model structure} on $\ncat{Cpx}$\footnote{It is unclear whether the model structures in this paper should really be credited to Thomason or Matsushita, hence we chose to credit both of them. We name the corresponding model structures on graphs in Section \ref{section graphs} after Matsushita.}. In summary, it is defined as the model structure where a map $f : K \to L$ of simplicial complexes is a
\begin{itemize}
    \item weak equivalence if and only if $\Sing(f) : \Sing(K) \to \Sing(L)$ is a weak equivalence of simplicial sets,
    \item fibration if and only if $ \tSing(f)$ is a Kan fibration,
    \item cofibration if it is a relative $\tRe(I)$-cell complex.
\end{itemize}
We refer to these as Thomason (co)fibrations and weak equivalences, and similarly define Thomason (co)fibrant simplicial complexes. We furthermore obtain the following basic properties of this model structure obtained by right-transfer.

\begin{Cor} \label{cor properties of thomason model structure on cpx}
The Thomason model structure on $\ncat{Cpx}$ is cofibrantly generated (by Theorem \ref{prop right-transfer}) and right proper (by \cite[Lemma 4.1]{nlab:transfer}). Furthermore the adjunction (\ref{eq thomason adjunction cpx and sset}) is a Quillen adjunction.
\end{Cor}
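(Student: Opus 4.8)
The plan is to observe that all three claims in Corollary \ref{cor properties of thomason model structure on cpx} are essentially immediate consequences of general facts about right-transferred model structures, combined with Theorem \ref{th transferred model structure from sset}, which guarantees the model structure exists in the first place. So the work here is really just bookkeeping, recording which general principle applies to each clause.

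For cofibrant generation, I would simply note that Proposition \ref{prop right-transfer} (the transfer criterion of Fiore--Paoli--Pronk) does not merely assert that a model structure exists, but that it is cofibrantly generated, with generating cofibrations $\tRe(I)$ and generating trivial cofibrations $\tRe(J)$. Since Theorem \ref{th transferred model structure from sset} was proved precisely by verifying the hypotheses of that proposition, the cofibrant generation comes for free; I would just cite it. For right properness, the relevant principle is that a right-transferred model structure along a right Quillen functor $R$ into a right proper model category is again right proper: the pullback of a weak equivalence along a fibration in $\cat{D}$ is sent by $R$ (which preserves pullbacks, being a right adjoint, and preserves fibrations by definition of the transferred structure) to the pullback of a weak equivalence along a fibration in $\cat{C}$, which is a weak equivalence since $\ncat{sSet}$ is right proper; and $R$ detects weak equivalences. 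This is exactly the content of \cite[Lemma 4.1]{nlab:transfer}, so again a citation suffices, though I might spell out the one-line argument for the reader's convenience.

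For the Quillen adjunction claim, the point is that $\tSing = \text{Ex}^2 \, \Sing$ is the right adjoint along which the structure was transferred, and by the very definition of the transferred fibrations and weak equivalences (Definition \ref{def transferred weak equivs and fibrations}), $\tSing$ preserves fibrations and trivial fibrations — indeed it reflects and preserves both by fiat. Hence $(\tRe, \tSing)$ is a Quillen adjunction, essentially tautologically. I would state this in one sentence.

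I do not anticipate a genuine obstacle here, since every assertion is a packaged consequence of results already invoked; the only mild subtlety is making sure the right-properness argument genuinely only needs right properness of $\ncat{sSet}$ (true: the Kan--Quillen model structure is proper) and the fact that right adjoints preserve pullbacks together with the transferred fibrations being created by $\tSing$. If one wanted to be careful one would note that the cited \cite[Lemma 4.1]{nlab:transfer} is stated for transfers along a single right adjoint, which is exactly our situation with $R = \tSing$.

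\begin{proof}
All three assertions are formal consequences of the fact that the Thomason model structure on $\ncat{Cpx}$ is obtained by right-transfer along the adjunction (\ref{eq thomason adjunction cpx and sset}).

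Cofibrant generation is part of the conclusion of Proposition \ref{prop right-transfer}, whose hypotheses were verified in the proof of Theorem \ref{th transferred model structure from sset}: the Thomason model structure on $\ncat{Cpx}$ has generating cofibrations $\tRe(I)$ and generating trivial cofibrations $\tRe(J)$ as in (\ref{eq gen cofibs for cpx}).

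That the adjunction (\ref{eq thomason adjunction cpx and sset}) is a Quillen adjunction is immediate from the definition of the transferred model structure: by Definition \ref{def transferred weak equivs and fibrations}, a map $f$ in $\ncat{Cpx}$ is a (trivial) fibration if and only if $\tSing(f)$ is a (trivial) fibration in $\ncat{sSet}$, so in particular $\tSing = \text{Ex}^2 \, \Sing$ preserves fibrations and trivial fibrations.

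Finally, right properness follows from \cite[Lemma 4.1]{nlab:transfer}: the Kan--Quillen model structure on $\ncat{sSet}$ is right proper, and the right adjoint $\tSing$ preserves pullbacks, preserves fibrations, and detects weak equivalences, so the pullback of a Thomason weak equivalence along a Thomason fibration in $\ncat{Cpx}$ is sent by $\tSing$ to the pullback of a weak equivalence along a fibration in $\ncat{sSet}$, hence to a weak equivalence; since $\tSing$ detects weak equivalences, the original map is a Thomason weak equivalence.
\end{proof}
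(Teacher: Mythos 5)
Your proposal is correct and follows exactly the route the paper takes: the paper's corollary simply packages the cofibrant generation from Proposition \ref{prop right-transfer}, right properness from the cited transfer lemma, and the Quillen adjunction from the definition of the transferred fibrations and trivial fibrations, which is precisely what you wrote (with the right-properness argument usefully spelled out). No gaps.
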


Corollary \ref{cor properties of thomason model structure on cpx} immediately supplies us with a wealth of cofibrant objects and cofibrations.

\begin{Cor} \label{cor cofibrant complexes}
Given a simplicial set $X$, the simplicial complex $\tRe X$ is Thomason cofibrant. In particular, if $X = \oSing(K_{\leq})$ is an
ordered simplicial complex, then
\begin{equation*}
   \tRe \, \oSing(K_\leq) = \Re \, \sd^2 \oSing(K_\leq) \cong \Sd^2 K 
\end{equation*}
is Thomason cofibrant. Furthermore, given any monomorphism $i : X \hookrightarrow Y$ of simplicial sets,
the map $\tRe(i)$ is a Thomason cofibration. In particular, if $i : K_\leq \hookrightarrow L_\leq$ is a monomorphism of ordered
simplicial complexes, then $\Sd^2(i) : \Sd^2 K \hookrightarrow \Sd^2 L$ is a Thomason cofibration.
\end{Cor}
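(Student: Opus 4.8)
The plan is to observe that $\tRe = \Re\,\sd^2 \colon \ncat{sSet} \to \ncat{Cpx}$ is left Quillen and then feed it the fact that in the Kan--Quillen model structure every object is cofibrant and every monomorphism is a cofibration; the two ``in particular'' clauses follow by unwinding the subdivision identities from Section~\ref{section simplicial complexes}. Concretely: by Corollary~\ref{cor properties of thomason model structure on cpx} the adjunction $\tRe \dashv \tSing$ of~(\ref{eq thomason adjunction cpx and sset}) is a Quillen adjunction, so $\tRe$ preserves cofibrations and cofibrant objects. Since the cofibrations of the Kan--Quillen model structure are precisely the monomorphisms of simplicial sets, $\varnothing \hookrightarrow X$ is a cofibration for every simplicial set $X$, so $X$ is cofibrant and hence $\tRe X$ is Thomason cofibrant; likewise any monomorphism $i \colon X \hookrightarrow Y$ is a cofibration, so $\tRe(i)$ is a Thomason cofibration. (If one prefers not to pass through left Quillen functors, the same follows from $\tRe$ being a left adjoint, hence preserving colimits and retracts, together with the monomorphisms of $\ncat{sSet}$ being exactly $\text{cof}(I)$ and the Thomason cofibrations being $\text{cof}(\tRe(I))$ by construction.)

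For the refinements I would first establish the natural isomorphism $\tRe\,\oSing(K_\leq) \cong \Sd^2 K$. Applying Lemma~\ref{lem realization preserves subdivision} twice, each subdivision carrying its canonical ordering, gives $\sd^2\,\oSing(K_\leq) \cong \sd\,\oSing(\Sd K) \cong \oSing(\Sd^2 K)$, and then the isomorphism~(\ref{eq realization on simp complexes}) applied to $\Sd^2 K$ gives $\Re\,\oSing(\Sd^2 K) \cong \Sd^2 K$; composing yields the claim, which together with the first paragraph shows $\Sd^2 K$ is Thomason cofibrant. For the last clause, a monomorphism $i \colon K_\leq \hookrightarrow L_\leq$ in $\ncat{oCpx}$ induces a levelwise injection $\oSing(i)$, since $\oSing(i)_n = \ncat{oCpx}(\Delta^n, i)$ and hom-functors preserve monomorphisms; thus $\oSing(i)$ is a monomorphism of simplicial sets, so $\tRe\,\oSing(i)$ is a Thomason cofibration, and under the natural isomorphism just established it is identified with $\Sd^2(i) \colon \Sd^2 K \hookrightarrow \Sd^2 L$.

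There is essentially no obstacle here: everything is a formal consequence of $\tRe$ being left Quillen plus the subdivision identities already recorded. The only point deserving a sentence of care is the naturality in $K_\leq$ of the chain $\tRe\,\oSing(K_\leq) \cong \Sd^2 K$, which is what licenses transporting the cofibration $\tRe\,\oSing(i)$ across to $\Sd^2(i)$; this naturality is inherited from the naturality of Lemma~\ref{lem realization preserves subdivision} and of~(\ref{eq realization on simp complexes}).
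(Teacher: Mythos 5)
Your proposal is correct and follows essentially the same route as the paper: the paper derives the corollary immediately from the Quillen adjunction of Corollary \ref{cor properties of thomason model structure on cpx} (every simplicial set is Kan--Quillen cofibrant and every monomorphism is a cofibration, so the left Quillen functor $\tRe$ yields Thomason cofibrant objects and cofibrations), with the identifications $\sd\,\oSing(K_\leq)\cong\oSing(\Sd K)$ and $\Re\,\oSing \cong U$ already recorded in Lemma \ref{lem realization preserves subdivision} and (\ref{eq realization on simp complexes}) supplying the two ``in particular'' clauses exactly as you spell out.
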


\begin{Rem}
Note that Corollary \ref{cor properties of thomason model structure on cpx} gives many simple examples of cofibrant objects and cofibrations in the Thomason model structure on $\ncat{Cpx}$. Such examples are more complicated in the case of the Thomason model structure on $\ncat{Cat}$, as the functor $h : \ncat{sSet} \to \ncat{Cat}$ is more complicated than $\Re : \ncat{sSet} \to \ncat{Cpx}$. We will explore more consequences of this in Section \ref{section (co)fibrant objects}.
\end{Rem}

The following results will help us prove that (\ref{eq thomason adjunction cpx and sset}) is a Quillen equivalence. The following proofs are structurally similar to those of \cite[Section 3]{matsushita2017box}.

\begin{Lemma} \label{lem push out of weak equiv units is weak equiv unit}
Suppose that
\begin{equation} \label{eq pushout of simplicial sets}
\begin{tikzcd}
	X & A \\
	Y & B
	\arrow["u", from=1-1, to=1-2]
	\arrow["i"', hook, from=1-1, to=2-1]
	\arrow["f", hook, from=1-2, to=2-2]
	\arrow["v"', from=2-1, to=2-2]
	\arrow["\lrcorner"{anchor=center, pos=0.125, rotate=180}, draw=none, from=2-2, to=1-1]
\end{tikzcd}
\end{equation}
is a pushout of simplicial sets, where $i$ and $f$ are monomorphisms. If the unit maps
\begin{equation*}
X \xrightarrow{\eta_X} \tSing \tRe X, \qquad Y \xrightarrow{\eta_Y} \tSing \tRe Y, \qquad A \xrightarrow{\eta_A} \tSing \tRe A
\end{equation*}
are all weak equivalences of simplicial sets, then $\eta_B$ is a weak equivalence of simplicial sets.
\end{Lemma}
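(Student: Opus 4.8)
The plan is to compare the given pushout square with the square obtained by applying $\tRe$ and then $\tSing$, using the naturality of the unit $\eta$. Since $\tRe = \Re \, \sd^2$ is a left adjoint, it preserves the pushout (\ref{eq pushout of simplicial sets}), yielding a pushout square
\begin{equation*}
\begin{tikzcd}
	{\tRe X} & {\tRe A} \\
	{\tRe Y} & {\tRe B}
	\arrow[from=1-1, to=1-2]
	\arrow["{\tRe(i)}"', hook, from=1-1, to=2-1]
	\arrow["{\tRe(f)}", hook, from=1-2, to=2-2]
	\arrow[from=2-1, to=2-2]
	\arrow["\lrcorner"{anchor=center, pos=0.125, rotate=180}, draw=none, from=2-2, to=1-1]
\end{tikzcd}
\end{equation*}
in $\ncat{Cpx}$. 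First I would observe that $\tRe(i)$ is the inclusion of an $\times$-NDR pair: since $\sd^2$ of a monomorphism of simplicial sets realizes (via $\Re$) to $\Sd^2$ of an inclusion of simplicial complexes up to isomorphism — this is the content of Corollary \ref{cor cofibrant complexes} together with Proposition \ref{prop explicit description of Re} — Proposition \ref{prop double subdivision is NDR pair} tells us $(\tRe Y, \tRe X)$ is a strong NDR pair, hence an $\times$-NDR pair. (More carefully, one reduces to the case where $i$ is a monomorphism between the $\tRe$-images of simplicial sets; a cleaner route is to note $\tRe(i)$ is a cofibration in $\ncat{Cpx}$ by Corollary \ref{cor cofibrant complexes}, and $\tRe(f)$ is too as a pushout of a cofibration, and then invoke Lemma \ref{lem pushout of ndr pair}.)

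Next I would apply Proposition \ref{prop sing sends NDR pairs to homotopy pushouts} — strictly, its $\tSing = \text{Ex}^2 \Sing$ version, which follows since $\text{Ex}$ preserves weak equivalences and monomorphisms — to conclude that
\begin{equation*}
\begin{tikzcd}
	{\tSing \tRe X} & {\tSing \tRe A} \\
	{\tSing \tRe Y} & {\tSing \tRe B}
	\arrow[from=1-1, to=1-2]
	\arrow[hook, from=1-1, to=2-1]
	\arrow[hook, from=1-2, to=2-2]
	\arrow[from=2-1, to=2-2]
\end{tikzcd}
\end{equation*}
is a homotopy pushout of simplicial sets. Meanwhile, the original square (\ref{eq pushout of simplicial sets}) is a pushout along the monomorphism $i$, hence is itself a homotopy pushout in the Kan-Quillen model structure. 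Naturality of $\eta$ gives a map of squares from (\ref{eq pushout of simplicial sets}) to the $\tSing\tRe$-square above, and by hypothesis the three components $\eta_X, \eta_Y, \eta_A$ are weak equivalences. A map of homotopy pushout squares which is a weak equivalence on three of the four corners is a weak equivalence on the fourth — this is the cube lemma / left-properness-style argument, citable as Proposition 3.4.2.10 of \cite[\href{https://kerodon.net/tag/0112}{Tag 0112}]{kerodon}, exactly as used in the proof of Theorem \ref{th transferred model structure from sset}. Therefore $\eta_B$ is a weak equivalence.

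The main obstacle I anticipate is the bookkeeping in the first step: verifying that $\tRe$ applied to an arbitrary monomorphism of simplicial sets lands among the inclusions to which Proposition \ref{prop double subdivision is NDR pair} applies. The functor $\Re$ can collapse degenerate and singular simplices (as the remarks after Proposition \ref{prop explicit description of Re} emphasize), so $\tRe(i)$ need not literally be $\Sd^2$ of a subcomplex inclusion for a generic monomorphism $i$. The clean fix is to avoid this entirely by using that $\tRe(i)$ is a Thomason cofibration (Corollary \ref{cor cofibrant complexes}), that $\tRe(f)$ is then a Thomason cofibration as a pushout of one, and that every Thomason cofibration between objects is, by its cell-complex description together with Lemma \ref{lem pushout of ndr pair} and Lemma \ref{lem composition of deformation retracts}, an inclusion of an $\times$-NDR pair — so that Proposition \ref{prop sing sends NDR pairs to homotopy pushouts} applies directly. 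Everything after that is formal manipulation of homotopy pushouts and $2$-of-$3$.
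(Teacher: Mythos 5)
Your overall architecture is the same as the paper's: apply $\tRe$ to the pushout (left adjoint), argue that $\Sing$ sends the resulting pushout in $\ncat{Cpx}$ to a homotopy pushout, observe that the original square is a homotopy pushout since $i$ is a monomorphism, and then compare the two squares along the unit and conclude by the gluing argument (the paper routes this last step through the strict pushout of the $\tSing\tRe$-span and $2$-of-$3$, which is only cosmetically different from your ``three corners imply the fourth'' formulation). The one place where you diverge is exactly the crux, and there your committed justification has a gap. You correctly flag that $\tRe(i)$ need not literally be $\Sd^2$ of a subcomplex inclusion for a general monomorphism $i$, but your proposed repair --- that \emph{every} Thomason cofibration is an inclusion of an $\times$-NDR pair, ``by its cell-complex description together with Lemma \ref{lem pushout of ndr pair} and Lemma \ref{lem composition of deformation retracts}'' --- is not supported by those results. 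Thomason cofibrations are retracts of \emph{transfinite} composites of pushouts of generating cofibrations, and the paper only establishes closure of $\times$-NDR inclusions under pushout and under binary composition. Closure under transfinite composition is genuinely problematic, since an $\times$-homotopy has finite length by definition: an $\omega$-indexed chain of elementary retractions need not assemble into any $I_n$-homotopy; closure under retracts is likewise never addressed. So as written this step does not go through.

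For comparison, the paper at this point simply identifies $\tRe(i)$ with $\Sd^2\Re(i)$ and invokes Proposition \ref{prop double subdivision is NDR pair} --- i.e.\ the first route you set aside --- which is unproblematic in the cases where the lemma is actually used (coproducts of $\partial\Delta^n\hookrightarrow\Delta^n$ and skeletal inclusions, where $\Re\,\sd^2$ agrees with $\Sd^2$ of the underlying complexes by Corollary \ref{cor realization of subdivision of horns, simplices, boundaries}). If you want a justification valid for an arbitrary monomorphism without that identification, the correct patch is not the NDR-pair claim but the argument of Proposition \ref{prop sing sends pushouts of cofibrations to homotopy pushouts}: factor the cofibration as a relative $\tRe(I)$-cell complex followed by a trivial fibration, handle the retract and the transfinite composition \emph{after} applying $\Sing$ (where weak equivalences are closed under retracts and, by Lemma \ref{lem transfinite composite of weak equivs}, under transfinite composition), and only use Proposition \ref{prop sing sends NDR pairs to homotopy pushouts} cell by cell. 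That proposition's proof does not depend on the present lemma, so invoking it (or reproducing its argument) here would not be circular.
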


\begin{proof}
Since $i$ is a monomorphism, it is a cofibration in the Kan-Quillen model structure on $\ncat{sSet}$. Hence (\ref{eq pushout of simplicial sets}) is a homotopy pushout. Since the unit maps $\eta_X, \eta_Y, \eta_A$ are all weak equivalences, this implies that the map $\eta_Y +_{\eta_X} \eta_A$ in the commutative diagram
\begin{equation*}
\begin{tikzcd}
	X && A \\
	Y && {Y +_X A \cong B} \\
	&& {\tSing \tRe X} & {\tSing \tRe A} \\
	&& {\tSing \tRe Y} & {\tSing\tRe Y +_{\tSing \tRe X} \tSing \tRe A}
	\arrow[""{name=0, anchor=center, inner sep=0}, "u", from=1-1, to=1-3]
	\arrow["i"', hook, from=1-1, to=2-1]
	\arrow["{\eta_X}", from=1-1, to=3-3]
	\arrow["f", hook, from=1-3, to=2-3]
	\arrow["{\eta_A}", from=1-3, to=3-4]
	\arrow["v"', from=2-1, to=2-3]
	\arrow["{\eta_Y}"', from=2-1, to=4-3]
	\arrow["{\eta_{Y} +_{\eta_X} \eta_A}", from=2-3, to=4-4]
	\arrow["{u_*}"{pos=0.2}, from=3-3, to=3-4]
	\arrow["{i_*}"', hook, from=3-3, to=4-3]
	\arrow[hook, from=3-4, to=4-4]
	\arrow[from=4-3, to=4-4]
	\arrow["\lrcorner"{anchor=center, pos=0.125, rotate=180}, draw=none, from=2-3, to=0]
\end{tikzcd}
\end{equation*}
is a weak equivalence. Now the map
\begin{equation*}
\tRe K \cong \Sd^2 \Re K \xhookrightarrow{\Sd^2 \Re i} \Sd^2 \Re L \cong \tRe L
\end{equation*}
is an $\times$-NDR pair by Proposition \ref{prop double subdivision is NDR pair}. Now by Proposition \ref{prop sing sends NDR pairs to homotopy pushouts}, the top horizontal map in the commutative diagram
\begin{equation*}
    \begin{tikzcd}
	{\Sing\tRe Y +_{\Sing \tRe X} \Sing \tRe A} & {\Sing \tRe B} \\
	{\tSing\tRe Y +_{\tSing \tRe X} \tSing \tRe A} & {\tSing \tRe B}
	\arrow["\simeq", from=1-1, to=1-2]
	\arrow["{\rho^2_{\tRe Y} +_{\rho^2_{\tRe X}} \rho^2_{\tRe A}}"', from=1-1, to=2-1]
	\arrow["{\rho^2_{\tRe B}}", from=1-2, to=2-2]
	\arrow[from=2-1, to=2-2]
\end{tikzcd}
\end{equation*}
where $\rho_X : X \to \text{Ex} \, X$ is the adjunct of the last vertex map $\lambda_X : \sd X \to X$. The map $\rho_X$ is a trivial cofibration by \cite[Theorem 22]{moss2020another}, hence the left hand vertical map is a weak equivalence, and the right hand vertical map is a weak equivalence. Thus the bottom horizontal map is a weak equivalence. So by $2$-of-$3$, the unit map $\eta_B$ in the commutative diagram
\begin{equation*}
    \begin{tikzcd}
	B \\
	{\tSing\tRe Y +_{\tSing \tRe X} \tSing \tRe A} & {\tSing \tRe B}
	\arrow["{\eta_Y +_{\eta_X} \eta_A}"', from=1-1, to=2-1]
	\arrow["{\eta_B}", from=1-1, to=2-2]
	\arrow[from=2-1, to=2-2]
\end{tikzcd}
\end{equation*}
is a weak equivalence of simplicial sets.
\end{proof}

\begin{Th} \label{th cpx quillen equiv to sset}
The Quillen adjunction (\ref{eq thomason adjunction cpx and sset}) is a Quillen equivalence.
\end{Th}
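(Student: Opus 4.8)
The plan is to invoke the standard recognition criterion for Quillen equivalences in the form of \cite[Corollary 1.3.16]{hovey2007model}: the Quillen adjunction $\tRe \dashv \tSing$ is a Quillen equivalence provided that (i) $\tSing$ reflects weak equivalences between fibrant objects, and (ii) for every cofibrant simplicial set $X$ the derived unit $X \xrightarrow{\eta_X} \tSing\,\tRe X \xrightarrow{\tSing(r)} \tSing((\tRe X)^{\mathrm{fib}})$ is a weak equivalence, where $r \colon \tRe X \to (\tRe X)^{\mathrm{fib}}$ is a fibrant replacement in the Thomason model structure on $\ncat{Cpx}$. Since every simplicial set is cofibrant, (ii) is to be checked for all $X$.

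Condition (i) holds in the stronger form that $\tSing$ both preserves and reflects \emph{all} weak equivalences: by Definition \ref{def transferred weak equivs and fibrations}, a map $f$ in $\ncat{Cpx}$ is by construction a weak equivalence exactly when $\tSing(f)$ is a weak equivalence of simplicial sets. (This is also equivalent to asking that $\Sing(f)$ be a weak equivalence, since the componentwise trivial cofibration $j \colon 1_{\ncat{sSet}} \to \text{Ex}$ forces $\text{Ex}$, hence $\text{Ex}^2$, to preserve and reflect weak equivalences by $2$-of-$3$.)

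For (ii), I would argue as follows. By Proposition \ref{prop unit is weak equiv for all simplicial sets} the unit $\eta_X \colon X \to \tSing\,\tRe X$ is a weak equivalence of simplicial sets for every $X$. The fibrant replacement $r \colon \tRe X \to (\tRe X)^{\mathrm{fib}}$ is a trivial cofibration, in particular a weak equivalence in $\ncat{Cpx}$, so $\tSing(r)$ is a weak equivalence by the preservation statement in the previous paragraph. Hence the derived unit, being a composite of two weak equivalences, is a weak equivalence; this verifies (ii), and the criterion then yields that $\tRe \dashv \tSing$ is a Quillen equivalence.

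I should stress that the present theorem is a formal consequence of work already done: all the geometric content is packaged into Proposition \ref{prop unit is weak equiv for all simplicial sets}, which is built from Lemmas \ref{lem unit maps are weak equivs}, \ref{lem unit of boundary is weak equiv} and \ref{lem push out of weak equiv units is weak equiv unit} by a cell-by-cell induction resting on Proposition \ref{prop double subdivision is NDR pair} (double subdivision yields $\times$-NDR inclusions) and Proposition \ref{prop sing sends NDR pairs to homotopy pushouts} ($\Sing$ carries pushouts along such inclusions to homotopy pushouts). The only step requiring any care in the assembly above — and it is slight — is remembering to form the derived unit using a fibrant replacement of $\tRe X$; but since $\tSing$ preserves all weak equivalences, this ultimately makes no difference.
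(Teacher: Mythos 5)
Your proposal is correct and is essentially the paper's own argument: the paper likewise reduces everything to Proposition \ref{prop unit is weak equiv for all simplicial sets}, using the fact that $\tSing$ creates weak equivalences in the right-transferred structure to conclude via a ready-made criterion (the dual of a lemma of Erdal, i.e.\ the standard criterion for transferred model structures), whereas you simply unpack that criterion by hand from Hovey's Corollary 1.3.16, noting that the fibrant-replacement step is harmless because $\tSing$ preserves all weak equivalences by definition. The content and the key input are identical, so there is nothing to correct.
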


\begin{proof}
Since the Thomason model structure is right-transferred along (\ref{eq thomason adjunction cpx and sset}) and hence $\tSing$ creates weak equivalences, by the dual of \cite[Lemma 3.3]{erdal2019} (\cite[Proposition 2.3]{nlab:quillen_equivalence}), to show that (\ref{eq thomason adjunction cpx and sset}) is a Quillen equivalence, it is enough to show that for every simplicial set $X$, the unit map
\begin{equation*}
    X \xrightarrow{\eta_X} \tSing \, \tRe(X)
\end{equation*}
is a weak equivalence of simplicial sets. But this follows by taking $\Gamma$ to be the trivial group in \cite[Lemma 3.4]{matsushita2017box}.
\end{proof}

Now let us investigate the Thomason cofibrations. Recall from Definition \ref{def cofibrantly generated model category} that the Thomason cofibrations are precisely the class $\text{cof}(\tRe(I))$, the retracts of relative $\tRe(I)$-cell complexes. We wish to show that these are all inclusions of $\times$-NDR pairs. We have already shown that inclusions of $\times$-NDR pairs are closed under pushouts in Lemma \ref{lem pushout of ndr pair}. 

\begin{Lemma}
Let $i : K \hookrightarrow L$, $j : L \hookrightarrow M$ be inclusions of $\times$-NDR pairs. Then $ji : K \hookrightarrow M$ is the inclusion of an $\times$-NDR pair.
\end{Lemma}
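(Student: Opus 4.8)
The plan is to build, out of the two $\times$-NDR structures, a single intermediate subcomplex $N$ with $n_M(K)\subseteq N\subseteq M$ that $\times$-deformation retracts onto $K$, and then chain the given retractions via Lemma \ref{lem composition of deformation retracts}. Unwinding the hypotheses: since $(L,K)$ is an $\times$-NDR pair there is a subcomplex $L'$ with $n_L(K)\subseteq L'\subseteq L$ and an $\times$-deformation retraction $r:L'\to K$ (with an $\times$-homotopy $G:L'\times I_q\to L'$ from $1_{L'}$ to $\iota_K r$ constant on $K$), and since $(M,L)$ is an $\times$-NDR pair there is a subcomplex $M'$ with $n_M(L)\subseteq M'\subseteq M$ and an $\times$-deformation retraction $s:M'\to L$ (with an $\times$-homotopy $H:M'\times I_p\to M'$ from $1_{M'}$ to $\iota_L s$ constant on $L$). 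I would then take
\[
N := s^{-1}(L') = \{\sigma\in M' : s(\sigma)\in L'\},
\]
which is a subcomplex of $M'$ since preimages of subcomplexes along maps of simplicial complexes are subcomplexes.

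The containment $n_M(K)\subseteq N$ is the first thing to verify, and it is straightforward: for $v\in V(K)\subseteq V(L)$ and $\sigma\in\st_M(v)$ we have $\sigma\cup\{v\}\in n_M(L)\subseteq M'$, so $s(\sigma)\cup\{v\}=s(\sigma\cup\{v\})\in L$ (as $s$ fixes $V(L)$), hence $s(\sigma)\in\st_L(v)\subseteq n_L(K)\subseteq L'$, i.e. $\sigma\in N$; taking unions over $v\in V(K)$ gives $n_M(K)\subseteq N$. Moreover $L'\subseteq N$ (if $\sigma\in L'$ then $s(\sigma)=\sigma\in L'$), so $s$ restricts to a retraction $s|_N:N\to L'$, and since $s$ and $r$ fix $K$ pointwise the composite $r\circ s|_N:N\to K$ restricts to $1_K$.

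It then suffices to show that $L'$ is an $\times$-deformation retract of $N$: Lemma \ref{lem composition of deformation retracts} applied to $K\hookrightarrow L'\hookrightarrow N$ exhibits $K$ as an $\times$-deformation retract of $N$, and together with $n_M(K)\subseteq N\subseteq M$ this is exactly the statement that $(M,K)$ is an $\times$-NDR pair. The retraction $N\to L'$ is $s|_N$ (which restricts to $1_{L'}$), and the candidate $\times$-homotopy from $1_N$ to $\iota_{L'}\circ s|_N$ is the restriction of $H$ to $N\times I_p$; it is automatically constant on $L'$ because $H$ is constant on all of $L\supseteq L'$.

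The main obstacle — and essentially the whole content of the argument — is checking that this restriction really lands in $N$, i.e. that $N=s^{-1}(L')$ is invariant under the homotopy $H$. This is the simplicial-complex counterpart of the technical core of Thomason's proof that the nerve sends pushouts along composites of Dwyer maps to homotopy pushouts. I would prove it by a face-by-face check: for $\sigma\in N$ and any $t$ one must show that the face $H(\sigma\times\{t,t+1\}) = H(\sigma\times\{t\})\cup H(\sigma\times\{t+1\})$ of $M'$ lies in $s^{-1}(L')$, arguing just as in the computation of $n_M(K)\subseteq N$ above, using that $H$ is constant on $L$ together with $n_L(K)\subseteq L'$ to control the $s$-image of the relevant faces. (If the naive choice $N=s^{-1}(L')$ turned out not to be $H$-invariant, the fallback would be to replace it by the largest $H$-invariant subcomplex of $s^{-1}(L')$ still containing $n_M(K)$ — a finite intersection of subcomplexes, hence again a subcomplex — and re-run the same reasoning.)
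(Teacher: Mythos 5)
The main line of your proof has a genuine gap: the claimed $H$-invariance of $N=s^{-1}(L')$ is false in general, and the face-by-face check you sketch cannot establish it. Your verification that $n_M(K)\subseteq N$ succeeds because membership in $n_M(K)$ is certified by adjacency to a vertex $v\in V(K)$, and $H$ fixes $v$, so that certificate is preserved; a general simplex of $s^{-1}(L')$ carries no such certificate, and $H$ can carry it out of $s^{-1}(L')$. Concretely: let $K=\{k\}$; let $L$ be the path with vertices $k,\ell_1,\ell_2$ and edges $\{k,\ell_1\},\{\ell_1,\ell_2\}$, with $L'=\st_L(k)$ (which contains $n_L(K)$ and $\times$-deformation retracts to $K$); let $M$ be the union of $L$ with the full $2$-simplex on $\{x,\ell_1,\ell_2\}$, take $M'=M$, let $s\colon M\to L$ fix $L$ and send $x\mapsto\ell_1$, and let $H\colon M\times I_2\to M$ be constant on $V(L)$ with $H(x,0)=x$, $H(x,1)=\ell_2$, $H(x,2)=\ell_1$. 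One checks directly that $s$ and $H$ are simplicial (the $2$-simplex $\{x,\ell_1,\ell_2\}$ is exactly what is needed for $H$), so these data exhibit $(M,L)$ and $(L,K)$ as $\times$-NDR pairs. Then $x\in N$ since $s(x)=\ell_1\in L'$, but $H(x,1)=\ell_2$ and $s(\ell_2)=\ell_2\notin L'$, so $H(x,1)\notin N$: your candidate homotopy is not even defined as a map $N\times I_2\to N$ on vertices.

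Your fallback can be repaired, but the repair is precisely the content your sketch omits, and it collapses onto the paper's proof. What one actually needs is that $n_M(K)$ itself is $H$-invariant and lies in $s^{-1}(L')$: if $\sigma\cup\{v\}\in M$ with $v\in V(K)$, then $\sigma\cup\{v\}\in\st_M(v)\subseteq M'$ and the $H$-image of $(\sigma\cup\{v\})\times\tau$ still contains $v$, hence stays in $\st_M(v)\subseteq n_M(K)$; and $s(n_M(K))\subseteq n_L(K)\subseteq L'$ by your own computation (the same holds for $L'$, which is $H$-invariant since $H$ is constant on $L$). Incidentally, the ``finite intersection'' description of the invariant core is also not quite right, since $\{\sigma : H(\sigma\times\tau)\in s^{-1}(L')\ \text{for all }\tau\}$ need not itself be $H$-invariant, so one would have to saturate. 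But once you know $n_M(K)$ is $H$-invariant with $s(n_M(K))\subseteq n_L(K)$, you can discard $s^{-1}(L')$ entirely: the paper takes the witnessing subcomplex for $(M,K)$ to be $n_M(K)$ itself and defines the deformation retraction as the concatenation of $H$ restricted to $n_M(K)$ with $(y,i)\mapsto G(s(y),i)$, where $G$ is the homotopy for $(L,K)$; both stages remain inside $n_M(K)$ because they fix the vertices of $K$, and no appeal to Lemma \ref{lem composition of deformation retracts} or to an invariant-core construction is needed. So the correct completion of your argument is essentially the paper's proof, with $n_M(K)$ playing the role you assigned to $s^{-1}(L')$.
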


\begin{proof}
Since $i$ and $j$ are inclusions of $\times$-NDR pairs, there exist subcomplexes $A \subseteq L$, $B \subseteq M$ such that $n_L(K) \subseteq A$, $n_M(L) \subseteq B$ and $i_A : K \hookrightarrow A$, $j_B : L \hookrightarrow B$ are inclusions of $\times$-deformation retracts with retract maps $r_A : A \to K$ and $r_B : B \to L$ respectively.

Consider the diagram
\begin{equation*}
    \begin{tikzcd}
	K & {n_L(K)} & L & {n_{M}(L)} & M \\
	&& {n_M(K)}
	\arrow[hook, from=1-1, to=1-2]
	\arrow[hook, from=1-1, to=2-3]
	\arrow["{r|_{n_L(K)}}"', curve={height=18pt}, from=1-2, to=1-1]
	\arrow[hook, from=1-2, to=1-3]
	\arrow[hook, from=1-2, to=2-3]
	\arrow[hook, from=1-3, to=1-4]
	\arrow["{s|_{n_M(L)}}"', curve={height=18pt}, from=1-4, to=1-3]
	\arrow[hook, from=1-4, to=1-5]
	\arrow[hook, from=2-3, to=1-4]
	\arrow[hook, from=2-3, to=1-5]
\end{tikzcd}
\end{equation*}
it is easy to see that $n_L(K) \subseteq n_M(K)$ and $n_M(K) \subseteq n_M(L)$. Furthermore $s(n_M(K)) \subseteq n_L(K)$, because if $\sigma \in n_M(K)$, then there is some $v \in V(K)$ such that $\sigma \cup v \in n_M(L) \subseteq M$. Therefore 
\begin{equation*}
s(\sigma) \cup v = s(\sigma) \cup s(v) = s(\sigma \cup v) \in L.
\end{equation*}
Hence $s(\sigma) \in L$ is a simplex such that $s(\sigma) \cup v \in L$, so $s(\sigma) \in \st_L(v)$ and hence $s(\sigma) \in n_L(K)$. 

Let $H_A : A \times I_n \to A$ and $H_B : B \times I_m \to B$ denote the homotopies witnessing the $\times$-deformation retractions of $A$ onto $K$ and $B$ onto $L$ respectively. Now let us define a map $H : n_M(K) \times I_N \to n_M(K)$, with $N = n + m$ by
\begin{equation*}
    H(x,i) = \begin{cases}
        H_B(x, i), & 0 \leq i \leq n, \\
        H_A(s(x), i - n), & n \leq i \leq N.
    \end{cases}
\end{equation*}
Then we see that $H(x,0) = x$ for all $x \in n_M(K)$, $H(k,i) = k$ for all $0 \leq i \leq N$ and $H(x,N) = H_A(s(x),m) = rs(x)$. Hence $H$ is a $\times$-deformation retraction, and thus $ji : K \hookrightarrow M$ is the inclusion of a $\times$-NDR pair.
\end{proof}

We now obtain the following strengthening of Proposition \ref{prop sing sends NDR pairs to homotopy pushouts}.

\begin{Prop} \label{prop sing sends pushouts of cofibrations to homotopy pushouts}
Given a pushout square
\begin{equation*}
   \begin{tikzcd}
	K & A \\
	L & B
	\arrow["u", from=1-1, to=1-2]
	\arrow["i"', hook, from=1-1, to=2-1]
	\arrow["f", hook, from=1-2, to=2-2]
	\arrow["v"', from=2-1, to=2-2]
	\arrow["\lrcorner"{anchor=center, pos=0.125, rotate=180}, draw=none, from=2-2, to=1-1]
\end{tikzcd}    
\end{equation*}
in $\ncat{Cpx}$ where $i : K \hookrightarrow L$ is a Thomason cofibration, the commutative square
\begin{equation*}
    \begin{tikzcd}
	{\Sing(K)} & {\Sing(X)} \\
	{\Sing(L)} & {\Sing(Y)}
	\arrow["{u_*}", from=1-1, to=1-2]
	\arrow["{i_*}"', hook, from=1-1, to=2-1]
	\arrow["{f_*}", hook, from=1-2, to=2-2]
	\arrow["{v_*}"', from=2-1, to=2-2]
\end{tikzcd}   
\end{equation*}
is a homotopy pushout in the Kan-Quillen model structure on $\ncat{sSet}$.
\end{Prop}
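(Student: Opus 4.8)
The plan is to reduce the statement to the generating cofibrations $\tRe(I)$, where Proposition~\ref{prop sing sends NDR pairs to homotopy pushouts} already supplies the conclusion, and then to propagate it through the cell structure of a Thomason cofibration using the standard stability properties of homotopy pushouts. Concretely, I first write $i : K \hookrightarrow L$ as a retract, in the arrow category $\ncat{Cpx}^{\to}$, of a relative $\tRe(I)$-cell complex $i' : K' \hookrightarrow L'$. Given $u : K \to A$ with pushout $B = L +_K A$, I form the pushout $B' = L' +_{K'} A$ of $i'$ along $K' \to K \xrightarrow{u} A$; since forming the pushout of a span is functorial and hence preserves retracts, the square $(K \to A,\, K \to L,\, L \to B,\, A \to B)$ is a retract of the analogous square for $i'$, and therefore the comparison map $\Sing(A) +_{\Sing(K)} \Sing(L) \to \Sing(B)$ is a retract of the comparison map built from $i'$. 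As weak equivalences of simplicial sets are closed under retracts, it suffices to treat the case where $i$ is a relative $\tRe(I)$-cell complex, say a transfinite composite $K = L_0 \to L_1 \to \cdots$ with $L = \colim_{\beta} L_\beta$ whose successor maps $L_\beta \to L_{\beta+1}$ are pushouts of maps $\Sd^2 \partial \bDelta^n \hookrightarrow \Sd^2 \bDelta^n$.

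Next I set $A_\beta := L_\beta +_K A$, so that $A_0 = A$, $\colim_{\beta} A_\beta = B$, and for each $\beta$ the square with sides $L_\beta \to L_{\beta+1}$ and $A_\beta \to A_{\beta+1}$ is again a pushout. By Proposition~\ref{prop double subdivision is NDR pair} each generating map $\Sd^2 \partial \bDelta^n \hookrightarrow \Sd^2 \bDelta^n$ is the inclusion of an $\times$-NDR pair, so by Lemma~\ref{lem pushout of ndr pair} each $L_\beta \to L_{\beta+1}$ is one as well. Moreover all the maps $L_\beta \to L_{\beta+1}$ and $A_\beta \to A_{\beta+1}$ are monomorphisms, being pushouts of monomorphisms in $\ncat{Cpx}$, and hence so are their images under the right adjoint $\Sing$. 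Proposition~\ref{prop sing sends NDR pairs to homotopy pushouts} then applies to each stagewise pushout square, showing that $\Sing$ carries each of them to a homotopy pushout in the Kan--Quillen model structure on $\ncat{sSet}$.

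I would then prove by transfinite induction on $\beta$ that the square $(\Sing(L_0) \to \Sing(L_\beta),\, \Sing(A_0) \to \Sing(A_\beta))$ is a homotopy pushout. The base case is trivial; the successor step is the pasting law for homotopy pushouts, applied to the homotopy pushout at stage $\beta$ together with the stagewise homotopy pushout produced above. For a limit ordinal $\beta$, the functor $\Sing$ preserves the colimits defining $L_\beta$ and $A_\beta$ (this colimit-preservation was verified in the proof of Theorem~\ref{th transferred model structure from sset}), so $\Sing(L_\beta) = \colim_{\gamma < \beta} \Sing(L_\gamma)$ and $\Sing(A_\beta) = \colim_{\gamma < \beta} \Sing(A_\gamma)$; since $\Sing(L_0) \hookrightarrow \Sing(L_\beta)$ is a monomorphism, the comparison map at stage $\beta$ is the transfinite composite, along monomorphisms, of the comparison maps at the earlier stages, all of which are weak equivalences by the inductive hypothesis. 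A standard homotopy-colimit argument — these colimits along monomorphisms compute homotopy colimits, cf.\ the reasoning behind Lemma~\ref{lem transfinite composite of weak equivs} and \cite{hirschhorn2003model} — then shows the comparison map at stage $\beta$ is a weak equivalence. Taking $\beta$ equal to the length of the cell decomposition finishes the proof.

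The step I expect to be the main obstacle is this last, limit-ordinal case: one must know that a levelwise weak equivalence between $\lambda$-sequences of monomorphisms of simplicial sets induces a weak equivalence on colimits. This is precisely where both ingredients are used — that transfinite composites of monomorphisms in $\ncat{sSet}$ are homotopy colimits, so that Kan--Quillen weak equivalences are detected correctly after passing to the colimit, and that $\Sing$ commutes with the colimits in question, which lets the stagewise conclusions of Proposition~\ref{prop sing sends NDR pairs to homotopy pushouts} assemble into the global statement. The remaining steps, namely the retract reduction and the successor step, are formal diagram chases.
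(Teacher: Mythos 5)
Your proposal is correct and follows essentially the same route as the paper's proof: reduce to a relative $\tRe(I)$-cell complex via a retract argument (the paper gets the retract from the small-object factorization and a lift, you take it directly from the definition of $\text{cof}(\tRe(I))$ and functoriality of pushouts), apply Proposition \ref{prop sing sends NDR pairs to homotopy pushouts} stagewise using Proposition \ref{prop double subdivision is NDR pair} and Lemma \ref{lem pushout of ndr pair}, and conclude by closure of weak equivalences under the transfinite colimit. The limit-ordinal point you flag is exactly what the paper compresses into the appeal to Lemma \ref{lem transfinite composite of weak equivs} together with the fact that $\Sing$ preserves these colimits, so there is no substantive difference.
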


\begin{proof}
We will follow the argument of Fjellbo \cite[Lemma 8.4]{fjellbo2020nonsingular}. Suppose that
\begin{equation*}
    \begin{tikzcd}
	K & A \\
	L & B
	\arrow["u", from=1-1, to=1-2]
	\arrow["i"', hook, from=1-1, to=2-1]
	\arrow["f", hook, from=1-2, to=2-2]
	\arrow["v"', from=2-1, to=2-2]
	\arrow["\lrcorner"{anchor=center, pos=0.125, rotate=180}, draw=none, from=2-2, to=1-1]
\end{tikzcd}
\end{equation*}
is a pushout square in $\ncat{Cpx}$, where $i$ is a cofibration. Then we know that $i$ is a retract of a $\tRe(I)$-cell complex. But here we can be more precise. Since the Thomason model structure on $\ncat{Cpx}$ is cofibrantly generated, we can factor $i = q j$ using the small object argument where $j : K \hookrightarrow E$ is a relative $\tRe(I)$-complex and $q : E \to L$ is a trivial fibration. Hence there is a solution to the following lifting problem
\begin{equation*}
    \begin{tikzcd}
	K & E \\
	L & L
	\arrow["j", hook, from=1-1, to=1-2]
	\arrow["i"', hook, from=1-1, to=2-1]
	\arrow["q", two heads, from=1-2, to=2-2]
	\arrow["s"{description}, dashed, from=2-1, to=1-2]
	\arrow[equals, from=2-1, to=2-2]
\end{tikzcd}
\end{equation*}
So $s$ exhibits $i$ as a retract of the relative $\tRe(I)$-complex $j : K \hookrightarrow E$. Hence we obtain the following commutative diagram
\begin{equation*}
\begin{tikzcd}
	{\Sing(K)} & {\Sing(A)} \\
	{\Sing(L)} & {\Sing(L)+_{\Sing(K)} \Sing(A)} & {\Sing(B)} \\
	{\Sing(E)} & {\Sing(E)+_{\Sing(K)} \Sing(A)} & {\Sing(E+_K A)} \\
	{\Sing(L)} & {\Sing(L)+_{\Sing(K)} \Sing(A)} & {\Sing(B)}
	\arrow["{u_*}", from=1-1, to=1-2]
	\arrow["{i_*}"', from=1-1, to=2-1]
	\arrow["{j_*}"', curve={height=24pt}, from=1-1, to=3-1]
	\arrow[from=1-2, to=2-2]
	\arrow[from=2-1, to=2-2]
	\arrow["{s_*}"', from=2-1, to=3-1]
	\arrow["{1_{\Sing(L)}}"', curve={height=24pt}, from=2-1, to=4-1]
	\arrow["\phi", from=2-2, to=2-3]
	\arrow[from=2-2, to=3-2]
	\arrow[from=2-3, to=3-3]
	\arrow["{1_{\Sing(B)}}", curve={height=-50pt}, from=2-3, to=4-3]
	\arrow[from=3-1, to=3-2]
	\arrow["{q_*}"', from=3-1, to=4-1]
	\arrow["\psi", from=3-2, to=3-3]
	\arrow[from=3-2, to=4-2]
	\arrow[from=3-3, to=4-3]
	\arrow[from=4-1, to=4-2]
	\arrow["\phi", from=4-2, to=4-3]
\end{tikzcd}
\end{equation*}
in which each of the left hand squares is a pushout. Now the map $j : K \hookrightarrow E$ is a relative $\tRe(I)$-cell complex, and hence is a transfinite composition of $\times$-NDR pairs. Hence applying $\Sing$, we see that every step of the transfinite composite is a homotopy pushout by Proposition \ref{prop sing sends NDR pairs to homotopy pushouts}. Since transfinite composites of weak equivalences of simplicial sets are weak equivalences by Lemma \ref{lem transfinite composite of weak equivs}, the map $\psi$ is a weak equivalence of simplicial sets. Hence $\phi$ is a retract of $\psi$, and therefore is a weak equivalence as well. Hence $\Sing$ sends the pushout to a homotopy pushout.
\end{proof}

\begin{Prop} \label{prop thomason model structure is proper}
The Thomason model structure on $\ncat{Cpx}$ is proper.
\end{Prop}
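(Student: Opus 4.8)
The plan is to split properness into its two halves. Right properness is already recorded in Corollary \ref{cor properties of thomason model structure on cpx}, since it holds automatically for any right-transferred model structure; so the actual content is left properness, namely that the pushout of a Thomason weak equivalence along a Thomason cofibration is again a Thomason weak equivalence.

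To prove this I would start from a pushout square in $\ncat{Cpx}$
\begin{equation*}
\begin{tikzcd}
K & A \\
L & B
\arrow["u", from=1-1, to=1-2]
\arrow["i"', hook, from=1-1, to=2-1]
\arrow["f", hook, from=1-2, to=2-2]
\arrow["v"', from=2-1, to=2-2]
\arrow["\lrcorner"{anchor=center, pos=0.125, rotate=180}, draw=none, from=2-2, to=1-1]
\end{tikzcd}
\end{equation*}
in which $i : K \hookrightarrow L$ is a Thomason cofibration and $u : K \to A$ is a Thomason weak equivalence, and the goal is that $v : L \to B$ is a Thomason weak equivalence. Since a map of simplicial complexes is a Thomason weak equivalence precisely when $\Sing$ sends it to a weak equivalence of simplicial sets, it suffices to show that $\Sing(v)$ is a weak equivalence in $\ncat{sSet}$. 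Applying $\Sing$ to the square, Proposition \ref{prop sing sends pushouts of cofibrations to homotopy pushouts} (whose hypothesis is met because $i$ is a Thomason cofibration) shows that the resulting commutative square of simplicial sets is a homotopy pushout in the Kan-Quillen model structure, and $\Sing(u)$ is a weak equivalence because $u$ is a Thomason weak equivalence. The Kan-Quillen model structure on $\ncat{sSet}$ is left proper, so in a homotopy pushout square a weak equivalence along one edge forces a weak equivalence along the parallel edge — concretely, by comparing the homotopy pushouts over the levelwise weak equivalence of cospans $\bigl(\Sing(K) \xleftarrow{=} \Sing(K) \xrightarrow{\Sing(i)} \Sing(L)\bigr) \to \bigl(\Sing(A) \xleftarrow{\Sing(u)} \Sing(K) \xrightarrow{\Sing(i)} \Sing(L)\bigr)$, exactly as in the argument used in the proof of Theorem \ref{th transferred model structure from sset}. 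Hence $\Sing(v)$ is a weak equivalence, so $v$ is a Thomason weak equivalence, and left properness follows.

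I do not expect a serious obstacle at this stage: all the genuine work has already been absorbed into Proposition \ref{prop sing sends pushouts of cofibrations to homotopy pushouts} and, behind it, Proposition \ref{prop double subdivision is NDR pair} together with the analysis of Thomason cofibrations as retracts of transfinite composites of $\times$-NDR pairs. The one point to be careful about is that one cannot replace the cofibration $i$ by an arbitrary map: $\Sing$ need not send a general pushout to a homotopy pushout, so the cofibrancy hypothesis is genuinely used when invoking Proposition \ref{prop sing sends pushouts of cofibrations to homotopy pushouts}. With that caveat, the remainder is routine model-categorical bookkeeping.
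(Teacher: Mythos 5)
Your proposal is correct and follows essentially the same route as the paper: right properness is quoted from Corollary \ref{cor properties of thomason model structure on cpx}, and left properness is deduced by applying $\Sing$ to the pushout, invoking Proposition \ref{prop sing sends pushouts of cofibrations to homotopy pushouts}, and concluding from the homotopy-pushout square that $\Sing(v)$ is a weak equivalence (the paper cites Kerodon Proposition 3.4.2.10 for this last step, while you argue it directly via a weak equivalence of cospans, which is the same content).
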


\begin{proof}
We have already shown that the Thomason model structure on  $\ncat{Cpx}$ is right proper by Corollary \ref{cor properties of thomason model structure on cpx}. Hence we just need to show that $\ncat{Cpx}$ is left proper. So suppose we have a pushout diagram
\begin{equation*}
 \begin{tikzcd}
	K & A \\
	L & B
	\arrow["u", from=1-1, to=1-2]
	\arrow["i"', hook, from=1-1, to=2-1]
	\arrow["f", hook, from=1-2, to=2-2]
	\arrow["v"', from=2-1, to=2-2]
	\arrow["\lrcorner"{anchor=center, pos=0.125, rotate=180}, draw=none, from=2-2, to=1-1]
\end{tikzcd} 
\end{equation*}
in $\ncat{Cpx}$ where $i$ is a Thomason cofibration and $u$ is a Thomason weak equivalence. Then by Proposition \ref{prop sing sends pushouts of cofibrations to homotopy pushouts} its image under $\Sing$
\begin{equation*}
    \begin{tikzcd}
	{\Sing(K)} & {\Sing(X)} \\
	{\Sing(L)} & {\Sing(Y)}
	\arrow["{u_*}", from=1-1, to=1-2]
	\arrow["{i_*}"', hook, from=1-1, to=2-1]
	\arrow["{f_*}", hook, from=1-2, to=2-2]
	\arrow["{v_*}"', from=2-1, to=2-2]
\end{tikzcd}    
\end{equation*}
is a homotopy pushout of simplicial sets, and $u_*$ is a weak equivalence of simplicial sets. Hence by Proposition 3.4.2.10 of \cite[\href{https://kerodon.net/tag/0112}{Tag 0112}]{kerodon}, $v_*$ is a weak equivalence of simplicial sets. Therefore $v : L \to Y$ is a Thomason weak equivalence.
\end{proof}

\begin{Def}
We say that a simplicial complex $K$ is a \textbf{flag complex} if it is in the essential image of the clique complex functor $\Cl : \ncat{Gr} \to \ncat{Cpx}$ from (\ref{eq clique complex adjunction}). Equivalently, a simplicial complex $K$ is flag if the following condition holds: given a subset $S \subseteq V(K)$, $S \in K$ if and only if for every pair $x,y \in S$, $\{x,y\} \in K$.
\end{Def}

\begin{Lemma} \label{lem subdivision is flag}
If $K$ is a simplicial complex, then $\Sd \, K$ is a flag complex.
\end{Lemma}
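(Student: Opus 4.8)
The plan is to reduce the statement to the elementary fact that the order complex of any poset is a flag complex. Recall from Definition \ref{def subdivision} that $\Sd\, K = \Ord(\Face(K))$, so it suffices to prove that for every poset $P$, the order complex $\Ord(P)$ is flag; applying this to $P = \Face(K)$ then gives the result. Writing $P = V(\Ord(P))$, recall that a finite subset $\{x_0, \dots, x_n \} \subseteq P$ is a simplex of $\Ord(P)$ precisely when, after reindexing, $x_0 \leq x_1 \leq \dots \leq x_n$, i.e. when its elements form a chain in $P$; in particular a two-element subset $\{x,y\}$ is a simplex of $\Ord(P)$ exactly when $x$ and $y$ are comparable.

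With this in hand I would verify the combinatorial characterization of flagness directly. One implication holds for every simplicial complex: if $S \in \Ord(P)$ and $x,y \in S$, then $\{x,y\} \subseteq S$ is again a simplex because $\Ord(P)$ is closed under passing to subsets. For the converse, suppose $S \subseteq P$ has the property that $\{x,y\} \in \Ord(P)$ for every pair $x,y \in S$. By the description above this says exactly that any two elements of $S$ are comparable, so the partial order $\leq$ restricted to $S$ is a total order; equivalently $S$ is a chain, and hence $S \in \Ord(P)$. This shows $\Ord(P)$ is flag, and therefore so is $\Sd\, K = \Ord(\Face(K))$.

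There is essentially no obstacle here: the only content is the observation that a partial order in which every two elements are comparable is a total order. Alternatively, and perhaps more in the spirit of the definition of flag complex, one can phrase the argument by noting that the $1$-skeleton of $\Ord(P)$ is the comparability graph of the poset $P$, and that $\Ord(P)$ is precisely the clique complex of this graph, so $\Ord(P)$ lies in the essential image of $\Cl : \ncat{Gr} \to \ncat{Cpx}$.
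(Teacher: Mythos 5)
Your proof is correct and follows essentially the same route as the paper: both hinge on the observation that the simplices of $\Sd\, K = \Ord(\Face(K))$ are the chains, and that a pairwise-comparable subset of a poset (i.e.\ a complete subgraph of the $1$-skeleton) is automatically a chain. Your slight generalization to $\Ord(P)$ for an arbitrary poset $P$, and the identification with the clique complex of the comparability graph, is exactly the content of the Remark the paper places immediately after this lemma.
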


\begin{proof}
Recall that $\Sd \, K = (\Ord \circ \Face)(K)$, see Definition \ref{def subdivision}. Hence $\Face(K)$ is the poset of simplices ordered by inclusion, and thus the simplices of $\Sd \, K$ are those sets of simplices of $K$ that form a chain by inclusion. But a chain in $\Face(K)$ is precisely a complete subgraph in the $1$-skeleton $(\Sd \, K)_{\leq 1}$. Hence the simplices of $\Sd \, K$ are determined by its underlying graph. 
\end{proof}

\begin{Rem}
In fact, every simplicial complex $K$ of the form $K = \Ord(P)$ for a poset $P$ is a flag complex. This is because $K \cong \Cl(\text{Comp}(P))$, where $\text{Comp}(P)$ is the \textbf{comparibility graph} of the poset $P$. It is the graph with vertices the elements of $P$ and where $xy \in E(\text{Comp}(P))$ if and only if $x \leq y$ or $y \leq x$. So clique complexes and flag complexes are the same class of simplicial complexes. Flags are another name for chains in posets, hence the name flag complex.
\end{Rem}

\begin{Lemma} \label{lem pushouts of flag complexes are flag}
Suppose that
\begin{equation*}
\begin{tikzcd}
	K & A \\
	L & B
	\arrow["u", from=1-1, to=1-2]
	\arrow["i"', hook, from=1-1, to=2-1]
	\arrow["f", hook, from=1-2, to=2-2]
	\arrow["v"', from=2-1, to=2-2]
	\arrow["\lrcorner"{anchor=center, pos=0.125, rotate=180}, draw=none, from=2-2, to=1-1]
\end{tikzcd} 
\end{equation*}
is a pushout in $\ncat{Cpx}$, where $L$ and $A$ are flag simplicial complexes and $i$ is a monomorphism. Then $B$ is a flag simplicial complex.
\end{Lemma}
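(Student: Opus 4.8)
The plan is to read off how pushouts along monomorphisms are built in $\ncat{Cpx}$, and then argue by lifting a would‑be missing face back to $L$ or to $A$ and invoking their flagness.

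First I would record the structure of the pushout. Since the forgetful functor $V\colon\ncat{Cpx}\to\Set$ is a left adjoint (as used in the proof of Lemma \ref{lem pushout of deformation retract}), we have $V(B)=V(L)\sqcup_{V(K)}V(A)$, and the faces of $B$ are precisely the sets $V(v)(\sigma)$ for $\sigma\in L$ together with the sets $V(f)(\tau)$ for $\tau\in A$; one checks that this collection is closed under passage to subsets and that $B$ so described has the universal property of the pushout (this is the description already used in Example \ref{ex pushouts along mono of simplicial complexes}). Because $i$ is a monomorphism, so is $f$ (pushouts of monomorphisms in $\ncat{Cpx}$ are monomorphisms, as in Lemma \ref{lem pushout of deformation retract}), so I may identify $A$ with the subcomplex $f(A)\subseteq B$. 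I would then partition $V(B)$ into the vertices lying only in the image of $v$, the vertices lying only in the image of $f$, and the ``gluing locus'' $V(v)(V(i)(V(K)))$, which is where the two images overlap.

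Next, let $S\subseteq V(B)$ be a finite set every $2$-element subset of which is a face of $B$; the goal is $S\in B$. I would first rule out the possibility that $S$ simultaneously contains a vertex $x$ lying only in the image of $v$ and a vertex $y$ lying only in the image of $f$: the edge $\{x,y\}$ is a face of $B$, hence of the form $V(v)(\sigma)$ or $V(f)(\tau)$, and the first alternative forces $y$ into the image of $v$ while the second forces $x$ into the image of $f$ — a contradiction either way. Hence, after possibly swapping the roles of $L$ and $A$, $S$ lies entirely in the image of $V(v)$. Now I would choose preimages in $V(L)$ of the elements of $S$ — uniquely for a vertex outside the gluing locus, and inside $V(i)(V(K))$ for a vertex on the gluing locus — obtaining a set $\widetilde S\subseteq V(L)$ with $|\widetilde S|=|S|$ and $V(v)(\widetilde S)=S$. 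By flagness of $L$ it then suffices to show that every $2$-element subset of $\widetilde S$ is a face of $L$, for then $\widetilde S\in L$ and so $S=V(v)(\widetilde S)\in B$.

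The main obstacle is exactly this last verification. A pair $\{s,s'\}\subseteq S$ whose chosen witness in $B$ is a face of $L$ causes no trouble, provided the preimages of $s$ and $s'$ are taken from that witnessing simplex; the genuinely delicate case is a pair $\{s,s'\}\subseteq S$ that is certified as a face of $B$ \emph{only} through a face of $A$ — which forces $s$ and $s'$ onto the gluing locus — and which must nevertheless be shown to lift to an edge of $L$ before flagness of $L$ can be applied. Thus the crux is to argue that the pushout does not manufacture a ``hollow'' higher simplex by distributing its boundary edges between $L$ and $A$, and moreover that the preimages of the gluing‑locus vertices of $S$ can be chosen uniformly so that all of the needed edges of $L$ are realized at once. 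I expect this step to require a careful use of the hypotheses together with the concrete nature of the map $i$ as it is used in the paper (the combinatorics of iterated barycentric subdivisions, where the complexes involved are ``thin'' enough that no such hollow simplex can arise); the remaining steps are then routine bookkeeping with the explicit description of $B$.
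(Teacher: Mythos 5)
Your reduction up to the final step is essentially the one the paper uses: the explicit description of pushouts along monomorphisms, the observation that no edge of $B$ joins a vertex coming only from $L$ to a vertex coming only from $A$, and the resulting placement of a complete subgraph $S$ of $B$ entirely inside $v(V(L))$ or inside $f(V(A))$. The step you leave open --- that a pair of gluing-locus vertices whose adjacency in $B$ is witnessed only by an edge of $A$ (respectively only of $L$) can still be absorbed into a single face of $L$ (respectively of $A$) --- is a genuine gap, and it cannot be closed from the stated hypotheses, because the lemma as stated is false. Take $K$ to be the discrete complex on two vertices $x,y$, let $L$ be the flag complex on $\{x,y,z\}$ whose only edges are $\{x,z\}$ and $\{y,z\}$, let $i$ be the inclusion, and let $A$ be the full $1$-simplex on $\{x,y\}$ with $u$ the identity on vertices. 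Then $B\cong \mathbf{\partial\Delta}^2$: the set $\{x,y,z\}$ is a complete graph in $B$ but not a face, so $B$ is not flag. The paper's own proof of Lemma \ref{lem pushouts of flag complexes are flag} stumbles at exactly the point you isolated: from ``every vertex of $S$ lies in $v(V(L))$'' it infers ``every edge of $S$ comes from $L$'', which fails when both endpoints of an edge lie in the image of $K$ and that edge is created only in $A$ (and the first case, $S\subseteq V(A)$, likewise presupposes that the edges of $S$ already lie in $A$). So your refusal to wave this step through is the right instinct, but as a proof of the stated lemma your proposal is incomplete, and no proof exists without strengthening the hypotheses.

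Your proposed repair --- restrict to the inclusions actually used, $\Sd^2\mathbf{\partial\Delta}^n\hookrightarrow\Sd^2\bDelta^n$, and argue that double subdivisions are too ``thin'' to create a hollow simplex --- also does not suffice as stated, and you should know the obstruction persists there. In $\Sd^2\bDelta^2$ the two boundary vertices given by the singleton chains on the faces $\{0\}$ and $\{0,1\}$ of $\bDelta^2$ are not adjacent, yet both are adjacent to the interior vertex given by the chain $\{0\}\subset\{0,1\}\subset\{0,1,2\}$. If $A$ contains a $2$-simplex $\{a,b,c\}$, the vertex map sending the first of these boundary vertices to $a$, the second to $b$, and every other vertex of $\Sd^2\mathbf{\partial\Delta}^2$ to $c$ is simplicial (the source is $1$-dimensional and all pairs from $\{a,b,c\}$ are edges of $A$), and in the resulting pushout the images of $a$, $b$ and of that interior vertex form a complete graph which is the image of no single face of $L$ or of $A$, since the only preimages of $a$ and $b$ in $L$ are the two incomparable chains. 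Taking $A=\Sd^2\bDelta^2=\tRe(\Delta^2)$, which is a $\tRe(I)$-cell complex containing $2$-simplices, this appears to extend to a counterexample to Proposition \ref{prop cofibrant complexes are flag} as well (and hence affects the proof of Theorem \ref{th quillen equiv between cpx and gr}, which relies on it). So the missing step in your proposal is not routine bookkeeping: either the lemma needs genuinely stronger hypotheses on $(L,K)$ and on the attaching map $u$ --- for instance a condition preventing two non-adjacent vertices of $K$ that share a neighbour in $L\setminus K$ from becoming adjacent in $B$ --- or the flagness claim has to be replaced by a weaker statement that still delivers the intended consequences.
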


\begin{proof}
Let $S \subseteq V(B)$ be a subset such that for all $x, y \in S$, $\{x,y \} \in B$. Then the $1$-skeleton $S_{\leq 1}$ is a complete graph inside of $B$. We want to show that $S$ is a simplex in $B$. 

Let us identify $A$ with $f(A) \subseteq B$. Suppose $S \subseteq V(A)$. Then since $A$ is a flag complex, $S \in A$, and hence $S \in B$. 

Now suppose that $S \nsubseteq V(A)$. If $S$ lies wholly in $v(L)$, then since $B$ is a pushout, there must exist a complete graph $S'$ in $L$ such that $v(S') = S$. But $L$ is a flag complex, so $S'$ is a simplex in $L$, and since $v$ is a map of simplicial complexes, this implies that $S$ is a simplex in $B$.

So now suppose that there exists $x, y \in V(S)$ such that $\{x,y\} \in B$ and $x = [a]$ for some $a \in A$ and $y = [\ell]$ for some $\ell \in L$. But by the set-theoretic description of the pushout $B$, $\{x,y\} \in B$ is only possible if either (1) there exists a $k \in K$ such that $a = u(k)$ and $\{\ell, k \} \in L$, or (2) $\ell = i(k')$ for some $k' \in K$, $a' = u(k')$ and $\{a, a' \} \in A$.

In the case of (1), if $a = u(k)$ and $\{\ell, k \} \in L$, then for every other vertex $s \in V(S)$, since $S$ is a complete graph $s$ must be adjacent to $\ell$, so it must be the case that $s = [\ell']$ for $\ell' \in L$ or $s = [a']$ for $a' = u(k')$ for some $k' \in K$, in which case $s = [k']$. In other words, it is not possible for $s = [a'']$ for $a'' \in (A \setminus u(K))$, because there are no edges in $B$ connecting vertices of $(A \setminus u(K))$ with $L$. So if (1) holds, then every edge in $S$ must come from $L$. But $L$ is a flag complex, so $S$ is a simplex, and hence a simplex in $B$.

In the case of (2), if $\ell = i(k')$, $a' = u(k')$ and $\{a, a' \} \in A$, then for every other $s \in V(S)$, since $s$ must be connected with $x = [a]$, it must be the case that $s = [a'']$ for some $a'' \in A$ or $s = [k]$ for $k \in K$, in which case $s = [u(k)]$, because there are no edges in $B$ connecting vertices of $A$ with $(L \setminus K)$. So if (2) holds, then every edge in $S$ must come from $A$. But $A$ is a flag complex, so $S$ is a simplex, and hence a simplex in $B$.
\end{proof}

\begin{Lemma} \label{lem filtered colimit of flag complexes are flag}
Given an ordinal $\lambda$ and a $\lambda$-sequence $K : \lambda \to \ncat{Cpx}$
\begin{equation*}
    K_0 \hookrightarrow K_1 \hookrightarrow K_2 \hookrightarrow \dots \hookrightarrow K_\beta \hookrightarrow \dots
\end{equation*}
where all of the maps are monomorphisms and each $K_\beta$ is a flag complex, then the colimit
\begin{equation*}
    K = \ncolim{\beta < \lambda} K_\beta
\end{equation*}
is a flag complex.
\end{Lemma}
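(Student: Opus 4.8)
The plan is to describe the colimit $K$ explicitly and then reduce the flag condition at $K$ to the flag condition at a single finite stage. First I would record the shape of filtered colimits of monomorphisms in $\ncat{Cpx}$: since the forgetful functor $V : \ncat{Cpx} \to \ncat{Set}$ is a left adjoint, it preserves colimits, so $V(K) = \bigcup_{\beta < \lambda} V(K_\beta)$, where the union identifies each $V(K_\beta)$ with its image. Moreover a finite set $\sigma$ of vertices is a simplex of $K$ if and only if $\sigma \in K_\beta$ for some $\beta < \lambda$. This is because the simplicial complex with this vertex set and this collection of simplices visibly satisfies the universal property of $\ncolim{\beta < \lambda} K_\beta$: it carries the evident cocone $(K_\beta \hookrightarrow -)$, and given any compatible cocone $(f_\beta : K_\beta \to M)$, every simplex of $K$ already lies in some $K_\beta$, so there is exactly one map extending the $f_\beta$, namely the one induced on vertices by the colimit in $\ncat{Set}$. (Alternatively this can be extracted from the discussion of $\ncat{Cpx}$ in Appendix \ref{section categories}.)

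Next, let $S \subseteq V(K)$ be a subset such that $\{x,y\} \in K$ for every pair $x, y \in S$; since every simplex of a simplicial complex is finite, it suffices to treat the case that $S$ is finite, and I want to show $S \in K$. The set $S$ has finitely many elements and finitely many pairs $\{x,y\}$. Each vertex $x \in S$ lies in $V(K_{\beta_x})$ for some $\beta_x < \lambda$, and each edge $\{x,y\}$ with $x, y \in S$ lies in $K_{\gamma_{xy}}$ for some $\gamma_{xy} < \lambda$. Let $\delta$ be the maximum of this finite collection of ordinals; a finite set of ordinals attains its maximum, so $\delta < \lambda$. Because the $\lambda$-sequence consists of inclusions, $S \subseteq V(K_\delta)$ and $\{x,y\} \in K_\delta$ for every pair $x, y \in S$.

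Finally, since $K_\delta$ is a flag complex, the fact that all pairs from $S$ are edges of $K_\delta$ forces $S \in K_\delta$, and hence $S \in K$ by the description of the colimit above. This verifies the flag condition for $K$, so $K$ is a flag complex. I do not expect a genuine obstacle here; the only point requiring care is the concrete description of the colimit $K$ as the union, i.e. the statement that a simplex of a filtered colimit of inclusions of simplicial complexes already appears at some stage $\beta < \lambda$, and this is straightforward from the universal property (or from Appendix \ref{section categories}).
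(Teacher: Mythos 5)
Your proof is correct and follows essentially the same route as the paper: reduce to a finite complete subgraph $S$ of $K$, observe that $S$ together with its edges already lives in some stage $K_\delta$ (you justify this via the explicit description of the colimit and a maximum over finitely many ordinals, which the paper leaves implicit), and then apply flagness of $K_\delta$. The extra care about the colimit description and the restriction to finite $S$ is consistent with the paper's Appendix and its own (terser) argument.
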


\begin{proof}
Suppose that $S \subseteq V(K)$ forms a complete graph in $K$. Then since $S$ is a finite simplicial complex, there exists some $\beta$ such that $S \subseteq K_\beta$. But $K_\beta$ is a flag complex, hence $S$ is a simplex in $K_\beta$, and therefore in $K$.
\end{proof}

\begin{Prop} \label{prop cofibrant complexes are flag}
Every Thomason cofibrant simplicial complex is a flag complex.
\end{Prop}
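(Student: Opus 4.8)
The plan is to induct over the cellular presentation of a Thomason cofibrant object, using that the three lemmas just proved show flagness is preserved under the operations that build a relative $\tRe(I)$-cell complex, together with one more elementary closure property, namely closure under retracts. Concretely, a simplicial complex $K$ is Thomason cofibrant exactly when $K$ is a retract in $\ncat{Cpx}$ of a $\tRe(I)$-cell complex $B$, where $\tRe(I) = \{\Sd^2 \mathbf{\partial \Delta}^n \hookrightarrow \Sd^2 \bDelta^n \mid n \geq 0\}$. So I would prove (i) every $\tRe(I)$-cell complex is a flag complex, and (ii) retracts of flag complexes are flag.

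For (i), the empty simplicial complex is vacuously flag, so by transfinite induction it suffices to see that the class of flag complexes is closed under the building blocks. First I would record that a coproduct $\coprod_i K_i$ of flag complexes is flag, since its $1$-skeleton has edges only within the individual $V(K_i)$, so any vertex set $S$ all of whose pairs span edges of $\coprod_i K_i$ must lie inside a single $V(K_i)$ and is therefore a simplex there by flagness of $K_i$. In particular, for a coproduct of maps in $\tRe(I)$, the codomain $\coprod_i \Sd^2 \bDelta^{n_i}$ is flag, because each $\Sd^2 \bDelta^{n_i}$ is $\Sd$ of the complex $\Sd \bDelta^{n_i}$ and hence flag by Lemma \ref{lem subdivision is flag}, and the coproduct inclusion is a monomorphism. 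Consequently, if $K$ is flag and $K'$ is obtained from $K$ by pushing out such a coproduct of maps in $\tRe(I)$, then $K'$ is flag by Lemma \ref{lem pushouts of flag complexes are flag}, whose hypotheses are met here since the attached complex $\coprod_i \Sd^2 \bDelta^{n_i}$ and the complex $K$ are flag and the attaching inclusion is a monomorphism. At limit stages, the transition maps are monomorphisms (pushouts of monomorphisms in $\ncat{Cpx}$ are monomorphisms), so the colimit stays flag by Lemma \ref{lem filtered colimit of flag complexes are flag}. Starting the induction from $\varnothing$ then yields (i).

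For (ii), suppose $K$ is a retract of a flag complex $L$, via $K \xrightarrow{s} L \xrightarrow{r} K$ with $rs = 1_K$, and let $S \subseteq V(K)$ satisfy $\{x,y\} \in K$ for every pair $x,y \in S$. Since $s$ is a morphism of simplicial complexes, $s(\{x,y\})$ — which is either $\{s(x), s(y)\}$ or the vertex $\{s(x)\}$ — is a simplex of $L$ for all $x,y \in S$, so every pair of vertices of $s(S)$ spans an edge of $L$; flagness of $L$ forces $s(S) \in L$, whence $S = (rs)(S) = r(s(S)) \in K$, so $K$ is flag. Combining (i) and (ii) proves the proposition. I expect no real obstacle: all the geometric content already sits in Lemmas \ref{lem subdivision is flag}, \ref{lem pushouts of flag complexes are flag}, and \ref{lem filtered colimit of flag complexes are flag}, and the only genuinely new inputs are the elementary observations that flag complexes are closed under coproducts and under retracts, plus the routine bookkeeping of the transfinite induction.
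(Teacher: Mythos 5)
Your proof is correct and follows essentially the same route as the paper: reduce to retracts of $\tRe(I)$-cell complexes, show the cell complexes are flag by transfinite induction using Lemmas \ref{lem subdivision is flag}, \ref{lem pushouts of flag complexes are flag} and \ref{lem filtered colimit of flag complexes are flag}, then close under retracts. The only differences are cosmetic: you handle coproducts explicitly where the paper cites their redundancy in cellular presentations, and for the retract step you argue directly with the section and retraction (which works, since $rs=1_K$ forces $s(S)$ to be a clique in the flag complex $L$, hence a simplex, and $r$ carries it back to $S$), whereas the paper observes that a split monomorphism in $\ncat{Cpx}$ is a regular monomorphism, i.e.\ an induced subcomplex, and induced subcomplexes of flag complexes are flag.
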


\begin{proof}
If $K$ is a Thomason cofibrant simplicial complex, then it is a retract of a transfinite composition of pushouts\footnote{Normally one says transfinite composition of coproducts of pushouts of maps in $\tRe(I)$. But this is redundant by \cite[Proposition 10.2.14]{hirschhorn2003model}.} of maps in $\tRe(I)$ from (\ref{eq gen cofibs for cpx}).

So let $K$ be an $\tRe(I)$-cell complex. In other words, let $K$ be constructed by transfinite compositions of pushouts of maps in $\tRe(I)$. By Lemma \ref{lem subdivision is flag}, every simplicial complex in $\tRe(I)$ is a flag complex, and $\varnothing$ is vacuously a flag complex, so by Lemma \ref{lem pushouts of flag complexes are flag} and Lemma \ref{lem filtered colimit of flag complexes are flag}, $K$ is a flag complex.

Now if $K' \xhookrightarrow{i} K$ is a retract of $K$, i.e. there exists a map $r : K \to K'$ such that $ri = 1_{K'}$, then $i$ is a split monomorphism, and hence is a regular monomorphism. But the regular monomorphisms in $\ncat{Cpx}$ are precisely the induced subcomplexes. But clearly any induced subcomplex of a flag complex is a flag complex. Hence $K'$ is a flag complex.
\end{proof}

\begin{Rem}
Note the similarity of Proposition \ref{prop cofibrant complexes are flag} with \cite[Proposition 5.7]{thomason1980cat} which says that every cofibrant category $\cat{C}$ in the Thomason model structure on $\ncat{Cat}$ is a poset.
\end{Rem}

\begin{Rem}
In \cite{thomason1980cat} it is shown that every cofibrant object in the Thomason model structure on $\ncat{Cat}$ is a poset. This begs the question, do there exists posets that are not cofibrant? In our context, in lieu of Proposition \ref{prop cofibrant complexes are flag}, the question is: do there exist flag complexes that are not cofibrant? In \cite[Proposition 6.2]{may2017poset} it is shown that there exists a poset $A$ that is not cofibrant in the Thomason model structure on posets. This poset $A$ has the property that $O = \text{Ord}(A)$ is the octohedron as a simplicial complex. However, because there does not exist a simplicial complex $K$ such that $A = \text{Face}(K)$, the methods used to prove this do not carry over straightforwardly to construct a non-cofibrant flag simplicial complex. It is therefore an open problem to determine if there exist non-cofibrant flag simplicial complexes in the Thomason model structure on $\ncat{Cpx}$.
\end{Rem}

\begin{Rem} \label{rem cpx not simplicial}
One can show, using the same example and technique as in \cite[Page 221]{raptis2010homotopy} that the Thomason model structure on  $\ncat{Cpx}$ is neither monoidal or simplicial. The same argument shows that the other model structures we will consider in the next section on $\ncat{Gr}$ and $\ncat{Gr}_\ell$ are also not monoidal or simplicial.
\end{Rem}

\begin{Lemma} \label{lem tsing is left quillen}
The right Quillen functor $\tSing$ is also left Quillen.
\end{Lemma}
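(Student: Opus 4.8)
The plan is to show directly that $\tSing = \text{Ex}^2\,\Sing$ sends cofibrations to cofibrations and weak equivalences to weak equivalences of simplicial sets; the first two properties of a left Quillen functor then follow immediately (and, in fact, $\tSing$ will preserve \emph{all} weak equivalences, which is strictly stronger than left Quillenness). Note that $\tSing$ is the right adjoint of the Quillen pair $\tRe = \Re\,\sd^2 \dashv \tSing$ from (\ref{eq thomason adjunction cpx and sset}); the content of the lemma is that this right adjoint additionally carries the cofibrations and trivial cofibrations of $\ncat{Cpx}$ into those of $\ncat{sSet}$.

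First I would record that every Thomason cofibration of simplicial complexes is a monomorphism. The generating cofibrations $\tRe(I) \cong \{\Sd^2\mathbf{\partial\Delta}^n \hookrightarrow \Sd^2\bDelta^n \mid n \geq 0\}$ from (\ref{eq gen cofibs for cpx}) are monomorphisms; pushouts of monomorphisms in $\ncat{Cpx}$ are monomorphisms and transfinite composites of monomorphisms are monomorphisms, both because $V : \ncat{Cpx} \to \ncat{Set}$ is a colimit-preserving functor that detects monomorphisms (as used in the proof of Lemma \ref{lem pushout of deformation retract}); and retracts of monomorphisms are monomorphisms. Since the Thomason cofibrations are precisely $\text{cof}(\tRe(I))$, the retracts of relative $\tRe(I)$-cell complexes, they are all monomorphisms.

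Next I would note that $\tSing$ preserves monomorphisms: being a right adjoint, it preserves all limits, in particular pullbacks, and hence monomorphisms. As the monomorphisms of simplicial sets are exactly the cofibrations of the Kan--Quillen model structure, the previous paragraph shows that $\tSing$ sends Thomason cofibrations to Kan--Quillen cofibrations.

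Finally, since the Thomason model structure on $\ncat{Cpx}$ is right-transferred along $\tRe \dashv \tSing$, Definition \ref{def transferred weak equivs and fibrations} gives that $f$ is a Thomason weak equivalence if and only if $\tSing(f)$ is a weak equivalence of simplicial sets, so in particular $\tSing$ preserves weak equivalences. Combining this with the preceding paragraph, $\tSing$ sends trivial cofibrations to trivial cofibrations, so it is left Quillen. I do not expect any real obstacle here; the only point that needs an argument rather than a citation is that Thomason cofibrations are monomorphisms — this is unavoidable because $\tSing$ is not cocontinuous ($\Sing$, and hence $\tSing$, fails to preserve all pushouts), so one cannot reduce the cofibration claim to the generating cofibrations $\tRe(I)$ by a left-adjoint argument.
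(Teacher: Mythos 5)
Your proposal is correct and follows essentially the same route as the paper's proof: show Thomason cofibrations are monomorphisms (generating cofibrations are monos, and monos in $\ncat{Cpx}$ are closed under pushouts, transfinite composites, and retracts), use that $\tSing$ is a right adjoint to conclude it preserves monomorphisms and hence Kan--Quillen cofibrations, and note that weak equivalence preservation is automatic from the right-transfer. The extra remark explaining why one cannot argue only on generating cofibrations is accurate but not needed.
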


\begin{proof}
Suppose that $i : K \to L$ is a Thomason cofibration of simplicial complexes. Then by definition it is a retract of a relative $\tRe(I)$-complex. Since $\tRe(I)$ consists of monomorphisms, and monomorphisms in $\ncat{Cpx}$ are closed under pushouts, transfinite composites and retracts, this implies that $i$ is a monomorphism. But $\tSing$ is a right adjoint, and hence preserves monomorphisms. Thus if $i$ is a Thomason cofibration, then $\tSing(i)$ is a monomorphism in $\ncat{sSet}$, equivalently a cofibration in the Kan-Quillen model structure. Furthermore, if $i$ is a weak equivalence, then by definition $\tSing(i)$ is a weak equivalence of simplicial sets.
\end{proof}

\section{Graphs} \label{section graphs}

Having right-transferred the Quillen model structure on $\ncat{sSet}$ to $\ncat{Cpx}$ to obtain the Thomason model structure, we will now work to further transfer this model structure to categories of graphs.

\begin{Def} \label{def loop graph}
A \textbf{loop graph} $G$ consists of a set $V(G)$ along with a symmetric relation $E(G) \subseteq V(G) \times V(G)$. We call the elements of $V(G)$ \textbf{vertices} and the elements of $E(G)$ \textbf{edges}. We write $xy \in E(G)$ to mean that $(x,y) \in E(G)$, and we say that $x$ and $y$ are \textbf{adjacent}. A graph homomorphism or graph map $f : G \to H$ of loop graphs consists of a function $V(f) : V(G) \to V(H)$ such that if $xy \in E(G)$, then $V(f)(x) V(f)(y) \in E(H)$. Let $\ncat{Gr}_\ell$ denote the category of loop graphs and graph homomorphisms.
\end{Def}

We can think of a loop graph as a collection of vertices with at most one edge between any pair of vertices. In particular this means that a loop graph is allowed to have at most one loop on every vertex. A morphism of loop graphs cannot crush an edge down to a vertex unless that vertex has a loop on it.

\begin{Def} \label{def reflexive graph}
We say a loop graph $G$ is \textbf{reflexive}, if the underlying edge relation $E(G)$ is reflexive. In other words, every vertex has a loop. We let $\ncat{Gr}$ denote the full subcategory of $\ncat{Gr}_\ell$ on the reflexive graphs\footnote{In previous work \cite{bumpus2025structured}, the notation $\ncat{Gr}$ meant the full subcategory of $\ncat{Gr}_\ell$ on the unlooped graphs. In this paper, since we only consider reflexive graphs and loop graphs, we feel justified in this abuse of notation.}.
\end{Def}

Hence a graph map $f : G \to H$ between reflexive graphs is allowed to crush an edge down to any vertex.

\subsection{The Matsushita Model Structure on Reflexive Graphs}

Now we transfer the Thomason model structure from $\ncat{Cpx}$ to obtain the Matsushita model structure on $\ncat{Gr}$.

We have an adjoint triple
\begin{equation} \label{eq clique complex triple adjunction}
\begin{tikzcd}
	{\ncat{Gr}} && {\ncat{Cpx}}
	\arrow[""{name=0, anchor=center, inner sep=0}, "\Cl"', curve={height=18pt}, hook, from=1-1, to=1-3]
	\arrow[""{name=1, anchor=center, inner sep=0}, "{\iota}", curve={height=-18pt}, hook', from=1-1, to=1-3]
	\arrow[""{name=2, anchor=center, inner sep=0}, "{(-)_{\leq 1}}"{description}, from=1-3, to=1-1]
	\arrow["\dashv"{anchor=center, rotate=-91}, draw=none, from=1, to=2]
	\arrow["\dashv"{anchor=center, rotate=-89}, draw=none, from=2, to=0]
\end{tikzcd}
\end{equation}
where $\Cl$ denotes the clique complex functor, which takes a graph $G$ to the simplicial complex $\Cl(G)$ where $V(\Cl(G)) = V(G)$ and $X = \{x_1, \dots, x_n \}$ is a simplex of $\Cl(G)$ if the induced subgraph on $X$ in $G$ is a clique/complete subgraph. Given a graph homomorphism $f : G \to H$, $\Cl(f) : \Cl(G) \to \Cl(H)$ is given by precisely the same map $V(f)$ on vertices. The functor $\iota$ is the fully faithful inclusion of reflexive graphs into simplicial complexes, and $(-)_{\leq 1}$ is the functor that takes the underlying graph of a simplicial complex by forgetting about its $n$-simplices for $n > 1$. It is easiest to see that $\Cl$ is fully faithful by noticing that the counit $\varepsilon_G : \Cl(G)_{\leq 1} \to G$ is an isomorphism.

Given reflexive graphs $G$ and $H$, let $H^G$ denote their internal hom in $\ncat{Gr}$. So $H^G$ is the graph with vertices given by functions $f : V(G) \to V(H)$ and two functions $f$, $f'$ are adjacent if for all pairs $xy \in E(G)$, $f(x)f'(y) \in E(H)$. Similarly for simplicial complexes $K$ and $L$, let $L^K$ denote the internal hom in $\ncat{Cpx}$. So $V(L^K)$ is the set of functions $f : V(K) \to V(L)$, and $\{f_0, \dots, f_n \} \in L^K$ if for every $\{x_0, \dots, x_n \} \in K$, $\{f_0(x_0), f_1(x_1), \dots, f_n(x_n) \} \in L$. Let us note the following convenient fact.

\begin{Lemma} \label{lem cl preserves internal hom}
Given reflexive graphs $G$ and $H$, there is an isomorphism
\begin{equation*}
    \Cl(H)^{\Cl(G)} \xrightarrow{\cong} \Cl(H^G).
\end{equation*}
\end{Lemma}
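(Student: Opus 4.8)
The plan is to establish the isomorphism by a Yoneda argument, transporting the internal-hom adjunction of $\ncat{Cpx}$ through the adjunction $(-)_{\leq 1} \dashv \Cl$ of (\ref{eq clique complex triple adjunction}) and the internal-hom adjunction of $\ncat{Gr}$. Concretely, for an arbitrary simplicial complex $M$ I would compute, natural in $M$,
\[
\ncat{Cpx}\big(M, \Cl(H)^{\Cl(G)}\big) \;\cong\; \ncat{Cpx}\big(M \times \Cl(G), \Cl(H)\big) \;\cong\; \ncat{Gr}\big((M \times \Cl(G))_{\leq 1}, H\big),
\]
using first that $\ncat{Cpx}$ is cartesian closed (it is a quasitopos), together with the explicit internal hom $L^K$ described above, and then the adjunction $(-)_{\leq 1} \dashv \Cl$. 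The key intermediate fact is that $(-)_{\leq 1} : \ncat{Cpx} \to \ncat{Gr}$ preserves finite products; granting this, and using that the counit $\Cl(G)_{\leq 1} \xrightarrow{\cong} G$ is an isomorphism, one has $(M \times \Cl(G))_{\leq 1} \cong M_{\leq 1} \times \Cl(G)_{\leq 1} \cong M_{\leq 1} \times G$, so the chain continues
\[
\ncat{Gr}\big(M_{\leq 1} \times G, H\big) \;\cong\; \ncat{Gr}\big(M_{\leq 1}, H^G\big) \;\cong\; \ncat{Cpx}\big(M, \Cl(H^G)\big),
\]
by the defining adjunction of the internal hom $H^G$ in $\ncat{Gr}$ and once more $(-)_{\leq 1} \dashv \Cl$. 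Since the composite bijection is natural in $M$, the Yoneda lemma yields $\Cl(H)^{\Cl(G)} \cong \Cl(H^G)$, and naturality in $G$ and $H$ is automatic from the naturality of each step.

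The only step requiring a short verification is that $(-)_{\leq 1}$ preserves binary products: on vertices both $(K \times L)_{\leq 1}$ and $K_{\leq 1} \times L_{\leq 1}$ have vertex set $V(K) \times V(L)$, and a two-element set $\{(k,l),(k',l')\}$ is an edge on either side exactly when $\{k,k'\} \in K$ and $\{l,l'\} \in L$ — on the left because the simplices of the product complex are precisely those sets whose two projections are simplices, and on the right by the definition of the (categorical) product of reflexive graphs. I expect this bookkeeping, together with being careful about exactly which product and internal-hom conventions are in force on $\ncat{Gr}$ and $\ncat{Cpx}$, to be the only real content; there is no genuine obstacle.

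As an alternative more hands-on route, one observes that both sides are flag complexes, so each is determined by its $1$-skeleton, and it suffices to identify those. For $\Cl(H)^{\Cl(G)}$ this rests on the fact that the internal hom in $\ncat{Cpx}$ of any complex into a flag complex is again flag: unwinding the exponential, $\{f_0,\dots,f_n\}$ is a face of $\Cl(H)^{\Cl(G)}$ iff the associated map $\bDelta^n \times \Cl(G) \to \Cl(H)$ is simplicial, and since $\Cl(H)$ is flag this reduces to the pairwise condition that $\{f_i(x), f_j(y)\} \in \Cl(H)$ for all $i,j$ and all $\{x,y\} \in \Cl(G)$. Using reflexivity of $G$ to absorb singleton faces into edges and reflexivity of $H$ to absorb equalities into adjacencies, this pairwise condition on $\{f_i, f_j\}$ is exactly the adjacency $f_i \sim f_j$ in $H^G$, while the vertices (the graph homomorphisms $G \to H$, matched via full faithfulness of $\Cl$) coincide; hence the identity on the underlying function sets is the desired isomorphism.
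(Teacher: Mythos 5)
Your Yoneda argument is precisely the content of the general categorical fact the paper invokes (it cites \cite[Corollary A.1.5.9]{Johnstone2002}: a fully faithful right adjoint whose left adjoint preserves finite products preserves exponentials), so your proposal is correct and essentially the same proof with that cited result unfolded by hand. The only simplification available is that you need not verify product preservation of $(-)_{\leq 1}$ directly: in the adjoint triple $\iota \dashv (-)_{\leq 1} \dashv \Cl$ it is a right adjoint, hence preserves all limits.
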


\begin{proof}
This follows from \cite[Corollary A.1.5.9]{Johnstone2002}, since $\Cl$ is fully faithful and its left adjoint $(-)_{\leq 1}$ preserves limits and hence products.
\end{proof}

\begin{Lemma}
Suppose that $f, g: G \to H$ are graph homomorphisms such that $f \simeq_\times g$ (in the sense of simplicial complexes), then $\Cl(f) \simeq_\times \Cl(g)$.
\end{Lemma}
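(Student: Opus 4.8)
The plan is to unwind the definitions and show that an $\times$-homotopy $H$ witnessing $f \simeq_\times g$ in $\ncat{Cpx}$, suitably transported along the clique complex functor, produces an $\times$-homotopy between $\Cl(f)$ and $\Cl(g)$. Concretely, suppose $H : G \times I_n \to H$ is a graph homomorphism (where the product and $I_n$ are taken in $\ncat{Gr}$, or equivalently, since $\ncat{Gr}$ is a full subcategory of $\ncat{Cpx}$ closed under the relevant constructions, in $\ncat{Cpx}$) with $H(-,0) = f$ and $H(-,n) = g$. We want to build a map $\Cl(G) \times I_n \to \Cl(H)$ in $\ncat{Cpx}$, where now $\times$ is the categorical product in $\ncat{Cpx}$. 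The key point is to understand how $\Cl$ interacts with the product by $I_n$.

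First I would record the comparison between $\Cl(G) \times I_n$ and $\Cl(G \times I_n)$. Since $I_n$ is itself a reflexive graph, $\Cl(I_n) = I_n$ (as $I_n$ is $1$-dimensional and flag), so this is an instance of how $\Cl$ behaves on products. The functor $\Cl : \ncat{Gr} \to \ncat{Cpx}$ is fully faithful with left adjoint $(-)_{\leq 1}$, which preserves products; dually one might hope $\Cl$ preserves products, but in general it does not. However, there is always a canonical comparison map, and in the specific case at hand one can check directly that $\Cl(G \times I_n) \to \Cl(G) \times \Cl(I_n) = \Cl(G) \times I_n$ is an isomorphism, or at least that we have a map going the right way to transport the homotopy. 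Actually the cleanest route: a simplex of $\Cl(G) \times I_n$ is a set $\{(x_0,i_0),\dots,(x_m,i_m)\}$ such that the $x$'s form a clique in $G$ and the $i$'s form a clique (i.e. an edge or vertex) in $I_n$; but that is exactly the condition for $\{(x_0,i_0),\dots,(x_m,i_m)\}$ to form a clique in $G \times I_n$, hence exactly a simplex of $\Cl(G \times I_n)$. So $\Cl(G \times I_n) \cong \Cl(G) \times I_n$ here — the flag-ness of both factors' clique complexes makes the comparison an isomorphism.

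Given this, the proof is immediate: apply $\Cl$ to $H : G \times I_n \to H$ to obtain $\Cl(H) : \Cl(G \times I_n) \to \Cl(H)$, precompose with the isomorphism $\Cl(G) \times I_n \xrightarrow{\cong} \Cl(G \times I_n)$ to get a map $\Cl(G) \times I_n \to \Cl(H)$, and check the endpoint conditions: restricting to $i = 0$ gives $\Cl(H(-,0)) = \Cl(f)$ and restricting to $i = n$ gives $\Cl(H(-,n)) = \Cl(g)$, using that $\Cl$ acts as the identity on vertices and that the isomorphism is compatible with the two inclusions $\Cl(G) \hookrightarrow \Cl(G) \times I_n$ at levels $0$ and $n$. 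Hence $\Cl(f) \simeq_n \Cl(g)$, so $\Cl(f) \simeq_\times \Cl(g)$.

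The main obstacle — really the only subtle point — is verifying that the comparison map $\Cl(G \times I_n) \to \Cl(G) \times I_n$ is an isomorphism, i.e. that a set of vertices forms a clique in the categorical product $G \times I_n$ of graphs precisely when its projections form cliques in each factor. This is a standard fact about categorical products of reflexive graphs (adjacency in $G \times I_n$ is coordinatewise, including loops), and combined with the fact that $I_n$ is $1$-dimensional it shows both sides have the same simplices on the same vertex set. One should be slightly careful that the product $I_n$ appearing in Definition \ref{def homotopy of simplicial complexes} for simplicial complexes agrees, on these flag inputs, with the graph-theoretic product, but this is exactly the content of the comparison just discussed. Alternatively, one can avoid the isomorphism entirely and argue via the sequence of $1$-homotopies (\ref{eq sequence of 1-homotopies}) together with Lemma \ref{lem 1-homotopies are contiguities}: a $1$-homotopy $G \times \Delta^1 \to H$ exhibits $f \sim_c g$, and it is straightforward that contiguous graph maps have contiguous clique complex images (if $f(\sigma) \cup g(\sigma)$ is a clique in $H$ whenever $\sigma$ is an edge of $G$, the flag condition propagates this to all cliques $\sigma$), so $\Cl(f) \sim_c \Cl(g)$ and then $\Cl(f) \simeq_1 \Cl(g)$; chaining over the sequence gives $\Cl(f) \simeq_\times \Cl(g)$.
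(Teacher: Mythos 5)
Your main route is correct and is in substance the paper's own proof: apply $\Cl$ to the homotopy and identify $\Cl(G\times I_n)$ with $\Cl(G)\times I_n$, using $\Cl(I_n)\cong I_n$. Two side assertions you make along the way are wrong, though neither sinks the argument. First, $\Cl$ \emph{does} preserve products in general: in the adjoint triple (\ref{eq clique complex triple adjunction}) it is the \emph{right} adjoint of $(-)_{\leq 1}$, hence preserves all limits; the paper's one-line proof is exactly this observation, and your hand verification of the comparison map for $G\times I_n$ is just the binary case of it, so your worry that ``in general it does not'' preserve products is unfounded. Second, the parenthetical claim that the homotopy may ``equivalently'' be taken in $\ncat{Gr}$ or in $\ncat{Cpx}$ is false: $\ncat{Gr}$ is not closed under products in $\ncat{Cpx}$ (the $\ncat{Cpx}$-product of two $1$-dimensional complexes has $2$- and $3$-simplices), and a simplicial map $\iota G\times_{\ncat{Cpx}} I_n \to \iota H$ is strictly stronger than a graph map $G\times_{\ncat{Gr}} I_n\to H$ --- for instance, with $G=H=K^3$ the identity and a rotation are related by a one-step graph homotopy but are not even contiguous as maps of $1$-dimensional complexes, so no one-step complex homotopy exists. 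What you actually need is only the easy direction: a complex-sense homotopy restricts along $(-)_{\leq 1}$ to a graph map out of $G\times_{\ncat{Gr}} I_n$, to which you then apply $\Cl$; with that one-line fix your main route is complete (and mirrors the paper's). Your alternative route via contiguity is a genuinely different, and also valid, argument --- it buys a proof that never mentions product preservation --- but note it silently uses the converse of Lemma \ref{lem 1-homotopies are contiguities} (contiguity implies $1$-homotopy), which is true (set $H(x,0)=f(x)$, $H(x,1)=g(x)$ and use that any simplex of $K\times I_1$ maps into $f(\sigma)\cup g(\sigma)$, which is a simplex, and that faces of simplices are simplices) but is not stated in the paper, so you would need to include that step.
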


\begin{proof}
If $h : G \times I_n \to H$ is an $\times$-homotopy, then since $\Cl$ is a right adjoint and $\Cl(I_n) \cong I_n$, we obtain a $\times$-homotopy
\begin{equation*}
    \Cl(h) : \Cl(G) \times I_n \to \Cl(H).
\end{equation*}
\end{proof}

\begin{Cor}
If $f : G \to H$ is an $\times$-homotopy equivalence between graphs, then so is $\Cl(f) : \Cl(G) \to \Cl(H)$.
\end{Cor}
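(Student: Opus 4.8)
The plan is to unwind the definition of an $\times$-homotopy equivalence and push it through the functor $\Cl$, using the preceding Lemma that $\Cl$ preserves the $\times$-homotopy relation between graph maps. First, since $f : G \to H$ is an $\times$-homotopy equivalence, there is a graph homomorphism $g : H \to G$ together with $\times$-homotopies $gf \simeq_\times 1_G$ and $fg \simeq_\times 1_H$. Applying $\Cl$ yields a map $\Cl(g) : \Cl(H) \to \Cl(G)$, and functoriality gives $\Cl(g)\Cl(f) = \Cl(gf)$ and $\Cl(f)\Cl(g) = \Cl(fg)$, together with $\Cl(1_G) = 1_{\Cl(G)}$ and $\Cl(1_H) = 1_{\Cl(H)}$.

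The second step is to transport the two homotopies. By the preceding Lemma, $gf \simeq_\times 1_G$ implies $\Cl(gf) \simeq_\times \Cl(1_G) = 1_{\Cl(G)}$, and likewise $fg \simeq_\times 1_H$ implies $\Cl(fg) \simeq_\times 1_{\Cl(H)}$. Combining this with the first step, $\Cl(g)$ is an $\times$-homotopy inverse to $\Cl(f)$, so $\Cl(f)$ is an $\times$-homotopy equivalence.

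There is essentially no obstacle here: the statement is a purely formal consequence of the fact that $\Cl$ is a functor preserving the $\times$-homotopy relation. The only point worth recording is that the preceding Lemma (and hence this corollary) rests on the identification $\Cl(I_n) \cong I_n$, which holds because $I_n$ is a reflexive graph and $\Cl$ restricts to the identity on graphs in the sense that $\Cl(I_n)_{\leq 1} \cong I_n$ with no higher simplices added.
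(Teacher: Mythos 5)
Your proof is correct and is exactly the argument the paper intends: the corollary is stated there without proof as an immediate formal consequence of the preceding Lemma (apply $\Cl$ to a homotopy inverse and transport both homotopies by functoriality). Your closing remark about $\Cl(I_n) \cong I_n$ is also the same identification the paper uses in proving that Lemma, and it holds since the path $I_n$ contains no cliques of size greater than two.
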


To right-transfer the Thomason model structure on $\ncat{Cpx}$ to $\Gr$, it will be helpful to consider the composite adjunction
\begin{equation}
\begin{tikzcd}
	\Gr && {\ncat{sSet}}
	\arrow[""{name=0, anchor=center, inner sep=0}, "{{{\tSing \, \circ \, \Cl}}}"', shift right=3, from=1-1, to=1-3]
	\arrow[""{name=1, anchor=center, inner sep=0}, "{{{(-)_{\leq 1} \circ \, \tRe}}}"', shift right=3, from=1-3, to=1-1]
	\arrow["\dashv"{anchor=center, rotate=-90}, draw=none, from=1, to=0]
\end{tikzcd}
\end{equation}
Let $\Re_\Gr = (-)_{\leq 1} \circ \tRe$ and $\Sing_\Gr = \tSing \circ \Cl$. It is easy to see that if $G$ is a graph, then $\Sing_\Gr(G)_n \cong \Gr(K^{n+1}, G)$, where $K^n$ is the complete graph on $n$ vertices.

If $K$ is a simplicial complex, then $K_{\leq 1}$ is a graph, but we will sometimes abuse notation and allow $K_{\leq 1}$ to also denote the simplicial complex $\iota(K_{\leq 1})$.

\begin{Lemma} \label{lem 1-truncation preserves def retract}
Given a pair $(L, K)$ of simplicial complexes, if $K$ is an $\times$-deformation retract of $L$, then $K_{\leq 1}$ is an $\times$-deformation retract of $L_{\leq 1}$.
\end{Lemma}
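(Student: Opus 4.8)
The plan is to transport the data witnessing the $\times$-deformation retract across the $1$-truncation functor $(-)_{\leq 1}$, after first pinning down how this functor interacts with the product by $I_n$.

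The step that matters is the following observation: $(-)_{\leq 1} : \ncat{Cpx} \to \ncat{Gr}$ is right adjoint to $\iota$ in the adjoint triple (\ref{eq clique complex triple adjunction}), so it preserves all limits, and in particular finite products. Since $I_n$ is a one-dimensional simplicial complex, $(\iota I_n)_{\leq 1} \cong I_n$, and hence for any simplicial complex $L$ there is a natural isomorphism $(L \times I_n)_{\leq 1} \cong L_{\leq 1} \times I_n$, with the right-hand product taken in $\ncat{Gr}$. (One can also see this by hand: a two-element face $\{(a,i),(b,j)\}$ of $L \times I_n$ is exactly a pair with $\{a,b\}$ a face of $L$ of dimension at most $1$ and $\{i,j\}$ a vertex or edge of $I_n$, which is precisely the adjacency condition of the reflexive-graph product $L_{\leq 1} \times I_n$.)

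With that in hand, I would take $i : K \hookrightarrow L$ the inclusion, $r : L \to K$ a retraction with $ri = 1_K$, and $H : L \times I_n \to L$ an $\times$-homotopy with $H(-,0) = 1_L$, $H(-,n) = ir$, and $H(v,m) = v$ for all $v \in V(K)$ and $0 \leq m \leq n$, and simply apply $(-)_{\leq 1}$ to everything. This produces graph homomorphisms $i_{\leq 1} : K_{\leq 1} \hookrightarrow L_{\leq 1}$ (still an inclusion, since $(-)_{\leq 1}$ fixes vertex sets and $E(K) \subseteq E(L)$), $r_{\leq 1} : L_{\leq 1} \to K_{\leq 1}$, and, via the identification above, a graph homomorphism $H_{\leq 1} : L_{\leq 1} \times I_n \to L_{\leq 1}$. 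Functoriality gives $r_{\leq 1}\, i_{\leq 1} = (ri)_{\leq 1} = 1_{K_{\leq 1}}$, and because $(-)_{\leq 1}$ alters neither vertex sets nor the vertex-level action of maps, composing $H_{\leq 1}$ with the endpoint inclusions $L_{\leq 1} \to L_{\leq 1} \times I_n$ at $0$ and at $n$ recovers $1_{L_{\leq 1}}$ and $i_{\leq 1}\, r_{\leq 1}$ respectively, while $H_{\leq 1}(v,m) = v$ for every vertex $v$ of $K_{\leq 1}$. Thus $H_{\leq 1}$ exhibits $K_{\leq 1}$ as an $\times$-deformation retract of $L_{\leq 1}$.

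The only genuinely load-bearing step, and hence the main obstacle such as it is, is the product identification $(L \times I_n)_{\leq 1} \cong L_{\leq 1} \times I_n$: here one must invoke that $(-)_{\leq 1}$ is a \emph{right} adjoint, since a product is a limit, not a colimit; the rest is routine bookkeeping with functoriality and underlying vertex maps.
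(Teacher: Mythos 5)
Your proof is correct and follows essentially the same route as the paper: the paper's argument is precisely to apply the right adjoint $(-)_{\leq 1}$, which preserves products (and fixes $I_n$), to the retraction and the homotopy $H : L \times I_n \to L$. Your version merely spells out the identification $(L \times I_n)_{\leq 1} \cong L_{\leq 1} \times I_n$ and the endpoint and rel-$K$ conditions, which the paper leaves implicit.
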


\begin{proof}
Given a homotopy $H : K \times I_n \to K$ between $1_K$ and $ir$ for some retraction map $r : L \to K$, since $(-)_{\leq 1}$ is a right adjoint, we have a map $H_{\leq 1} : K_{\leq 1} \times I_n \to K_{\leq 1}$ such that $H_{\leq 1}(-,0) = 1_{K_{\leq 1}}$ and $H_{\leq 1}(-, n) = ir$.
\end{proof}

\begin{Lemma} \label{lem 1-truncation preserves x-NDR pairs}
If $(L, K)$ is an $\times$-NDR pair of simplicial complexes, then so is $(K_{\leq 1}, L_{\leq 1})$.
\end{Lemma}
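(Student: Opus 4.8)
The plan is to use the $1$-truncation $L'_{\leq 1}$ of the witnessing subcomplex for $(L,K)$ as the witnessing subcomplex for $K_{\leq 1} \hookrightarrow L_{\leq 1}$. Since $(L,K)$ is an $\times$-NDR pair, by definition there is a subcomplex $L' \subseteq L$ with $n_L(K) \subseteq L'$ such that $K$ is an $\times$-deformation retract of $L'$. Note that $K \subseteq n_L(K) \subseteq L'$, since any nonempty $\sigma \in K$ lies in $\st_L(v)$ for any vertex $v$ of $\sigma$. Because $(-)_{\leq 1}$ is a right adjoint, it preserves monomorphisms, so the inclusions $K \hookrightarrow L' \hookrightarrow L$ restrict to inclusions of subcomplexes $K_{\leq 1} \hookrightarrow L'_{\leq 1} \hookrightarrow L_{\leq 1}$.

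There are two things to check. First, applying Lemma \ref{lem 1-truncation preserves def retract} to the pair $(L', K)$: since $K$ is an $\times$-deformation retract of $L'$, the truncation $K_{\leq 1}$ is an $\times$-deformation retract of $L'_{\leq 1}$. Second, I would verify the neighborhood condition $n_{L_{\leq 1}}(K_{\leq 1}) \subseteq L'_{\leq 1}$. For this it suffices to show $n_{L_{\leq 1}}(K_{\leq 1}) \subseteq n_L(K)$: fix $v \in V(K_{\leq 1}) = V(K)$ and $\tau \in \st_{L_{\leq 1}}(v)$, so $\tau \in L_{\leq 1}$ and $v \cup \tau \in L_{\leq 1}$. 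Since $L_{\leq 1}$ is a subcollection of the simplices of $L$, both $\tau$ and $v \cup \tau$ lie in $L$, so $\tau \in \st_L(v) \subseteq n_L(K)$. Taking the union over $v \in V(K)$ gives $n_{L_{\leq 1}}(K_{\leq 1}) \subseteq n_L(K) \subseteq L'$; and since $n_{L_{\leq 1}}(K_{\leq 1})$ consists only of simplices of dimension $\leq 1$, it lands in $L' \cap L_{\leq 1} = L'_{\leq 1}$. Together with the previous paragraph, this exhibits $K_{\leq 1} \hookrightarrow L_{\leq 1}$ as the inclusion of an $\times$-NDR pair.

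There is no serious obstacle here; the only point to be careful about is that passing a star to the $1$-truncation can only shrink it, namely $\st_{L_{\leq 1}}(v) \subseteq \st_L(v)$, because a simplex $\tau$ and its join $v \cup \tau$ must both already have dimension $\leq 1$ to survive truncation. Once this containment is noted, the claim reduces entirely to the behavior of $(-)_{\leq 1}$ on monomorphisms and to Lemma \ref{lem 1-truncation preserves def retract}.
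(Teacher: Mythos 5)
Your proof is correct and follows essentially the same route as the paper: take $L'_{\leq 1}$ as the witnessing subcomplex, apply Lemma \ref{lem 1-truncation preserves def retract} to get the deformation retract, and check the neighborhood condition. The only difference is that you verify the containment $n_{L_{\leq 1}}(K_{\leq 1}) \subseteq L'_{\leq 1}$ in detail, whereas the paper simply asserts the identification $n_{L_{\leq 1}}(K_{\leq 1}) = (n_L(K))_{\leq 1}$ without proof.
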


\begin{proof}
First we note that $n_{L_{\leq 1}}(K_{\leq 1}) = (n_L(K))_{\leq 1}$. Since $(L, K)$ is an $\times$-NDR pair, there exists a subcomplex $L' \subseteq L$ such that $n_L(K) \subseteq L'$ and $L'$ $\times$-deformation retracts to $K$. Hence $n_{L_{\leq 1}}(K_{\leq 1}) = (n_L(K))_{\leq 1} \subseteq L'_{\leq 1}$ and $L'_{\leq 1} \subseteq L_{\leq 1}$. Since $K$ is an $\times$-deformation retract of $L'$, $K_{\leq 1}$ is a $\times$-deformation retract of $L$ by Lemma \ref{lem 1-truncation preserves def retract}.
\end{proof}

The proof of the following result is nearly the same as that for Proposition \ref{prop sing sends NDR pairs to homotopy pushouts}, and hence we leave it to the reader.

\begin{Lemma} \label{lem clique complex sends NDR pushouts to homotopy pushouts}
The clique complex functor sends pushouts along $\times$-NDR pairs to homotopy pushouts in the Thomason model structure on $\ncat{Cpx}$.
\end{Lemma}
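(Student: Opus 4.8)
The plan is to reduce the statement to Proposition \ref{prop sing sends NDR pairs to homotopy pushouts}. Write the given pushout square in $\ncat{Gr}$ as $u\colon K\to A$, $i\colon K\hookrightarrow L$, with $B=A+_K L$ and structure maps $f\colon A\to B$, $v\colon L\to B$, where $i$ is the inclusion of an $\times$-NDR pair of graphs. I would prove two things: (a) $\Cl$ carries this to a pushout in $\ncat{Cpx}$, that is, the canonical map $\Cl A+_{\Cl K}\Cl L\to\Cl B$ is an isomorphism; and (b) $\Cl(i)\colon\Cl K\hookrightarrow\Cl L$ is the inclusion of an $\times$-NDR pair of simplicial complexes. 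Granting these, Proposition \ref{prop sing sends NDR pairs to homotopy pushouts} shows that $\Sing$ sends the pushout $\Cl B\cong\Cl A+_{\Cl K}\Cl L$ to a homotopy pushout in the Kan--Quillen model structure on $\ncat{sSet}$; since $\tSing=\text{Ex}^2\,\Sing$ creates weak equivalences (the Thomason model structure being right-transferred along (\ref{eq thomason adjunction cpx and sset})) and forms a Quillen equivalence with $\tRe$ by Theorem \ref{th cpx quillen equiv to sset}, this is the same as saying that $\Cl$ of the original square is a homotopy pushout in the Thomason model structure on $\ncat{Cpx}$.

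For (a), only that $i$ is a monomorphism is used. The functor $(-)_{\leq 1}$ lies in the middle of the adjoint triple (\ref{eq clique complex triple adjunction}), hence is a left adjoint of $\Cl$ and preserves pushouts, so the pushout $P:=\Cl A+_{\Cl K}\Cl L$ satisfies $P_{\leq 1}\cong A+_K L\cong B\cong(\Cl B)_{\leq 1}$. As a right adjoint $\Cl$ preserves monomorphisms, so $\Cl(i)$ is a monomorphism, and $\Cl A$, $\Cl L$ are flag complexes; hence $P$ is a flag complex by Lemma \ref{lem pushouts of flag complexes are flag}. A flag complex is recovered from its $1$-skeleton, and the comparison map $P\to\Cl B$ is the identity on $1$-skeleta, so it is an isomorphism.

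Statement (b) is the main obstacle. Choose $L'\subseteq L$ with $n_L(K)\subseteq L'$ such that $K$ is an $\times$-deformation retract of $L'$, witnessed by $r\colon L'\to K$ and a homotopy $H$. Exactly as in the proof that $\Cl$ preserves $\times$-homotopy equivalences, $\Cl$ is a right adjoint (so preserves finite products) and $\Cl(I_n)\cong I_n$, so $\Cl(r)$ and $\Cl(H)$ exhibit $\Cl K$ as an $\times$-deformation retract of $\Cl L'$; moreover $\Cl$ commutes with deletion of a vertex and sends a vertex dominated in a graph to a vertex dominated in its clique complex, so it carries strong collapses of graphs to strong collapses of clique complexes. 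The new difficulty is that $n_{\Cl L}(\Cl K)$ can be strictly larger than $\Cl L'$: a simplex of $n_{\Cl L}(\Cl K)$ is a clique of $L$ all of whose vertices are adjacent to (or equal to) some fixed $v\in V(K)$, so all its vertices lie in $V(L')$ because $n_L(K)\subseteq L'$, but its edges need not all lie in $L'$. I would take $L''$ to be the subcomplex of $\Cl L$ generated by $\Cl L'$ together with the closed stars $\st_{\Cl L}(v)$ for $v\in V(K)$; then $n_{\Cl L}(\Cl K)\subseteq L''$ by construction, each $\st_{\Cl L}(v)$ strongly collapses onto its apex $v\in V(K)$ (every facet of a closed star contains the star's apex), and one glues these collapses with $\Cl(r)$ to produce an $\times$-deformation retraction of $L''$ onto $\Cl K$. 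The delicate point is to verify that the resulting retraction and homotopy are well defined simplicial maps on all of $L''$ — possibly the retraction must be modified on vertices, not merely taken to be $\Cl(V(r))$ — and it is exactly here that one uses that $(L,K)$ is genuinely an $\times$-NDR pair rather than an arbitrary inclusion. It is plausible that a sharper choice of $L''$, or a direct adaptation of the proof of Proposition \ref{prop sing sends NDR pairs to homotopy pushouts} with clique complexes in place of simplicial complexes, makes this step cleaner.
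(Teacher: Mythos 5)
Your step (a) is false, and this is the crux of the proposal. Take $G$ to be the reflexive path with vertices $g,m,g'$ and edges $gm,mg'$, let $H$ be $G$ together with one further vertex $h_0$ joined to each of $g,m,g'$, and let $u\colon G\to A$ be the identity on vertices, where $A$ is $G$ with the edge $gg'$ added (a reflexive triangle). Then $(H,G)$ is a strong, hence $\times$-, NDR pair: $n_H(G)=H$, and $h_0$ is dominated by $m$, so one may take $G'=H$ with retraction $r(h_0)=m$. The pushout in $\ncat{Gr}$ is $B\cong K^4$, so $\Cl(B)\cong\bDelta^3$; but the pushout $\Cl(A)+_{\Cl(G)}\Cl(H)$ in $\ncat{Cpx}$ contains only the three $2$-simplices $\{g,m,g'\}$, $\{g,m,h_0\}$, $\{m,g',h_0\}$ and nothing of dimension $3$: the clique $\{g,g',h_0\}$ of $B$ mixes the edge $gg'$ coming from $A$ with the edges $gh_0,g'h_0$ coming from $H$, and it lifts to a clique of neither piece. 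So the comparison map is not an isomorphism, even though (as you correctly compute) it is an isomorphism on $1$-skeleta; in particular the strict pushout of clique complexes here is not flag, so the appeal to Lemma \ref{lem pushouts of flag complexes are flag} cannot deliver (a), and your stronger assertion that only monomorphy of $i$ is needed certainly fails. (The example does not contradict the Lemma being proved: all four complexes are contractible, so the square is still a homotopy pushout.)

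Step (b) is also not established: you correctly identify that $n_{\Cl(L)}(\Cl(K))$ contains cliques whose edges need not lie in $L'$, but the construction of a retraction of your $L''$ onto $\Cl(K)$ is exactly the hard point, and you leave it as "plausible." The paper's proof needs neither (a) nor (b). It factors the given pushout through $G'\supseteq n_H(G)$ and the intermediate pushout $A'=A+_G G'$, and shows that $\Sing_\Gr=\Sing\circ\Cl$ sends the pushout along $G'\hookrightarrow H$ (not along $G\hookrightarrow H$) to a strict pushout of simplicial sets, because every complete-graph probe $K^{n+1}\to B$ factors through $H$ or through $A'$; in the example above $A'=B$, which is exactly why the mixed clique causes no harm at that stage. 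Since $\Cl$ preserves $\times$-deformation retracts, $\Sing_\Gr(G)\to\Sing_\Gr(G')$ and $\Sing_\Gr(A)\to\Sing_\Gr(A')$ are weak equivalences, so the image of the original square is a homotopy pushout in $\ncat{sSet}$; one then applies the left Quillen functor $\tRe$ and the derived counit of the Quillen equivalence of Theorem \ref{th cpx quillen equiv to sset} to conclude in the Thomason model structure on $\ncat{Cpx}$ --- this last transfer is also where your brief "creates weak equivalences" remark would need to be expanded. To salvage your reduction you would have to replace (a) by the weaker claim that the comparison map $\Cl(A)+_{\Cl(G)}\Cl(H)\to\Cl(B)$ is a Thomason weak equivalence, which is not easier than the lemma itself.
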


\begin{Th} \label{th transfer from cpx to gr}
The Thomason model structure on $\ncat{Cpx}$ right-transferred along the bottom adjunction of (\ref{eq clique complex triple adjunction}) to $\ncat{Gr}$ exists.
\end{Th}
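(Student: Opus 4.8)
The plan is to apply the right-transfer criterion of Proposition \ref{prop right-transfer} with $\cat{C} = \ncat{Cpx}$ carrying the Thomason model structure, which is cofibrantly generated by $\tRe(I)$ and $\tRe(J)$ (Theorem \ref{th transferred model structure from sset} and Corollary \ref{cor properties of thomason model structure on cpx}), with $\cat{D} = \ncat{Gr}$, left adjoint $L = (-)_{\leq 1} : \ncat{Cpx} \to \ncat{Gr}$ and right adjoint $R = \Cl : \ncat{Gr} \to \ncat{Cpx}$; this is precisely the bottom adjunction of (\ref{eq clique complex triple adjunction}). Conditions (1)--(3) of Proposition \ref{prop right-transfer} are quick. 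For (1), $\ncat{Gr}$ is locally presentable (see Appendix \ref{section categories}), hence cocomplete with every object small by \cite[Remark 1.30, 2.15]{rosicky1994locally}, and the domains of $L(\tRe(I))$ and $L(\tRe(J))$ are the finite graphs $(\Sd^2\mathbf{\partial\Delta}^n)_{\leq 1}$ and $(\Sd^2\mathbf{\Lambda}^n_k)_{\leq 1}$. For (2), the generating (trivial) cofibrations of the Thomason model structure on $\ncat{Cpx}$ have finite domains and codomains, so Lemma \ref{lem transfinite composite of weak equivs} applies. For (3), $\Cl$ preserves filtered colimits, since a clique in a filtered colimit of graphs is finite and hence already a clique of one of the stages; as a $\lambda$-sequence is a filtered diagram, $\Cl$ preserves its colimit.

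The real content is condition (4). Let $j$ be a generating trivial cofibration of the Thomason model structure on $\ncat{Cpx}$; up to isomorphism (Lemma \ref{lem realization preserves subdivision}) it is an inclusion $\Sd^2\mathbf{\Lambda}^n_k \hookrightarrow \Sd^2\bDelta^n$, so $L(j)$ is the inclusion $f_n : (\Sd^2\mathbf{\Lambda}^n_k)_{\leq 1} \hookrightarrow (\Sd^2\bDelta^n)_{\leq 1}$ of reflexive graphs, and given a pushout $j'$ of $L(j)$ in $\ncat{Gr}$ we must show $\Cl(j')$ is a Thomason weak equivalence. By Proposition \ref{prop double subdivision is NDR pair}, $(\Sd^2\bDelta^n, \Sd^2\mathbf{\Lambda}^n_k)$ is a strong, hence $\times$-NDR, pair; by Lemma \ref{lem 1-truncation preserves x-NDR pairs} its $1$-truncation is an $\times$-NDR pair of graphs, so $f_n$ is the inclusion of an $\times$-NDR pair. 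Then Lemma \ref{lem clique complex sends NDR pushouts to homotopy pushouts} shows that applying $\Cl$ to the pushout square defining $j'$ yields a homotopy pushout square in the Thomason model structure on $\ncat{Cpx}$, in which $\Cl(j')$ is the homotopy cobase change of $\Cl(f_n)$.

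It therefore remains to check that $\Cl(f_n)$ is a Thomason weak equivalence, for then so is $\Cl(j')$ by Proposition 3.4.2.10 of \cite[\href{https://kerodon.net/tag/0112}{Tag 0112}]{kerodon}. As in the proof of Lemma \ref{lem unit maps are weak equivs}, both $\Sd^2\mathbf{\Lambda}^n_k$ and $\Sd^2\bDelta^n$ strongly collapse to $\bDelta^0$ (by Example \ref{ex strong collapse} and Lemma \ref{lem sd preserves strong collapse}); these collapses are $\times$-deformation retractions (Lemma \ref{lem strong collapse is x-def retract}), and by Lemma \ref{lem 1-truncation preserves def retract} the $1$-truncations $(\Sd^2\mathbf{\Lambda}^n_k)_{\leq 1}$ and $(\Sd^2\bDelta^n)_{\leq 1}$ also $\times$-deformation retract onto $\bDelta^0$. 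Since $\bDelta^0$ is the terminal object, the triangle $(\Sd^2\mathbf{\Lambda}^n_k)_{\leq 1} \xhookrightarrow{f_n} (\Sd^2\bDelta^n)_{\leq 1} \to \bDelta^0$ commutes, and applying $\Cl$ (which preserves $\times$-homotopies, hence $\times$-homotopy equivalences, of graphs) together with Proposition \ref{prop Sing preserves x-homotopy equivalence} shows that both legs into $\Cl(\bDelta^0) = \bDelta^0$ are Thomason weak equivalences; $2$-of-$3$ then gives that $\Cl(f_n)$ is one as well. The main obstacle is condition (4), but its geometric heart has already been isolated in Proposition \ref{prop double subdivision is NDR pair} and Lemma \ref{lem clique complex sends NDR pushouts to homotopy pushouts}, so what remains above is essentially bookkeeping.
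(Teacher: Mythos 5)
Your proposal is correct and follows essentially the same route as the paper: verify conditions (1)--(3) of Proposition \ref{prop right-transfer} by smallness/finiteness arguments, and reduce condition (4) to Lemma \ref{lem 1-truncation preserves x-NDR pairs} and Lemma \ref{lem clique complex sends NDR pushouts to homotopy pushouts}, concluding with the cobase-change statement from Kerodon. The only cosmetic difference is at the last step: where the paper identifies $\Cl$ of the truncated horn inclusion with $\Sd^2 \mathbf{\Lambda}^n_0 \hookrightarrow \Sd^2 \bDelta^n$ itself (using that double subdivisions are flag complexes) and notes that this is already a Thomason weak equivalence, you rederive the same fact via strong collapses and Lemma \ref{lem 1-truncation preserves def retract}; both justifications are valid.
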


\begin{proof}
It is easy to see that the first three conditions of Proposition \ref{prop right-transfer} hold in this case using similar reasoning as in Theorem \ref{th transferred model structure from sset}. Now consider the classes of maps
\begin{equation}
\begin{aligned}
\tRe_\Gr(I) = \tRe(I)_{\leq 1} & = \{ \tRe(\partial \Delta^n)_{\leq 1} \to \tRe(\Delta^n)_{\leq 1} \, | \, n \geq 0 \} \cong \{ (\Sd^2 \partial \u{\Delta}^n)_{\leq 1} \hookrightarrow (\Sd^2 \bDelta^n)_{\leq 1} \, | \, n \geq 0 \} \\
\tRe_\Gr(J) = \tRe(J)_{\leq 1} & = \{ \tRe(\Lambda^n_0)_{\leq 1} \to \tRe(\Delta^n)_{\leq 1} \, | \, n \geq 0 \} \cong \{ (\Sd^2 \u{\Lambda}^n_0)_{\leq 1} \hookrightarrow (\Sd^2 \bDelta^n)_{\leq 1} \, | \, n \geq 0 \}
\end{aligned}
\end{equation}
We want to verify condition (4) of Proposition \ref{prop right-transfer}. Suppose that the following diagram is a pushout
\begin{equation} \label{eq pushout square for graphs}
    \begin{tikzcd}
	{(\Sd^2 \, \u{\Lambda}^n_0)_{\leq 1}} & G \\
	{(\Sd^2 \, \bDelta^n)_{\leq 1}} & H
	\arrow["u", from=1-1, to=1-2]
	\arrow["j"', hook', from=1-1, to=2-1]
	\arrow["f", from=1-2, to=2-2]
	\arrow["v"', from=2-1, to=2-2]
	\arrow["\lrcorner"{anchor=center, pos=0.125, rotate=180}, draw=none, from=2-2, to=1-1]
\end{tikzcd}
\end{equation}
in $\Gr$. But $\iota : \ncat{Gr} \to \ncat{Cpx}$ is a left adjoint, hence this is also a pushout in $\ncat{Cpx}$. By Lemma \ref{lem 1-truncation preserves x-NDR pairs}, we know that $((\Sd^2 \, \u{\Lambda}^n_0)_{\leq 1}, (\Sd^2 \, \bDelta^n)_{\leq 1})$ is an $\times$-NDR pair. Hence by Lemma \ref{lem clique complex sends NDR pushouts to homotopy pushouts}, the right-hand square in the following diagram
\begin{equation*}
\begin{tikzcd}
	{\Sd^2 \, \u{\Lambda}^n_0} & {\Cl((\Sd^2 \, \u{\Lambda}^n_0)_{\leq1})} & {\Cl(G)} \\
	{\Sd^2 \, \bDelta^n} & {\Cl((\Sd^2 \, \bDelta^n)_{\leq1})} & {\Cl(H)}
	\arrow["\cong", from=1-1, to=1-2]
	\arrow["j"', hook, from=1-1, to=2-1]
	\arrow[from=1-2, to=1-3]
	\arrow[hook, from=1-2, to=2-2]
	\arrow["\Cl(f)", from=1-3, to=2-3]
	\arrow["\cong"', from=2-1, to=2-2]
	\arrow[from=2-2, to=2-3]
\end{tikzcd}
\end{equation*}
is a homotopy pushout in the Thomason model structure on $\ncat{Cpx}$. Since $j$ is a Thomason weak equivalence, so is $\Cl(f)$, by Proposition 3.4.2.10 of \cite[\href{https://kerodon.net/tag/0112}{Tag 0112}]{kerodon}.
\end{proof}

We call the model structure on $\ncat{Gr}$ that exists by Theorem \ref{th transfer from cpx to gr} the \textbf{Matsushita model structure} on $\ncat{Gr}$. In summary it is defined as the model structure where a map $f : G \to H$ of graphs is a
\begin{itemize}
    \item weak equivalence if and only if $\Sing_\Gr(f)$ is a weak equivalence of simplicial sets,
    \item fibration if and only if $\tSing_\Gr(f)$ is a Kan fibration,
    \item cofibration if it is a relative $\tRe_\Gr(I)$-cell complex.
\end{itemize}

Furthermore the reflective adjunction
\begin{equation} \label{eq clique complex adjunction}
\begin{tikzcd}
	{\ncat{Gr}} && {\ncat{Cpx}}
	\arrow[""{name=0, anchor=center, inner sep=0}, "\Cl"', shift right=3, hook, from=1-1, to=1-3]
	\arrow[""{name=1, anchor=center, inner sep=0}, "{(-)_{\leq1}}"', shift right=3, from=1-3, to=1-1]
	\arrow["\dashv"{anchor=center, rotate=-90}, draw=none, from=1, to=0]
\end{tikzcd}    
\end{equation}
is a Quillen adjunction. 

\begin{Th} \label{th quillen equiv between cpx and gr}
The Quillen adjunction (\ref{eq clique complex adjunction}) is a Quillen equivalence.
\end{Th}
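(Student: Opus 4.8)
The plan is to follow the blueprint of Theorem~\ref{th cpx quillen equiv to sset}. The Matsushita model structure on $\ncat{Gr}$ is right-transferred along $(-)_{\leq 1} \dashv \Cl$ (Theorem~\ref{th transfer from cpx to gr}), so $\Cl$ creates weak equivalences, and in particular preserves and reflects them. Hence, by the dual of \cite[Lemma 3.3]{erdal2019} (\cite[Proposition 2.3]{nlab:quillen_equivalence}), exactly as in the proof of Theorem~\ref{th cpx quillen equiv to sset}, it suffices to show that for every Thomason cofibrant simplicial complex $K$ the unit map of the adjunction,
\[
    \eta_K : K \longrightarrow \Cl(K_{\leq 1}),
\]
is a Thomason weak equivalence in $\ncat{Cpx}$.

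First I would recall from Proposition~\ref{prop cofibrant complexes are flag} that every Thomason cofibrant simplicial complex $K$ is a flag complex, that is, $K$ lies in the essential image of $\Cl : \ncat{Gr} \to \ncat{Cpx}$. Since $\Cl$ is fully faithful — its counit $\Cl(G)_{\leq 1} \xrightarrow{\cong} G$ is an isomorphism — the unit of $(-)_{\leq 1} \dashv \Cl$ is an isomorphism precisely on those simplicial complexes lying in the essential image of $\Cl$. Therefore $\eta_K$ is an isomorphism for Thomason cofibrant $K$, hence in particular a Thomason weak equivalence, and we are done.

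Essentially all of the substance has already been carried out, in Theorem~\ref{th transfer from cpx to gr} and — decisively — in Proposition~\ref{prop cofibrant complexes are flag}, which pins down the cofibrant objects as flag complexes. The one subtlety I would flag is that the Quillen-equivalence criterion is applied with the unit tested on cofibrant objects only: on a general simplicial complex $\eta_K$ is the inclusion of $K$ into its flag completion and is typically far from a weak equivalence (for instance $\eta_{\partial \bDelta^2} : \partial \bDelta^2 \hookrightarrow \bDelta^2$). Restricting to cofibrant $K$ is legitimate precisely because $\Cl$ creates weak equivalences, so that the strict unit and the derived unit agree up to weak equivalence. Accordingly I do not expect a genuine obstacle in this proof.
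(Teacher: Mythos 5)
Your proposal is correct and follows the paper's own proof essentially verbatim: reduce via the fact that $\Cl$ creates weak equivalences to checking the unit $\eta_K : K \to \Cl(K_{\leq 1})$ on Thomason cofibrant $K$, then invoke Proposition \ref{prop cofibrant complexes are flag} to conclude $K$ is flag, so that full faithfulness of $\Cl$ makes $\eta_K$ an isomorphism. Your cautionary remark that the unit is far from a weak equivalence on non-cofibrant objects is accurate but not needed for the argument.
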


\begin{proof}
As in the proof of Theorem \ref{th cpx quillen equiv to sset}, since $\Cl$ creates weak equivalences we only need to show that for every cofibrant simplicial complex $K$, the unit
\begin{equation*}
    K \xrightarrow{\eta_K} \Cl(K_{\leq 1})
\end{equation*}
is a Thomason weak equivalence. But by Proposition \ref{prop cofibrant complexes are flag}, since $K$ is Thomason cofibrant, it is a flag complex. Therefore the unit is an isomorphism, and the result follows.
\end{proof}

The Matsushita model structure on $\ncat{Gr}$ inherits the properties of the Thomason model structure on $\ncat{Cpx}$.

\begin{Prop} \label{prop model structure on gr is cofib gen and proper}
The Matushita model structure on $\ncat{Gr}$ is cofibrantly generated and proper.
\end{Prop}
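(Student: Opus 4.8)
I would treat the three assertions in turn, the first two being formal and the last carrying all the content. \emph{Cofibrant generation} is immediate: Theorem \ref{th transfer from cpx to gr} produced the Matsushita model structure on $\ncat{Gr}$ precisely by verifying the hypotheses of Proposition \ref{prop right-transfer}, which delivers a cofibrantly generated model structure with generating cofibrations $\tRe_\Gr(I)$ and generating trivial cofibrations $\tRe_\Gr(J)$. \emph{Right properness} is equally formal: the structure is right-transferred along $(-)_{\leq 1}\dashv\Cl$, so the right adjoint $\Cl$ preserves pullbacks and, by the definition of the transfer, detects both fibrations and weak equivalences; since the Thomason model structure on $\ncat{Cpx}$ is right proper (Corollary \ref{cor properties of thomason model structure on cpx}), the pullback in $\ncat{Gr}$ of a weak equivalence along a fibration maps under $\Cl$ to such a pullback in $\ncat{Cpx}$ and hence stays a weak equivalence, exactly as in \cite[Lemma 4.1]{nlab:transfer}.

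The substance is \emph{left properness}, which I would obtain by transporting the arguments of Propositions \ref{prop sing sends pushouts of cofibrations to homotopy pushouts} and \ref{prop thomason model structure is proper} from $\ncat{Cpx}$ to $\ncat{Gr}$. The crux is the graph analogue of Proposition \ref{prop sing sends pushouts of cofibrations to homotopy pushouts}: \emph{every pushout in $\ncat{Gr}$ along a Matsushita cofibration is sent by $\Sing_\Gr$ to a homotopy pushout of simplicial sets.} To prove this I would first record that each generating cofibration $(\Sd^2\mathbf{\partial\Delta}^n)_{\leq 1}\hookrightarrow(\Sd^2\bDelta^n)_{\leq 1}$ is the inclusion of an $\times$-NDR pair of graphs, by combining Proposition \ref{prop double subdivision is NDR pair} with Lemma \ref{lem 1-truncation preserves x-NDR pairs}; next, that inclusions of $\times$-NDR pairs of graphs are closed under pushout, composition and transfinite composition, for which the proofs of Lemmas \ref{lem pushout of deformation retract} and \ref{lem pushout of ndr pair} and of the composition lemma for $\times$-NDR pairs carry over verbatim, since $\ncat{Gr}$ is cartesian closed and $V:\ncat{Gr}\to\ncat{Set}$ is a left adjoint. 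Then, exactly as in Proposition \ref{prop sing sends pushouts of cofibrations to homotopy pushouts}, I would use the small object argument to exhibit an arbitrary Matsushita cofibration as a retract of a relative $\tRe_\Gr(I)$-cell complex, apply the homotopy-pushout statement established inside the proof of Lemma \ref{lem clique complex sends NDR pushouts to homotopy pushouts} at every stage of the resulting transfinite composite, invoke Lemma \ref{lem transfinite composite of weak equivs} so that the transfinite composite of weak equivalences of simplicial sets is a weak equivalence, and conclude with the stability of homotopy pushouts under retracts.

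Granting this intermediate claim, left properness follows at once: given a pushout square in $\ncat{Gr}$ in which $i$ is a Matsushita cofibration and $u$ a Matsushita weak equivalence, $\Sing_\Gr$ carries it to a homotopy pushout of simplicial sets in which $\Sing_\Gr(u)$ is a weak equivalence, so $\Sing_\Gr(v)$ is a weak equivalence by Proposition 3.4.2.10 of \cite[\href{https://kerodon.net/tag/0112}{Tag 0112}]{kerodon}, whence $v$ is a Matsushita weak equivalence. The main obstacle is the intermediate claim, and within it the only genuinely new point is verifying that the $\times$-NDR machinery — deformation retractions, neighborhoods, the NDR factorization — is stable under the pushouts, composites and transfinite composites that build up $\tRe_\Gr(I)$-cell complexes; once that bookkeeping is in hand, everything else reduces to Lemma \ref{lem clique complex sends NDR pushouts to homotopy pushouts} and formal model-categorical manipulation.
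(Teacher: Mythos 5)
Your proposal is correct and follows essentially the same route as the paper: cofibrant generation comes for free from the transfer theorem, and left properness is obtained by rerunning the argument of Proposition \ref{prop sing sends pushouts of cofibrations to homotopy pushouts} with $\Sing$ replaced by $\Sing_\Gr$ (resting on Lemma \ref{lem clique complex sends NDR pushouts to homotopy pushouts} and the $\times$-NDR closure properties), exactly as the paper indicates. The only difference is that you spell out the bookkeeping the paper leaves implicit.
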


\begin{proof}
Cofibrant generation follows from Proposition \ref{prop right-transfer}, and properness follows using nearly the same argument as in Proposition \ref{prop sing sends pushouts of cofibrations to homotopy pushouts}, but with $\Sing$ replaced with $\Sing_\Gr$, and Proposition \ref{prop thomason model structure is proper}.
\end{proof}

\subsection{The Matsushita Model Structure on Loop Graphs}

\begin{Def}
By a \textbf{loop graph} $G$ we mean a set $V(G)$ and a symmetric relation $E(G) \subseteq V(G) \times V(G)$. We often write $xy \in E(G)$ to mean that $(x,y)$ equivalently $(y,x) \in E(G)$. A graph homomorphism $f : G \to H$ between loop graphs consists of a function $V(f) : V(G) \to V(H)$ such that if $xy \in E(G)$, then $f(x)f(y) \in E(H)$. Let $\ncat{Gr}_\ell$ denote the category of loop graphs and graph homomorphisms. In other words, this is the category whose objects are simple undirected graphs, but where now a single loop is allowed. Morphisms in this category cannot collapse edges to vertices unless that vertex is looped.
\end{Def}

\begin{Rem}
Like the categories $\ncat{Cpx}$ and $\ncat{Gr}$, the category $\ncat{Gr}_\ell$ is a quasitopos. See \cite[Appendix A]{bumpus2025structured} for more details.
\end{Rem}

We have an adjoint triple
\begin{equation} \label{eq adjunction between reflexive and loop graphs}
\begin{tikzcd}
	{\ncat{Gr}_\ell} && {\ncat{Gr}}
	\arrow[""{name=0, anchor=center, inner sep=0}, "{(-)^\circ}"', curve={height=18pt}, from=1-1, to=1-3]
	\arrow[""{name=1, anchor=center, inner sep=0}, "{(-)_\ell}", curve={height=-18pt}, from=1-1, to=1-3]
	\arrow[""{name=2, anchor=center, inner sep=0}, "{{i_\ell}}"{description}, hook', from=1-3, to=1-1]
	\arrow["\dashv"{anchor=center, rotate=-88}, draw=none, from=1, to=2]
	\arrow["\dashv"{anchor=center, rotate=-92}, draw=none, from=2, to=0]
\end{tikzcd}
\end{equation}
where $i_\ell$ denotes the inclusion of reflexive graphs into loop graphs by thinking of reflexive graphs as loop graphs where every vertex is looped, $(-)_\ell$ is the functor that makes a loop graph reflexive by adding all loops to vertices that are unlooped, and $(-)^\circ$ takes the maximal reflexive subgraph of a loop graph. Note that $\Gr_\ell$ is cartesian closed, with internal hom $H^G$ defined in precisely the same way as in $\ncat{Gr}$.

\begin{Rem} \label{rem max reflective subgraph functor not cartesian closed}
Given loop graphs $G$ and $H$, then in general
\begin{equation*}
    (H^G)^\circ \ncong (H^\circ)^{G^\circ}.
\end{equation*}
For example, let $G$ be the loop graph $K^2_u$, the unlooped complete graph on two vertices, and let $H$ be the reflexive discrete graph on two vertices. Then it is easy to see that $H^G \cong H$, and hence $(H^G)^\circ \cong H^G \cong H$, while $G^\circ = \varnothing$ and so $(H^\circ)^{G^\circ} \cong K^1$ is the terminal graph.
\end{Rem}

Let
\begin{equation*}
    \tRe_{\Gr_\ell} = i_\ell \circ \tRe_{\Gr} = i_\ell \circ (-)_{\leq 1} \circ \tRe, \qquad \tSing_{\Gr_\ell} = \tSing_{\Gr} \circ (-)^\circ = \tSing \circ \Cl \circ (-)^\circ.
\end{equation*}

\begin{Th} \label{th transfer from gr to loop gr}
The right-transferred model structure along the bottom adjunction in (\ref{eq adjunction between reflexive and loop graphs}) from the Matsushita model structure on $\ncat{Gr}$ to $\ncat{Gr}_\ell$ exists.
\end{Th}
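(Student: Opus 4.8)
The plan is to verify the four hypotheses of Proposition \ref{prop right-transfer} for the adjunction $i_\ell : \ncat{Gr} \rightleftarrows \ncat{Gr}_\ell : (-)^\circ$ from (\ref{eq adjunction between reflexive and loop graphs}), with $\ncat{Gr}$ carrying the Matsushita model structure of Theorem \ref{th transfer from cpx to gr}; the left adjoint is $L = i_\ell$ and the right adjoint is $R = (-)^\circ$. Conditions (1)--(3) are handled exactly as in Theorems \ref{th transferred model structure from sset} and \ref{th transfer from cpx to gr}. For (1), $\ncat{Gr}_\ell$ is a quasitopos, hence locally presentable and cocomplete, so every object is small and in fact the domains of the images under $i_\ell$ of the generating (trivial) cofibrations of $\ncat{Gr}$ are finite. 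For (2), those generating (trivial) cofibrations of $\ncat{Gr}$ have finite domains and codomains, so Lemma \ref{lem transfinite composite of weak equivs} shows that transfinite composites of weak equivalences in $\ncat{Gr}$ are weak equivalences. For (3), $(-)^\circ$ preserves colimits of $\lambda$-sequences, since in a colimit of loop graphs along a chain a vertex carries a loop, resp.\ two vertices span an edge, exactly when this already holds at some stage, so that $(\ncolim{\beta < \lambda} G_\beta)^\circ \cong \ncolim{\beta < \lambda} (G_\beta)^\circ$.

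The substance of the proof is condition (4). Let $j : A \hookrightarrow B$ be one of the generating trivial cofibrations $(\Sd^2 \mathbf{\Lambda}^n_0)_{\leq 1} \hookrightarrow (\Sd^2 \bDelta^n)_{\leq 1}$ of $\ncat{Gr}$, so $A$ and $B$ are reflexive graphs and $j$ is a monomorphism, and form the pushout $H = i_\ell(B) +_{i_\ell(A)} G$ in $\ncat{Gr}_\ell$ along some $u : i_\ell(A) \to G$, with structure map $j' : G \to H$. We must show $(j')^\circ : G^\circ \to H^\circ$ is a weak equivalence in $\ncat{Gr}$. Since $A$ is reflexive, $u$ carries every vertex of $A$ to a looped vertex of $G$, hence factors as $i_\ell(\bar u)$ followed by the canonical inclusion $i_\ell(G^\circ) \hookrightarrow G$, for a unique $\bar u : A \to G^\circ$. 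The key structural claim is that the canonical comparison map $B +_A G^\circ \to H^\circ$, with the left-hand pushout formed in $\ncat{Gr}$ along $j$ and $\bar u$, is an isomorphism. The point is that, $j$ being injective, in the coequalizer presentation of $H$ no two distinct vertices coming from $G$ are identified; hence a vertex of $H$ is looped precisely when it comes from $B$ or is a looped vertex of $G$, and an edge of $H$ joining two looped vertices is the image of an edge of $B$ or of an edge of $G^\circ$. A direct count of vertices and edges then matches $B +_A G^\circ$ on the nose, and under the resulting isomorphism $(j')^\circ$ becomes the pushout of $j$ along $\bar u$ taken in $\ncat{Gr}$. As $j$ is a trivial cofibration in the Matsushita model structure on $\ncat{Gr}$ and trivial cofibrations are stable under pushout, $(j')^\circ$ is a trivial cofibration, in particular a weak equivalence. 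This verifies (4), and Proposition \ref{prop right-transfer} then produces the right-transferred model structure on $\ncat{Gr}_\ell$, with generating (trivial) cofibrations the images under $i_\ell$ of those of $\ncat{Gr}$.

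The hard part is precisely this isomorphism $B +_A G^\circ \cong H^\circ$: because $(-)^\circ$ is only a right adjoint it need not preserve pushouts, so one cannot formally commute it past the pushout defining $H$ and must instead exploit the injectivity of $j$ to rule out both the creation of spurious loops in $H$ and the merging of $G$-vertices, so that the maximal reflexive subgraph of the pushout is computed one factor at a time. Granting this, the rest of the argument is a routine transcription of what was done for the Matsushita model structure on $\ncat{Gr}$, together with the formal properties of $\ncat{Gr}_\ell$ as a quasitopos.
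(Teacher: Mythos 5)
Your proposal is correct and follows essentially the same route as the paper: conditions (1)--(3) are routine, and for condition (4) both arguments exploit that the attaching map factors through $G^\circ$, so that the pushout of the generating trivial cofibration restricts on maximal reflexive subgraphs to the corresponding pushout in $\ncat{Gr}$, whence the restricted map is a weak equivalence there. The only differences are cosmetic: you verify the identification $H^\circ \cong B +_A G^\circ$ by hand where the paper records it via the pushout-pasting diagram, and you conclude via pushout-stability of trivial cofibrations in the Matsushita model structure on $\ncat{Gr}$ rather than by re-invoking the condition-(4) argument from the proof of Theorem \ref{th transfer from cpx to gr}.
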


\begin{proof}
As in the proof of Theorem \ref{th transfer from cpx to gr}, it is easy to check the first three conditions of Proposition \ref{prop right-transfer} hold. 

We see that
\begin{equation}
\tRe_{\Gr_\ell}(I) = \tRe_\Gr(I), \qquad 
    \tRe_{\Gr_\ell}(J) = \tRe_\Gr(J).
\end{equation}
We want to verify condition (4) of Proposition \ref{prop right-transfer}. Suppose that the following diagram is a pushout
\begin{equation}
    \begin{tikzcd}
	{(\Sd^2 \, \u{\Lambda}^n_0)_{\leq 1}} & G \\
	{(\Sd^2 \, \bDelta^n)_{\leq 1}} & H
	\arrow["u", from=1-1, to=1-2]
	\arrow["j"', hook', from=1-1, to=2-1]
	\arrow["f", from=1-2, to=2-2]
	\arrow["v"', from=2-1, to=2-2]
	\arrow["\lrcorner"{anchor=center, pos=0.125, rotate=180}, draw=none, from=2-2, to=1-1]
\end{tikzcd}
\end{equation}
in $\Gr_\ell$. Note that $u$ and $v$ must factor through $G^\circ$ and $H^\circ$ respectively, since $(\Sd^2 \u{\Lambda}^n_0)_{\leq 1}$ and $(\Sd^2 \bDelta^n)_{\leq 1}$ are reflexive graphs and looped vertices must map to looped vertices in $\ncat{Gr}_\ell$. Hence by the pushout pasting Lemma \ref{lem pushout pasting lemma}, the above pushout square factors as
\begin{equation*}
\begin{tikzcd}
	{(\Sd^2 \, \u{\Lambda}^n_0)_{\leq 1}} & {G^\circ} & G \\
	{(\Sd^2 \, \bDelta^n)_{\leq 1}} & {H^\circ} & H
	\arrow["u", from=1-1, to=1-2]
	\arrow["j"', hook', from=1-1, to=2-1]
	\arrow[hook, from=1-2, to=1-3]
	\arrow["{f^\circ}", from=1-2, to=2-2]
	\arrow["f", from=1-3, to=2-3]
	\arrow["v"', from=2-1, to=2-2]
	\arrow["\lrcorner"{anchor=center, pos=0.125, rotate=180}, draw=none, from=2-2, to=1-1]
	\arrow[hook, from=2-2, to=2-3]
	\arrow["\lrcorner"{anchor=center, pos=0.125, rotate=180}, draw=none, from=2-3, to=1-2]
\end{tikzcd}
\end{equation*}
But then $f^\circ$ is a map in $\Gr$, and thus by the proof of Theorem \ref{th transfer from cpx to gr} is a Matsushita weak equivalence.
\end{proof}

As before, we call the model strucutre on $\ncat{Gr}_\ell$ that exists by Theorem \ref{th transfer from gr to loop gr} the \textbf{Matsushita model structure} on $\ncat{Gr}_\ell$. In summary it is defined as the model structure where a map $f : G \to H$ of loop graphs is a
\begin{itemize}
    \item weak equivalence if and only if $\Sing_{\Gr_\ell}(f)$ is a weak equivalence of simplicial sets,
    \item fibration if and only if $ \tSing_{\Gr_\ell}(f)$ is a Kan fibration,
    \item cofibration if it is a relative $\tRe_{\Gr_\ell}(I)$-cell complex.
\end{itemize}

Furthermore the coreflective adjunction
\begin{equation} \label{eq loop graph adjunction}
\begin{tikzcd}
	{\ncat{Gr}_\ell} && {\ncat{Gr}}
	\arrow[""{name=0, anchor=center, inner sep=0}, "{(-)^\circ}"', shift right=3, from=1-1, to=1-3]
	\arrow[""{name=1, anchor=center, inner sep=0}, "{i_\ell}"', shift right=3, hook', from=1-3, to=1-1]
	\arrow["\dashv"{anchor=center, rotate=-90}, draw=none, from=1, to=0]
\end{tikzcd} 
\end{equation}
is a Quillen adjunction.

\begin{Th} \label{th quillen equiv between gr and loop gr}
The Quillen adjunction (\ref{eq loop graph adjunction}) is a Quillen equivalence.
\end{Th}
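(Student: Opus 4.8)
The plan is to follow the same strategy as in the proofs of Theorems~\ref{th cpx quillen equiv to sset} and~\ref{th quillen equiv between cpx and gr}. The Matsushita model structure on $\ncat{Gr}_\ell$ is right-transferred along the adjunction~\eqref{eq loop graph adjunction}, in which $i_\ell$ is the \emph{left} adjoint and $(-)^\circ$ the \emph{right} adjoint. By Definition~\ref{def transferred weak equivs and fibrations} the functor $(-)^\circ$ creates weak equivalences, so by the dual of \cite[Lemma 3.3]{erdal2019} (equivalently \cite[Proposition 2.3]{nlab:quillen_equivalence}) it suffices to show that for every cofibrant reflexive graph $G$ the unit map
\begin{equation*}
    G \xrightarrow{\eta_G} (i_\ell G)^\circ
\end{equation*}
of the adjunction $i_\ell \dashv (-)^\circ$ is a Matsushita weak equivalence in $\ncat{Gr}$.

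The point is that this unit is in fact an isomorphism, and not merely for cofibrant $G$. The functor $i_\ell$ is fully faithful, being the inclusion of reflexive graphs into loop graphs that regards every vertex as looped, so the unit of $i_\ell \dashv (-)^\circ$ is a natural isomorphism. Concretely, every vertex of $i_\ell G$ is looped, so its maximal reflexive subgraph $(i_\ell G)^\circ$ is all of $i_\ell G$, which is just $G$ again; thus $\eta_G$ is (up to canonical identification) the identity. Hence $\eta_G$ is trivially a weak equivalence for every $G$, and the criterion above yields the result.

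There is essentially no obstacle: the only inputs are that $(-)^\circ$ creates weak equivalences, which is immediate from the right-transfer, and that $i_\ell$ is fully faithful, which is immediate from the definitions. This is in contrast with Theorem~\ref{th quillen equiv between cpx and gr}, where the analogous unit $K \to \Cl(K_{\leq 1})$ is an isomorphism only after invoking that Thomason cofibrant complexes are flag (Proposition~\ref{prop cofibrant complexes are flag}); here full faithfulness of $i_\ell$ makes the unit an isomorphism on the nose, so no restriction to cofibrant objects is even needed.
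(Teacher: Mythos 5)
Your proposal is correct and follows essentially the same route as the paper: invoke the transfer criterion (since $(-)^\circ$ creates weak equivalences it suffices that the unit $G \to (i_\ell G)^\circ$ be a weak equivalence on cofibrant $G$) and observe that this unit is in fact an isomorphism because $i_\ell$ is a fully faithful left adjoint. Your added remark that, unlike Theorem \ref{th quillen equiv between cpx and gr}, no cofibrancy hypothesis is actually needed here is accurate and consistent with the paper's one-line argument.
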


\begin{proof}
As in the proof of Theorem \ref{th cpx quillen equiv to sset}, since $(-)^\circ$ creates weak equivalences we only need to show that for every cofibrant reflexive graph $G$, the unit
\begin{equation*}
    G \xrightarrow{\eta_G} (i_\ell(G))^\circ
\end{equation*}
is a Matsushita weak equivalence. But $\eta_G$ is an isomorphism, so the result follows.
\end{proof}

\begin{Prop} \label{prop model structure loop graphs is proper}
The Matsushita model structure on $\ncat{Gr}_\ell$ is cofibrantly generated and proper.
\end{Prop}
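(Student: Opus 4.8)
The statement has two parts, only one of which carries real content. Cofibrant generation is immediate: the Matsushita model structure on $\ncat{Gr}_\ell$ was produced by Theorem \ref{th transfer from gr to loop gr} as a right-transfer along the adjunction (\ref{eq loop graph adjunction}), and Proposition \ref{prop right-transfer} asserts that the transferred structure is cofibrantly generated, with generating cofibrations $\tRe_{\Gr_\ell}(I)$ and generating trivial cofibrations $\tRe_{\Gr_\ell}(J)$. Right properness is likewise automatic for right-transferred model structures, exactly as in Corollary \ref{cor properties of thomason model structure on cpx}: this is \cite[Lemma 4.1]{nlab:transfer}. So the work is in establishing left properness.

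The plan for left properness is to mimic the proofs of Proposition \ref{prop thomason model structure is proper} and Proposition \ref{prop model structure on gr is cofib gen and proper}. The key step is to show that $\Sing_{\Gr_\ell} = \Sing \circ \Cl \circ (-)^\circ$ carries a pushout in $\ncat{Gr}_\ell$ along a Matsushita cofibration $i : G \hookrightarrow H$ to a homotopy pushout in the Kan--Quillen model structure on $\ncat{sSet}$; left properness then follows from Proposition 3.4.2.10 of \cite[\href{https://kerodon.net/tag/0112}{Tag 0112}]{kerodon}, since if one leg of the original pushout is a Matsushita weak equivalence then, after applying $\Sing_{\Gr_\ell}$, so is the opposite leg. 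As in the proof of Proposition \ref{prop sing sends pushouts of cofibrations to homotopy pushouts}, the small object argument lets us factor $i = q j$ with $j : G \hookrightarrow E$ a relative $\tRe_{\Gr_\ell}(I)$-cell complex and $q : E \to H$ a trivial fibration, exhibiting $i$ as a retract of $j$ over $G$; so it suffices to handle a single cell attachment and then pass to transfinite composites (using Lemma \ref{lem transfinite composite of weak equivs}) and then to retracts.

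The main obstacle is that $(-)^\circ$ is a right adjoint and so does not preserve pushouts. This is resolved exactly as in the proof of Theorem \ref{th transfer from gr to loop gr}: in a cell-attachment pushout square with top-left corner $(\Sd^2 \mathbf{\partial \Delta}^n)_{\leq 1}$, top-right corner a loop graph $W$, left-hand map a generating cofibration, and bottom-right corner the pushout $W'$, the attaching map has reflexive source and hence factors through the maximal reflexive subgraph $W^\circ$. By the pushout-pasting Lemma \ref{lem pushout pasting lemma} the square factors as an ``outer'' pushout of reflexive graphs, namely $(\Sd^2 \mathbf{\partial \Delta}^n)_{\leq 1} \hookrightarrow (\Sd^2 \bDelta^n)_{\leq 1}$ pushed out along $W^\circ \to V$ — which, since $i_\ell$ preserves colimits, is again a pushout along a generating cofibration of $\ncat{Gr}$ — followed by the pushout $W' = V +_{W^\circ} W$. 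One then verifies that $(W')^\circ = V$, i.e.\ that the attachment creates no new loops or edges among looped vertices beyond those already present in $V$; this is the only slightly delicate bookkeeping. Given this, applying $\Sing_{\Gr_\ell}$ collapses the second square to identity maps and identifies $\Sing_{\Gr_\ell}$ of the cell-attachment square with $\Sing_\Gr$ applied to the outer reflexive pushout, which is a homotopy pushout by the analogue of Proposition \ref{prop sing sends pushouts of cofibrations to homotopy pushouts} for $\ncat{Gr}$ used in the proof of Proposition \ref{prop model structure on gr is cofib gen and proper}. Assembling these homotopy pushouts over the transfinite composite and then taking the retract completes the argument.
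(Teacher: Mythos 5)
Your proposal is correct and follows essentially the same route as the paper, which also obtains cofibrant generation and right properness from the transfer machinery and proves left properness by rerunning the argument of Proposition \ref{prop sing sends pushouts of cofibrations to homotopy pushouts} with $\Sing$ replaced by $\Sing_{\Gr_\ell}$. You in fact supply more detail than the paper does on the one genuine wrinkle, namely that $(-)^\circ$ does not preserve pushouts, resolving it by the same factorization through the maximal reflexive subgraph used in Theorem \ref{th transfer from gr to loop gr}; the bookkeeping you flag (that the reflexive pushout is exactly the maximal reflexive subgraph of the loop-graph pushout) is indeed true and routine.
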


\begin{proof}
Cofibrant generation follows from Proposition \ref{prop right-transfer} and properness follows using nearly the same argument as in Proposition \ref{prop sing sends pushouts of cofibrations to homotopy pushouts}, but with $\Sing$ replaced with $\Sing_{\Gr_\ell}$, and Proposition \ref{prop thomason model structure is proper}.
\end{proof}

\begin{Rem}
The model structure on $\ncat{Gr}_\ell$ proved to exist by Theorem \ref{th transfer from gr to loop gr} is precisely the model structure that Matsushita constructs in \cite{matsushita2017box} by right-transferring along the composite adjunction
\begin{equation*}
    \begin{tikzcd}
	{\ncat{Gr}_\ell} && {\ncat{sSet}}
	\arrow[""{name=0, anchor=center, inner sep=0}, "{\tSing_{\Gr_\ell}}"', shift right=3, from=1-1, to=1-3]
	\arrow[""{name=1, anchor=center, inner sep=0}, "{\tRe_{\Gr_\ell}}"', shift right=3, from=1-3, to=1-1]
	\arrow["\dashv"{anchor=center, rotate=-90}, draw=none, from=1, to=0]
\end{tikzcd}
\end{equation*}
\end{Rem}

\begin{Lemma} \label{lem adjunction between loop graphs is biQuillen}
The left Quillen functor $i_\ell$ is also right Quillen.
\end{Lemma}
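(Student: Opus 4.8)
The plan is to realize $i_\ell$ as the right Quillen functor of the Quillen adjunction $(-)_\ell \dashv i_\ell$ appearing in the adjoint triple (\ref{eq adjunction between reflexive and loop graphs}); that is, to verify that $i_\ell$ preserves fibrations and trivial fibrations. The entire argument rests on a single elementary observation: $i_\ell(G)$ is the loop graph whose underlying simple graph is $G$ with a loop adjoined at every vertex, so its maximal reflexive subgraph is all of $i_\ell(G)$, which is again $G$. Equivalently, the fully faithful functor $i_\ell$ is a section of $(-)^\circ$, i.e.\ the unit of the adjunction $i_\ell \dashv (-)^\circ$ is a natural isomorphism $1_{\ncat{Gr}} \xrightarrow{\cong} (-)^\circ \circ i_\ell$.

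With this in hand I would argue as follows. By Theorem \ref{th transfer from gr to loop gr} the Matsushita model structure on $\ncat{Gr}_\ell$ is right-transferred along $i_\ell \dashv (-)^\circ$, so a morphism $g$ of loop graphs is a fibration (resp.\ trivial fibration) precisely when $(-)^\circ(g)$ is a fibration (resp.\ trivial fibration) of reflexive graphs. Now take a fibration (resp.\ trivial fibration) $f \colon G \to H$ in $\ncat{Gr}$. Applying $(-)^\circ$ to $i_\ell(f)$ and conjugating by the unit isomorphism above identifies $(-)^\circ(i_\ell(f))$ with $f$ up to isomorphisms of source and target; hence $(-)^\circ(i_\ell(f))$ is again a fibration (resp.\ trivial fibration) in $\ncat{Gr}$, and therefore $i_\ell(f)$ is a fibration (resp.\ trivial fibration) in $\ncat{Gr}_\ell$. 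Thus $i_\ell$ preserves fibrations and trivial fibrations, which is exactly the statement that $(-)_\ell \dashv i_\ell$ is a Quillen adjunction, i.e.\ that $i_\ell$ is right Quillen. Since $i_\ell$ is already left Quillen as the left adjoint in the right-transfer adjunction $i_\ell \dashv (-)^\circ$, the lemma follows.

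I do not expect any real obstacle; the proof is purely formal, in the spirit of Lemma \ref{lem tsing is left quillen}. The one point that needs care is keeping the variance in the adjoint triple straight: it is the composite $(-)^\circ \circ i_\ell$, not $i_\ell \circ (-)^\circ$, that is naturally isomorphic to the identity, and the (trivial) fibrations of $\ncat{Gr}_\ell$ are created by $(-)^\circ$, not by $i_\ell$. Alternatively, one can phrase the same computation through the generating functors, using $\Sing_{\Gr_\ell} \circ i_\ell \cong \Sing_\Gr$ and $\tSing_{\Gr_\ell} \circ i_\ell \cong \tSing_\Gr$, which shows that $i_\ell$ preserves (indeed reflects) weak equivalences and fibrations and yields the same conclusion.
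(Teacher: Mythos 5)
Your proposal is correct and follows essentially the same route as the paper: the fibrations and trivial fibrations of $\ncat{Gr}_\ell$ are created by $(-)^\circ$ via the right transfer, and since $(i_\ell(f))^\circ = f$ (the unit of $i_\ell \dashv (-)^\circ$ being an isomorphism), $i_\ell$ preserves (trivial) fibrations and is therefore right Quillen with left adjoint $(-)_\ell$. Your added care about the variance in the adjoint triple is exactly the point the paper's one-line proof implicitly uses.
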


\begin{proof}
If $f : G \to H$ is a (trivial) fibration in $\ncat{Gr}$, then $i_\ell(f)$ is a (trivial) fibration in $\ncat{Gr}_\ell$ if and only if $(i_\ell(f))^\circ$ is a (trivial) fibration in $\ncat{Gr}$, but $(i_\ell(f))^\circ = f$.
\end{proof}

\begin{Rem} \label{rem unlooped sing}
Note that $\Sing_{\Gr_\ell} : \Gr_\ell \to \ncat{sSet}$ does not ``see'' unlooped graphs. This is because we are probing loop graphs with reflexive complete graphs. We could instead probe our loop graphs with complete unlooped graphs $K^n_u$. However, $K^\bullet_u$ is not a simplicial object in $\ncat{Gr}_\ell$, because codegeneracy maps no longer exist, as they require squashing edges to vertices. Instead $K^\bullet_u$ forms a co-semisimplicial object in $\Gr_\ell$. Hence we obtain an alternate, unlooped $\Sing^u_{\Gr_\ell} : \ncat{Gr}_\ell \to \ncat{ssSet}$, where $\ncat{ssSet}$ is the category of semisimplicial sets, i.e. the presheaf category on $\mathsf{\Delta}^\text{inj}$, the subcategory of $\mathsf{\Delta}$ with only injective maps. There are some subtleties regarding abstract homotopical structures on $\ncat{ssSet}$, see \cite{nlab:model_structure_on_semi-simplicial_sets}, but transferring such homotopical structure along this functor to $\Gr_\ell$ deserves further investigation.
\end{Rem}

\section{(Co)Fibrant Objects and (Co)Fibrations} \label{section (co)fibrant objects}
In this section we explore the question of (co)fibrancy of several classes of objects in  the Matsushita model structures on $\ncat{Gr}_\ell$ and $\ncat{Gr}$ and the Thomason model structure on $\ncat{Cpx}$.

\begin{Lemma} \label{lem simplices are cofibrant}
Every simplex $\bDelta^n$ is Thomason cofibrant.
\end{Lemma}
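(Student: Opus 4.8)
The plan is to realize $\bDelta^n$ as a retract in $\ncat{Cpx}$ of a Thomason cofibrant object, and then invoke the fact that cofibrant objects are closed under retracts in any model category. The natural candidate is $\Sd^2 \bDelta^n \cong \tRe(\Delta^n)$: this is Thomason cofibrant by Corollary~\ref{cor cofibrant complexes} (applied to the simplicial set $\Delta^n$, or equivalently noting that $\bDelta^n$ admits a total order on its vertices, so $\Sd^2\bDelta^n$ has the form $\tRe\,\oSing(K_\leq)$). It then suffices to produce a retraction $r \colon \Sd^2 \bDelta^n \to \bDelta^n$ together with a section $s \colon \bDelta^n \to \Sd^2 \bDelta^n$ with $rs = 1_{\bDelta^n}$.

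For $r$ I would take the twice-iterated last vertex map: equip $\bDelta^n$ with its standard order and $\Sd\bDelta^n$ with the inclusion order of Definition~\ref{def subdivision}, giving last vertex maps $\lambda \colon \Sd\bDelta^n \to \bDelta^n$ and $\lambda' \colon \Sd^2\bDelta^n \to \Sd\bDelta^n$ as in~(\ref{eq last vertex map}), and set $r = \lambda\lambda'$. Explicitly, $r$ sends a flag $\sigma_0 \subsetneq \cdots \subsetneq \sigma_k$ of faces of $\bDelta^n$ to $\max(\sigma_k) \in \{0,\dots,n\}$.

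For the section, for $0 \le i \le n$ let $C_i$ be the flag of faces $\{0\} \subsetneq \{0,1\} \subsetneq \cdots \subsetneq \{0,1,\dots,i\}$ of $\bDelta^n$; this is a simplex of $\Sd\bDelta^n$, hence a vertex of $\Sd^2\bDelta^n$, and I would define $s$ on vertices by $s(i) = C_i$. The one point that needs checking is that $s$ is a morphism of simplicial complexes: for a face $\{i_0 < \cdots < i_k\}$ of $\bDelta^n$ — and every subset of $\{0,\dots,n\}$ is such a face — the simplices $C_{i_0} \subseteq C_{i_1} \subseteq \cdots \subseteq C_{i_k}$ form a chain in $\Face(\Sd\bDelta^n)$, so $\{C_{i_0},\dots,C_{i_k}\}$ is a simplex of $\Sd^2\bDelta^n$. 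Finally $\lambda'(C_i) = \{0,\dots,i\}$ is the top element of the flag $C_i$ and $\lambda(\{0,\dots,i\}) = i$, so $r s = 1_{\bDelta^n}$, which completes the argument.

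I do not expect a genuine obstacle here. The only thing to keep straight is the identification ``simplex of $\Sd(-)$ $=$ vertex of $\Sd^2(-)$'', and the observation that taking the $C_i$ to be the chain of \emph{initial segments} is exactly what forces them to be nested and hence makes $s$ simplicial — a naive choice such as $i \mapsto \{i\}$ fails, since distinct singletons are incomparable in $\Face(\bDelta^n)$. An alternative would be to check directly that $\varnothing \to \bDelta^n$ is a relative $\tRe(I)$-cell complex, but the retract argument is cleaner and avoids any cell-by-cell description of $\Sd^2\bDelta^n$.
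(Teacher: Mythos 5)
Your proposal is correct and is essentially the paper's own argument: exhibit $\bDelta^n$ as a retract of the cofibrant complex $\Sd^2\bDelta^n \cong \tRe(\Delta^n)$, with the section sending $i$ to the chain of initial segments $\{0\}\subset\{0,1\}\subset\cdots\subset\{0,\dots,i\}$ and the retraction given by the iterated last vertex map. (The paper writes the retraction as $\lambda\circ\Sd(\lambda)$ rather than $\lambda\circ\lambda_{\Sd\bDelta^n}$, but these composites agree by naturality of the last vertex map, so the two proofs coincide.)
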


\begin{proof}
By Corollary \ref{cor cofibrant complexes}, $\Sd^2 \bDelta^n$ is cofibrant. Let us consider $\bDelta^n$ as an ordered simplicial complex with its canonical total ordering. There is an inclusion $i : \bDelta^n \to \Sd^2 \bDelta^n$ given by 
\begin{equation*}
0 \mapsto \{0\}, \quad 1 \mapsto \{0,01 \}, \quad \, \dots, \, \quad  n \mapsto \{0, 1, \, \dots, \, 01\dots n \}  
\end{equation*}
and the iterated last vertex map $\lambda^2 = \lambda \circ \Sd(\lambda) : \Sd^2 \bDelta^n \to \bDelta^n$ acts a retraction $\lambda^2 i = 1_{\Delta^n}$. Hence $\bDelta^n$ is a retract of a cofibrant object and is therefore cofibrant.
\end{proof}

\begin{Lemma} \label{lem simplices are thomason fibrant}
Every simplex $\bDelta^n$ is Thomason fibrant.
\end{Lemma}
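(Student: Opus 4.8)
The plan is to unwind the definition of Thomason fibrancy and reduce the claim to the classical fact that the nerve of a groupoid is a Kan complex. By definition, $\bDelta^n$ is Thomason fibrant precisely when $\bDelta^n \to \ast$ is a Thomason fibration, i.e. when $\tSing(\bDelta^n) \to \tSing(\ast)$ is a Kan fibration. Since $\Sing(\bDelta^0) = \Delta^0$ and $\text{Ex}$ preserves the terminal object (being a right adjoint), we have $\tSing(\ast) = \Delta^0$, so it suffices to prove that the simplicial set $\tSing(\bDelta^n) = \text{Ex}^2 \, \Sing(\bDelta^n)$ is a Kan complex.

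Next I would invoke the identification $\Sing(\bDelta^n) = N(E[n])$ recorded after (\ref{eq sing def}), where $E[n]$ is the indiscrete category on $n+1$ objects. The category $E[n]$ is a groupoid: given objects $a,b$, the composite of the unique morphism $a \to b$ with the unique morphism $b \to a$ is the unique endomorphism of $a$, namely $1_a$, so every morphism of $E[n]$ is invertible. Since the nerve of a groupoid is a Kan complex (see e.g. \cite{goerss2009simplicial, kerodon}), the simplicial set $\Sing(\bDelta^n)$ is already a Kan complex.

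Finally I would note that $\text{Ex}$ carries Kan complexes to Kan complexes: it preserves Kan fibrations (cf. \cite{guillou2006KanEx}) and, being a right adjoint, preserves the terminal object, so if $X$ is a Kan complex then $\text{Ex}(X) \to \text{Ex}(\Delta^0) = \Delta^0$ is a Kan fibration. Applying this twice to $\Sing(\bDelta^n)$ shows that $\text{Ex}^2 \, \Sing(\bDelta^n)$ is a Kan complex, which is exactly the assertion that $\bDelta^n$ is Thomason fibrant. I do not expect any genuine obstacle here; the only care needed is the bookkeeping that $\tSing$ of a point is $\Delta^0$ together with the appeal to the standard fact that the nerve of a groupoid is fibrant.
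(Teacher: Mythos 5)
Your proof is correct, but it takes a genuinely different route from the paper. The paper never computes $\tSing(\bDelta^n)$ at all: it uses the lifting characterization of fibrations in the transferred structure, reducing fibrancy of $\bDelta^n$ to extending any map $\Sd^2\,\mathbf{\Lambda}^n_0 \to \bDelta^n$ over the generating trivial cofibration $\Sd^2\,\mathbf{\Lambda}^n_0 \hookrightarrow \Sd^2\,\bDelta^n$, and this is immediate because $\bDelta^n$ is the codiscrete complex on $n+1$ vertices, so \emph{any} function on vertex sets into $\{0,\dots,n\}$ is automatically a map of simplicial complexes. That argument is purely combinatorial, two lines long, and transfers verbatim to the reflexive complete graphs in Lemma \ref{lem complete graphs are fibrant}. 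Your argument instead identifies $\Sing(\bDelta^n) = N(E[n])$ as the nerve of the indiscrete groupoid, invokes the classical fact that nerves of groupoids are Kan complexes, and then uses that $\text{Ex}$ preserves Kan complexes (it preserves Kan fibrations and, being a right adjoint, the terminal object) to conclude that $\tSing(\bDelta^n)$ is a Kan complex; your reduction of fibrancy to this statement, via $\tSing(\ast) = \Delta^0$, is also accurate. What your approach buys is a reusable and more conceptual criterion: any simplicial complex $K$ for which $\Sing(K)$ is already a Kan complex is Thomason fibrant, since fibrancy can then be checked before applying $\text{Ex}^2$. What the paper's approach buys is brevity and independence from the nerve-of-a-groupoid and $\text{Ex}$-preservation facts, resting only on the codiscreteness of $\bDelta^n$ and the explicit generating trivial cofibrations.
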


\begin{proof}
To show that $\bDelta^n$ is fibrant, it is enough to show that every map $\Sd^2 \u{\Lambda}^n_0 \to \bDelta^n$ extends to a map $\Sd^2 \bDelta^n$. But this is equivalent to extending a function $V(\Sd^2 \u{\Lambda}^n_k) \to \{0, 1, \dots, n \}$ to a function $V(\Sd^2 \bDelta^n) \to \{0, 1, \dots, n\}$, which obviously can always be done.
\end{proof}

Now let us note that if a simplicial complex $K$ is Thomason cofibrant, then the graph $K_{\leq 1}$ is Matsushita cofibrant in $\ncat{Gr}$ since $(-)_{\leq 1}$ is left Quillen. Furthermore $K_{\leq 1}$ is (co)fibrant in $\ncat{Gr}_\ell$ since $i_\ell$ is left and right Quillen by Lemma \ref{lem adjunction between loop graphs is biQuillen}. Hence we say that a reflexive graph $G$ is Matsushita (co)fibrant to mean it is (co)fibrant in $\Gr$ and hence in $\Gr_{\ell}$. We say that a reflexive graph $G$ is Thomason cofibrant when $G$ is Thomason cofibrant considered as a simplicial complex. In this case, $G$ is then Matsushita cofibrant as well. Of course, we obtain a large class of Matsushita cofibrant graphs immediately.

\begin{Lemma} \label{lem double subdivisions of graphs are cofibrant}
Given any simplicial set $X$, the graph $\tRe_{\Gr}(X)$ is Matsushita cofibrant.
In particular, given a graph $G$, the graph $\Sd^2 G$ is Matsushita cofibrant. Furthermore given any monomorphism $i : G \hookrightarrow H$ of graphs, the map $\Sd^2(i) : \Sd^2 G \hookrightarrow \Sd^2 H$ is a Matsushita cofibration.
\end{Lemma}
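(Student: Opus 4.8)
The plan is to reduce the whole statement to facts already available for the Thomason model structure on $\ncat{Cpx}$, namely Corollary \ref{cor cofibrant complexes}, together with the fact (recorded just before this lemma) that $(-)_{\leq 1}\colon \ncat{Cpx}\to\ncat{Gr}$ is left Quillen, hence preserves cofibrant objects and cofibrations. No genuinely new homotopical input is needed.

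First I would dispose of the claim about $\tRe_{\Gr}(X)$. Since $\tRe_{\Gr} = (-)_{\leq 1}\circ\tRe$ and, by Corollary \ref{cor cofibrant complexes}, $\tRe\,X$ is Thomason cofibrant for every simplicial set $X$, applying the left Quillen functor $(-)_{\leq 1}$ shows $\tRe_{\Gr}(X) = (\tRe\,X)_{\leq 1}$ is Matsushita cofibrant. Next I would record the small bookkeeping lemma that barycentric subdivision restricts to an endofunctor of $\ncat{Gr}$: for a reflexive graph $G$, the simplices of $\Sd\,\iota(G)$ are the chains in the face poset $\Face(\iota(G))$, and since the faces of $\iota(G)$ are only vertices and edges, the maximal chains have the form $\{v\}\subset\{v,w\}$, so $\Sd\,\iota(G)$ is one-dimensional and equals $\iota(\Sd\,G)$; iterating, $\Sd^2\iota(G) = \iota(\Sd^2 G)$, and this is natural in $G$. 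Granting this, the ``in particular'' follows: choose any total order on $V(G)$ so that $\iota(G)$ becomes an ordered simplicial complex, apply Corollary \ref{cor cofibrant complexes} to see that $\Sd^2\iota(G)\cong\tRe\,\oSing(\iota(G)_{\leq})$ is Thomason cofibrant, and then push forward along $(-)_{\leq 1}$ to conclude that $\Sd^2 G = (\Sd^2\iota(G))_{\leq 1}$ is Matsushita cofibrant.

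For the last assertion, let $i\colon G\hookrightarrow H$ be a monomorphism of graphs, so $V(i)$ is injective. I would fix a total order on $V(H)$ and restrict it along $V(i)$ to a total order on $V(G)$; with these orders $\iota(i)\colon \iota(G)_{\leq}\hookrightarrow\iota(H)_{\leq}$ is a monomorphism of ordered simplicial complexes. By Corollary \ref{cor cofibrant complexes}, $\Sd^2\iota(i)\colon \Sd^2\iota(G)\hookrightarrow\Sd^2\iota(H)$ is a Thomason cofibration in $\ncat{Cpx}$. Applying the left Quillen functor $(-)_{\leq 1}$ and using the identification $\Sd^2\iota(i) = \iota(\Sd^2 i)$ from the previous step, we get that $\Sd^2 i = (\Sd^2\iota(i))_{\leq 1}$ is a Matsushita cofibration in $\ncat{Gr}$.

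The only point requiring any care — and it is purely bookkeeping rather than a real obstacle — is the compatibility claim that $\Sd$ preserves one-dimensionality and commutes with $\iota$, and that a monomorphism of graphs can be promoted to a monomorphism of \emph{ordered} simplicial complexes; the latter is just the observation that a total order on $V(H)$ restricts along the injection $V(i)$. Everything else is a direct invocation of Corollary \ref{cor cofibrant complexes} and of $(-)_{\leq 1}$ being left Quillen.
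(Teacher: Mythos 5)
Your proposal is correct and follows exactly the route the paper intends: the paper states this lemma without proof, as an immediate consequence of Corollary \ref{cor cofibrant complexes} together with the observation (made in the paragraph preceding the lemma) that $(-)_{\leq 1}$ is left Quillen, which is precisely what you spell out. Your extra bookkeeping — that $\Sd$ preserves one-dimensionality so $\Sd^2\iota(G)=\iota(\Sd^2 G)$, and that a monomorphism of graphs can be ordered compatibly — is exactly the implicit content the paper leaves to the reader, and it is handled correctly.
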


\begin{Lemma}
If $G$ is an unlooped graph, then it is Matushita fibrant in $\ncat{Gr}_\ell$.
\end{Lemma}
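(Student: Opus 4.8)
The plan is to unwind the definition of fibrancy in the right-transferred model structure on $\ncat{Gr}_\ell$ until the statement becomes vacuous. By construction, $G$ is Matsushita fibrant precisely when the unique map $G \to K^1$ to the terminal loop graph is a Matsushita fibration, equivalently when $\tSing_{\Gr_\ell}(G \to K^1)$ is a Kan fibration. Since $\tSing_{\Gr_\ell} = \text{Ex}^2 \circ \Sing \circ \Cl \circ (-)^\circ$ is a composite of right adjoints, it preserves limits, so $\tSing_{\Gr_\ell}(K^1) \cong \Delta^0$; hence it suffices to show that $\tSing_{\Gr_\ell}(G)$ is a Kan complex.

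The key observation is that $\tSing_{\Gr_\ell}(G) = \varnothing$. Indeed, since $G$ has no loops its maximal reflexive subgraph $G^\circ$ has no vertices at all --- a reflexive subgraph of $G$ must carry a loop at each of its vertices --- so $G^\circ = \varnothing$. Consequently $\Cl(G^\circ)$ is the empty simplicial complex, and $\Sing(\Cl(G^\circ)) = \varnothing$ since $\Sing(\varnothing)_n = \ncat{Cpx}(\bDelta^n,\varnothing) = \varnothing$ for every $n$ (the source $\bDelta^n$ has a vertex). Finally $\text{Ex}$ preserves the empty simplicial set, as $\text{Ex}(\varnothing)_n = \ncat{sSet}(\sd\Delta^n,\varnothing) = \varnothing$, so $\tSing_{\Gr_\ell}(G) = \text{Ex}^2\,\Sing\,\Cl(G^\circ) = \varnothing$. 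This is precisely the phenomenon recorded in Remark~\ref{rem unlooped sing}.

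To finish I would note that $\varnothing$ is a Kan complex, for the trivial reason that every horn $\Lambda^n_k$ with $n \ge 1$ has nonempty vertex set and so admits no map into $\varnothing$; every horn-filling problem is thus solved. Therefore $\tSing_{\Gr_\ell}(G) \to \Delta^0$ is a Kan fibration and $G$ is Matsushita fibrant. There is no real obstacle here: the only points to be careful about are that $\tSing_{\Gr_\ell}$, being built from right adjoints, preserves the terminal object, and that for an unlooped $G$ the graph $G^\circ$ is genuinely empty rather than a point.
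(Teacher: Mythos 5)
Your proof is correct and follows the same route as the paper: since $G^\circ = \varnothing$ for an unlooped $G$, one gets $\tSing_{\Gr_\ell}(G) = \varnothing$, which is vacuously a Kan complex, so $G$ is fibrant. The paper states this in one line; your version just spells out the intermediate verifications (preservation of the terminal object, $\text{Ex}$ and $\Sing$ preserving $\varnothing$), all of which are fine.
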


\begin{proof}
If $G$ is an unlooped graph, then $\Sing_{\Gr}(G) = \varnothing$, which is trivially fibrant as a simplicial set.
\end{proof}

\begin{Lemma}
If $G$ is an unlooped graph, then it is not Matsushita cofibrant in $\ncat{Gr}_\ell$.
\end{Lemma}

\begin{proof}
The cofibrant objects in $\ncat{Gr}_\ell$ are retracts of $\tRe_{\ncat{Gr}_\ell}(I)$-cell complexes. But $\tRe_{\ncat{Gr}_\ell}(I)$-cell complexes must be reflexive graphs, and a reflexive graph cannot map to an unlooped graph, hence $G$ cannot be a retract of a $\tRe_{\ncat{Gr}_\ell}(I)$-cell complex.
\end{proof}

\begin{Lemma} \label{lem complete graphs are fibrant}
Every reflexive complete graph $K^n$ is Matsushita fibrant.
\end{Lemma}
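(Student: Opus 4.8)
The plan is to unwind what fibrancy means in the right-transferred model structure and reduce it to a fact already proved for simplices. By definition, $K^n$ is Matsushita fibrant precisely when $K^n \to 1$ is a Matsushita fibration, equivalently when $\tSing_\Gr(K^n) = \text{Ex}^2\,\Sing(\Cl(K^n))$ is a Kan complex (using that $\tSing_\Gr(1) \cong \Delta^0$). So the first step is the elementary observation that the clique complex of the reflexive complete graph on $n$ vertices is the full simplex: every subset of $V(K^n)$ spans a clique, hence $\Cl(K^n) \cong \bDelta^{n-1}$.

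With that identification in hand, $\tSing_\Gr(K^n) = \tSing(\Cl(K^n)) = \tSing(\bDelta^{n-1})$. But Lemma \ref{lem simplices are thomason fibrant} says exactly that $\bDelta^{n-1}$ is Thomason fibrant, i.e. $\tSing(\bDelta^{n-1})$ is a Kan complex. Hence $\tSing_\Gr(K^n)$ is a Kan complex and $K^n$ is Matsushita fibrant. The same argument shows $K^n$ is fibrant in $\ncat{Gr}_\ell$, either by transporting along the right Quillen functor $i_\ell$ of Lemma \ref{lem adjunction between loop graphs is biQuillen}, or directly, since $\Cl((i_\ell K^n)^\circ) = \Cl(K^n) = \bDelta^{n-1}$.

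Alternatively one can argue directly, paralleling the proof of Lemma \ref{lem simplices are thomason fibrant}: since the Matsushita model structure on $\ncat{Gr}$ is cofibrantly generated with generating trivial cofibrations $\tRe_\Gr(J) = \{(\Sd^2\mathbf{\Lambda}^m_0)_{\leq 1}\hookrightarrow(\Sd^2\bDelta^m)_{\leq 1} : m \geq 1\}$, it suffices to solve every lifting problem of a graph map $(\Sd^2\mathbf{\Lambda}^m_0)_{\leq 1}\to K^n$ against $(\Sd^2\mathbf{\Lambda}^m_0)_{\leq 1}\hookrightarrow(\Sd^2\bDelta^m)_{\leq 1}$. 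This inclusion is a subcomplex inclusion, so on vertex sets it is an inclusion $V(\Sd^2\mathbf{\Lambda}^m_0)\subseteq V(\Sd^2\bDelta^m)$; and because $K^n$ is complete (an edge between every pair of vertices, loops included), \emph{any} function $V(\Sd^2\bDelta^m)\to V(K^n)$ is a graph homomorphism, so an extension always exists.

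There is essentially no obstacle here; the only things to verify are routine — the identification $\Cl(K^n)\cong\bDelta^{n-1}$ and the bookkeeping that $\tSing_\Gr = \text{Ex}^2\,\Sing\circ\Cl$, so that the problem genuinely collapses to the already-established fibrancy of the simplex. I would present the first (two-paragraph) argument, since it is the shortest and makes the parallel with the $\ncat{Cpx}$ case manifest, and mention the direct lifting argument as a remark.
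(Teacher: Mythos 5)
Your proof is correct. The paper's own proof is a one-line appeal to ``practically the same argument as'' the simplex case, i.e.\ it re-runs the lifting argument: any vertex function into a reflexive complete graph is a graph homomorphism, so every map $(\Sd^2\mathbf{\Lambda}^m_0)_{\leq 1}\to K^n$ extends over $(\Sd^2\bDelta^m)_{\leq 1}$ --- this is exactly your third paragraph, and it is valid because in the transferred cofibrantly generated structure fibrancy is detected by lifting against the generating anodyne maps $\tRe_\Gr(J)$. Your headline argument is a slightly different and arguably cleaner decomposition: since fibrations in the Matsushita structure on $\ncat{Gr}$ are created by $\tSing_\Gr=\tSing\circ\Cl$, and $\Cl(K^n)\cong\bDelta^{n-1}$, the statement is literally an instance of Lemma \ref{lem simplices are thomason fibrant} rather than a parallel argument; this buys you a proof with no new combinatorics and makes the transfer mechanism explicit, while the paper's route avoids even the (easy) identification of the clique complex. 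Your additional observation that $K^n$ is also fibrant in $\ncat{Gr}_\ell$, via $i_\ell$ being right Quillen (Lemma \ref{lem adjunction between loop graphs is biQuillen}) or via $(i_\ell K^n)^\circ=K^n$, matches the paper's stated convention and is correct. The only bookkeeping worth stating explicitly in a write-up is the indexing $\Cl(K^n)\cong\bDelta^{n-1}$ (the paper's $K^n$ has $n$ vertices), which you already handle correctly.
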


\begin{proof}
This follows from practically the same argument as Lemma \ref{lem simplices are thomason fibrant}.
\end{proof}

Recall the $n$-path reflexive graph $I_n$ from Definition \ref{def homotopy of simplicial complexes}.

\begin{Lemma} \label{lem paths are cofibrant}
For every $n \geq 1$, the $n$-path reflexive graph $I_n$ and the terminal graph $K^1$ are Thomason cofibrant, and the inclusion\footnote{If we label $I_n$ with vertices $0, \dots, n$, then the inclusions $0 : K^1 \hookrightarrow I_n$ and $n : K^1 \hookrightarrow I_n$ are isomorphic} $i: K^1 \hookrightarrow I_n$ of the terminal graph into an endpoint of $I_n$ is a cofibration.
\end{Lemma}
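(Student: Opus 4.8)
The plan is to reduce the statement, via a pushout induction on $n$, to the single case $n=1$, which in turn follows from Corollary \ref{cor cofibrant complexes} together with the retraction already exhibited in the proof of Lemma \ref{lem simplices are cofibrant}.

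First, $K^1 = \bDelta^0 = \tRe(\Delta^0)$ is Thomason cofibrant by Corollary \ref{cor cofibrant complexes} (equivalently by Lemma \ref{lem simplices are cofibrant}). Next I would show that the inclusion $0\colon K^1\hookrightarrow\bDelta^1 = I_1$ of an endpoint is a Thomason cofibration. Recall from the proof of Lemma \ref{lem simplices are cofibrant} the section $\iota\colon\bDelta^1\hookrightarrow\Sd^2\bDelta^1$ (with $\iota(0)=\{\{0\}\}$ and $\iota(1)=\{\{0\},\{0,1\}\}$ on the path $\Sd^2\bDelta^1$) and the iterated last vertex map $\lambda^2\colon\Sd^2\bDelta^1\to\bDelta^1$, satisfying $\lambda^2\iota = 1_{\bDelta^1}$. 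Since the vertex $\{\{0\}\}$ of $\Sd^2\bDelta^1$ is exactly the image of $\Sd^2$ applied to the vertex inclusion $\{0\}\hookrightarrow\bDelta^1$ of ordered simplicial complexes, and $\lambda^2$ sends it back to $0$, the maps $\iota$ and $\lambda^2$ fit into a retract-of-arrows diagram exhibiting $(0\colon K^1\hookrightarrow\bDelta^1)$ as a retract of $\Sd^2(\{0\}\hookrightarrow\bDelta^1)\colon K^1\hookrightarrow\Sd^2\bDelta^1$. The latter is a Thomason cofibration by Corollary \ref{cor cofibrant complexes}, and cofibrations are closed under retracts, so $0\colon K^1\hookrightarrow\bDelta^1$ is a Thomason cofibration; composing with the flip automorphism of $\bDelta^1$ gives the same for the other endpoint.

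For general $n\ge 1$ I would then argue by induction. Using the description of pushouts along monomorphisms in $\ncat{Cpx}$ (Example \ref{ex pushouts along mono of simplicial complexes}), one checks that $I_n$ is the pushout of the span $I_{n-1}\xleftarrow{\,n-1\,}K^1\xrightarrow{\,0\,}I_1$ obtained by gluing an endpoint of $I_{n-1}$ to an endpoint of $I_1$, with the canonical map $I_{n-1}\hookrightarrow I_n$ being the inclusion of an initial subpath. Since $0\colon K^1\hookrightarrow I_1$ is a Thomason cofibration and cofibrations are stable under pushout, each such inclusion $I_k\hookrightarrow I_{k+1}$ is a Thomason cofibration. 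Starting from $I_1=\bDelta^1$, which is Thomason cofibrant by Lemma \ref{lem simplices are cofibrant}, and composing, every $I_n$ is Thomason cofibrant; moreover the composite $K^1\xrightarrow{\,0\,}I_1\hookrightarrow I_2\hookrightarrow\cdots\hookrightarrow I_n$ of Thomason cofibrations is precisely the endpoint inclusion $K^1\hookrightarrow I_n$, which is therefore a Thomason cofibration. (As an independent check of cofibrancy of $I_n$, note that $I_n$ is a retract of $\Sd^2 I_n\cong I_{4n} = \tRe\,\oSing\big((I_n)_\le\big)$ via the obvious ``folding'' retraction, and $\Sd^2 I_n$ is Thomason cofibrant by Corollary \ref{cor cofibrant complexes}.) Finally, since a Thomason cofibrant simplicial complex which is a graph is Matsushita cofibrant, and $(-)_{\le 1}$ and $i_\ell$ preserve cofibrations (being left Quillen, the latter also by Lemma \ref{lem adjunction between loop graphs is biQuillen}), all of this descends to the Matsushita model structures on $\ncat{Gr}$ and $\ncat{Gr}_\ell$.

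I expect the only real work to be bookkeeping: one must verify that $\Sd^2$ of the vertex inclusion $\{0\}\hookrightarrow\bDelta^1$ lands on precisely the vertex of $\Sd^2\bDelta^1$ picked out by $\iota$, and that $\lambda^2$ returns it to $0$, so that the retract square genuinely commutes (and likewise, in the folding-map remark, that folding fixes the relevant endpoint). There is no geometric obstacle beyond this; the substantive content — that $\Sd^2$ applied to a monomorphism of ordered simplicial complexes is a Thomason cofibration — is already Corollary \ref{cor cofibrant complexes}.
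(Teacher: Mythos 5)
Your proof is correct, but it takes a genuinely different route from the paper's. The paper handles all $n$ at once with a single retract diagram: it identifies $\Sd^2 I_n \cong I_{4n}$, so that $I_{4n}$ is cofibrant and $\Sd^2(i)\colon K^1 \to I_{4n}$ is a cofibration, and then exhibits $i \colon K^1 \hookrightarrow I_n$ as a retract of $\Sd^2(i)$ via the initial-subpath inclusion $j \colon I_n \hookrightarrow I_{4n}$ and the collapse map $r \colon I_{4n} \to I_n$ sending $n+1,\dots,4n$ to $n$ (your parenthetical ``folding'' check is essentially this argument). You instead use the retract trick only for the base case $0\colon K^1 \hookrightarrow \bDelta^1$, borrowing $\iota$ and $\lambda^2$ from Lemma \ref{lem simplices are cofibrant}, and then run a pushout induction, writing $I_n$ as the pushout of $I_{n-1} \xleftarrow{\,n-1\,} K^1 \xrightarrow{\,0\,} I_1$ and invoking stability of cofibrations under pushout and composition; the bookkeeping you flag (that $\Sd^2$ of the vertex inclusion hits the endpoint $\{\{0\}\}$ of $\Sd^2\bDelta^1 \cong I_4$ and that $\lambda^2$ returns it to $0$) does check out. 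The paper's argument is shorter and self-contained in one diagram; yours yields slightly more information along the way, namely that every subpath inclusion $I_k \hookrightarrow I_{k+1}$ is a Thomason cofibration, and in fact anticipates exactly the gluing-an-edge-along-$K^1 \hookrightarrow I_1$ mechanism the paper uses later in Corollary \ref{cor trees are cofibrant}, so it integrates naturally with the forest result at the cost of an induction.
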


\begin{proof}
It is easy to see that $\Sd \, I_n \cong I_{2n}$. Hence $\Sd^2(I_n) \cong I_{4n}$ is cofibrant by Lemma \ref{lem double subdivisions of graphs are cofibrant}. Now without loss of generality suppose that $i : K^1 \to I_n$ maps the single vertex to $0$. We have the retraction
\begin{equation*}
\begin{tikzcd}
	K^1 & K^1 & K^1 \\
	{I_n} & {I_{4n}} & {I_n}
	\arrow[equals, from=1-1, to=1-2]
	\arrow["i"', hook, from=1-1, to=2-1]
	\arrow[equals, from=1-2, to=1-3]
	\arrow["{\Sd^2(i)}"', hook, from=1-2, to=2-2]
	\arrow["i"', hook, from=1-3, to=2-3]
	\arrow["j"', hook, from=2-1, to=2-2]
	\arrow["r"', from=2-2, to=2-3]
\end{tikzcd}
\end{equation*}
where $j$ sends $0, \dots, n$ to themselves in $I_{4n}$ and $r$ is the map that sends $0, \dots, n$ to themselves and $n+1, \dots, 4n$ to $n$. Hence $i$ is a cofibration and since the retraction of maps also gives a retraction of objects, $I_n$ is cofibrant.
\end{proof}

\begin{Cor}
All reflexive discrete graphs are Matsushita cofibrant and fibrant.
\end{Cor}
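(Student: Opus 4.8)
The plan is to observe that a reflexive discrete graph $G$ is nothing but the coproduct $\coprod_{v \in V(G)} K^1$ of copies of the terminal graph $K^1$ in $\ncat{Gr}$ (the initial graph $\varnothing$ when $V(G) = \varnothing$), and then to treat cofibrancy and fibrancy separately, reducing each to a statement about $K^1$ together with the behaviour of coproducts.

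For cofibrancy, I would note that $K^1$ is Matsushita cofibrant — this is already contained in Lemma \ref{lem paths are cofibrant}, where the terminal graph is shown to be Thomason cofibrant, and Thomason cofibrant graphs are Matsushita cofibrant — and then invoke the general fact that in any model category the class of cofibrations is closed under arbitrary small coproducts, being characterised by a left lifting property against trivial fibrations. Thus $\varnothing \to \coprod_v K^1 = G$ is a cofibration, so $G$ is Matsushita cofibrant in $\ncat{Gr}$; since $i_\ell$ is left Quillen, $i_\ell(G)$ is cofibrant in $\ncat{Gr}_\ell$ as well.

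For fibrancy, I would use that a reflexive graph $G$ is Matsushita fibrant precisely when $\tSing_{\Gr}(G) = \text{Ex}^2\,\Sing(\Cl(G))$ is a Kan complex (the terminal graph satisfies $\tSing_{\Gr}(K^1) \cong \Delta^0$). The key computation is that $\Cl$ of a reflexive discrete graph is a disjoint union of points $\coprod_v \bDelta^0$, and since every $\bDelta^n$ is connected, $\Sing(\coprod_v \bDelta^0)_n \cong V(G)$ for all $n$ with all faces and degeneracies the identity; that is, $\Sing(\Cl(G))$ is the discrete simplicial set on $V(G)$, which is manifestly a Kan complex. Since $\text{Ex}$ preserves Kan fibrations (as recalled in the excerpt) and fixes $\Delta^0$, it preserves Kan complexes, so $\text{Ex}^2\,\Sing(\Cl(G))$ is a Kan complex and $G$ is Matsushita fibrant in $\ncat{Gr}$; applying $i_\ell$, which is also right Quillen by Lemma \ref{lem adjunction between loop graphs is biQuillen}, gives fibrancy in $\ncat{Gr}_\ell$.

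The only place requiring any care — the main obstacle, such as it is — is the fibrancy half: coproducts of fibrant objects are not fibrant in general, so one genuinely needs the explicit identification of $\Sing(\Cl(G))$ as a discrete simplicial set rather than a formal closure argument, together with the (routine) observation that $\text{Ex}$ does not destroy the Kan property in this case. Everything else is formal: closure of cofibrations under coproducts, and preservation of (co)fibrant objects by the left/right Quillen functor $i_\ell$.
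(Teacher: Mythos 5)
Your proposal is correct and follows essentially the same route as the paper: cofibrancy via closure of cofibrant objects under coproducts together with the cofibrancy of $K^1$ from Lemma \ref{lem paths are cofibrant}, and fibrancy by observing that $\Sing_{\Gr}(G)$ is a (discrete, hence Kan) simplicial set, with your explicit identification of $\Sing(\Cl(G))$ and the remark that $\text{Ex}$ preserves Kan complexes merely spelling out details the paper leaves implicit.
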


\begin{proof}
Coproducts of cofibrant objects are cofibrant, so discrete graphs are cofibrant by Lemma \ref{lem paths are cofibrant}, and they are fibrant since for any discrete graph $G$, $\Sing_{\Gr}(G)$ is a Kan complex.
\end{proof}

\begin{Cor} \label{cor trees are cofibrant}
Every reflexive forest\footnote{A forest is a coproduct of trees, i.e. a reflexive graph without any cycles.} $T$ is Thomason cofibrant. Furthermore, any inclusion of a tree into another tree is a Thomason cofibration.
\end{Cor}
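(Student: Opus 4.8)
The plan is to deduce everything from the single input already established in Lemma~\ref{lem paths are cofibrant} --- namely that $K^1$ is Thomason cofibrant and that the endpoint inclusion $K^1 \hookrightarrow I_1$ is a Thomason cofibration --- together with the standard closure properties of cofibrations in a model category (coproducts, pushouts, transfinite composites, retracts, and composition). First I would reduce to a single tree: a forest is by definition a coproduct $\coprod_i T_i$ of trees, so $\varnothing \to \coprod_i T_i$ is the coproduct of the maps $\varnothing \to T_i$, and a coproduct of cofibrations is a cofibration. Next I would reduce the cofibrancy of a tree $T$ to the second assertion: a single vertex $\{r\} \cong K^1$ is a subtree of $T$, the map $\varnothing \hookrightarrow K^1$ is a cofibration since $K^1$ is cofibrant, and composing it with the cofibration $K^1 \hookrightarrow T$ provided by the second assertion exhibits $\varnothing \hookrightarrow T$ as a cofibration.

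So the heart is to show that an inclusion $S \hookrightarrow T$ of trees is a Thomason cofibration. Identifying $S$ with its image, $S$ is a connected subgraph of $T$; I would first record the elementary fact that such an $S$ is automatically an \emph{induced} subtree (if $u,v \in V(S)$ with $uv \in E(T)$, the unique $u$--$v$ path in $T$ is the edge $uv$, and it lies in $S$ since $S$ is connected). Now filter $T$ by distance to $S$: let $T_n \subseteq T$ be the induced subgraph on $\{w \in V(T) : \operatorname{dist}_T(w, S) \le n\}$, so that $T_0 = S$ and, since $T$ is connected and $S$ nonempty, $T = \bigcup_{n \ge 0} T_n = \operatorname{colim}_n T_n$. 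The claim is that each inclusion $T_{n-1} \hookrightarrow T_n$ with $n \ge 1$ is a pushout of a coproduct of copies of $K^1 \hookrightarrow I_1$. This is where acyclicity of $T$ enters, in the form: deleting an edge $e = uv$ from a tree produces exactly two components, one containing $u$ and one containing $v$, and $S$ (being connected) lies wholly in one of them. From this one deduces (a) if $vv' \in E(T)$ then $\operatorname{dist}_T(v,S)$ and $\operatorname{dist}_T(v',S)$ differ by exactly $1$, so there is no edge between two vertices at the same distance $n \ge 1$; and (b) every vertex $w$ with $\operatorname{dist}_T(w,S) = n$ has exactly one neighbour at distance $n-1$ and no neighbour at distance $< n-1$. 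Hence $T_n$ is obtained from $T_{n-1}$ by adjoining, for each vertex $w$ at distance exactly $n$, one new pendant edge from $w$ to its unique parent in $T_{n-1}$; that is, $T_n \cong T_{n-1} +_{\coprod_w K^1} \coprod_w I_1$, where the map $\coprod_w K^1 \to T_{n-1}$ picks out the parents. Therefore $S = T_0 \hookrightarrow T$ is a transfinite composite of pushouts of coproducts of the Thomason cofibration $K^1 \hookrightarrow I_1$, hence itself a Thomason cofibration.

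I expect the main obstacle to be the purely combinatorial verification of (a) and (b) --- equivalently, the assertion that the distance filtration attaches at each stage only pendant edges, each with a single well-defined attaching vertex --- which is precisely the point at which one uses that $T$ has no cycles. Everything else is a formal consequence of the closure properties of the class of cofibrations. (The cofibrancy of an individual forest then also follows, as noted, by combining $\varnothing \hookrightarrow K^1$ with the subtree inclusion and then taking the coproduct over the components.)
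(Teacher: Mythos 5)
Your proposal is correct and takes essentially the same route as the paper: both exhibit a tree inclusion as a (transfinite) composite of pushouts of (coproducts of) the cofibration $K^1 \hookrightarrow I_1$ from Lemma \ref{lem paths are cofibrant}, and then deduce cofibrancy of trees and forests from closure of cofibrations under composition, coproducts, and transfinite composition. Your distance-to-$S$ filtration merely organizes the paper's edge-by-edge attachment into BFS levels and supplies the combinatorial details (acyclicity giving unique parents and no same-level edges) that the paper leaves implicit.
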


\begin{proof}
Every tree $T$ can be written as a transfinite composition of pushouts of the form
\begin{equation*}
\begin{tikzcd}
	K^1 & T_0 \\
	{I_1} & {T_0 + e}
	\arrow["v", from=1-1, to=1-2]
	\arrow[from=1-1, to=2-1]
	\arrow[from=1-2, to=2-2]
	\arrow[from=2-1, to=2-2]
	\arrow["\lrcorner"{anchor=center, pos=0.125, rotate=180}, draw=none, from=2-2, to=1-1]
\end{tikzcd}
\end{equation*}
where $T_0 + e$ denotes $T_0$ with an extra edge glued onto a leaf node of $T_0$. Hence $T$ is cofibrant, and furthermore every tree inclusion $i : T \hookrightarrow T'$ can be obtained by pushing out along $K^1 \hookrightarrow I_1$, hence $i$ is a cofibration. Since coproducts of cofibrant objects are cofibrant, this implies that all forests are cofibrant.
\end{proof}

\begin{Lemma}
Every map of reflexive graphs of the form $I_n \to K^1$ is a Thomason trivial fibration.
\end{Lemma}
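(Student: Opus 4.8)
The plan is to verify directly that the unique map $p \colon I_n \to K^1$ has the right lifting property against the generating cofibrations
\[
\tRe_\Gr(I) = \{\, (\Sd^2 \mathbf{\partial \Delta}^k)_{\leq 1} \hookrightarrow (\Sd^2 \bDelta^k)_{\leq 1} \;\mid\; k \geq 0 \,\}
\]
of the Matsushita model structure on $\ncat{Gr}$ (see the proof of Theorem \ref{th transfer from cpx to gr}); since this model structure is cofibrantly generated, the trivial fibrations are exactly the maps with this lifting property. As $K^1$ is terminal, a commutative square over $p$ with left edge $(\Sd^2 \mathbf{\partial \Delta}^k)_{\leq 1} \hookrightarrow (\Sd^2 \bDelta^k)_{\leq 1}$ carries no information along its bottom, so producing a diagonal filler amounts to extending the top graph homomorphism $\varphi \colon (\Sd^2 \mathbf{\partial \Delta}^k)_{\leq 1} \to I_n$ along the subcomplex inclusion to a graph homomorphism $(\Sd^2 \bDelta^k)_{\leq 1} \to I_n$.

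For $k = 0$ this is immediate, since $\mathbf{\partial \Delta}^0 = \varnothing$ and one only has to choose a vertex of $I_n$. For $k \geq 1$ I would argue using the contractibility of $I_n$ in combinatorial form. The clique complex $\Cl(I_n) = I_n$ strongly collapses to an endpoint vertex by deleting dominated leaves one at a time, so by Lemma \ref{lem strong collapse is x-def retract} the inclusion $K^1 \hookrightarrow I_n$ of an endpoint is an $\times$-deformation retract; hence $p$ is an $\times$-homotopy equivalence, and in particular a Matsushita weak equivalence by Proposition \ref{prop Sing preserves x-homotopy equivalence} together with the fact that $\Cl$ preserves $\times$-homotopy equivalences, so at least the ``weak equivalence'' half is in hand. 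To promote this to the lifting property I would use that $\Sd^2 \bDelta^k$ strongly collapses to a point (Example \ref{ex strong collapse} combined with Lemma \ref{lem sd preserves strong collapse}), hence so does its $1$-skeleton $(\Sd^2 \bDelta^k)_{\leq 1}$ by Lemma \ref{lem 1-truncation preserves def retract}; one then transports $\varphi$ inward across the doubly subdivided disk along a retraction $r \colon I_n \to K^1$ and the contracting $\times$-homotopy, checking at each stage of the collapse that adjacency in $(\Sd^2 \bDelta^k)_{\leq 1}$ is carried to adjacency-or-equality in $I_n$.

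The crux is precisely this last extension step for $k \geq 1$: one must fill a graph homomorphism from the $1$-skeleton of a doubly subdivided sphere over the $1$-skeleton of a doubly subdivided disk while landing in the very rigid graph $I_n$, which contains no triangles and no cycles, so the fine combinatorics of $\Sd^2 \bDelta^k$ has to be matched precisely with the walks available in $I_n$. Equivalently, one can aim to show that $\text{Ex}^2 \, \Sing(\Cl(I_n)) \to \text{Ex}^2 \, \Sing(\Cl(K^1)) = \Delta^0$ is a trivial Kan fibration, i.e.\ that $\text{Ex}^2 \, \Sing(\Cl(I_n))$ is a contractible Kan complex; weak contractibility is automatic, since $|\Sing(\Cl(I_n))| \simeq |I_n|_{\ncat{Cpx}} \simeq \ast$ by Proposition \ref{prop hoequiv of realizations} and $\text{Ex}$ preserves weak equivalences, so the genuine content is the Kan condition after the two subdivisions. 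I expect the required induction to be organized around the strong collapsibility of $(\Sd^2 \bDelta^k)_{\leq 1}$ onto a point, but carrying it out — keeping track of how an extension over a partial subdivision forces the remaining values in $I_n$ — is where the real work lies, and I would treat it as the main obstacle.
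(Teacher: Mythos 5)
Your framework is the right one (trivial fibrations are detected by the right lifting property against $\tRe_\Gr(I)$, and the weak-equivalence half follows from strong collapsibility of $I_n$), but the proof stops exactly where the content of the statement lies: you never construct the extension of $\varphi \colon (\Sd^2 \mathbf{\partial \Delta}^k)_{\leq 1} \to I_n$ over $(\Sd^2 \bDelta^k)_{\leq 1}$ for $k \geq 1$, and you explicitly flag this as ``the main obstacle.'' As written, this is a plan together with an acknowledged gap, not a proof. (For comparison, the paper itself offers only a one-line assertion that the claim ``can be seen using the lifting characterization or using $\Sing$,'' so there is no detailed argument there to lean on either.)

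More seriously, the step you leave open actually fails in the stated generality, already at $k=1$. The generating cofibration for $k=1$ is the inclusion of the two endpoints into $(\Sd^2 \bDelta^1)_{\leq 1} \cong I_4$. If $n \geq 5$ and the top map sends the two points to the vertices $0$ and $n$ of $I_n$, a filler would be a reflexive-graph homomorphism $I_4 \to I_n$ whose endpoint values are at distance $n \geq 5$; but adjacency in $I_n$ moves the image by at most one vertex per edge of $I_4$, so the two endpoint values can be at distance at most $4$. Hence that lifting problem has no solution; equivalently, $\text{Ex}^2\,\Sing(\Cl(I_n))$ has no edge joining vertices at distance greater than $4$, so the map to $\Delta^0$ is not a trivial Kan fibration. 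This is precisely the point you correctly isolate — weak contractibility is automatic, the Kan/lifting condition after only two applications of $\text{Ex}$ is the real content — and that content is simply not available once $n \geq 5$: $\text{Ex}^2$ is not $\text{Ex}^\infty$, and no amount of strong collapsing of $(\Sd^2\bDelta^k)_{\leq 1}$ or of $I_n$ can produce walks in $I_n$ longer than the subdivided interval allows. So your strategy can at best be completed for $n \leq 4$ (and even there the $k \geq 2$ extensions remain to be carried out); for general $n$ the approach, and indeed the lifting property it is meant to verify, breaks down, which indicates the statement needs restriction or reinterpretation rather than a cleverer extension argument.
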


\begin{proof}
This can either be seen using the lifting characterization or using $\Sing$.
\end{proof}

\begin{Lemma}
Every surjective map $p : \bDelta^n \to \bDelta^m$ between simplices is a Thomason trivial fibration. 
\end{Lemma}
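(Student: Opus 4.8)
The plan is to test $p$ against the functor $\Sing$, exploiting the identification $\Sing(\bDelta^n) \cong N(E[n])$ already noted in the excerpt. A surjective map $p : \bDelta^n \to \bDelta^m$ is the same datum as a surjection $V(p) : \{0,\dots,n\} \to \{0,\dots,m\}$ (every subset is a face, so any vertex function is a map, and surjectivity on faces follows from surjectivity on vertices by taking preimages). Under $\Sing$ this becomes $\Sing(p) = N(E[V(p)]) : N(E[n]) \to N(E[m])$, the nerve of the functor $E[V(p)] : E[n] \to E[m]$ induced by $V(p)$. Recall that in the Thomason model structure on $\ncat{Cpx}$, a map $p$ is a trivial fibration exactly when $\Sing(p)$ is a weak equivalence of simplicial sets and $\tSing(p) = \text{Ex}^2 \Sing(p)$ is a Kan fibration. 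So I would split the argument into these two checks.

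First I would observe that $\Sing(p)$ is automatically a weak equivalence, with no use of surjectivity: each $E[n]$ is an indiscrete category, hence a contractible groupoid, so $N(E[n])$ is a weakly contractible Kan complex; any map between weakly contractible simplicial sets is a weak equivalence by the two-out-of-three property applied to the maps to $\Delta^0$.

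Next I would show that $\Sing(p)$ is a Kan fibration, which is where surjectivity is used. I would verify the lifting property against each horn inclusion $\Lambda^k_i \hookrightarrow \Delta^k$ directly. For $k \geq 2$, the horn $\Lambda^k_i$ contains every vertex of $\Delta^k$, and $N(E[n])$ has a unique simplex on any prescribed tuple of vertices; so given a map $\Lambda^k_i \to N(E[n])$ and a filler of its image in $N(E[m])$, the vertices of the desired filler in $N(E[n])$ are forced, the unique simplex on those vertices exists, and it is sent to the given filler by uniqueness of simplices in $N(E[m])$ — so the lift exists. For $k = 1$ we have $\Lambda^1_0 \cong \Lambda^1_1 \cong \Delta^0$, and the lifting problem says: given a vertex $v$ of $\bDelta^n$ and an edge of $N(E[m])$ from (or to) $V(p)(v)$ with other endpoint $w'$, produce an edge of $N(E[n])$ from $v$ over it; since edges of $N(E[n])$ are determined by their endpoints, this requires only $w' \in \operatorname{im}(V(p))$, which is precisely surjectivity. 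Equivalently, I would phrase this as: $E[V(p)]$ is an isofibration of groupoids (every morphism of $E[m]$ is invertible, every object of $E[m]$ lies in the image of $V(p)$, and $E[n]$ supplies the unique lifting morphism), so its nerve is a Kan fibration.

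Finally, since $\text{Ex}$ preserves Kan fibrations (as recalled above), $\tSing(p) = \text{Ex}^2\Sing(p)$ is a Kan fibration; together with the weak equivalence from the second paragraph this shows $p$ is a Thomason trivial fibration. The only genuinely delicate point is the Kan fibration verification in the third paragraph: surjectivity of $V(p)$ is exactly what makes the $1$-dimensional horn lifts (equivalently, the isofibration property on objects) work, and the higher horns are handled for free by the cosimplicial rigidity of $N(E[n])$.
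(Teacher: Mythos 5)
Your proof is correct, and it starts from the same identification the paper uses, namely $\Sing(p) \cong N(p')$ where $p' : E[n] \to E[m]$ is the induced functor between indiscrete categories. The difference is in how the conclusion is reached: the paper finishes in one step by invoking the standard characterization that $N(p')$ is a \emph{trivial} Kan fibration if and only if $p'$ is fully faithful and surjective on objects (fully faithfulness being automatic for indiscrete categories, surjectivity on objects being exactly the hypothesis), whereas you decompose ``trivial fibration'' into ``weak equivalence'' plus ``fibration'' and verify each by hand --- contractibility of $N(E[n])$ for the former, and explicit horn lifting (equivalently, the isofibration property of $p'$ between groupoids, with surjectivity entering only at the $1$-dimensional horns) for the latter, before pushing through $\text{Ex}^2$. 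Your route is more elementary and self-contained, at the cost of length; the paper's route is shorter and yields directly that $\Sing(p)$ is already a trivial Kan fibration, after which preservation under $\text{Ex}$ is immediate. One small remark: your splitting of the transferred trivial fibrations as ``$\Sing(p)$ a weak equivalence and $\tSing(p)$ a Kan fibration'' is legitimate, but it silently uses that $\text{Ex}$ preserves and reflects weak equivalences (via the natural componentwise weak equivalence $1 \to \text{Ex}$), which is worth stating since the transferred structure is defined through $\tSing$ rather than $\Sing$.
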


\begin{proof}
We see that $\Sing(p)$ is isomorphic to $Np' : NE[n] \to NE[m]$ where $p'$ is the obvious induced functor $p' : E[n] \to E[m]$ between indiscrete categories. Then $Np'$ is a trivial Kan fibration if and only if $p'$ is surjective on objects and fully faithful, \cite[Exercise 25.14]{rezk2022introduction}. This is the case for every surjective map $p$, hence $\Sing(p)$ is a trivial Kan fibration.
\end{proof}

\begin{Cor}
Every surjective map $p : K^n \to K^m$ between complete reflexive graphs is a Matsushita trivial fibration.
\end{Cor}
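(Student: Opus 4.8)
The plan is to transport the statement along the clique complex functor $\Cl : \ncat{Gr} \to \ncat{Cpx}$ and reduce it to the immediately preceding lemma on surjective maps of simplices. First I would observe that, since $K^n$ is a complete reflexive graph, every subset of its vertex set is a clique, so $\Cl(K^n)$ is the full simplex $\bDelta^{n-1}$ on $n$ vertices (and likewise $\Cl(K^m) \cong \bDelta^{m-1}$). Because $\Cl$ is the identity on vertex sets, $\Cl(p) : \bDelta^{n-1} \to \bDelta^{m-1}$ is exactly the surjection $V(p)$ of vertex sets underlying $p$; and since every function between the vertex sets of full simplices is automatically a map of simplicial complexes, $\Cl(p)$ is a surjective map between simplices.

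Next I would apply the preceding Lemma to conclude that $\Cl(p)$ is a Thomason trivial fibration in $\ncat{Cpx}$. To finish, recall that by Theorem~\ref{th transfer from cpx to gr} the Matsushita model structure on $\ncat{Gr}$ is obtained by right transfer from the Thomason model structure on $\ncat{Cpx}$ along the adjunction $(-)_{\leq 1} \dashv \Cl$; equivalently, a map $f$ of reflexive graphs is a Matsushita (trivial) fibration precisely when $\Cl(f)$ is a Thomason (trivial) fibration. Hence $\Cl$ not only preserves but also reflects trivial fibrations, so $\Cl(p)$ being a Thomason trivial fibration forces $p$ to be a Matsushita trivial fibration.

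There is essentially no obstacle here: the only points needing a word of justification are that $\Cl$ sends complete graphs to simplices and sends $p$ to its underlying vertex surjection — both immediate from the definition of $\Cl$ — and that "trivial fibration" is reflected by $\Cl$, which holds because the model structure on $\ncat{Gr}$ is right-transferred along $\Cl$. An equally short alternative, parallel to the proof of the simplex lemma, would be to note directly that $\Sing_{\Gr}(p) \cong N p'$ for the evident surjective fully faithful functor $p' : E[n] \to E[m]$ of indiscrete categories, hence a trivial Kan fibration, so $p$ is a Matsushita trivial fibration.
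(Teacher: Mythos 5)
Your proof is correct and matches the argument the paper intends: the corollary is stated without proof precisely because, as you note, $\Cl(K^n)\cong\bDelta^{n-1}$, $\Cl(p)$ is the corresponding vertex surjection, and the right-transferred (created-by-$\Cl$) definition of trivial fibrations in $\ncat{Gr}$ reduces the statement to the preceding lemma on simplices. Your alternative via $\Sing_{\Gr}(p)\cong Np'$ for the induced functor of indiscrete categories is likewise just the lemma's own proof replayed for graphs, so there is nothing to add.
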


\begin{Lemma} \label{lem 4n-cycles are cofibrant}
Every reflexive $n$-cycle $C_n$ for $n \geq 3$ is Matsushita cofibrant.
\end{Lemma}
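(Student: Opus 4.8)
The plan is to realize $C_{4n}$ as $\tRe(X)$ for a suitable simplicial set $X$, so that Thomason cofibrancy is immediate from Corollary \ref{cor cofibrant complexes}. For each $m \geq 1$ let $\Gamma_m$ denote the simplicial ``$m$-gon'': the simplicial set obtained by gluing $m$ copies of $\Delta^1$ end to end in an oriented cycle, with the convention $\Gamma_1 = \Delta^1/\partial\Delta^1$. Concretely $\Gamma_m$ has $m$ nondegenerate vertices $v_0, \dots, v_{m-1}$ and $m$ nondegenerate $1$-simplices $e_i \colon v_i \to v_{i+1}$ (indices mod $m$), and no nondegenerate simplices in higher dimensions.

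First I would check that $\sd \, \Gamma_m \cong \Gamma_{2m}$ for every $m \geq 1$. Since $\sd$ preserves colimits and $\Gamma_m$ is a cyclic colimit of copies of $\Delta^1$ glued along $\Delta^0$'s, it suffices to note that $\sd \, \Delta^1$ is the interval subdivided once (a path of length two) and that the cyclic gluing of $m$ such paths is $\Gamma_{2m}$; the case $m = 1$ follows because $\sd(\Delta^1/\partial\Delta^1)$ is $\sd\,\Delta^1$ with its two endpoints identified. Iterating gives $\sd^2 \, \Gamma_n \cong \Gamma_{4n}$.

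Next I would compute $\Re(\Gamma_{4n}) \cong C_{4n}$ using Proposition \ref{prop explicit description of Re}: $\Re(\Gamma_{4n})$ has vertex set $\{v_0, \dots, v_{4n-1}\}$, and a subset $\{x_0, x_1\}$ is an edge exactly when some $1$-simplex of $\Gamma_{4n}$ has those two vertices. Since $4n \geq 4$, the nondegenerate $1$-simplices contribute precisely the pairs $\{v_i, v_{i+1}\}$ while degenerate ones only contribute singletons, so $\Re(\Gamma_{4n})$ is the $4n$-cycle. Combining, $\tRe(\Gamma_n) = \Re(\sd^2 \Gamma_n) \cong \Re(\Gamma_{4n}) \cong C_{4n}$, and the first part of Corollary \ref{cor cofibrant complexes} gives that $\tRe(\Gamma_n)$, hence $C_{4n}$, is Thomason cofibrant.

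The only real work is the two combinatorial identifications $\sd \, \Gamma_m \cong \Gamma_{2m}$ and $\Re(\Gamma_{4n}) \cong C_{4n}$; neither is difficult, but the first needs mild care at the degenerate base cases $m \in \{1,2\}$ (where $\Gamma_m$ is not a simplicial complex, so Definition \ref{def subdivision} does not literally apply and one argues via colimits instead), and the second needs one to keep in mind that $\Re$ only sees nondegenerate simplices, as recorded in the remark following Proposition \ref{prop explicit description of Re}. Everything else is formal.
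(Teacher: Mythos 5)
Your overall strategy is sound and genuinely different from the paper's. The paper obtains $C_{4n}$ by gluing the two endpoints of $I_{4n} \cong \Sd^2 I_n$: it uses the second half of Corollary \ref{cor cofibrant complexes} to see that $\Sd^2(K^1 + K^1 \hookrightarrow I_n)$ is a cofibration, pushes out along the fold map $K^1 + K^1 \to K^1$, and concludes that $K^1 \hookrightarrow C_{4n}$ is a cofibration with cofibrant source; this also yields the extra fact that the inclusion of a vertex into $C_{4n}$ is a cofibration. You instead exhibit $C_{4n}$ directly as $\tRe$ of a simplicial set and invoke the first half of the same corollary, which is shorter and has the pleasant byproduct of placing $C_{4n}$ in the image of $\tRe$; the price is that you must work with simplicial sets (the $n$-gons $\Gamma_n$) that are not realizations of ordered complexes, so the combinatorics of $\sd$ must be done by hand.

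That is where the one real inaccuracy sits: the claimed isomorphism $\sd\,\Gamma_m \cong \Gamma_{2m}$ is false with your convention that the edges of $\Gamma_{2m}$ are coherently oriented $e_i \colon v_i \to v_{i+1}$. Since $\sd \Delta^1 = N(\{0\} \subset \{01\} \supset \{1\})$, both edges of each subdivided $\Delta^1$ point toward the barycenter, so in $\sd\,\Gamma_m$ the barycenter vertices are not the $d_1$-face of any nondegenerate edge, whereas in $\Gamma_{2m}$ every vertex is; the two simplicial sets are therefore not isomorphic, and the iteration $\sd^2\Gamma_n \cong \sd\,\Gamma_{2n} \cong \Gamma_{4n}$ uses this twice. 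The gap is harmless and easily repaired because the conclusion you actually need is about $\Re$, which forgets orientations: present $\Gamma_n$ as the coequalizer gluing $n$ copies of $\Delta^1$ along $\Delta^0$'s, note that both $\sd$ and $\Re$ are left adjoints and hence preserve this colimit, and use $\Re\, \sd^2 \Delta^1 \cong \Sd^2 \bDelta^1 \cong I_4$ (Corollary \ref{cor realization of subdivision of horns, simplices, boundaries}); the corresponding cyclic gluing of $n$ copies of $I_4$ in $\ncat{Cpx}$ is $C_{4n}$, with the cases $n = 1,2$ causing no collapse since the endpoint identifications involve distinct vertices of distinct copies of $I_4$ (or opposite ends of a single $I_4$). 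With that substitution for your two displayed identifications, the proof is complete.
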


\begin{proof}
First consider the inclusion $\Sd^2 \partial \u{\Delta}^1 \to \Sd^2 \u{\Delta}^1$, which is isomorphic to $K^1 + K^1 \xhookrightarrow{(0,4)} I_4$. Consider the pushout 
\begin{equation*}
    \begin{tikzcd}
	{K^1 + K^1} & {I_m} \\
	{I_4} & {C_{m+4}}
	\arrow["{(0,m)}", from=1-1, to=1-2]
	\arrow["{(0,4)}"', from=1-1, to=2-1]
	\arrow[from=1-2, to=2-2]
	\arrow[from=2-1, to=2-2]
	\arrow["\lrcorner"{anchor=center, pos=0.125, rotate=180}, draw=none, from=2-2, to=1-1]
\end{tikzcd}
\end{equation*}
Since $I_n$ is cofibrant by Lemma \ref{lem paths are cofibrant}, it follows that $C_{m+4}$ is cofibrant for all $m \geq 0$. The $3$-cycle $C_3 = K^3$ is Matsushita cofibrant since $K^3 = (\u{\Delta}^3)_{\leq 1}$ and $(-)_{\leq 1}$ is left Quillen and simplices are Thomason cofibrant by Lemma \ref{lem simplices are cofibrant}.
\end{proof}

\section{Derived Hom} \label{section derived hom}

In this short section, we discuss derived mapping spaces in the Matsushita model structure on $\ncat{Gr}_\ell$. To any model category $M$, there exists a $\ncat{sSet}$-enriched category $\H(M)$, called the \textbf{hammock localization} \cite{dwyer1980calculating}, which is a model for the underlying $\infty$-category of the model category $M$. A nice write up of these ideas is given in \cite[Appendix A]{mazel2016quillen} and \cite{riehl2020homotopical}.

So, to every pair of objects $X, Y \in M$, there is an associated simplicial set which we denote by $\H(M)(X,Y)$, called the \textbf{derived mapping space} between $X$ and $Y$, that has the property that if there are weak equivalences $f : X \to X'$ or $g : Y \to Y'$ in $M$, then there are associated weak equivalences
\begin{equation*}
    \H(M)(f,Y) : \H(M)(X',Y) \to \H(M)(X,Y), \qquad \H(X, g) : \H(X,Y) \to \H(X,Y').
\end{equation*}

However, it is a nontrivial task to compute these derived mapping spaces. If $M$ is a simplicial model category, then the task is made easier by using the simplicial enrichment. However, none of the model structures on $\ncat{Gr}_\ell$, $\ncat{Gr}$ or $\ncat{Cpx}$ considered in this paper are simplicial, see Remark \ref{rem cpx not simplicial}. The next best way to compute derived mapping spaces in model categories is to use (co)simplicial resolutions or framings \cite[Chapters 16 and 17]{hirschhorn2003model}. However the author has been unable to prove that the obvious cosimplicial resolution of a simplicial complex $K$, given by $K \times \Sd^2 \bDelta^\bullet$ is Reedy cofibrant. Instead, we will leverage the fact that the Thomason and Matsushita model structures are all Quillen equivalent to the Kan-Quillen model structure on $\ncat{sSet}$, where it is easier to compute derived mapping spaces.

Given simplicial sets $X$ and $Y$, let $\u{\ncat{sSet}}(X,Y)$ denote the internal hom in $\ncat{sSet}$. We also denote this with the usual exponential notation $Y^X$. It is the simplicial set defined degreewise by
\begin{equation*}
\u{\ncat{sSet}}(X,Y)_n = \ncat{sSet}(X \times \Delta^n, Y).
\end{equation*}
Since $\ncat{sSet}$ is a simplicial model category, if $X$ is a cofibrant simplicial set and $Y$ is fibrant, equivalently $X$ is any simplicial set and $Y$ is a Kan complex, then
\begin{equation*}
    \H(\ncat{sSet})(X,Y) \simeq \u{\ncat{sSet}}(X,Y).
\end{equation*}

By Section A.2 of \cite{mazel2016quillen}, for any simplicial set $X$ and graph $G$, we have
\begin{equation*}
    \H(\ncat{sSet})(X, \tSing_\Gr(G)) \simeq \H(\ncat{Gr})(\tRe_\Gr(X), G).
\end{equation*}
Moreover for any pair of simplicial sets we have
\begin{equation*}
    \H(\ncat{sSet})(X,Y) \simeq \H(\ncat{Gr})(\tRe(X), \tRe(Y)),
\end{equation*}
and for any pair of Matsushita fibrant loop graphs we have
\begin{equation*}
 \H(\ncat{Gr}_\ell)(G,H) \simeq \H(\ncat{sSet})(\tSing_{\Gr_\ell}(G), \tSing_{\Gr_\ell}(H)).
\end{equation*}
In fact, as long as $H$ is Matsushita fibrant, then
\begin{equation*}
\begin{aligned}
    \H(\ncat{sSet})(\tSing_{\Gr_\ell}(G), \tSing_{\Gr_\ell}(H)) &\cong \u{\ncat{sSet}}(\tSing_{\Gr_\ell}(G), \tSing_{\Gr_\ell}(H)) \\
    & \simeq \u{\ncat{sSet}}(\Sing_{\Gr_\ell}(G), \tSing_{\Gr_\ell}(H)).
\end{aligned}
\end{equation*}

As mentioned in the Introduction, the author's original motivation in this work was towards understanding the Hom-complex of Lov\'{a}sz. Recall that by \cite[Remark 3.6]{dochtermann2009hom} the homotopy type of the Hom-complex can be modelled by a simplicial complex
\begin{equation*}
    \Hom(G,H) \simeq \Cl((H^G)^\circ).
\end{equation*}
In the case of reflexive graphs, this simplifies by the following result.

\begin{Lemma}
If $G$ and $H$ are reflexive graphs, then
\begin{equation*}
    \Sing \Hom(G,H) \simeq \Sing(\Cl(H)^{\Cl(G)}),
\end{equation*}
where the right-hand side is the internal hom in $\ncat{Cpx}$.
\end{Lemma}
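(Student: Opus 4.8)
The plan is to deduce the statement from Lemma~\ref{lem cl preserves internal hom} together with the Dochtermann homotopy equivalence $\Hom(G,H)\simeq\Cl((H^G)^\circ)$ recalled in the introduction, the only substantive work being to reconcile the two descriptions of the exponential object that occur in these two inputs. Write $E$ for the exponential of $i_\ell G$ and $i_\ell H$ computed in $\ncat{Gr}_\ell$. Since $G$ and $H$ are reflexive, the looped vertices of $E$ are exactly the graph homomorphisms $G\to H$, so $E^\circ$ (the maximal reflexive subgraph) is the induced subgraph of $E$ on those homomorphisms; by \cite[Remark~3.6]{dochtermann2009hom}, $\Hom(G,H)$ is modelled up to homotopy equivalence of spaces by the simplicial complex $\Cl(E^\circ)$. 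Thus the lemma reduces to comparing $\Cl(E^\circ)$ with $\Cl(H)^{\Cl(G)}$.

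Next I would check that $E^\circ$ is exactly the internal hom $H^G$ of $G$ and $H$ in $\ncat{Gr}$. This is formal: in the adjoint triple~(\ref{eq adjunction between reflexive and loop graphs}) the inclusion $i_\ell$ is fully faithful and, being a right adjoint, preserves finite products, so for every reflexive graph $K$ there is a chain of natural bijections
\[
\ncat{Gr}(K\times G,\,H)\;\cong\;\ncat{Gr}_\ell(i_\ell K\times i_\ell G,\,i_\ell H)\;\cong\;\ncat{Gr}_\ell(i_\ell K,\,E)\;\cong\;\ncat{Gr}(K,\,E^\circ),
\]
where the middle isomorphism uses cartesian closedness of $\ncat{Gr}_\ell$ and the last one the adjunction $i_\ell\dashv(-)^\circ$. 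By the Yoneda lemma $H^G\cong E^\circ$. Granting this, Lemma~\ref{lem cl preserves internal hom} gives $\Cl(H)^{\Cl(G)}\cong\Cl(H^G)\cong\Cl(E^\circ)$, hence $|\Cl(H)^{\Cl(G)}|_{\ncat{Cpx}}\simeq\Hom(G,H)$ as spaces; applying $\Sing$ and invoking Proposition~\ref{prop hoequiv of realizations} (so that $\Sing K$ carries the homotopy type of $|K|_{\ncat{Cpx}}$ for every simplicial complex $K$) upgrades this to the asserted weak equivalence $\Sing\Hom(G,H)\simeq\Sing(\Cl(H)^{\Cl(G)})$ of simplicial sets — in fact an isomorphism once $\Hom(G,H)$ is taken to be the clique-complex model $\Cl(E^\circ)$ itself.

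The one place where care is genuinely needed is the identification $H^G\cong E^\circ$: it matches the hands-on description of the internal hom in $\ncat{Gr}$ with the maximal-reflexive-subgraph-of-the-exponential picture from Dochtermann, and is exactly what makes his theorem directly quotable here. Everything else in the argument is a chase through the adjunctions of~(\ref{eq adjunction between reflexive and loop graphs}) and the previously established equivalences, so I expect no further obstacles.
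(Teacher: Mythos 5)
Your proposal is correct and follows essentially the same route as the paper: Dochtermann's equivalence $\Hom(G,H)\simeq\Cl((H^G)^\circ)$ combined with Lemma \ref{lem cl preserves internal hom}. Your adjunction chase identifying $E^\circ$ with the internal hom in $\ncat{Gr}$ is precisely the detail the paper compresses into the phrase ``the fact that $H^G$ is reflexive,'' so you have simply made that step explicit.
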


\begin{proof}
This follows from the fact that $H^G$ is reflexive and Lemma \ref{lem cl preserves internal hom}.
\end{proof}

The author originally naively hoped that there would exist a weak equivalence
\begin{equation*}
    \H(\ncat{Gr}_\ell)(G, H) \simeq \Sing \, \Hom(G,H),
\end{equation*}
in general, but the following counterexample shows this to be incorrect. Let $G = K^2_u$ and $H = K^1 + K^1$ be the loop graphs from Remark \ref{rem max reflective subgraph functor not cartesian closed}. Both of these graphs are Matsushita fibrant, so we have
\begin{equation*}
    \H(\Gr_\ell)(G,H) \simeq \u{\ncat{sSet}}(\Sing_{\Gr_\ell}(G), \tSing_{\Gr_\ell}(H)).
\end{equation*}
In fact, $\Sing_{\Gr_\ell}(H) \cong \Delta^0 + \Delta^0$ is already a Kan complex, so we have
\begin{equation*}
    \H(\ncat{Gr}_\ell)(G,H) \simeq \Sing_{\Gr_\ell}(H)^{\Sing_{\Gr_\ell}(G)} \cong (\Delta^0 + \Delta^0)^{\varnothing} \cong \Delta^0.
\end{equation*}

Whereas
\begin{equation*}
    \Sing \, \Hom(G,H) \simeq \Sing_{\Gr_\ell}(H^G) \cong \Delta^0 + \Delta^0.
\end{equation*}

That being said, we do have the following obvious relationship.

\begin{Lemma}
Given loop graphs $G$ and $H$ such that $H^G$ is Matsushita fibrant, then
\begin{equation*}
    \Sing \, \Hom(G,H) \simeq \H(\ncat{Gr}_\ell)(K^1, H^G).
\end{equation*}
\end{Lemma}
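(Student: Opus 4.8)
The plan is to trace both sides through the chain of identifications for derived mapping spaces recorded just before the statement. First observe that $K^1$ is the terminal loop graph, hence Matsushita fibrant, and is Matsushita cofibrant by Lemma \ref{lem paths are cofibrant}; also $H^G$ is Matsushita fibrant by hypothesis. So the identity $\H(\ncat{Gr}_\ell)(G',H') \simeq \H(\ncat{sSet})(\tSing_{\Gr_\ell}(G'),\tSing_{\Gr_\ell}(H'))$ for pairs of Matsushita fibrant loop graphs applies with $G' = K^1$ and $H' = H^G$:
\begin{equation*}
\H(\ncat{Gr}_\ell)(K^1,H^G) \simeq \H(\ncat{sSet})\bigl(\tSing_{\Gr_\ell}(K^1),\,\tSing_{\Gr_\ell}(H^G)\bigr).
\end{equation*}

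Next I would evaluate the $\tSing_{\Gr_\ell}$ term at $K^1$. Since $(K^1)^\circ = K^1$ and $\Cl(K^1) \cong \bDelta^0$, we get $\Sing_{\Gr_\ell}(K^1) \cong \Sing(\bDelta^0) \cong \Delta^0$ and $\tSing_{\Gr_\ell}(K^1) = \text{Ex}^2\Sing(\bDelta^0) \cong \text{Ex}^2\Delta^0 \cong \Delta^0$. Now apply the identity $\H(\ncat{sSet})(\tSing_{\Gr_\ell}(G'),\tSing_{\Gr_\ell}(H')) \cong \u{\ncat{sSet}}(\Sing_{\Gr_\ell}(G'),\tSing_{\Gr_\ell}(H'))$, which holds whenever $H'$ is Matsushita fibrant so that $\tSing_{\Gr_\ell}(H')$ is a Kan complex, with $G' = K^1$:
\begin{equation*}
\H(\ncat{sSet})\bigl(\tSing_{\Gr_\ell}(K^1),\,\tSing_{\Gr_\ell}(H^G)\bigr) \cong \u{\ncat{sSet}}\bigl(\Delta^0,\,\tSing_{\Gr_\ell}(H^G)\bigr) \cong \tSing_{\Gr_\ell}(H^G).
\end{equation*}

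Finally I would strip off the $\text{Ex}^2$ and invoke Dochtermann. The natural map $\Sing\Cl((H^G)^\circ) \to \text{Ex}^2\Sing\Cl((H^G)^\circ) = \tSing_{\Gr_\ell}(H^G)$ is a weak equivalence, being built from the componentwise trivial cofibration $1_{\ncat{sSet}} \to \text{Ex}$ recalled in the discussion of $\text{Sd} \dashv \text{Ex}$, and it is a map between Kan complexes, hence a homotopy equivalence. Combining this with the $\times$-homotopy equivalence $\Hom(G,H) \simeq \Cl((H^G)^\circ)$ of Dochtermann — which $\Sing$ sends to a weak equivalence by Proposition \ref{prop Sing preserves x-homotopy equivalence} — yields $\tSing_{\Gr_\ell}(H^G) \simeq \Sing\Cl((H^G)^\circ) \simeq \Sing\Hom(G,H)$. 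Splicing the three displays gives the claim. I do not expect a genuine obstacle: each step instantiates something already established, and the only mild care points are confirming that $K^1$ is simultaneously Matsushita cofibrant and fibrant so that the mapping-space formulas apply, the computation $\tSing_{\Gr_\ell}(K^1) \cong \Delta^0$, and reading $\Sing\Hom(G,H)$ as $\Sing$ of the simplicial-complex model $\Cl((H^G)^\circ)$, in keeping with the preceding lemmas.
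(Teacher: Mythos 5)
Your argument is correct and follows essentially the same route as the paper: identify $\H(\ncat{Gr}_\ell)(K^1,H^G)$ with $\u{\ncat{sSet}}(\Delta^0,\tSing_{\Gr_\ell}(H^G)) \cong \tSing_{\Gr_\ell}(H^G)$ using fibrancy of $K^1$ and $H^G$, strip the $\text{Ex}^2$ via the natural weak equivalence, and invoke Dochtermann's $\Hom(G,H)\simeq\Cl((H^G)^\circ)$ together with the fact that $\Sing$ sends $\times$-homotopy equivalences to weak equivalences. The only quibble is your aside that the comparison map is between Kan complexes ($\Sing_{\Gr_\ell}(H^G)$ need not be Kan), but that claim is unnecessary since a weak equivalence is all the statement requires.
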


\begin{proof}
If $H^G$ is Matsushita fibrant, then since the terminal graph $K^1$ is also Matsushita fibrant, we have
\begin{equation*}
\begin{aligned}
    \H(\ncat{Gr}_\ell)(K^1, H^G) & \simeq \u{\ncat{sSet}}(\Delta^0, \tSing_{\Gr_\ell}(H^G)) \\
    & \cong \tSing_{\Gr_\ell}(H^G) \\
    & \simeq \Sing_{\Gr_\ell}(H^G).
\end{aligned}
\end{equation*}
\end{proof}

Hence with $G$ and $H$ as above, we see that the derived mapping space does not respect the cartesian closure of the underlying category $\ncat{Gr}_\ell$,
\begin{equation*}
    \H(\Gr_\ell)(G,H) \cong \H(\Gr_\ell)(K^1 \times G, H) \nsimeq \H(\ncat{Gr}_\ell)(K^1, H^G).
\end{equation*}
This is a consequence of $\ncat{Gr}_\ell$ (along with $\ncat{Gr}$ and $\ncat{Cpx}$) not being a monoidal model category.

\appendix

\section{The Categories of (Ordered) Simplicial Complexes} \label{section categories}

\subsection{Simplicial Complexes}

While the description of $\ncat{Cpx}$ from Definition \ref{def cpx} is adequate, it will be helpful to characterize $\ncat{Cpx}$ more abstractly. We introduce symmetric sets for this purporse.

\begin{Def}
Let $\ncat{FinSet}_\times$ denote the category of nonempty finite sets and all functions between them. We let $\u{n} = \{1, 2, \dots, n \}$. Let $\ncat{SymSet} = \ncat{Set}^{\ncat{FinSet}_\times^\op}$, we call the objects of this category \textbf{symmetric sets}\footnote{They are also referred to as symmetric simplicial sets in the literature.}. If $X$ is a symmetric set, we let $X_n = X(\u{n + 1})$ for $n \geq 0$\footnote{This convention is in accordance with the labeling for simplicial sets.}. 
\end{Def}

The category of symmetric sets is obviously very nice, being a presheaf category. For our purposes, we wish to consider a certain full subcategory of $\ncat{SymSet}$.

\begin{Def}
We say that a small category $\cat{C}$ is \textbf{terminally concrete} if it has a terminal object $*$, and such that the functor
\begin{equation*}
    \cat{C}(*, -) : \cat{C} \to \ncat{Set}
\end{equation*}
is faithful. If $\cat{C}$ is terminally concrete, then a presheaf $X : \cat{C}^\op \to \ncat{Set}$ is \textbf{concrete} if for every $U \in \cat{C}$, the canonical map
\begin{equation*}
    X(U) \to \ncat{Set}(\cat{C}(*,U), X(*))
\end{equation*}
is injective.
\end{Def}

Let us note that $\ncat{FinSet}_\times$ is terminally concrete, with terminal object $\u{1}$.

\begin{Prop}[{\cite[Lemma 47]{baez2011convenient}}] \label{prop concrete presheaves form reflective subcat}
Given a terminally concrete category $\cat{C}$, the category of concrete presheaves on $\cat{C}$ is a reflective subcategory of presheaves on $\cat{C}$
\begin{equation*}
\begin{tikzcd}
	{\ncat{ConPre}(\cat{C})} && {\ncat{Pre}(\cat{C})}
	\arrow[""{name=0, anchor=center, inner sep=0}, "i"', shift right=3, hook, from=1-1, to=1-3]
	\arrow[""{name=1, anchor=center, inner sep=0}, "{{\text{Con}}}"', shift right=3, from=1-3, to=1-1]
	\arrow["\dashv"{anchor=center, rotate=-90}, draw=none, from=1, to=0]
\end{tikzcd}
\end{equation*}
\end{Prop}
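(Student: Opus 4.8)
The plan is to prove Proposition \ref{prop concrete presheaves form reflective subcat} by exhibiting the reflector $\text{Con}$ explicitly and verifying the adjunction directly. The key observation is that for a terminally concrete category $\cat{C}$ with terminal object $*$, every presheaf $X$ admits a canonical ``concretization'': for each object $U$, define $(\text{Con}\,X)(U)$ to be the image of the canonical comparison map $X(U) \to \ncat{Set}(\cat{C}(*,U), X(*))$. The functoriality in $U$ comes from the naturality of this comparison map, which in turn uses that $\cat{C}(*,-)$ is faithful; concreteness of $\text{Con}\,X$ is then immediate because its value at $U$ is by construction a subset of $\ncat{Set}(\cat{C}(*,U),X(*))$, and one checks $X(*) \to (\text{Con}\,X)(*)$ is an isomorphism so the target doesn't change.

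First I would verify that $\text{Con}\,X$ really is a presheaf, i.e. that for $f : V \to U$ in $\cat{C}$ the restriction map on $X$ descends to the images. This is a diagram chase: an element of $(\text{Con}\,X)(U)$ is represented by some $x \in X(U)$, and its restriction $f^*x \in X(V)$ maps into the image subset $(\text{Con}\,X)(V)$ by definition, so one only needs that the value in $\ncat{Set}(\cat{C}(*,V),X(*))$ depends only on the image of $x$, which holds because the comparison maps form a natural transformation $X \Rightarrow \ncat{Set}(\cat{C}(*,-),X(*))$. Second I would check that the natural surjection $q_X : X \to \text{Con}\,X$ is natural in $X$, making $\text{Con}$ a functor with a natural transformation from the identity. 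Third, and this is the heart of the argument, I would show $q_X$ satisfies the universal property: for any concrete presheaf $Y$ and any map $g : X \to Y$, there is a unique $\bar g : \text{Con}\,X \to Y$ with $\bar g \circ q_X = g$. Uniqueness is forced by $q_X$ being an epimorphism (it is surjective on each $X(U)$). For existence, one uses that $Y$ concrete means $Y(U) \hookrightarrow \ncat{Set}(\cat{C}(*,U),Y(*))$, so $g_U$ factors through the image of $X(U)$ in that set via $g_*$, which is exactly $(\text{Con}\,X)(U)$; here one must check the induced maps assemble into a natural transformation, again a routine naturality check.

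The main obstacle I expect is purely bookkeeping: making sure all the comparison maps, restriction maps, and the two diagrams (the presheaf structure on $\text{Con}\,X$ and the factorization $\bar g$) are genuinely natural, since there are several layers of naturality stacked (in $U \in \cat{C}$, in $X$, and in the factorization). None of these steps is conceptually difficult, but it is easy to conflate ``the comparison map is injective'' (concreteness) with ``the comparison map is an isomorphism'' (which is false in general), so I would be careful to always work with the image subobject rather than the codomain. Since this is exactly \cite[Lemma 47]{baez2011convenient}, I would state the proof at this level of detail and cite that reference for the remaining verifications, noting in particular that $\text{Con}$ preserves the terminal section so that $\ncat{ConPre}(\cat{C})$ is closed under the relevant limits.

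\begin{proof}
This is \cite[Lemma 47]{baez2011convenient}; we sketch the construction. Given a presheaf $X$ on $\cat{C}$ and an object $U$, let $c_{X,U} : X(U) \to \ncat{Set}(\cat{C}(*,U), X(*))$ denote the canonical comparison map sending $x \in X(U)$ to the function $p \mapsto X(p)(x)$. These maps are natural in $U$, so letting $(\text{Con}\,X)(U)$ be the image of $c_{X,U}$ defines a subpresheaf $\text{Con}\,X \subseteq \ncat{Set}(\cat{C}(*,-), X(*))$ together with a natural surjection $q_X : X \twoheadrightarrow \text{Con}\,X$. Since $*$ is terminal, $c_{X,*}$ is a bijection, so $q_{X,*}$ is an isomorphism and $\text{Con}\,X$ is concrete. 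The assignment $X \mapsto \text{Con}\,X$ is functorial and $q$ is natural.

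We claim $q_X$ is the unit of the desired adjunction. Let $Y$ be a concrete presheaf and $g : X \to Y$ a morphism. As $q_X$ is objectwise surjective, any factorization of $g$ through $q_X$ is unique, so it remains to produce one. For each $U$, concreteness of $Y$ gives an injection $c_{Y,U} : Y(U) \hookrightarrow \ncat{Set}(\cat{C}(*,U), Y(*))$, and naturality of the comparison maps yields $c_{Y,U} \circ g_U = g_* \circ c_{X,U}$, where $g_* = \ncat{Set}(\cat{C}(*,U), g_*)$. Hence $c_{Y,U}\circ g_U$ factors through the image $(\text{Con}\,X)(U)$ of $c_{X,U}$, and since $c_{Y,U}$ is injective, $g_U$ itself factors through $(\text{Con}\,X)(U)$ via a map $\bar g_U : (\text{Con}\,X)(U) \to Y(U)$. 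These maps are natural in $U$ because $q_X$ is an epimorphism of presheaves, so $\bar g : \text{Con}\,X \to Y$ is the required morphism with $\bar g \circ q_X = g$. This establishes the adjunction $\text{Con} \dashv i$, with $i$ fully faithful since $q_{iY}$ is an isomorphism for concrete $Y$; thus $\ncat{ConPre}(\cat{C})$ is a reflective subcategory of $\ncat{Pre}(\cat{C})$.
\end{proof}
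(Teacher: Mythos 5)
Your construction is correct and is the standard reflection-onto-the-image-of-the-comparison-map argument; the paper itself gives no proof, simply citing \cite[Lemma 47]{baez2011convenient}, and your sketch reproduces exactly that reference's approach (concretize by taking the image of $X(U) \to \ncat{Set}(\cat{C}(*,U),X(*))$, check it is a concrete subpresheaf, and verify the universal property of the objectwise-surjective unit $q_X$). The only step stated a bit tersely is why $\text{Con}\,X$ is itself concrete, which uses that $\cat{C}(*,*)=\{1_*\}$ so the comparison map of $\text{Con}\,X$ at $U$ is (up to the isomorphism $(\text{Con}\,X)(*)\cong X(*)$) just the subset inclusion into $\ncat{Set}(\cat{C}(*,U),X(*))$; this is routine and does not affect correctness.
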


The previous result tells us much about the (co)limits in $\ncat{ConPre}(\cat{C})$.

\begin{Prop}[{\cite[Proposition 4.5.15]{riehl2017category}}] \label{prop (co)limits in reflective subcategories}
Given a reflective subcategory $i: \cat{D} \hookrightarrow \cat{C}$ with left adjoint $L$ then
\begin{itemize}
    \item The inclusion $i : \cat{D} \hookrightarrow \cat{C}$ creates all limits that $\cat{C}$ admits.
    \item $\cat{D}$ has all colimits that $\cat{C}$ admits, formed by applying $L$ to the colimit of the diagram included in $\cat{C}$
\end{itemize}
Therefore if $\cat{D} \hookrightarrow \cat{C}$ is a reflective subcategory, with $\cat{C}$ (co)complete, then so is $\cat{D}$. Furthermore limits in $\cat{D}$ agree with those in $\cat{C}$, while colimits in $\cat{D}$ are computed by applying $L$ to the colimit computed in $\cat{C}$.
\end{Prop}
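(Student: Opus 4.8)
The plan is to deduce both bullet points from the adjunction $L \dashv i$ together with the fact that $i$ is fully faithful, equivalently that the counit $\varepsilon : L i \Rightarrow 1_{\cat{D}}$ is a natural isomorphism. The limit statement is the standard fact that a fully faithful right adjoint creates limits. Concretely, I would first note that $i$, being a right adjoint, preserves all limits, and, being fully faithful, is conservative and hence reflects them; it remains to produce the limit in $\cat{D}$. So let $F : J \to \cat{D}$ be a diagram such that $iF$ has a limit $c = \lim_{\cat{C}} iF$ in $\cat{C}$, with limit cone $(\lambda_j : c \to iF(j))_j$. Applying $L$ and postcomposing with the counit isomorphisms $\varepsilon_{F(j)} : LiF(j) \to F(j)$ yields a cone $\mu_j := \varepsilon_{F(j)} \circ L\lambda_j : Lc \to F(j)$ over $F$ in $\cat{D}$, and I would check that $(Lc, \mu_j)$ is a limit of $F$: any cone $(\nu_j : d \to F(j))_j$ in $\cat{D}$ gives, on applying $i$, a cone $(i\nu_j : id \to iF(j))_j$ over $iF$, which factors uniquely through $c$ via some $h : id \to c$; then $Lh \circ \varepsilon_d^{-1} : d \to Lc$ is the required factorization through $Lc$ (one line, using naturality of $\varepsilon$), and its uniqueness is obtained by transporting the factorization problem back across the adjunction. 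Finally, since $i$ preserves limits, $iLc$ is again a limit of $iF$ in $\cat{C}$, so there is a canonical isomorphism $iLc \cong c$ identifying the two limit cones; together with reflection this is precisely the assertion that $i$ \emph{creates} the limit.

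For the colimit statement, suppose $\cat{C}$ admits $\colim_{\cat{C}} iF$. I would verify directly that $L(\colim_{\cat{C}} iF)$ corepresents cocones under $F$ in $\cat{D}$: for every $d \in \cat{D}$ there are isomorphisms, natural in $d$,
\begin{equation*}
\cat{D}\big(L(\colim_{\cat{C}} iF),\, d\big) \;\cong\; \cat{C}\big(\colim_{\cat{C}} iF,\, id\big) \;\cong\; \lim_j \cat{C}\big(iF(j),\, id\big) \;\cong\; \lim_j \cat{D}\big(F(j),\, d\big),
\end{equation*}
where the first isomorphism is the adjunction $L \dashv i$, the second is the universal property of the colimit in $\cat{C}$, and the third is full faithfulness of $i$. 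Hence $L(\colim_{\cat{C}} iF)$ is a colimit of $F$ in $\cat{D}$, computed exactly as claimed. The closing ``therefore'' sentence is then immediate: if $\cat{C}$ is (co)complete so is $\cat{D}$, limits in $\cat{D}$ coincide with those in $\cat{C}$, and colimits in $\cat{D}$ are obtained by applying $L$ to the colimit in $\cat{C}$.

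The only genuinely delicate point is the limit half, and specifically the word \emph{creates}: beyond exhibiting a limit of $F$ in $\cat{D}$ one must check that $i$ carries the constructed limit cone of $F$ back to the prescribed limit cone of $iF$ in $\cat{C}$, which is what forces the small diagram chase with $L$, the counit $\varepsilon$, and the comparison $iLc \cong c$. The colimit half, by contrast, is a pure Yoneda computation with no content beyond bookkeeping, and is essentially the standard argument computing colimits in a reflective subcategory.
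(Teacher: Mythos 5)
The paper does not prove this proposition at all—it is quoted verbatim with a citation to Riehl—so the relevant comparison is with the standard proof in the cited source. Your colimit half is correct and complete: the representability computation $\cat{D}(L(\colim iF),d)\cong\cat{C}(\colim iF,id)\cong\lim_j\cat{C}(iF(j),id)\cong\lim_j\cat{D}(F(j),d)$ is exactly the standard argument. The closing ``therefore'' sentence is likewise fine once both bullets are in place.

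The limit half, however, has a genuine gap at the uniqueness step, and it sits precisely where the content of the statement lies. Your existence computation $\mu_j\circ Lh\circ\varepsilon_d^{-1}=\nu_j$ is fine, but uniqueness cannot be ``obtained by transporting the factorization problem back across the adjunction'': the adjunction $L\dashv i$ identifies $\cat{D}(Lx,y)$ with $\cat{C}(x,iy)$, i.e.\ it controls maps \emph{out of} $Lc$ and \emph{into} objects of the form $id$, whereas here you must control maps \emph{into} $Lc$. The only available transport for $\cat{D}(d,Lc)$ is full faithfulness of $i$, sending $k\mapsto ik$, and then uniqueness of $k$ requires the cone $\bigl(i\mu_j : iLc\to iF(j)\bigr)$ to be jointly monic in $\cat{C}$—which at that stage you do not know. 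That fact is essentially equivalent to the key lemma you never prove: the unit $\eta_c : c\to iLc$ at the limit $c=\lim_{\cat{C}} iF$ is an isomorphism, i.e.\ a limit of objects of a reflective subcategory already lies in its essential image. This is what Riehl's proof establishes: the units $\eta_{iF(j)}$ are invertible since $F(j)\in\cat{D}$, so the maps $\eta_{iF(j)}^{-1}\circ iL\lambda_j$ form a cone from $iLc$ over $iF$, which factors through $c$ via some $t:iLc\to c$; one then checks $t\eta_c=1_c$ from the universal property and $\eta_c t=1_{iLc}$ using $Lt=\varepsilon_{Lc}$ and the triangle identities. With $\eta_c$ invertible, $i\mu_j\circ\eta_c=\lambda_j$ shows $(iLc,i\mu_j)$ is a limit cone in $\cat{C}$, and full faithfulness then yields that $(Lc,\mu_j)$ is a limit cone in $\cat{D}$, together with the creation statement. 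Note finally that your concluding step—``since $i$ preserves limits, $iLc$ is again a limit of $iF$, so $iLc\cong c$''—already invokes the limit property of $(Lc,\mu_j)$ that the missing uniqueness was supposed to deliver, so as written the creation claim rests on the incomplete step rather than repairing it.
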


It was noticed by Grandis in \cite[Section 1.3]{Grandis2001higher} that simplicial complexes embed fully faithfully as the concrete presheaves into $\ncat{SymSet}$\footnote{Grandis calls these simple presheaves but mentions Lawvere's characterization of concrete presheaves.}. This was later proved more completely by Baez-Hoffnung in \cite[Proposition 27]{baez2011convenient}. We sketch the proof of this statement as the next result.

\begin{Lemma} \label{lem cpx is concrete presheaves}
The category $\ncat{Cpx}$ of simplicial complexes is equivalent to the category of concrete presheaves on $\ncat{FinSet}_\times$.
\end{Lemma}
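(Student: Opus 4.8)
The plan is to exhibit an explicit equivalence of categories by constructing functors in both directions and checking they are mutually quasi-inverse. In one direction, I would send a simplicial complex $K$ to the concrete presheaf $\Phi(K)$ on $\ncat{FinSet}_\times$ with vertex set $V(\Phi(K)) = V(K)$ and whose plots are exactly those functions $p : \u{n} \to V(K)$ whose image $p(\u{n}) \subseteq V(K)$ is a face of $K$ (including the empty image case, which cannot occur since $\u n$ is nonempty, and the singleton images, which are faces because $K$ is a simplicial complex containing all vertices). First I would verify that this collection of plots satisfies the two closure conditions of the lemma preceding the statement: constant functions land in a singleton, hence their image is a vertex, hence a face; and if $p$ is a plot and $f : \u m \to \u n$ any function, then the image of $pf$ is contained in the image of $p$, hence is a face by downward closure of $K$. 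A map $f : K \to L$ of simplicial complexes clearly induces a map of concrete presheaves since $V(f)$ carries faces to faces, so composing with a plot yields a plot; functoriality is immediate.

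In the other direction, I would send a concrete presheaf $P = (V(P), \text{Plot}(P))$ to the simplicial complex $\Psi(P)$ with vertex set $V(P)$ and with faces exactly the sets of the form $p(\u n)$ for $p$ a plot of $P$. The key point to check is that $\Psi(P)$ is genuinely a simplicial complex, i.e. closed under taking subsets: given a face $\sigma = p(\u n)$ and a nonempty $\tau \subseteq \sigma$, I would choose a surjection $g : \u m \twoheadrightarrow \tau$, pick any set-section-like function $\u m \to \u n$ lifting $g$ through $p$ (possible because $\tau \subseteq p(\u n)$), and then $pf$ is a plot with image $\tau$; the empty set is a face vacuously, or one notes it is not in the image of any plot but is required to be in a simplicial complex by Definition \ref{def cpx}, so I would simply declare it a face. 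Again a map of concrete presheaves induces a simplicial complex map because the image of a plot maps into the image of the composed plot.

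The main thing to nail down — and the step I expect to be slightly fiddly rather than truly hard — is that $\Phi$ and $\Psi$ are mutually inverse up to natural isomorphism, and in fact I expect them to be strictly inverse on objects. On the vertex sets both round-trips are the identity. For $\Psi\Phi(K)$: its faces are the images of plots of $\Phi(K)$, which by construction are exactly the faces of $K$, so $\Psi\Phi(K) = K$ on the nose. For $\Phi\Psi(P)$: I must show that a function $p : \u n \to V(P)$ whose image is a face of $\Psi(P)$ — i.e. equals $q(\u m)$ for some plot $q$ — is itself a plot of $P$. This is where concreteness is used crucially: since $p$ and $q$ have the same image, there are functions $\u n \to \u m$ and $\u m \to \u n$ realizing $p$ as a reparametrization of $q$ and vice versa, so $p = q \circ (\text{some } \u n \to \u m)$ is a plot by the second closure axiom. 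Conversely any plot has its image a face, so it lies in $\Phi\Psi(P)$. Hence $\Phi\Psi(P) = P$ strictly. Finally, since both round-trips are the identity on objects and the morphism assignments are literally ``keep the underlying function on vertices'', naturality is trivial and the two functors constitute an isomorphism, in particular an equivalence, of categories. The only genuine subtlety is the bookkeeping around the empty face and around possibly-non-injective plots, which is handled by the surjection/section argument and by concreteness respectively; no serious obstacle arises.
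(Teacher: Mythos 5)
Your construction is exactly the one used in the paper's proof sketch: a simplicial complex corresponds to the concrete presheaf whose plots are the functions whose image is a face, and a concrete presheaf corresponds to the complex whose faces are the images of plots, with the precomposition-closure axiom giving the two round-trip identifications. Your write-up just fills in the verifications the paper leaves implicit (downward closure via choosing preimages, and the empty-face/singleton bookkeeping), so it is correct and essentially the same argument.
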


\begin{proof}[Proof Sketch]
Given a simplicial complex $K$, let $\widetilde{K} : \ncat{FinSet}_\times^\op \to \ncat{Set}$ be the functor where $\widetilde{K}(\u{1}) = V(K)$ and $\widetilde{K}(\u{n})$ is the set of functions $p : \u{n} \to V(K)$ such that $\text{im} \, p \in K$. If $f : \u{n} \to \u{m}$ is a function, and $p : \u{m} \to V(K)$ is a plot of $\widetilde{K}$, then $\widetilde{K}(f) : \widetilde{K}(m) \to \widetilde{K}(n)$ is defined by precomposition $\widetilde{K}(f)(p) = pf$. Since $K$ is a simplicial complex, its simplices are downward closed, so this is well-defined. 

Conversely, given a concrete presheaf $X$ on $\ncat{FinSet}_\times$, let $K_X$ be the simplicial complex with $V(K_X) = X(\u{1})$ and $K_X = \{ \text{im} \, p \, | \, p \in X(\u{n}), \, n \geq 1\}$ where we are thinking of $p \in X(\u{n})$ equivalently as a function $p : \u{n} \to X_0 = X(\u{1})$.
\end{proof}

The category of concrete presheaves on a terminally concrete category inherits many nice properties of a presheaf topos, but not all of them. Categories of concrete presheaves are in particular Grothendieck quasitoposes, which implies that they are locally presentable, cartesian closed and have strong subobject classifiers, see \cite{baez2011convenient, garner2012grothendieck}.

\begin{Cor} \label{cor cpx is quasitopos}
The category $\ncat{Cpx}$ is a Grothendieck quasitopos.
\end{Cor}

Let us denote the corresponding adjunction between symmetric sets and simplicial complexes by
\begin{equation} \label{eq concreteness adjunction}
\begin{tikzcd}
	{\ncat{Cpx}} && {\ncat{SymSet}}
	\arrow[""{name=0, anchor=center, inner sep=0}, "i"', shift right=3, hook, from=1-1, to=1-3]
	\arrow[""{name=1, anchor=center, inner sep=0}, "{{{\text{Con}}}}"', shift right=3, from=1-3, to=1-1]
	\arrow["\dashv"{anchor=center, rotate=-90}, draw=none, from=1, to=0]
\end{tikzcd}
\end{equation}
Thus by Proposition \ref{prop (co)limits in reflective subcategories}, we know that $\ncat{Cpx}$ has the same limits as $\ncat{SymSet}$, and colimits are obtained by applying $\text{Con}$ to the colimit in $\ncat{SymSet}$.

Let us note that there is a triple adjunction
\begin{equation}
\begin{tikzcd}
	{\ncat{Cpx}} && {\ncat{Set}}
	\arrow[""{name=0, anchor=center, inner sep=0}, "V"{description}, from=1-1, to=1-3]
	\arrow[""{name=1, anchor=center, inner sep=0}, "{\text{CoDisc}}", curve={height=-18pt}, from=1-3, to=1-1]
	\arrow[""{name=2, anchor=center, inner sep=0}, "{\text{Disc}}"', curve={height=18pt}, from=1-3, to=1-1]
	\arrow["\dashv"{anchor=center, rotate=-90}, draw=none, from=0, to=1]
	\arrow["\dashv"{anchor=center, rotate=-90}, draw=none, from=2, to=0]
\end{tikzcd}
\end{equation}
where given a simplicial complex $K$, $V$ is the faithful functor where $V(K) = K_0$ is the set of vertices. Given a set $S$, $\text{Disc}(S)$ is the discrete simplicial complex on $S$, with only constant plots $p : \u{n} \to \u{1} \to \text{Disc}(S)$, and $\text{CoDisc}(S)$ is the codiscrete simplicial complex $\text{CoDisc}(S) = \bDelta^S$ where every function $p : \u{n} \to V(K)$ is a plot. If $S = \u{n+1}$ for $n \geq 0$, then $\bDelta^n = \text{CoDisc}(\u{n+1})$. 

\begin{Cor} \label{cor monos and epis of simplicial complexes}
A morphism $f : K \to L$ of simplicial complexes is a monomorphism/epimorphism if and only if $V(f)$ is injective/surjective. Furthemore the regular monomorphisms $ f : K \to L$ of simplicial complexes are precisely the induced subcomplexes, i.e. $f$ is a monomorphism, and if $S \subseteq V(K)$ with $f(S) \in L$, then $S \in K$.
\end{Cor}

\begin{proof}
Since $V$ is a right and left adjoint, it preserves monomorphisms and epimorphisms. So $V$ sends mono/epimorphisms to inj/surjections. Furthermore if $f : K \to L$ is a map of simplicial complexes such that $V(f)$ is inj/surjective, then $f$ is a mono/epimorphism, since $V$ is faithful, and hence reflects mono/epimorphisms. 

Now given a monomorphism $f : K \to L$, consider the cokernel pair $L \rightrightarrows L +_K L$. By the description of limits below and pushouts in Example \ref{ex pushouts along mono of simplicial complexes}, the equalizer of the cokernel pair is the simplicial complex with vertices $V(K)$, but where $S \subseteq V(K)$ is a simplex if and only if it is a simplex in $L$. In other words, the equalizer is the induced subcomplex of $L$ on $V(K)$. Hence a monomorphism of simplicial complexes is regular if and only if it is the inclusion of an induced subcomplex.
\end{proof}

Hence a plot $p : \u{n+1} \to V(K)$ of a simplicial complex $K$ is equivalently a map $p : \bDelta^n \to K$ of simplicial complexes. Hence we can fully understand a simplicial complex by using its sets of $n$-plots for $n \geq 0$. As a consequence of Lemma \ref{lem cpx is concrete presheaves} we obtain the following important result, which is often called the \textbf{Density Lemma} or \textbf{CoYoneda Lemma}. Given a simplicial complex $K$, let $(\bDelta \downarrow K)$ denote the category whose objects are maps $p : \bDelta^n \to K$, and given $q : \bDelta^m \to K$, a morphism $f : p \to q$ in $(\bDelta \downarrow K)$ is a map $f : \bDelta^n \to \bDelta^m$ of simplicial complexes such that $qf = p$.

\begin{Lemma} \label{lem coyoneda for simplicial complexes}
Given a simplicial complex $K$, there is an isomorphism
\begin{equation}
K \cong \ncolim{\bDelta^n \to K} \bDelta^n \cong \colim \left( (\bDelta \downarrow K) \xrightarrow{U} \ncat{Cpx} \right)
\end{equation}
where $U : (\bDelta \downarrow K) \to \ncat{Cpx}$ is the forgetful functor $U(p : \bDelta^n \to K) = \bDelta^n$.
\end{Lemma}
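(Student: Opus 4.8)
The plan is to deduce this from the density (CoYoneda) lemma for presheaf categories, using the reflective embedding $i : \ncat{Cpx} \hookrightarrow \ncat{SymSet}$ of (\ref{eq concreteness adjunction}).

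First I would observe that each $\bDelta^n$ is, as a symmetric set, nothing but the representable presheaf $y(\u{n+1}) = \ncat{FinSet}_\times(-,\u{n+1})$, where $y$ is the Yoneda embedding $\ncat{FinSet}_\times \to \ncat{SymSet}$: indeed $(\bDelta^n)_m = \ncat{Cpx}(\bDelta^m, \bDelta^n) = \ncat{FinSet}_\times(\u{m+1}, \u{n+1})$, and in particular $\bDelta^n$ is concrete, so $i(\bDelta^n) \cong y(\u{n+1})$. Next, using Lemma \ref{lem cpx is concrete presheaves} together with the full faithfulness of $i$, I would identify the category of elements of the symmetric set $i(K)$ with the comma category $(\bDelta \downarrow K)$: an element of $i(K)_n$ is an $n$-plot of $K$, equivalently a map $\bDelta^n \to K$ in $\ncat{Cpx}$, and a morphism $g : \u{n+1} \to \u{m+1}$ of $\ncat{FinSet}_\times$ over $i(K)$ corresponds, via $y$, to a map $g : \bDelta^n \to \bDelta^m$ of simplicial complexes over $K$. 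Under this equivalence the tautological diagram over the category of elements becomes $i \circ U : (\bDelta \downarrow K) \to \ncat{SymSet}$.

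Now the density lemma in the presheaf category $\ncat{SymSet}$ gives a natural isomorphism $i(K) \cong \colim^{\ncat{SymSet}}_{(\bDelta \downarrow K)}(i \circ U)$. Applying the reflector $\text{Con}$, which is a left adjoint and so preserves colimits, and using that $\text{Con} \circ i \cong 1_{\ncat{Cpx}}$ (because $i$ is fully faithful and $\ncat{Cpx}$ is reflective in $\ncat{SymSet}$), I obtain $K \cong \text{Con}\!\left(\colim^{\ncat{SymSet}}_{(\bDelta \downarrow K)}(i \circ U)\right)$. By Proposition \ref{prop (co)limits in reflective subcategories}, colimits in $\ncat{Cpx}$ are computed by applying $\text{Con}$ to the colimit formed in $\ncat{SymSet}$, so the right-hand side is exactly $\colim^{\ncat{Cpx}}_{(\bDelta \downarrow K)} U = \ncolim{\bDelta^n \to K} \bDelta^n$. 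Naturality of the density isomorphism and of the counit $\text{Con} \circ i \cong 1_{\ncat{Cpx}}$ then yields the stated isomorphism.

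No step here is genuinely hard; the only delicate point is the bookkeeping in the second paragraph, namely checking that the category-of-elements diagram literally is $i \circ U$ (not merely something equivalent to it), so that Proposition \ref{prop (co)limits in reflective subcategories} is applied to the correct diagram. If one prefers to avoid symmetric sets altogether, an alternative is to verify the universal property of the colimit directly: a cocone under $U$ with apex $L$ assigns to each plot $p : \bDelta^n \to K$ a map $f_p : \bDelta^n \to L$ compatibly with the inclusions $\bDelta^0 \hookrightarrow \bDelta^n$ selecting vertices; restricting to $0$-plots yields a function $V(K) \to V(L)$, and the compatibility conditions force every simplex $\sigma = \{x_0,\dots,x_n\} \in K$ to be sent into the image of $f_p$ for a plot $p$ with image $\sigma$, hence into a simplex of $L$. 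This produces the unique morphism $K \to L$ inducing the given cocone.
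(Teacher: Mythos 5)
Your proposal is correct and follows essentially the same route as the paper: invoke the CoYoneda/density lemma in $\ncat{SymSet}$ and push the colimit through the reflector $\text{Con}$, using that $\text{Con}$ preserves colimits, $\text{Con}\, i \cong 1_{\ncat{Cpx}}$, and $\text{Con}(\bDelta^n) \cong \bDelta^n$. Your extra bookkeeping identifying $i(\bDelta^n)$ with the representables and $(\bDelta \downarrow K)$ with the category of elements of $i(K)$ is exactly the detail the paper leaves implicit, so there is nothing to fix.
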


\begin{proof}
Since $\ncat{SymSet}$ is a presheaf topos, if $X$ is a symmetric set, then by the CoYoneda lemma for presheaf toposes, 
\begin{equation*}
    X \cong \ncolim{\bDelta^n \to X} \bDelta^n
\end{equation*}
where the colimit here is taken in $\ncat{SymSet}$, see \cite[Example 1.4.6]{riehl2014categorical}, \cite[Lemma A.75]{minichiello2025coverages} for more on the CoYoneda lemma. Now the functor $\text{Con}: \ncat{SymSet} \to \ncat{Cpx}$ from Proposition \ref{prop concrete presheaves form reflective subcat} is a left adjoint, and $\text{Con}(\bDelta^n) \cong \bDelta^n$, hence 
\begin{equation*}
\begin{aligned}
    K & \cong \text{Con}\, i K \\
    & \cong \text{Con} \left( \ncolim{\bDelta^n \to iK} \bDelta^n \right) \\
    & \cong \ncolim{\bDelta^n \to i K} \text{Con}\, \bDelta^n \\
    & \cong \ncolim{\bDelta^n \to K} \bDelta^n.
\end{aligned}
\end{equation*}
\end{proof}

We can describe (co)limits in $\ncat{Cpx}$ more concretely. (Co)limits in $\ncat{Cpx}$ are computed by first taking the corresponding (co)limit of the vertices in $\ncat{Set}$, and then giving the appropriate structure on the simplices. In more detail, if $X : I \to \ncat{Cpx}$ is a diagram, then $\lim_{i \in I} X(i)$ is the simplicial complex with $V(\lim_{i \in I} X(i)) = \lim_{i \in I} V(X(i))$ and limit cone maps $p_i : \lim_{i \in I} V(X(i)) \to V(X(i))$. A subset $S \subseteq V(\lim_{i \in I} X(i))$ is a simplex if and only if for every $i \in I$, the subsets $p_i(S) \subseteq X(i)$ are simplices.

Similarly $\colim_{i \in I} X(i)$ is the simplicial complex with $V(\colim_{i \in I} X(i)) = \colim_{i \in I} V(X(i))$ and colimit cocone maps $c_i : V(X(i)) \to \colim_{i \in I} V(X(i))$. A subset $S \subseteq V(\colim_{i \in I} X(i))$ is a simplex if and only if for some $i \in I$, there is an $i \in I$ and a simplex $T \in X(i)$ such that $c_i(T) = S$.

\begin{Ex}
For example, a pullback 
\begin{equation*}
    \begin{tikzcd}
	K & A \\
	L & B
	\arrow["u", from=1-1, to=1-2]
	\arrow["g"', from=1-1, to=2-1]
	\arrow["\lrcorner"{anchor=center, pos=0.125}, draw=none, from=1-1, to=2-2]
	\arrow["f", from=1-2, to=2-2]
	\arrow["v"', from=2-1, to=2-2]
\end{tikzcd}
\end{equation*}
in $\ncat{Cpx}$ is computed by first taking $V(K) = V(L) \times_{V(B)} V(A)$, and declaring that $S \subseteq V(K)$ is a simplex if and only if $g(S)$ and $u(S)$ are simplices in $L$ and $A$.   
\end{Ex}

\begin{Ex} \label{ex pushouts along mono of simplicial complexes}
Pushouts of simplicial complexes are also important for us. We will mostly be concerned with the case when we are pushing out along a monomorphism
\begin{equation*}
\begin{tikzcd}
	K & A \\
	L & B
	\arrow["u", from=1-1, to=1-2]
	\arrow["g"', hook, from=1-1, to=2-1]
	\arrow["f", from=1-2, to=2-2]
	\arrow["v"', from=2-1, to=2-2]
	\arrow["\lrcorner"{anchor=center, pos=0.125, rotate=180}, draw=none, from=2-2, to=1-1]
\end{tikzcd}
\end{equation*}
in which case $V(B) = V(L) +_{V(K)} V(A)$ is the disjoint union $V(L) + V(A)$ quotiented by the relation $\sim$, where for $\ell \in V(L)$ and $a \in V(A)$, $\ell \sim a$ if and only if there exists a finite zig-zag of elements of the form
\begin{equation*}
\begin{tikzcd}
	& {k_0} && {k_1} && {k_2} && {k_n} \\
	\ell && {a_0} && {\ell_1} && \dots && a
	\arrow["g"', maps to, from=1-2, to=2-1]
	\arrow["u", maps to, from=1-2, to=2-3]
	\arrow["u"', maps to, from=1-4, to=2-3]
	\arrow["g", maps to, from=1-4, to=2-5]
	\arrow["g"', maps to, from=1-6, to=2-5]
	\arrow["u", maps to, from=1-6, to=2-7]
	\arrow["g"', maps to, from=1-8, to=2-7]
	\arrow["u", maps to, from=1-8, to=2-9]
\end{tikzcd}    
\end{equation*}
i.e. $g(k_0) = \ell$, $u(k_0) = a_0$, $u(k_1) = a_0$, $\dots$, $u(k_n) = a$, or of the form starting with $a$ on the left.

Now since $g$ is injective on vertices, this implies that $k_1 = k_2$, which implies that $a_0 = u(k_1) = u(k_2) = a_1$, etc. Hence $\ell \sim a$ if and only if there exists a $k_0 \in V(K)$ such that $\ell = g(k_0)$ and $u(k_0) = a$. Note however, that if both $\ell, a \in V(L)$ or $\ell, a \in V(A)$, then there can be arbitrary length zig-zags that identify them in $V(L +_K A)$. A subset $S \subseteq V(L +_K A)$ is a simplex in $L +_K A$ if there exists a simplex $T$ in $L$ or $A$ such that $g(T) = S$ or $u(T) = S$.
\end{Ex}

The following well-known result, called the \textbf{pushout pasting lemma} from category theory will also be useful.

\begin{Lemma}[{\cite[Lemma 1.1]{lack2005adhesive}}] \label{lem pushout pasting lemma}
Given a category $\cat{C}$ with pushouts and a commutative diagram
\begin{equation*}
    \begin{tikzcd}
	A & C & E \\
	B & D & F
	\arrow[from=1-1, to=1-2]
	\arrow[from=1-1, to=2-1]
	\arrow[from=1-2, to=1-3]
	\arrow[from=1-2, to=2-2]
	\arrow[from=1-3, to=2-3]
	\arrow[from=2-1, to=2-2]
	\arrow[from=2-2, to=2-3]
\end{tikzcd}
\end{equation*}
if the left square is a pushout, then the outer rectangle is a pushout if and only if the right square is a pushout.
\end{Lemma}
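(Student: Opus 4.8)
The plan is to prove both implications directly from the universal properties of the pushout squares in the diagram; no hypothesis on $\cat{C}$ beyond the existence of the pushouts named in the statement is needed. Fix notation: call the left square (corners $A,C,B,D$) $S_1$, the right square (corners $C,E,D,F$) $S_2$, and the outer rectangle (corners $A,E,B,F$) $R$, and recall that the span defining $R$ as a pushout is $B \leftarrow A \to E$ with $A \to E$ the composite $A \to C \to E$, while its cocone legs into $F$ are $E \to F$ and the composite $B \to D \to F$.

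For the forward direction I would assume $S_2$ is a pushout and show $R$ is one. Given a test object $X$ equipped with maps $B \to X$ and $E \to X$ agreeing on $A$, first apply the universal property of $S_1$ to the cocone $(C \to E \to X,\ B \to X)$ under $B \leftarrow A \to C$ — these agree on $A$ precisely by hypothesis — to obtain a unique $D \to X$ compatible with $C \to D$ and $B \to D$. Then apply the universal property of $S_2$ to the cocone $(E \to X,\ D \to X)$ under $D \leftarrow C \to E$, whose compatibility over $C$ holds by the way $D \to X$ was constructed, to obtain a unique $F \to X$. A short check shows that $F \to X$ factors the original cocone through the legs of $R$, and uniqueness of $F \to X$ is inherited by chasing the uniqueness clauses of $S_2$ and then $S_1$.

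For the converse I would assume $R$ is a pushout (and $S_1$ is, as in the hypothesis) and show $S_2$ is one. Given $X$ with maps $E \to X$ and $D \to X$ agreeing over $C$, apply the universal property of $R$ to the cocone $(E \to X,\ B \to D \to X)$ under $B \leftarrow A \to E$ — these agree over $A$ by commutativity of the diagram together with the assumed agreement over $C$ — obtaining a unique $F \to X$ with $E \to F \to X = E \to X$ and $B \to F \to X = B \to D \to X$. It remains to verify $D \to F \to X = D \to X$, and this is the single place where the pushout hypothesis on $S_1$ enters: the two maps $D \to X$ agree after precomposition with $B \to D$ (immediate from the previous line) and after precomposition with $C \to D$ (using commutativity of $S_2$ and the agreement over $C$), hence they coincide by the universal property of $S_1$. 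Uniqueness of $F \to X$ is again inherited from $R$.

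I do not anticipate any genuine obstacle: this is the standard pasting law for pushouts, formally dual to the pullback pasting lemma, and the only care required is bookkeeping — keeping track of the fact that $S_1$ is used in the forward direction to manufacture the auxiliary map $D \to X$ and in the backward direction only to verify the equation $D \to F \to X = D \to X$. An equally valid route is to dualize the pullback version of the statement in $\cat{C}^{\op}$, or simply to cite \cite[Lemma 1.1]{lack2005adhesive}, as the paper does.
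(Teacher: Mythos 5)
Your proof is correct. Note, though, that the paper does not prove this lemma at all: it simply cites \cite[Lemma 1.1]{lack2005adhesive}, exactly as you observe at the end of your proposal. So your contribution is to supply the standard universal-property argument in full, and it is the right one. In particular you correctly isolate the only subtle point, namely that in the converse direction the pushout hypothesis on the left square is used solely through its uniqueness clause (the maps $B \to D$ and $C \to D$ are jointly epimorphic), to force $D \to F \to X = D \to X$; the existence clause of that square plays no role there. One small bookkeeping remark on the forward direction: in the uniqueness chase the order is the reverse of what you wrote --- given a competitor $F \to X$, you first apply the uniqueness clause of the \emph{left} square to identify its restriction $D \to X$ with the auxiliary map you constructed, and only then the uniqueness clause of the right square to conclude the two maps out of $F$ agree. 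This is a matter of phrasing, not a gap; the argument as a whole is the standard pasting law (dual to the pullback version) and goes through without any further hypotheses on $\cat{C}$.
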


\subsection{Ordered Simplicial Complexes} \label{section ordered simplicial complexes}

Recall the definition of the category $\ncat{oCpx}$ of ordered simplicial complexes from Definition \ref{def ordered complexes}.

Given ordered simplicial complexes $K_{\leq}, L_{\leq}$, with $K = U(K_{\leq})$ and $L = U(L_\leq)$, their product $K_{\leq} \times L_{\leq}$ is the ordered simplicial complex with $V(K_{\leq} \times L_{\leq}) = V(K) \times V(L)$ and where $\{(k_1, \ell_1), \dots, (k_n, \ell_n) \}$ forms a simplex if and only if $\{k_1, \dots, k_n \}$ and $\{\ell_1, \dots, \ell_n\}$ are totally ordered simplices in $K$ and $L$ respectively \cite[Proposition 1.25]{ruschoff2017lecture}. 

\begin{Ex} \label{ex ordered simplicial complex products}
Consider the ordered simplicial complex $K_{\leq}$ with $V(K) = \{0 \leq 1 \}$ and $K_{\leq} = \{ \{0 \}, \{1 \}, \{0, 1 \} \}$, and let $K = U(K_{\leq})$. Then $K_{\leq} \times K_{\leq}$ is the simplicial complex drawn below
\begin{equation*}
    \begin{tikzcd}
	{(0,1)} & {(1,1)} \\
	{(0,0)} & {(1,0)}
	\arrow[from=1-1, to=1-2]
	\arrow[from=2-1, to=1-1]
	\arrow[from=2-1, to=1-2]
	\arrow[from=2-1, to=2-2]
	\arrow[from=2-2, to=1-2]
\end{tikzcd}
\end{equation*}
with two $2$-simplices $\{(0,0), (0,1), (1,1)\}$ and $\{(0,0), (1,0), (1,1) \}$, whereas $K \times L$ is the simplicial complex $\u{\Delta}^3$.
\end{Ex}

Given an ordered simplicial complex $K_{\leq}$, we can describe $\oSing(K_{\leq})$ (Definition \ref{def osing}), alternatively as follows. If $S = \{x_0, \dots, x_n \}$ is an $n$-simplex in $K_\leq$, then we let $(x_0, \dots, x_n)$ also denote $S$, but remembering the total order $x_0 \leq x_1 \leq \dots \leq x_n$.

We let 
\begin{equation*}
 \oSing(K_\leq)_n = \{(x_0, \dots, x_n) \, | \, x_i \in V(K_{\leq}), \, (x_0, \dots, x_n) \in K_\leq \}
\end{equation*}
with face maps
\begin{equation*}
    d_i : \oSing(K_\leq)_n \to \oSing(K_\leq)_{n-1}, \, 0 \leq i \leq n, \qquad d_i(x_0, \dots, x_n) = (x_0, \dots, x_{i-1}, x_{i+1}, \dots, x_n)
\end{equation*}
and degeneracy maps
\begin{equation*}
    s_i : \oSing(K_\leq)_n \to \oSing(K_\leq)_{n+1}, \, 0 \leq i \leq n, \qquad s_i(x_0, \dots, x_n) = (x_0, \dots, x_i, x_i, \dots, x_n).
\end{equation*}

\begin{Rem}
By a semisimplicial set we mean a functor $ X: \bDelta_{\text{inj}}^\op \to \ncat{Set}$, where $\bDelta_\text{inj}$ is the subcategory of $\bDelta$ with the same objects but only injective order preserving maps. These are also called $\Delta$-complexes in \cite{hatcher2002algebraic}. Given an ordered simplicial complex $K_\leq$, the simplicial set $\oSing(K_\leq)$ is the semisimplicial set with face maps defined as above, with freely generated degeneracies.
\end{Rem}

\begin{Rem}
Shortly after the preprint version of this paper was put on the arXiv, the preprint \cite{wei2025ordered} appeared, putting a model structure on the category of ordered simplicial complexes. In a previous version of this paper, we mentioned \cite[Proposition 10.2.2]{may2003finite} which says that $\oSing$ is fully faithful. However \cite[Remark 5.3, 5.4]{wei2025ordered} disproves this. We recommend the reader to look at \cite{wei2025ordered} for more information on the category $\ncat{oCpx}$ and its relationship to simplicial sets.
\end{Rem}

\subsection*{Statements and Declarations}

\subsubsection*{Author Contributions}
Not applicable.

\subsubsection*{Funding}

The author was supported by Professional Staff Congress (PSC) CUNY grant TRADB 56-13.

\subsubsection*{Availability of Data and Materials}

No data or materials were used.

\subsubsection*{Conflict of Interest}
The author declares no competing interests.

\subsubsection*{Ethical Approval}
Not applicable.

\printbibliography
\end{document}